\documentclass[11pt]{article}

\usepackage[a4paper,margin=1in]{geometry}
\usepackage{amsmath,amssymb,amsthm,mathtools,mathrsfs}
\usepackage{enumitem,needspace,booktabs,array}
\usepackage{microtype}
\ifdefined\pdfgentounicode
  \IfFileExists{glyphtounicode.tex}{\input{glyphtounicode}\pdfgentounicode=1}{}
\fi
\usepackage[hidelinks]{hyperref}
\hypersetup{
 pdftitle={On the ero sets of the Bargmann–Fock space},
 pdfauthor={Shengzhao Hou and Pan Ma},
 pdfkeywords={Bargmann--Fock space, zero set, critical density, logarithmic potential, insertion index, coherent states}
}

\newtheorem{theorem}{Theorem}[section]
\newtheorem{proposition}[theorem]{Proposition}
\newtheorem{lemma}[theorem]{Lemma}
\newtheorem{corollary}[theorem]{Corollary}
\theoremstyle{definition}
\newtheorem{definition}[theorem]{Definition}
\newtheorem{example}[theorem]{Example}
\newtheorem{remark}[theorem]{Remark}

\newcommand{\C}{\mathbb C}
\newcommand{\N}{\mathbb N}
\newcommand{\Z}{\mathbb Z}
\newcommand{\F}{\mathcal F^2}
\newcommand{\ord}{\operatorname{ord}}
\newcommand{\supp}{\operatorname{supp}}
\newcommand{\dist}{\operatorname{dist}}
\newcommand{\dd}{\,dA}

\newcommand{\one}{\mathbf 1}
\newcommand{\Ent}{\operatorname{Ent}}
\newcommand{\calP}{\mathcal P_2}
\newcommand{\calB}{\mathcal B_2}
\newcommand{\calH}{\mathcal H}
\newcommand{\calT}{\mathcal T}

\title{On the zero sets of the Bargmann–Fock space}
\author{Shengzhao Hou\thanks{School of Mathematical Sciences, Soochow University, Suzhou 215006, China. Email: \texttt{shou@suda.edu.cn}.}
\and
Pan Ma\thanks{School of Mathematics and Statistics, HNP-LAMA, Central South University, Changsha, Hunan 410083, China. Email: \texttt{pan.ma@csu.edu.cn}.}}
\date{}

\begin{document}
\maketitle

\begin{abstract}
At the critical density, the classical density theory of the Bargmann--Fock
space no longer determines exact zero sets. We show that this failure is
substantial: the complete radial counting function together with any
prescribed finite collection of angular moments does not determine exact
realizability, and the finite-dimensional relation between the dimension of
the vanishing space and the zero-insertion capacity breaks down in infinite
dimension. For arbitrary locally finite multisets, we establish a
variational characterization of exact zero sets in terms of a renormalized
logarithmic potential and relative entropy. This characterization yields
sharp criteria for several classes of critical configurations.
\end{abstract}

\medskip
\noindent\textbf{Keywords.}
Bargmann--Fock space; zero set; critical density; logarithmic potential;
relative entropy; finite modifications; insertion index; coherent states.
\smallskip

\noindent\textbf{2020 Mathematics Subject Classification.}
Primary 30H20; Secondary 30D10, 30D20.

\section{Introduction}\phantomsection
\label{sec:introduction}

The classical Bargmann--Fock space is
\[
 \F=\left\{f\in\mathcal O(\C):
 \|f\|_2^2:=\frac1\pi\int_\C |f(z)|^2e^{-|z|^2}\,dA(z)<\infty\right\},
\]
where $\mathcal O(\C)$ denotes the entire functions and $dA$ denotes planar Lebesgue area measure. A locally finite multiset
$\Lambda\subset\C$ is an \emph{exact zero multiset} (or simply a zero set)
if there is a nonzero $f\in\F$ whose zero divisor, with multiplicities, is
exactly $\Lambda$. We write
\[
 \mathcal I(\Lambda)=\{f\in\F:f\text{ vanishes on }\Lambda
 \text{ with at least the prescribed multiplicities}\}.
\]
Thus $\mathcal I(\Lambda)\neq\{0\}$ only asserts the existence of a Fock
function vanishing on the prescribed points; it does not in general imply
that $\Lambda$ is the complete zero divisor of such a function
\cite{Zhu1993,Zhu2011}.

The Bargmann--Fock space, the Bargmann transform, coherent states, and
Gaussian Gabor systems are closely related
\cite{Bargmann1961,Folland1989,Grochenig2001,Lyubarskii1992,Zhu2012}.
Sampling and interpolation are governed by Beurling-type densities in the
classical and weighted settings
\cite{Seip1992,SeipWallsten1992,OrtegaSeip1998,Lindholm2001,
BorichevDhuezKellay2007,GrochenigHaimiOrtegaRomero2019,
EscuderoHaimiRomero2021}; related reproducing-kernel questions in Fock-type
spaces appear, for example, in \cite{BorichevLyubarskii2010}. The exact
zero-divisor problem is different: one must prescribe the complete divisor,
without additional zeros. The resulting instability under deletion or
addition of finitely many zeros is already visible in Zhu's work
\cite{Zhu1993,Zhu2011}; zero-based extremal problems were studied in
\cite{BeneteauCarswellKouchekian2010}, while related questions on zero sets
and hereditary zero-set properties appear in
\cite{AadiBouyaOmari2018,AadiOmari2023}.

\medskip\noindent\textbf{The critical scale.}
Our normalization is fixed by the Gaussian weight above. The square lattice
\[
 \mathcal L=\sqrt\pi(\Z+i\Z)
\]
has $n_{\mathcal L}(r)=r^2+O(r)$. With
\[
 D^-(\Lambda)=\liminf_{r\to\infty}\inf_{w\in\C}
       \frac{\Lambda(D(w,r))}{\pi r^2},\qquad
 D^+(\Lambda)=\limsup_{r\to\infty}\sup_{w\in\C}
       \frac{\Lambda(D(w,r))}{\pi r^2},
\]
where $D(w,r)=\{z:|z-w|<r\}$ and $D_R:=\{z:|z|\le R\}$, the corresponding Beurling density is
$1/\pi$. This is the critical density for the present normalization.
There is also a direct growth interpretation. If
\[
 n_\Lambda(r)=\rho r^2+o(r^2),
\]
then Jensen integration gives
\[
 J_\Lambda(r)=\frac{\rho}{2}r^2+o(r^2),
\]
whereas the Fock pointwise estimate permits quadratic growth $|z|^2/2$ in
the logarithm of a nonzero Fock function. Thus $\rho=1$ is the scale at
which the leading quadratic contribution of the divisor matches the
Gaussian growth allowed by the space. At this scale the leading density
term no longer decides exact realizability, and lower-order radial and
angular terms become relevant. A centered asymptotic alone does not imply a
uniform Beurling density statement; the critical configurations constructed
below satisfy a disk-discrepancy estimate uniform in the center.

The purpose of the critical results in this paper is
to determine which aspects of this lower-order information are detected by
finite geometric statistics and which require the full renormalized
geometry. The standard critical lattice already exhibits the sensitivity of this
transition. For
\[
\mathcal L=\sqrt{\pi}(\mathbb Z+i\mathbb Z),
\]
deleting two lattice points yields an exact zero multiset, whereas deleting
one point leaves a uniqueness multiset; see \cite[Section~2]{Zhu2011}.
This raises the question of whether such critical behavior can be captured
by finitely many natural geometric statistics.

The first obstruction to a finite-dimensional description of critical
zero-set geometry is the failure of angular statistics to determine exact
realizability. For a discrete multiset $\Lambda$, the $j$-th angular moment
on the circle of radius $r>0$ is defined by
\[
M_j(r;\Lambda)
=
\sum_{\lambda\in\Lambda:\,|\lambda|=r}
\left(\frac{\lambda}{|\lambda|}\right)^j,
\qquad j\ge1,
\]
where the sum is taken with multiplicities.

\medskip
\noindent\textbf{Theorem A (Failure of finite geometric compression at
	critical density).}
For every integer $L\ge0$, every $\eta>0$, and every $R_*>0$, there exist
simple uniformly separated critical configurations $\Lambda_+$ and
$\Lambda_-$ and a bijection $T:\Lambda_+\to\Lambda_-$ such that
\begin{enumerate}[label=\textup{(\roman*)},leftmargin=2.2em]
	\item $\Lambda_+$ and $\Lambda_-$ have identical radial counting functions;
	\item their first $L$ angular moments agree on every circle;
	\item $T$ is the identity on $D_{R_*}$, the displacement satisfies
	\[
	\sup_{\lambda\in\Lambda_+}|T\lambda-\lambda|<\eta,
	\]
	and tends to zero at infinity;
	\item every finite modification of $\Lambda_+$ is an exact Fock zero
	multiset, whereas every finite modification of $\Lambda_-$ is a
	uniqueness multiset.
\end{enumerate}

This is
\hyperref[thm:finite-angular-data-insufficient]{Theorem~\ref*{thm:finite-angular-data-insufficient}}
below. The quantifier dependence on $L$ is essential: for each prescribed
finite collection of angular moments we construct a corresponding pair of
configurations, but we do not assert the existence of a single pair whose
angular moments agree to all orders.

The second phenomenon concerns the rigidity of finite modifications.
In the finite-dimensional setting, the dimension of the vanishing space
controls the number of zeros that can be inserted while preserving exact
realizability. More precisely, by
Lemma~\ref{lem:finite-modifications}, the effect of a finite addition
depends only on its total multiplicity. For an exact zero multiset
$\Lambda$, define the exact insertion index $\iota(\Lambda)$ as the
supremum of the integers $q\ge0$ such that every finite effective multiset
$\nu$ with $|\nu|=q$ satisfies that $\Lambda+\nu$ is again an exact zero
multiset. Zhu's finite-dimensional structure theorem gives
\[
\iota(\Lambda)=\dim\mathcal I(\Lambda)-1
\]
whenever $\mathcal I(\Lambda)$ is finite-dimensional
\cite[Corollaries~7--9 and Theorem~10]{Zhu2011}.

\medskip\noindent\textbf{Theorem B (Breakdown of finite-dimensional insertion
rigidity).}
For every integer $m\ge0$, there exists a simple exact zero multiset
$\Lambda_m$ such that
\[
 \dim\mathcal I(\Lambda_m)=\infty,
 \qquad \iota(\Lambda_m)=m.
\]

This is \hyperref[thm:infinite-dimensional-prescribed-index]{Theorem~\ref*{thm:infinite-dimensional-prescribed-index}}
below. Let
\[
 \mathcal V:=i\sqrt\pi\,\Z\subset\mathcal L.
\]
One may take
\[
 \Lambda_m=\mathcal L\setminus
 \Bigl(\mathcal V\cup\{a_1,\ldots,a_{m+1}\}\Bigr),
\]
where $a_1,\ldots,a_{m+1}$ are distinct points of
$\mathcal L\setminus\mathcal V$. These examples retain the critical
Beurling densities $D^-(\Lambda_m)=D^+(\Lambda_m)=1/\pi$. In particular,
for $m=0$ there is an exact zero multiset maximal under finite additions
whose vanishing space is
infinite-dimensional. The construction is based on
\[
 g_N(z)=\frac{\sigma(z)}{\sinh(\sqrt\pi z)
             \prod_{j=1}^{N}(z-a_j)},\qquad N=m+1,
\]
where $\sigma$ is the square-lattice sigma function. Deleting the lattice
line gives exponential decay in one Euclidean direction, while the
additional deleted points control the remaining polynomial insertion depth.

These two phenomena show that neither finite radial/angular data nor the
dimension of the vanishing space gives a complete description of exact
realizability at the critical scale. The full renormalized logarithmic
potential provides the information used in the following characterization.

For a multiset $\Lambda$, let $n_\Lambda(r)$ be its counting function. Under
the necessary growth condition $n_\Lambda(r)=O(r^2)$, let $P_\Lambda$ be
the normalized genus-two canonical product and let
$\mathcal U^{\mathrm{can}}_\Lambda$ be the circularly smoothed logarithmic
potential defined in Section~\ref{sec:analytic-characterization}. Write
\[
 d\gamma_0(z)=\pi^{-1}e^{-|z|^2}\,dA(z).
\]
For $\mu\ll\gamma_0$, define the relative entropy
\[
 \Ent(\mu\mid\gamma_0)
 :=\int_\C \log\!\left(\frac{d\mu}{d\gamma_0}\right)d\mu,
\]
with the value $+\infty$ when the integral is not finite. Let $\calP$
be the class of compactly supported probability measures $\mu\ll\gamma_0$
with finite relative entropy and satisfying
\[
 \int_\C z\,d\mu(z)=0,\qquad
 \int_\C z^2\,d\mu(z)=0.
\]
Set
\[
 \calH(\Lambda):=
 \sup_{\mu\in\calP}
 \left\{2\int_\C \mathcal U^{\mathrm{can}}_\Lambda\,d\mu
       -\Ent(\mu\mid\gamma_0)\right\}.
\]

\medskip\noindent\textbf{Theorem C (General variational characterization of exact
zero multisets).}
A locally finite multiset $\Lambda$ is an exact zero multiset for $\F$ if and
only if
\[
 n_\Lambda(r)=O(r^2)
 \qquad\text{and}\qquad
 \calH(\Lambda)<\infty.
\]
When these conditions hold, the minimizing quadratic harmonic correction is
unique.

This is \hyperref[thm:complete-zero-set-characterization]{Theorem~\ref*{thm:complete-zero-set-characterization}}
below. Proposition~\ref{thm:finite-blaschke-zero-test} gives, in addition, a
finite-product extremal scheme whose minimum values converge to the least
squared Fock norm of a normalized exact realizer, and whose minimizers
converge locally uniformly to that extremal function.

The square-lattice threshold above also gives a concrete reading of
$\calH(\Lambda)$. If $a\in\mathcal L\setminus\{0\}$, the sigma-function estimate implies
$\sigma(z)/[z(z-a)]\in\F$, while $\mathcal L\setminus\{0\}$ is a uniqueness
multiset \cite[Section~2]{Zhu2011}. Hence the characterization gives
\[
 \calH(\mathcal L\setminus\{0,a\})<\infty,
 \qquad \calH(\mathcal L\setminus\{0\})=\infty.
\]
Thus the entropy need not be evaluated numerically: its finiteness records
whether the renormalized potential is compatible with exact Fock
realizability.

Hadamard factorization describes a realizer, if one exists, as
$Ce^{az+bz^2}P_\Lambda$, while the Gibbs variational formula treats a fixed
pair $(a,b)$; see, for example, \cite{DemboZeitouni1998}. The proof of the
variational characterization passes from minimizing pairs $(a_R,b_R)$ on
disks to a global minimizer when $\calH(\Lambda)<\infty$. The
radius-independent coercive estimate \eqref{dual-eq:coercive} keeps these
pairs bounded in the four real harmonic directions, allowing a compactness
argument to establish attainment of the global minimum. The two complex
moment conditions above are the Euler--Lagrange equations that
remove the linear and quadratic harmonic freedom. Exact reconstruction is a
separate compactness step: normalization prevents a zero limit, and
Hurwitz's theorem excludes additional zeros. The precise statements are
Theorem~\ref{thm:complete-zero-set-characterization},
Proposition~\ref{thm:finite-blaschke-zero-test}, and
Remark~\ref{dual-rem:smoothed-extremum}.

\medskip\noindent\textbf{Further critical criteria and local stability.}
The remaining geometric results make the lower-order potential explicit in
several classes. For the circular arrays of
Theorem~\ref{thm:smooth-angular-profile}, logarithmic and
squared-logarithmic endpoint corrections are determined by the distribution
of directions near the maximum of an angular profile. For lattice
perturbations, Corollary~\ref{thm:lattice-drift-criterion} retains the
cumulative radial drift, including the $\log\log r$ contribution at the
endpoint. Theorem~\ref{thm:general-cluster-isomorphism} and
Corollary~\ref{cor:cluster-invariants} transfer exact realizability,
vanishing-space dimension, and insertion index through bounded local
replacements preserving two complex moments, subject to a local cubic-mass
bound.

\medskip\noindent\textbf{Relation to earlier work.}
The finite-dimensional identity used above follows from Zhu
\cite[Corollaries~7--9 and Theorem~10]{Zhu2011}. Omari's perturbation
conditions use upper and lower logarithmic averages of cumulative radial
displacement \cite[Theorem~1.6]{Omari2020}; our criterion in
Corollary~\ref{thm:lattice-drift-criterion} retains the full drift $B(r)$.
For example, if
\[
 B(r)=(q+1)\log r+\delta\log\log r+O(1),
\]
all values of $\delta$ give the same limit of $B(r)/\log r$, whereas the
zero-set condition after a finite change of net multiplicity $q$ is
$\delta>1/2$. Angular-density and critical-type results in
\cite{KononovaI,KononovaIII} concern the leading growth scale; the angular
criteria below resolve logarithmic and squared-logarithmic terms after the
quadratic scale is fixed. The noncritical alternatives for $d$-regular
sequences in \cite{MitkovskiWick2013} do not determine these endpoints.
Zhu's earlier work records the sensitivity of Fock zero sets under finite
changes \cite{Zhu1993,Zhu2011}. Dependence of zero sets on the Fock exponent
is studied in \cite{AadiBouyaOmari2018}, while \cite{AadiOmari2023}
characterizes hereditary zero sets and studies stability questions for
uniqueness sets of zero excess. Zero-based extremal problems are treated in
\cite{BeneteauCarswellKouchekian2010}. Moment cancellation for logarithmic
potentials is classical: local moment matching appears explicitly in
\cite[Section~3]{Ortega2002}, and balayage with respect to harmonic
polynomials and logarithmic kernels is developed in \cite{KM2020}. Here it yields a norm isomorphism between vanishing spaces
for prescribed configurations and preserves both dimension and exact
insertion index; see Corollary~\ref{cor:cluster-invariants}.

\medskip\noindent\textbf{Organization.}
Section~\ref{sec:analytic-characterization} proves the variational
characterization and extremal reconstruction, and
Section~\ref{sec:finite-stability} develops finite-modification principles.
Section~\ref{sec:infinite-dimensional-insertion} proves
Theorem~\ref{thm:infinite-dimensional-prescribed-index}.
Sections~\ref{sec:critical-lattice} and \ref{sec:circular-arrays} establish
radial and angular criteria; Section~\ref{sec:transfer} proves transfer
results. Section~\ref{sec:counterexamples} contains the construction of
Theorem~\ref{thm:finite-angular-data-insufficient} and related sharpness
examples. Section~\ref{sec:coherent-states} records consequences for
coherent states. Appendix~\ref{app:circular-estimates} collects the proofs
of the auxiliary sampling, growth, and kernel estimates used in
Section~\ref{sec:circular-arrays}.

\section{Variational characterization and extremal reconstruction}
\label{sec:analytic-characterization}
\label{sec:canonical-reconstruction}
\label{sec:finite-data}

\subsection{Preliminaries}
We identify a multiset with its counting measure and assume throughout
that its support is locally finite and its multiplicities are finite.
In sums and products with explicit multiplicity factors or exponents,
each support point is used once; otherwise multiset notation repeats
each point according to its multiplicity. We write $D(w,t)=\{|z-w|<t\}$ and
$D_R=\{|z|\le R\}$. For a finite multiset $\nu$, set
$q_\nu(z)=\prod_w(z-w)^{\nu(\{w\})}$.
By a finite modification we mean
$\Xi=\Lambda-\nu_-+\nu_+$, where $0\le\nu_-\le\Lambda$;
its net multiplicity is $q=|\nu_+|-|\nu_-|$.
We write
$\mathcal I(\Lambda)=\{f\in\F:f\text{ vanishes on }\Lambda\}$,
where vanishing is understood with at least the prescribed multiplicities.
We call $\Lambda$ a \emph{uniqueness multiset} if
$\mathcal I(\Lambda)=\{0\}$.
A nonzero function in $\mathcal I(\Lambda)$ may have additional
zeros. Thus $\mathcal I(\Lambda)\ne\{0\}$ does not in general imply
that $\Lambda$ is a zero set \cite{Zhu1993}.
Constants in estimates may depend on the configuration, but are
independent of the radius and the summation index.

For a multiset $\Lambda$, let $n_\Lambda(r)$ count its points in
$|z|\le r$, and let $m_\lambda$ denote the multiplicity at $\lambda$.
We use the functions
\begin{equation*}
 \begin{split}
 J_\Lambda(r)&=m_0\log r+
       \sum_{0<|\lambda|\le r}m_\lambda\log\frac r{|\lambda|},\\
 D_\Lambda(r)&=J_\Lambda(r)-\frac{r^2}{2}.
 \end{split}
 \end{equation*}

We shall use the pointwise estimate
\begin{equation}
 |f(z)|\le\|f\|_2 e^{|z|^2/2},\qquad f\in\F;
 \label{eq:fock-point-evaluation}
\end{equation}
see \cite{Zhu2012}. For $f(z)=\sum_{j\ge0}c_jz^j$, polar
integration gives
\begin{equation}
 \|f\|_2^2=\sum_{j\ge0}j!|c_j|^2.
 \label{eq:fock-coefficient-norm}
\end{equation}
Cauchy--Schwarz and $\sum |z|^{2j}/j!=e^{|z|^2}$ then give
\eqref{eq:fock-point-evaluation}.
Let $m_0=\ord_0f$ and $c=[z^{m_0}]f\ne0$.
Jensen's formula, applied after removal of the zero at the origin,
yields
\[
 J_{\mathcal Z(f)}(r)
 =\frac1{2\pi}\int_0^{2\pi}\log|f(re^{it})|\,dt-\log|c|
 \le\frac{r^2}{2}+\log\frac{\|f\|_2}{|c|}.
\]
For $R\ge1$, every zero in $D_R$ contributes at least $\log2$
to $J_{\mathcal Z(f)}(2R)$. Thus
$n_{\mathcal Z(f)}(R)\log2\le2R^2+O_f(1)$.
More generally, if a nonzero entire function $f$ vanishes on $\Lambda$, write
$\mathcal Z(f)=\Lambda+\Omega$ and let $c_f$ be its first nonzero
Taylor coefficient. Since $J_\Omega(r)\ge0$ for $r\ge1$, Jensen's
formula and the arithmetic--geometric mean inequality give
\[
 \frac1{2\pi}\int_0^{2\pi}|f(re^{i\theta})|^2\,d\theta
 \ge |c_f|^2e^{2J_\Lambda(r)+2J_\Omega(r)}
 \ge |c_f|^2e^{2J_\Lambda(r)}.
\]
Polar integration therefore yields
\begin{equation}
 \|f\|_2^2\ge2|c_f|^2\int_1^\infty
                         r e^{2D_\Lambda(r)}\,dr,
 \label{eq:jensen-radial-norm-lower}
\end{equation}
with infinite values allowed. Thus convergence is necessary for a zero
set, and divergence implies uniqueness. The radial criteria below give
classes in which this condition is sufficient; Example~\ref{ex:smooth-flat-maxima}
shows that it is not sufficient in general.

For a multiset satisfying
\begin{equation}
 n_\Lambda(r)=O(r^2),\qquad r\longrightarrow\infty,
 \label{eq:quadratic-count}
\end{equation}
the canonical product
\begin{equation*}
 P_\Lambda(z)=z^{m_0}\prod_{\lambda\ne0}
 E_2(z/\lambda)^{m_\lambda},\qquad
 E_2(w)=(1-w)e^{w+w^2/2},
 \end{equation*}
converges locally uniformly. Indeed, partial summation gives
\[
 \begin{gathered}
 \sum_{|\lambda|>1}\frac{m_\lambda}{|\lambda|^3}
 \le3\int_1^\infty\frac{n_\Lambda(t)}{t^4}\,dt<\infty,\\
 \log E_2(w)=-\sum_{j\ge3}\frac{w^j}{j}=O(|w|^3)
 \quad (|w|\le\tfrac12).
 \end{gathered}
\]
The remaining factors are finite in number, so $P_\Lambda$ has zero
multiset $\Lambda$ and $P_\Lambda(z)=z^{m_0}(1+O(z^3))$.
By \eqref{eq:fock-point-evaluation}, every nonzero Fock function has
order at most two. Hadamard factorization then gives
\begin{equation}
 f=Ce^{az+bz^2}P_\Lambda,\qquad C\ne0,\quad a,b\in\C,
 \label{eq:allforms}
\end{equation}
for every nonzero $f\in\F$ with $\mathcal Z(f)=\Lambda$; see
\cite[Chapters~II--III]{Boas1954}, \cite{Levin1980}, and \cite{Zhu1993}.
If a lower genus occurs in the factorization, passage to $E_2$ adds
convergent linear and quadratic terms to the exponent.
The main criterion removes the two harmonic terms by imposing moment
conditions on test measures.

\subsection{The complete potential criterion}
For background on logarithmic potentials and subharmonic functions in the
plane, see \cite{Ransford1995,SaffTotik1997}.

For $h>0$, write
\[
 S_hu(z)=\frac1{2\pi}\int_0^{2\pi}u(z+he^{it})\,dt,
 \qquad \ell(w)=\log\max\{1,|w|\}.
\]
Logarithmic singularities of a nonzero entire or meromorphic function
are integrable on each such circle, and Jensen's formula gives
$S_h\log|z-\lambda|=\log\max\{h,|z-\lambda|\}$.
For a multiset satisfying \eqref{eq:quadratic-count}, put
\begin{equation}
 \begin{split}
 \mathcal U^{\mathrm{can}}_\Lambda(z)={}&m_0\ell(z)\\
 &+\sum_{\lambda\ne0}m_\lambda
 \left\{\ell(z-\lambda)-\log|\lambda|
       +\Re\left(\frac z\lambda+\frac{z^2}{2\lambda^2}\right)\right\}.
 \end{split}
 \label{eq:canonical-smoothed-series}
\end{equation}
The terms inside braces are grouped. If $n_\Lambda(t)\le Ct^2$ for
$t\ge1$, then, for $R\ge1$,
\begin{equation}
 \sum_{|\lambda|>R}\frac{m_\lambda}{|\lambda|^3}
 =-\frac{n_\Lambda(R)}{R^3}
       +3\int_R^\infty\frac{n_\Lambda(t)}{t^4}\,dt
 \le\frac{3C}{R}.
 \label{eq:canonical-tail-count}
\end{equation}
On a fixed compact set the sufficiently distant summands in
\eqref{eq:canonical-smoothed-series} equal
$\log|E_2(z/\lambda)|=O(|\lambda|^{-3})$ uniformly. The tail thus
converges locally uniformly, and the finite remaining terms are
continuous. Termwise circular averaging, justified by the same estimate
and the integrability of the finitely many logarithmic singularities,
gives
\begin{equation}
 \mathcal U^{\mathrm{can}}_\Lambda=S_1\log|P_\Lambda|,
 \qquad \log|P_\Lambda|\le\mathcal U^{\mathrm{can}}_\Lambda.
 \label{eq:canonical-smoothed-potential}
\end{equation}
The inequality is subharmonicity. Set
\begin{equation*}
 E_\Lambda(a,b)=\int_\C
       e^{2\mathcal U^{\mathrm{can}}_\Lambda(z)+2\Re(az+bz^2)}\,d\gamma_0(z).
 \end{equation*}

Let $\calP$ be the class of compactly supported probability measures
$\mu\ll\gamma_0$ such that
\begin{equation}\label{eq:balanced-complex-moments}
 \int_\C z\,d\mu(z)=0,\qquad
 \int_\C z^2\,d\mu(z)=0,
\end{equation}
and whose relative entropy
\[
 \Ent(\mu\mid\gamma_0)
 =\int_\C\log\left(\frac{d\mu}{d\gamma_0}\right)\,d\mu
\]
is finite. For a multiset satisfying \eqref{eq:quadratic-count}, put
\begin{equation*}\calH(\Lambda)=\sup_{\mu\in\calP}
 \left\{2\int_\C \mathcal U^{\mathrm{can}}_\Lambda\,d\mu-\Ent(\mu\mid\gamma_0)\right\}.
\end{equation*}
The integrals of $\mathcal U^{\mathrm{can}}_\Lambda$ are finite because it is continuous and
$\mu$ has compact support. The class $\calP$ is nonempty: the Gaussian
measure conditioned on a centered disk belongs to it.

\begin{theorem}[Complete characterization of Fock zero sets]\label{thm:complete-zero-set-characterization}
For every locally finite multiset $\Lambda$,
\begin{equation*}
 \begin{gathered}
 \Lambda\text{ is a zero set for }\F\\
 \Longleftrightarrow\quad n_\Lambda(r)=O(r^2)
                       \quad\text{and}\quad\calH(\Lambda)<\infty.
 \end{gathered}
 \end{equation*}
 
Under \eqref{eq:quadratic-count}, the following identity holds in the
extended real numbers:
\begin{equation}\label{eq:entropy-zero-duality}
 \exp\calH(\Lambda)=\inf_{a,b\in\C}E_\Lambda(a,b).
\end{equation}
If this value is finite, the infimum is attained at a unique pair
$(a,b)$. The corresponding $P_\Lambda e^{az+bz^2}$ is a Fock function
with zero multiset $\Lambda$.
\end{theorem}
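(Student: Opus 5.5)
The plan is to reduce everything to the identity \eqref{eq:entropy-zero-duality} together with a two-sided comparison between $E_\Lambda(a,b)$ and $\|P_\Lambda e^{az+bz^2}\|_2^2$. The growth condition is handled first. If $\Lambda$ is a zero set, the Jensen argument after \eqref{eq:fock-point-evaluation} gives $n_\Lambda(r)=O(r^2)$. Under \eqref{eq:quadratic-count}, the Hadamard form \eqref{eq:allforms} shows that $\Lambda$ is a zero set if and only if $f_{a,b}:=P_\Lambda e^{az+bz^2}\in\F$ for some $a,b\in\C$. So the first claim reduces to showing that $\inf_{a,b}E_\Lambda(a,b)<\infty$ iff some $f_{a,b}\in\F$, and then to proving \eqref{eq:entropy-zero-duality}.

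\emph{Comparison of $E_\Lambda$ with the Fock norm.} Since $\Re(az+bz^2)$ is harmonic, \eqref{eq:canonical-smoothed-potential} gives
\[
S_1\log|f_{a,b}|=\mathcal U^{\mathrm{can}}_\Lambda+\Re(az+bz^2)\ge\log|f_{a,b}|.
\]
Hence $\|f_{a,b}\|_2^2\le E_\Lambda(a,b)$. For the reverse bound, fix $w$ and put $F_w(z)=f_{a,b}(z)e^{-\bar wz}$. The term $\Re(\bar w z)$ is harmonic with circle mean $|w|^2$ at $w$. Jensen's inequality (convexity of $\exp$) then gives
\[
e^{2S_1\log|f_{a,b}|(w)-|w|^2}\le S_1\bigl(|F_w|^2\bigr)(w)\,e^{-|w|^2}.
\]
On $|z-w|=1$ we have $-2\Re(\bar wz)-|w|^2=-|z|^2+|z-w|^2=-|z|^2+1$, so the right side is at most $e\,S_1\bigl(|f_{a,b}|^2e^{-|\cdot|^2}\bigr)(w)$. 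Integrating in $w$ with Fubini yields $E_\Lambda(a,b)\le e\,\|f_{a,b}\|_2^2$. Thus $E_\Lambda(a,b)<\infty$ iff $f_{a,b}\in\F$, and the last sentence of the theorem follows once a minimizer is found.

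\emph{Duality.} For the inequality $\calH\le\log\inf E_\Lambda$, I use the Gibbs variational inequality. For $\mu\in\calP$ and any $(a,b)$, the moment conditions \eqref{eq:balanced-complex-moments} give $\int\Re(az+bz^2)\,d\mu=0$. Therefore
\[
2\int\mathcal U^{\mathrm{can}}_\Lambda\,d\mu-\Ent(\mu\mid\gamma_0)
=\int\bigl(2\mathcal U^{\mathrm{can}}_\Lambda+2\Re(az+bz^2)\bigr)\,d\mu-\Ent(\mu\mid\gamma_0)\le\log E_\Lambda(a,b).
\]
For the reverse inequality I truncate to disks. Let $E_R(a,b)$ be the same integral over $D_R$. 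On $D_R$ the integrand is continuous, so $E_R$ is finite and log-convex in the four real parameters. It is strictly log-convex because $\Re(az+bz^2)$ is not constant on a disk unless $a=b=0$. It is also coercive: for $(a,b)\ne0$ the set $\{z\in D_{R}:\Re(az+bz^2)>c|(a,b)|\}$ has positive measure for some $c>0$, uniformly over directions by compactness. Hence $E_R$ has a unique minimizer $(a_R,b_R)$. The Euler--Lagrange equations state that the Gibbs measure
\[
\mu_R\propto \mathbf 1_{D_R}\,e^{2\mathcal U^{\mathrm{can}}_\Lambda+2\Re(a_Rz+b_Rz^2)}\,\gamma_0
\]
has vanishing $z$ and $z^2$ moments, so $\mu_R\in\calP$. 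Equality in the Gibbs formula gives that the value of $\mu_R$ in the supremum equals $\log E_R(a_R,b_R)$. Hence $\log\min E_R\le\calH(\Lambda)$ for every $R$.

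\emph{Passage to the limit.} This is the step I expect to be the main obstacle, since it must produce a radius-independent bound. Assume $\calH(\Lambda)<\infty$. For $R\ge R_0$ we have $E_{R_0}(a_R,b_R)\le E_R(a_R,b_R)\le e^{\calH(\Lambda)}$. Coercivity of the single function $E_{R_0}$ then confines all the pairs $(a_R,b_R)$ to a fixed compact set; this is the radius-independent coercive estimate. Pass to a convergent subsequence $(a_R,b_R)\to(a_*,b_*)$. By Fatou's lemma (the integrands converge pointwise, with indicators increasing),
\[
E_\Lambda(a_*,b_*)\le\liminf_R E_R(a_R,b_R)\le e^{\calH(\Lambda)}.
\]
Combined with the first inequality, this proves \eqref{eq:entropy-zero-duality} with the infimum attained. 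If instead $\calH(\Lambda)=\infty$, the first inequality already forces $\inf E_\Lambda=\infty$, so both sides are infinite. Uniqueness of the minimizer follows from strict log-convexity of $E_\Lambda$ on its convex finiteness domain, by the same nonconstancy of $\Re(az+bz^2)$. Together with the comparison step, this gives both the characterization and the final assertion that $P_\Lambda e^{az+bz^2}$ is a Fock function with zero multiset exactly $\Lambda$.
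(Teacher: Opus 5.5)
Your proposal is correct and follows the paper's proof in all essentials: disk truncations $E_R$ with unique minimizers, Gibbs measures $\mu_R$ whose Euler--Lagrange equations are exactly the two moment conditions, radius-independent coercivity coming from a fixed disk, compactness of $(a_R,b_R)$ and passage to a limit, together with the circular-smoothing comparison $\|f_{a,b}\|_2^2\le E_\Lambda(a,b)\le e\|f_{a,b}\|_2^2$. Your direct Gibbs inequality for $\calH\le\log\inf E_\Lambda$ and your use of Fatou's lemma are only cosmetic variants. There is one sign slip in the comparison step. The factor should be $e^{+|w|^2}$, and the identity you need is $-2\Re(\bar w z)+|w|^2=|z-w|^2-|z|^2$. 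The slip appears in the same way in both lines, so your bound $E_\Lambda(a,b)\le e\|f_{a,b}\|_2^2$ is still correct.
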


\begin{lemma}[Circular smoothing]
\label{lem:circular-fock-smoothing}
For $h>0$ and every nonzero entire function $g$,
\begin{equation}
 \|g\|_2^2\le\frac1\pi\int_\C
       e^{2S_h\log|g|(z)-|z|^2}\dd(z)
       \le e^{h^2}\|g\|_2^2.
 \label{eq:smoothed-fock-norm}
\end{equation}
The inequalities hold with infinite values allowed.
\end{lemma}
\begin{proof}
The first inequality is subharmonicity. For the second, Jensen's
inequality gives
\[
 e^{2S_h\log|g|(z)-|z|^2}
 =e^{h^2}\exp S_h\log(|g|^2e^{-|\cdot|^2})(z)
 \le e^{h^2}S_h(|g|^2e^{-|\cdot|^2})(z).
\]
If $G(w)=|g(w)|^2e^{-|w|^2}$, translation invariance and Tonelli's
theorem give, with infinite values allowed,
\[
 \int_\C S_hG(z)\dd(z)
 =\frac1{2\pi}\int_0^{2\pi}\int_\C G(z+he^{it})\dd(z)\,dt
 =\int_\C G(w)\dd(w).
\]
This proves the upper bound, also for infinite norms.
\end{proof}

\begin{proof}[Proof of Theorem~\ref{thm:complete-zero-set-characterization}]
For $R\ge1$, write
\[
 E_R(a,b)=\int_{|z|<R}
 e^{2\mathcal U^{\mathrm{can}}_\Lambda(z)+2\Re(az+bz^2)}\,d\gamma_0(z).
\]
This is a continuous strictly convex function of the four real
coordinates of $(a,b)$. Indeed, a nonzero polynomial
$\Re(az+bz^2)$ cannot vanish on an open disk. Moreover, the continuity
of $\mathcal U^{\mathrm{can}}_\Lambda$ gives a constant $c>0$, independent of $R\ge1$, such that
\begin{equation}\label{dual-eq:coercive}
 E_R(a,b)\ge c\left(1+\frac{|a|^2}{2}
                 +\frac{|b+a^2/2|^2}{3}\right).
\end{equation}
To see this, bound $e^{2\mathcal U^{\mathrm{can}}_\Lambda(z)-|z|^2}$ below on $|z|\le1$ and
integrate the first three Taylor coefficients of $e^{az+bz^2}$ over
that disk. Consequently $E_R$ has a unique minimizer $(a_R,b_R)$.
Put $m_R=E_R(a_R,b_R)$.

Differentiation with respect to all four coordinates at the minimum
shows that
\begin{equation}\label{dual-eq:gibbs}
 d\mu_R(z)=m_R^{-1}\one_{\{|z|<R\}}
 e^{2\mathcal U^{\mathrm{can}}_\Lambda(z)+2\Re(a_Rz+b_Rz^2)}\,d\gamma_0(z)
\end{equation}
satisfies \eqref{eq:balanced-complex-moments}. This measure has compact support and
finite entropy. For every $\mu\in\calP$ supported in $|z|\le R$,
\begin{equation*}2\int \mathcal U^{\mathrm{can}}_\Lambda\,d\mu-\Ent(\mu\mid\gamma_0)
 =\log m_R-\Ent(\mu\mid\mu_R)\le\log m_R.
\end{equation*}
The equality follows by inserting the density in
\eqref{dual-eq:gibbs}; the integral of the harmonic polynomial vanishes
by the two moment conditions. For the inequality, write
$\rho=d\mu/d\mu_R$ and apply Jensen to $x\log x$:
$\Ent(\mu\mid\mu_R)=\int\rho\log\rho\,d\mu_R\ge0$.
The density of $\mu_R$ relative to $\gamma_0$ is bounded above and below
on its disk, so the finite-entropy assumption justifies these identities.
Equality holds for $\mu=\mu_R$.
The boundary has $\gamma_0$-measure zero. Since every compactly supported
measure is contained in a sufficiently large disk, it follows that
\begin{equation*}\calH(\Lambda)=\lim_{R\to\infty}\log m_R.
\end{equation*}
The limit exists because $m_R$ is increasing.

Suppose this limit is finite. The bound \eqref{dual-eq:coercive} makes
$(a_R,b_R)$ bounded. Choose $R_j\to\infty$ such that these pairs
converge to $(a_*,b_*)$. For each fixed $T$, continuity on the disk gives
\[
 E_T(a_*,b_*)=\lim_{j\to\infty}E_T(a_{R_j},b_{R_j})
 \le\lim_{R\to\infty}m_R.
\]
Letting $T\to\infty$ yields
$E_\Lambda(a_*,b_*)\le\lim m_R$. Conversely, for every $(a,b)$,
$m_R\le E_R(a,b)\le E_\Lambda(a,b)$. This proves
\eqref{eq:entropy-zero-duality} and attainment. Strict convexity gives
uniqueness. If instead $m_R\to\infty$, the latter inequality shows
$E_\Lambda(a,b)=\infty$ for every pair, so the identity still holds.

If $\Lambda$ is a zero set, the preliminary estimates give
\eqref{eq:quadratic-count} and a factorization \eqref{eq:allforms}.
Lemma~\ref{lem:circular-fock-smoothing} then gives a finite value of $E_\Lambda$.
Conversely, a finite value in \eqref{eq:entropy-zero-duality} supplies a pair
$(a_*,b_*)$ for which Lemma~\ref{lem:circular-fock-smoothing} gives a Fock function.
The exponential factor has no zeros, so all prescribed orders are
unchanged and there are no additional zeros.
\end{proof}

\subsection{Finite products and extremal approximation}

For $R\ge2$, define the normalized finite Blaschke product
\begin{equation*}
 \mathcal B_{\Lambda,R}(z)
 =z^{m_0}\prod_{0<|\lambda|<R}
 \left(\frac{1-z/\lambda}{1-\overline\lambda z/R^2}
 \right)^{m_\lambda},\qquad |z|<R.
 \end{equation*}
It is holomorphic in a neighborhood of $D_R$, has leading coefficient
$[z^{m_0}]\mathcal B_{\Lambda,R}=1$, and satisfies
\begin{equation}
 |\mathcal B_{\Lambda,R}(Re^{it})|=e^{J_\Lambda(R)}.
 \label{eq:finite-blaschke-boundary-modulus}
\end{equation}
Indeed, for $|z|=R$ and $|\lambda|<R$,
\[
 |1-\overline\lambda z/R^2|=|z-\lambda|/R,
 \qquad
 \left|\frac{1-z/\lambda}{1-\overline\lambda z/R^2}\right|
 =\frac R{|\lambda|}.
\]
The zero at the origin contributes $R^{m_0}$. Boundary zeros are
omitted from the product and contribute zero to $J_\Lambda(R)$,
so the identity remains valid. Set
\begin{equation*}
 \delta(R)=\max\{0,R^2/2-J_\Lambda(R)\},\qquad
 d(R)=1+\delta(R),\qquad R\ge2.
 \end{equation*}

For the same multiset, define the normalized extremal value
\begin{equation}
 M(\Lambda)=\inf\bigl\{\|f\|_2^2:
 f\in\F,\ \mathcal Z(f)=\Lambda,\ [z^{m_0}]f=1\bigr\},
 \label{eq:normalized-zero-set-energy}
\end{equation}
with $\inf\varnothing=+\infty$. When finite, this value is at least
$m_0!$ by \eqref{eq:fock-coefficient-norm}.

\Needspace{18\baselineskip}
\begin{proposition}[Finite-data criterion and extremal convergence]
\label{thm:finite-blaschke-zero-test}
Let $\Lambda$ be any locally finite multiset with finite
multiplicities, and let $p\in\{0,1,2\}$.
Choose radii $R_k\ge2k$ such that
\begin{equation}
 d(R_k)\left(\frac{k}{R_k}\right)^{p+1}\le1,
 \qquad k\ge1.
 \label{eq:finite-blaschke-radius-selection}
\end{equation}
Define
\begin{equation}
 E_{k,p}(\Lambda)=
 \min_{a_1,\ldots,a_p\in\C}\frac1\pi\int_{|z|<k}
 \left|\mathcal B_{\Lambda,R_k}(z)
       e^{\sum_{j=1}^p a_jz^j}\right|^2
  e^{-|z|^2}\,dA(z),
 \label{eq:finite-blaschke-energy}
\end{equation}
where, for $p=0$, the exponential is $1$ and no minimization is taken.
The finite-dimensional minima are attained, and
\begin{equation}
 \lim_{k\to\infty}E_{k,p}(\Lambda)=M(\Lambda)
       \quad\text{in }[0,+\infty].
 \label{eq:finite-blaschke-energy-limit}
\end{equation}
In particular,
\begin{equation*}
 \Lambda\text{ is a zero set for }\F
 \quad\Longleftrightarrow\quad
 \lim_{k\to\infty}E_{k,p}(\Lambda)<\infty.
 \end{equation*}
When $M(\Lambda)<\infty$, there is a unique normalized function
$f_\Lambda$ attaining this value. Every choice of minimizing functions
in \eqref{eq:finite-blaschke-energy} converges locally uniformly to
$f_\Lambda$ on $\C$.
The radii in \eqref{eq:finite-blaschke-radius-selection} can be
chosen under the following hypotheses:
\begin{center}
\begin{tabular}{ccl}
$p$ & Parameters & Sufficient radial hypothesis \\
\hline
$2$ & $a_1,a_2\in\C$ & none; take $R_k=2k^3$ \\[2pt]
$1$ & $a_1\in\C$ & $\displaystyle\liminf_{R\to\infty}\delta(R)/R^2=0$ \\[5pt]
$0$ & none & $\displaystyle\liminf_{R\to\infty}\delta(R)/R=0$
\end{tabular}
\end{center}
\end{proposition}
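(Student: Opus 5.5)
The plan is to compare every competitor with the finite Blaschke product through a holomorphic logarithm. Boundary control from \eqref{eq:finite-blaschke-boundary-modulus} then shows that, on $|z|<k$, everything beyond degree $p$ in that logarithm is negligible. The limit \eqref{eq:finite-blaschke-energy-limit} follows from a $\limsup$ bound by plugging in realizers and a $\liminf$ bound by normal families and Hurwitz's theorem.

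\emph{Attainment.} For fixed $k$ the integrand in \eqref{eq:finite-blaschke-energy} is $|\mathcal B_{\Lambda,R_k}|^2e^{2\Re\sum a_jz^j-|z|^2}$. Its integral over $|z|<k$ is a continuous, strictly convex function of the real coordinates of $(a_1,\dots,a_p)$, because a nonzero harmonic polynomial cannot vanish on an open set. A coercive lower bound follows exactly as in \eqref{dual-eq:coercive}: bound $|\mathcal B_{\Lambda,R_k}|^2e^{-|z|^2}$ below on a small annulus away from its zeros and read off the low Taylor coefficients of $e^{\sum a_jz^j}$. Hence the minima are attained.

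\emph{The key estimate.} Let $f$ be any normalized realizer, so $\mathcal Z(f)=\Lambda$ and $[z^{m_0}]f=1$. Put $R=R_k$. Then $f/\mathcal B_{\Lambda,R}$ is zero-free and holomorphic near $D_R$, with value $1$ at $0$. Write it as $e^{h}$ with $h(0)=0$. On $|z|=R$, the bound \eqref{eq:fock-point-evaluation} and \eqref{eq:finite-blaschke-boundary-modulus} give
\[
 \Re h\le \tfrac{R^2}{2}+\log\|f\|_2-J_\Lambda(R)\le \delta(R)+C_f .
\]
By the Borel--Carathéodory coefficient bound, the Taylor coefficients of $h$ satisfy $|h_j|\le 2(\delta(R)+C_f)R^{-j}$. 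Split $h=\sum_{j\le p}h_jz^j+t_R$. Since $R_k\ge 2k$, condition \eqref{eq:finite-blaschke-radius-selection} gives, for $|z|<k$,
\[
 |t_R(z)|\le C\,d(R_k)\Bigl(\frac{|z|}{R_k}\Bigr)^{p+1}\le C'\Bigl(\frac{|z|}{k}\Bigr)^{p+1}.
\]
This is bounded on $|z|<k$ and tends to $0$ pointwise as $k\to\infty$. Choosing $a_j=h_j$ gives
\[
 E_{k,p}\le\frac1\pi\int_{|z|<k}|f|^2e^{2|t_R|}e^{-|z|^2}\,dA .
\]
Dominated convergence, with dominating function $e^{2C'}|f|^2e^{-|z|^2}$, yields $\limsup_k E_{k,p}\le\|f\|_2^2$. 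Taking the infimum over $f$ gives $\limsup_k E_{k,p}\le M(\Lambda)$.

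\emph{Lower bound and convergence.} Assume $\liminf_k E_{k,p}<\infty$ along a subsequence, and let $g_k=\mathcal B_{\Lambda,R_k}e^{\sum a_jz^j}$ be the minimizers. Subharmonic mean-value estimates turn the uniform bound on $\int_{|z|<k}|g_k|^2e^{-|z|^2}$ into local uniform bounds, so a further subsequence converges locally uniformly to some entire $g$. Fatou's lemma gives $\|g\|_2^2\le\liminf_k E_{k,p}$. The normalization $[z^{m_0}]g_k=1$ passes to the limit, so $g\ne0$. The zeros of $g_k$ in $|z|<R_k$ are exactly $\Lambda\cap\{|z|<R_k\}$, so Hurwitz's theorem gives $\mathcal Z(g)=\Lambda$ with the correct multiplicities. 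Hence $M(\Lambda)\le\|g\|_2^2\le\liminf_k E_{k,p}$, which proves \eqref{eq:finite-blaschke-energy-limit} and the zero-set equivalence.

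\emph{Uniqueness and full convergence.} By \eqref{eq:allforms}, the normalized realizers are $f_0e^{az+bz^2}$ with $f_0$ one fixed normalized realizer. The map $(a,b)\mapsto\|f_0e^{az+bz^2}\|_2^2$ is strictly convex. It is also coercive by the argument of \eqref{dual-eq:coercive}, and the compactness argument above also shows the infimum is attained. So $f_\Lambda$ exists and is unique. Every subsequential limit of the minimizers $g_k$ is a normalized realizer of norm $M(\Lambda)$, hence equals $f_\Lambda$, and therefore the whole sequence converges locally uniformly to $f_\Lambda$.

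\emph{Choice of radii.}
\begin{itemize}
 \item For $p=2$: $J_\Lambda(R)\ge0$ for $R\ge1$, so $d(R)\le1+R^2/2$. With $R_k=2k^3$ this gives $d(R_k)(k/R_k)^3\le(1+2k^6)/(8k^6)\le1$.
 \item For $p=1$: choose $R_k\ge2k$ along a sequence with $\delta(R_k)/R_k^2\to0$ fast enough that $d(R_k)k^2/R_k^2\le1$. This is possible since $k$ is fixed while $R_k$ may be taken large.
 \item For $p=0$: choose $R_k\ge2k$ along a sequence with $\delta(R_k)/R_k\to0$, so that $d(R_k)k/R_k\le1$.
\end{itemize}

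I expect the main obstacle to be the coefficient estimate for $h$. Borel--Carathéodory needs an upper bound on $\Re h$ only on $|z|=R$, and one must check that the boundary zeros omitted from $\mathcal B_{\Lambda,R}$, together with the possibly negative contribution of $\delta$, are handled by passing to $R\pm\epsilon$. If that fails, I would apply the argument on a slightly smaller circle instead.
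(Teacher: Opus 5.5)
Your proposal is correct and follows essentially the same route as the paper. The upper limit comes from the logarithm of $f/\mathcal B_{\Lambda,R_k}$, the Borel--Carath\'eodory coefficient bound and truncation at degree $p$, and the lower limit from normal families, the coefficient normalization, Hurwitz and Fatou. The only differences are cosmetic: your attainment step uses an annulus instead of weighted Parseval on the disk, and your uniqueness step uses Hadamard factorization plus strict convexity in $(a,b)$, whereas the paper applies Cauchy--Schwarz to $f_2e^{Q/2}$, which is the same midpoint argument. Your worry about boundary zeros is resolved as in the paper: bound $\log|f/\mathcal B_{\Lambda,R}|$ on the open disk by the maximum principle, apply the coefficient estimate on circles $|z|=t<R$, and let $t\uparrow R$.
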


Thus the limit does not depend on the admissible radii or on the
allowed value of $p$. One may equivalently require
$\liminf_k E_{k,p}(\Lambda)<\infty$ or
$\sup_k E_{k,p}(\Lambda)<\infty$. The proposition gives a convergent
extremal approximation; it does not assert monotonicity, a convergence
rate, or a finite stopping rule.

\begin{proof}
Since $J_\Lambda(R)\ge0$ for $R\ge2$, we have $d(R)\le R^2$.
For $p=2$ and $R_k=2k^3$, this gives
$d(R_k)(k/R_k)^3\le k^3/R_k=1/2$, as required.
For $p=1$, the radial hypothesis implies
$\liminf d(R)/R^2=0$, and we may choose $R_k\ge2k$ so that
$d(R_k)/R_k^2\le k^{-2}$.  For $p=0$, we choose
$d(R_k)/R_k\le k^{-1}$.  In all three cases the radii may be taken to increase.

For attainment of the minimum, fix $k$ and write
$\mathcal B_{\Lambda,R_k}(z)=z^{m_0}(1+\beta_1z+\beta_2z^2+\cdots)$.
When $p=2$, the coefficients of degrees $m_0$, $m_0+1$, and
$m_0+2$ in the holomorphic function are
\[
 1,\qquad a_1+\beta_1,\qquad
 a_2+a_1^2/2+\beta_1a_1+\beta_2.
\]
For a holomorphic function $G(z)=\sum_{j\ge0}c_jz^j$ on a
neighborhood of $D_k$, Parseval's identity and monotone convergence give
\[
 \frac1\pi\int_{|z|<k}|G(z)|^2e^{-|z|^2}\,dA(z)
 =\sum_{j\ge0}|c_j|^2 M_j(k),
 \qquad M_j(k)=2\int_0^k r^{2j+1}e^{-r^2}\,dr>0.
\]
An energy bound therefore bounds $a_1$ and then $a_2$. The integral
is continuous in the parameters, since the integrands converge uniformly
on $D_k$ when the parameters converge. Its sublevel sets are compact,
so the minimum is attained. For $p=1$, use the first two coefficients;
for $p=0$, there is nothing to minimize.

Suppose $f\in\F$ has zero multiset $\Lambda$, and normalize it by
$[z^{m_0}]f=1$. After cancellation, $f/\mathcal B_{\Lambda,R}$ is
holomorphic near $D_R$, nonvanishing in its interior, and equal to
one at zero. Let $h_R$ be its logarithm in $|z|<R$, with
$h_R(0)=0$. Equations \eqref{eq:fock-point-evaluation} and
\eqref{eq:finite-blaschke-boundary-modulus} give
\[
 \left|\frac{f(z)}{\mathcal B_{\Lambda,R}(z)}\right|
 \le\exp\!\left(C_f+R^2/2-J_\Lambda(R)
                       \right)
 \le e^{C'_f d(R)},\qquad |z|=R.
\]
The maximum principle gives $\Re h_R\le C'_f d(R)$ in the disk.
Boundary zeros cause no difficulty: the quotient is holomorphic there,
and its logarithm is used only in the interior.
For $t<R$, the nonnegative harmonic function $C'_f d(R)-\Re h_R$
has mean $C'_f d(R)$. Each Fourier coefficient is bounded by this
mean, while its $j$th coefficient is $-t^j[z^j]h_R/2$.
Letting $t\uparrow R$ gives
\[
 |[z^j]h_R|\le\frac{2C'_f d(R)}{R^j},\qquad j\ge1.
\]
Let $T_p h_R$ denote the Taylor polynomial of degree $p$, whose
constant term is zero. For $|z|\le k$ and $R=R_k$, we have
\begin{align}
 |h_R(z)-T_p h_R(z)|
 &\le2C'_f d(R)
       \frac{(|z|/R)^{p+1}}{1-|z|/R}\notag\\
 &\le4C'_f\left(\frac{|z|}{k}\right)^{p+1}.
 \label{eq:finite-blaschke-log-tail}
\end{align}
The coefficients of $T_p h_{R_k}$ give the admissible function
\[
 F_k=\mathcal B_{\Lambda,R_k}e^{T_p h_{R_k}}
     =f\,e^{-(h_{R_k}-T_p h_{R_k})}\quad\text{on }D_k.
\]
On each fixed compact set, \eqref{eq:finite-blaschke-log-tail}
tends to zero, so $F_k\to f$ locally uniformly. On the whole disk
$D_k$, the same estimate gives $|F_k|^2\le e^{8C'_f}|f|^2$.
Extend $|F_k|^2\boldsymbol1_{D_k}$ by zero outside $D_k$.
These functions converge pointwise to $|f|^2$ and are dominated
by the integrable function $e^{8C'_f}|f|^2$ with respect to
$\pi^{-1}e^{-|z|^2}dA$. Dominated convergence yields
\[
 \lim_{k\to\infty}\frac1\pi\int_{D_k}
              |F_k(z)|^2e^{-|z|^2}\,dA(z)=\|f\|_2^2.
\]
Since $F_k$ is admissible for the finite minimum,
\begin{equation}
 \limsup_{k\to\infty}E_{k,p}(\Lambda)\le M(\Lambda)
       \quad\text{if }M(\Lambda)<\infty.
 \label{eq:finite-energy-upper-limit}
\end{equation}
The domination also gives $\sup_k E_{k,p}(\Lambda)<\infty$.

Conversely, suppose $L=\liminf_k E_{k,p}(\Lambda)<\infty$.
Choose minimizing functions $G_j$ at indices $k_j\to\infty$
such that $E_{k_j,p}(\Lambda)\to L$.
For $k_j>M+1$ and $|z|\le M$, the mean-value inequality gives
\[
 |G_j(z)|^2\le\frac1\pi\int_{D(z,1)}|G_j(w)|^2\,dA(w)
 \le e^{(M+1)^2} E_{k_j,p}(\Lambda).
\]
Montel's theorem and a diagonal argument now give a subsequence
converging locally uniformly on $\C$ to an entire function $f$.
Cauchy's formula preserves the coefficients of degrees $0,\ldots,m_0$,
which are $0,\ldots,0,1$. Hence $f\ne0$.

Hurwitz's theorem excludes zeros outside $\Lambda$.
For $\lambda\in\Lambda$, choose $\rho>0$ so that
$|z-\lambda|\le\rho$ contains no other point of its support.
For large $j$, the functions $H_j(z)=G_j(z)/(z-\lambda)^{m_\lambda}$
are holomorphic and nonvanishing there, and
\[
 H_j(z)=\frac1{2\pi i}\int_{|\zeta-\lambda|=\rho}
 \frac{G_j(\zeta)}{(\zeta-\lambda)^{m_\lambda}(\zeta-z)}
 \,d\zeta.
\]
Consequently $H_j$ converges locally uniformly to a holomorphic
function $H$ with $f(z)=(z-\lambda)^{m_\lambda}H(z)$.
Since $f\not\equiv0$, also $H\not\equiv0$, and Hurwitz's
theorem gives $H(\lambda)\ne0$. Thus
$\mathcal Z(f)=\Lambda$, with the prescribed multiplicities.
Finally, for every fixed $M$,
\[
 \frac1\pi\int_{|z|<M}|f(z)|^2e^{-|z|^2}\,dA(z)
 =\lim_{j\to\infty}\frac1\pi
       \int_{|z|<M}|G_j(z)|^2e^{-|z|^2}\,dA(z)
 \le L.
\]
Letting the disk radius $M\to\infty$ gives
$\|f\|_2^2\le L$, and therefore
\[
 M(\Lambda)\le\liminf_{k\to\infty}E_{k,p}(\Lambda).
\]
Together with \eqref{eq:finite-energy-upper-limit}, this proves
\eqref{eq:finite-blaschke-energy-limit} when $M(\Lambda)<\infty$.
If $M(\Lambda)=+\infty$, the same compactness argument excludes
a finite lower limit, so $E_{k,p}(\Lambda)\to+\infty$.

When $M(\Lambda)<\infty$, the function obtained from a minimizing
subsequence satisfies $\|f\|_2^2=M(\Lambda)$, proving attainment.
Suppose $f_1$ and $f_2$ are two normalized minimizers. After canceling
their common zeros, $f_1/f_2$ is an entire nonvanishing function with
value one at zero. It has an entire logarithm $Q$, chosen so that
$Q(0)=0$. The function $g=f_2e^{Q/2}$ has zero multiset $\Lambda$
and leading coefficient one. Moreover,
\[
 M(\Lambda)\le\|g\|_2^2
 =\frac1\pi\int_\C |f_1(z)||f_2(z)|e^{-|z|^2}\,dA(z)
 \le\|f_1\|_2\|f_2\|_2=M(\Lambda).
\]
Equality in Cauchy--Schwarz implies $|f_1|=|f_2|$ almost everywhere,
and hence everywhere by continuity. Their entire quotient has constant
modulus one and equals one at zero. Thus $f_1=f_2$.

Finally, let $G_k$ be any choice of finite minimizing functions.
The bounded energies make this sequence normal on every fixed disk.
Every subsequential local limit has the prescribed zeros and leading
coefficient, by the argument above, and its squared norm is at most
$\lim_k E_{k,p}(\Lambda)=M(\Lambda)$. Uniqueness identifies every
such limit with $f_\Lambda$. If local uniform convergence failed on a
compact set, a subsequence staying a fixed positive distance from
$f_\Lambda$ there would have a further subsequence converging to
$f_\Lambda$, a contradiction. This proves convergence of the entire
sequence.
\end{proof}

\begin{remark}\label{dual-rem:smoothed-extremum}
The minimum in \eqref{eq:entropy-zero-duality} is a smoothed norm extremum.
It need not equal the least squared Fock norm of a function with these
zeros and leading coefficient one. With $M(\Lambda)$ as in \eqref{eq:normalized-zero-set-energy}, Lemma~\ref{lem:circular-fock-smoothing} gives
\[
 M(\Lambda)\le e^{\calH(\Lambda)}\le eM(\Lambda).
\]
The compact-disk minimizers in the proof converge to the unique finite
minimizer, when it exists: every convergent subsequence has that limit,
and \eqref{dual-eq:coercive} gives boundedness.
\end{remark}

Two radial conditions simplify Proposition~\ref{thm:finite-blaschke-zero-test}.
If $n_\Lambda(r)/r^2\to1$, then
\[
 J_\Lambda(r)-\frac{r^2}{2}
 =J_\Lambda(r_0)-\frac{r_0^2}{2}
   +\int_{r_0}^r\frac{n_\Lambda(t)-t^2}{t}\,dt=o(r^2).
\]
Indeed, for each $\varepsilon>0$ the part of the integral beyond
a sufficiently large fixed radius has modulus at most
$\varepsilon r^2/2$, and the remaining part is constant.
The proposition then applies with $p=1$, with no angular hypothesis.
Under the stronger assumption
\[
 \liminf_{R\to\infty}
       \frac{(R^2/2-J_\Lambda(R))_+}{R}=0,
\]
it applies with $p=0$. In particular, $D_\Lambda(R)=O(\log R)$
suffices; only a suitable sequence of radii is needed, however,
and the deficit may be larger elsewhere. Finite modifications
preserve both radial hypotheses, since they change
$J_\Lambda(R)$ by $q\log R+O(1)$. In applying the proposition after
a modification, one forms the energy from the modified multiset.

Even when $p=0$, the products retain the full angular distribution.
Their normalization does not imply convergence of unregularized
reciprocal sums. Nor can radial counts and finitely many angular
moments replace the full products; see
Theorem~\ref{thm:finite-angular-data-insufficient}.
The exact-divisor normalization in the reconstruction problem is
essential; replacing it by vanishing constraints alone would allow
additional zeros.

\section{Finite modifications and exact stability}
\label{sec:finite-stability}

The complete criterion concerns prescribed multiplicities. In this
section we determine how finite changes enter it and characterize the
multisets for which every such change is allowed.

\subsection{Finite additions and deletions}
\label{sec:finite-modifications}

Finite additions and deletions enter the criteria through their net
multiplicity.

\begin{lemma}[Finite modifications]\label{lem:finite-modifications}
\label{lem:finite-change-weight}
Let $\Xi=\Lambda-\nu_-+\nu_+$ be a finite modification of net
multiplicity $q$. Then
\begin{equation*}
 \begin{gathered}
 \Xi\text{ is a zero set for }\F
 \quad\Longleftrightarrow\\
 \exists f\in \mathcal O(\C)\setminus\{0\}:\quad\mathcal Z(f)=\Lambda,
 \quad\int_\C |f(z)|^2(1+|z|^2)^q e^{-|z|^2}\,dA(z)<\infty.
 \end{gathered}
 \end{equation*}
In particular, two finite modifications of the same multiset
with the same net multiplicity are Fock zero sets simultaneously.
\end{lemma}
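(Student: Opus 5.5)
The plan is to pass between $\Lambda$ and $\Xi$ by multiplying or dividing by the polynomials $q_{\nu_\pm}$, and to compare weights using the elementary asymptotic $|q_\nu(z)|^2\asymp(1+|z|^2)^{|\nu|}$ for $|z|$ large. Since $\nu_\pm$ are finite, choose $R_0$ so that all their support points lie in $D_{R_0/2}$. Then for $|z|\ge R_0$ we have $c(1+|z|^2)^{|\nu|}\le|q_\nu(z)|^2\le C(1+|z|^2)^{|\nu|}$ for $\nu=\nu_\pm$.

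For ($\Leftarrow$), let $f$ be entire with $\mathcal Z(f)=\Lambda$ and finite weighted integral. Put $g=f\,q_{\nu_+}/q_{\nu_-}$. Because $\nu_-\le\Lambda$, the division removes only zeros of $f$, so $g$ is entire with $\mathcal Z(g)=\Lambda-\nu_-+\nu_+=\Xi$. On $|z|\ge R_0$,
\[
 |g|^2\le C'|f|^2(1+|z|^2)^{|\nu_+|-|\nu_-|}=C'|f|^2(1+|z|^2)^q .
\]
On $D_{R_0}$, $g$ is continuous, so the integral there is finite. Hence $g\in\F$.

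For ($\Rightarrow$), let $g\in\F$ with $\mathcal Z(g)=\Xi$. Since $\nu_+\le\Xi$, the function $f=g\,q_{\nu_-}/q_{\nu_+}$ is entire with $\mathcal Z(f)=\Lambda$. On $|z|\ge R_0$ we get $|f|^2(1+|z|^2)^q\le C''|g|^2$, and the compact part is again finite. Multiplicities need some care: the net multiset identity $\Xi-\nu_++\nu_-=\Lambda$ is exactly what guarantees that the zero divisors match with multiplicity. This matters even when $\nu_-$ and $\nu_+$ share points; the identity as multisets handles cancellation there.

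The ``in particular'' clause follows at once, because the right-hand condition depends only on $\Lambda$ and $q$. The only step needing attention is the zero-bookkeeping, and it is routine; the argument has no real analytic obstacle. When $q<0$ the weight $(1+|z|^2)^q$ is a decaying factor, and the same two-sided comparison applies unchanged.
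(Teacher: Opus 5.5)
Your proposal is correct and follows essentially the same route as the paper: multiply or divide by $q_{\nu_\pm}$, which is legitimate because $\nu_-\le\Lambda$ and $\Xi\ge\nu_+$. Then compare $|g|^2$ with $|f|^2(1+|z|^2)^q$ outside a fixed disk (the paper does this via $q_{\nu_+}/q_{\nu_-}=z^q(1+O(|z|^{-1}))$) and note that both integrands are integrable on that disk. The only cosmetic point is to take $R_0\ge1$, so that $|z|^2\asymp1+|z|^2$ on $|z|\ge R_0$ even when $\nu_\pm$ are supported near the origin.
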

\begin{proof}
If $\mathcal Z(f)=\Lambda$, then
$g=fq_{\nu_+}/q_{\nu_-}$ is entire because $\nu_-\le\Lambda$,
and $\mathcal Z(g)=\Xi$. Conversely, $\Xi\ge\nu_+$,
so $gq_{\nu_-}/q_{\nu_+}$ is entire and recovers $f$.
This is a bijection, including when the added and deleted multisets
overlap. The two polynomials are monic, and hence
\[
 \frac{q_{\nu_+}(z)}{q_{\nu_-}(z)}
 =z^q(1+O(|z|^{-1}))\qquad (|z|\to\infty).
\]
Hence $|g(z)|^2\asymp |f(z)|^2(1+|z|^2)^q$ outside a fixed
disk. Both integrands are integrable on the disk, so their integrals
are finite simultaneously. The condition depends on the finite changes
only through $q$.
\end{proof}

For all sufficiently large radii, a finite modification gives
\begin{equation}
 D_\Xi(r)=D_\Lambda(r)+q\log r+O(1).
 \label{eq:finite-change-jensen}
\end{equation}
Indeed, for a finite multiset $\nu$ and $r$ beyond its support,
$J_\nu(r)=|\nu|\log r-\sum_{w\ne0}\nu(\{w\})\log|w|$.
Subtracting this identity for $\nu_-$ from the one for $\nu_+$
proves \eqref{eq:finite-change-jensen}, with an error that is constant
for all sufficiently large $r$.
A criterion expressed in terms of $D_\Lambda$ therefore acquires
a factor $r^{2q}$. A criterion expressed in terms of $D_\Xi$
already includes the finite change.

The following condition ensures that a multiset is either a zero set
or a uniqueness multiset.

\begin{corollary}[A Jensen condition]
\label{cor:jensen-subsequence-dichotomy}
Let $\Lambda$ be a locally finite multiset such that
\begin{equation*}
 \liminf_{r\to\infty}
 \frac{(r^2/2-J_\Lambda(r))_+}{\log r}<\infty.
 \end{equation*}
Then
\[
 \Lambda\text{ is a zero set for }\F
 \quad\Longleftrightarrow\quad\mathcal I(\Lambda)\ne\{0\}.
\]
The hypothesis and the equivalence hold after any finite modification.
\end{corollary}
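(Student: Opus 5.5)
The forward implication is trivial: a Fock function with zero multiset exactly $\Lambda$ lies in $\mathcal I(\Lambda)$. For the converse I would take a nonzero $g\in\mathcal I(\Lambda)$ and write its divisor as $\mathcal Z(g)=\Lambda+\Omega$ with $\Omega\ge0$. The aim is to show that $\Omega$ is finite. Once that is known, $f=g/q_\Omega$ is entire with $\mathcal Z(f)=\Lambda$, and $|f|\le|g|$ outside a large disk, so $f\in\F$.

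\textbf{Finiteness of $\Omega$.} I would use Jensen's formula together with \eqref{eq:fock-point-evaluation} applied to $g$. This gives, for $r\ge1$,
\[
 J_\Lambda(r)+J_\Omega(r)=J_{\mathcal Z(g)}(r)\le\frac{r^2}{2}+C_g ,
\]
where the origin is handled by the normalization already used in the preliminaries. Rearranging,
\[
 J_\Omega(r)\le \Bigl(\tfrac{r^2}{2}-J_\Lambda(r)\Bigr)_+ +C_g .
\]
By hypothesis there are radii $r_k\to\infty$ along which the right-hand side is at most $A\log r_k$. Now suppose $\Omega$ contained $N$ points, counted with multiplicity. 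Then for large $r$,
\[
 J_\Omega(r)\ge N\log r-O_N(1),
\]
so $N\le A$. Hence $|\Omega|\le\lfloor A\rfloor$, and in particular $\Omega$ is finite. The only care needed is that the $O(1)$ terms coming from finitely many chosen points of $\Omega$ are fixed before letting $r_k\to\infty$.

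\textbf{Stability under finite modification.} By \eqref{eq:finite-change-jensen}, a finite modification of net multiplicity $q$ changes $r^2/2-J$ by $-q\log r+O(1)$. Since $(x+y)_+\le x_++|y|$, the $\liminf$ of the ratio stays finite. The equivalence for the modified multiset then follows from the argument above applied to it.

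The main point to check is the bound $J_\Omega\le$ deficit $+\,C$. In particular, one must confirm that $J_\Lambda$ and $J_\Omega$ add exactly and that the leading coefficient normalization at $0$ is harmless. No subtle obstacle is expected beyond this bookkeeping.
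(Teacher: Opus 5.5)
Your proposal is correct and follows essentially the same route as the paper. Jensen's formula with the pointwise Fock bound gives $J_\Omega(r)\le(r^2/2-J_\Lambda(r))_++C_g$, and comparing this with $J_\Omega(r)\ge N\log r-O_N(1)$ along the good radii shows that $\Omega$ is finite. Division by $q_\Omega$ then yields the exact realizer, and \eqref{eq:finite-change-jensen} together with $(x+y)_+\le x_++|y|$ transfers the hypothesis to finite modifications. Your explicit bound $|\Omega|\le\lfloor A\rfloor$ is simply a quantitative form of the paper's contradiction argument.
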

\begin{proof}
Let $0\ne f\in\mathcal I(\Lambda)$ and write
$\mathcal Z(f)=\Lambda+\Omega$. Jensen's formula and
\eqref{eq:fock-point-evaluation} give, for $r\ge2$,
\[
 J_\Omega(r)\le (r^2/2-J_\Lambda(r))_+
                  +C_f.
\]
The right side is $O(\log r)$ along an unbounded sequence.
If $\Omega$ were infinite, any $N$ of its zeros would give
$J_\Omega(r)\ge N\log r-O_N(1)$ for large $r$. Letting $N$
increase would force $J_\Omega(r)/\log r\to\infty$, a contradiction.
Thus $\Omega$ is finite. Division by $q_\Omega$ gives an entire
function $g$ with zero multiset $\Lambda$. Since
$|q_\Omega(z)|\ge c|z|^{|\Omega|}$ outside a fixed disk and $g$
is bounded on that disk, $g\in\F$.
For a finite modification of net multiplicity $q$,
\eqref{eq:finite-change-jensen} gives
\[
 (r^2/2-J_\Xi(r))_+
 \le (r^2/2-J_\Lambda(r))_+ +|q|\log r+C
\]
for all sufficiently large $r$. Hence the hypothesis is preserved,
and the equivalence just proved applies to $\Xi$.
\end{proof}

\subsection{Stability under every finite modification}

\begin{definition}\label{dual-def:stable}
The multiset $\Lambda$ is \emph{stable under finite modifications} if
$\Lambda-\nu_-+\nu_+$ is a zero set for $\F$ for every pair of finite
multisets $\nu_-,\nu_+$ with $\nu_-\le\Lambda$. The added and removed
multiplicities are prescribed.
\end{definition}

\begin{proposition}\label{prop:finite-modification-moments}
A locally finite multiset $\Lambda$ is stable under finite modifications
if and only if, for every integer $q\ge0$, there is an entire function
$f_q$ such that
\begin{equation}\label{dual-eq:moments}
 \mathcal Z(f_q)=\Lambda,\qquad f_q\in\F,\qquad z^qf_q\in\F.
\end{equation}
The function $f_q$ is allowed to depend on $q$.
\end{proposition}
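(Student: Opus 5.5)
The plan is to prove both directions by reducing to Lemma~\ref{lem:finite-modifications}, which turns a finite modification of net multiplicity $q$ into a weighted integrability condition $\int|f|^2(1+|z|^2)^q e^{-|z|^2}\,dA<\infty$ for some entire $f$ with $\mathcal Z(f)=\Lambda$.

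\emph{Necessity.} Assume $\Lambda$ is stable and fix $q\ge0$. Take $\nu_-=0$ and let $\nu_+$ be $q$ points; for instance $\nu_+=q\cdot\{0\}$, the origin with multiplicity $q$. The modification $\Xi=\Lambda+\nu_+$ has net multiplicity $q$ and is a zero set, so Lemma~\ref{lem:finite-modifications} supplies an entire $f_q$ with $\mathcal Z(f_q)=\Lambda$ and $\int|f_q|^2(1+|z|^2)^q e^{-|z|^2}\,dA<\infty$. Because $(1+|z|^2)^q\ge1$ and $(1+|z|^2)^q\ge|z|^{2q}$, this gives $f_q\in\F$ and $z^qf_q\in\F$, which is \eqref{dual-eq:moments}. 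Alternatively, one can take $f_q=g/z^q$ with $g$ a realizer of $\Lambda+q\{0\}$ directly.

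\emph{Sufficiency.} Let $\Xi=\Lambda-\nu_-+\nu_+$ be an arbitrary finite modification with net multiplicity $q\in\Z$. Choose $q'=\max\{q,0\}$ and take $f=f_{q'}$. Then $|f|^2(1+|z|^2)^{q}\le|f|^2(1+|z|^2)^{q'}\le 2^{q'}\bigl(|f|^2+|z|^{2q'}|f|^2\bigr)$, using $(1+|z|^2)^{q'}\le 2^{q'}\max\{1,|z|^{2q'}\}$. Integrating against $e^{-|z|^2}$ gives a finite weighted integral, since both $f$ and $z^{q'}f$ lie in $\F$. When $q<0$, the weight is simply at most $1$ and $f_0\in\F$ already suffices. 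Lemma~\ref{lem:finite-modifications} then shows that $\Xi$ is a zero set. Since $\Xi$ was arbitrary, $\Lambda$ is stable.

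No step is a real obstacle. The only care needed is the bookkeeping with the weight $(1+|z|^2)^q$ for negative $q$, and checking that the lemma's equivalence applies to additions at points that may already lie in $\Lambda$. The lemma explicitly covers overlapping added and deleted multisets, so the choice $\nu_+=q\{0\}$ is legitimate even if $0\in\Lambda$.
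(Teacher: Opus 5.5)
Your proof is correct and is essentially the paper's argument. Necessity is the same $f_q=g_q/z^q$ with $g_q$ realizing $\Lambda+q\delta_0$. Sufficiency is the same multiplication by $q_{\nu_+}/q_{\nu_-}$, which you reach through Lemma~\ref{lem:finite-modifications} with exponent $\max\{q,0\}$, whereas the paper bounds $Af_q/B$ directly using $q=\deg A$; the two bookkeepings are equivalent.
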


\begin{proof}
Suppose $\Lambda$ is stable. Choose $g_q\in\F$ with
$\mathcal Z(g_q)=\Lambda+q\delta_0$ and set $f_q=g_q/z^q$. The singularity at zero
is removable. The quotient is in $\F$, since division by $z^q$ is
bounded outside the unit disk and the quotient is entire on that disk.
This proves \eqref{dual-eq:moments}.

Conversely, let $A$ and $B$ be the monic polynomials corresponding to
$\nu_+$ and $\nu_-$. Choose $q=\deg A$. The function $Af_q/B$ is entire
and has precisely the modified multiset of zeros. For large $|z|$,
\[
 |A(z)f_q(z)/B(z)|\le C(1+|z|)^q|f_q(z)|.
\]
Its Fock norm is finite by \eqref{dual-eq:moments}; the integral over a compact
disk is finite by holomorphy. This proves stability.
\end{proof}

\subsection{Balanced pointwise inequalities}

Let $\calB$ consist of the finitely supported probability measures
\begin{equation*}\mu=\sum_{\ell=1}^N t_\ell\delta_{z_\ell},\qquad
 t_\ell\ge0,\quad \sum_\ell t_\ell=1,\quad
 \sum_\ell t_\ell z_\ell=\sum_\ell t_\ell z_\ell^2=0.
\end{equation*}
For a continuous real function $W$ on $\C$, write
\[
 \calT(W)=\sup_{\mu\in\calB}\int W\,d\mu.
\]

\begin{proposition}\label{prop:balanced-five-points}
For a continuous real function $W$ and a real number $A$, the following
are equivalent:
\begin{enumerate}[label=\textup{(\roman*)},leftmargin=2.2em]
 \item $\calT(W)\le A$;
 \item there are $a,b\in\C$ such that
 \begin{equation}\label{dual-eq:pointwise-dual}
 W(z)+\Re(az+bz^2)\le A\qquad(z\in\C).
 \end{equation}
\end{enumerate}
In the definition of $\calT(W)$, it is enough to use measures with at
most five support points.
\end{proposition}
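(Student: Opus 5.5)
The implication (ii)$\Rightarrow$(i) is immediate. For $\mu\in\calB$, the moment conditions give $\int\Re(az+bz^2)\,d\mu=\Re\bigl(a\int z\,d\mu+b\int z^2\,d\mu\bigr)=0$. Integrating \eqref{dual-eq:pointwise-dual} against the probability measure $\mu$ then yields $\int W\,d\mu\le A$.

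For (i)$\Rightarrow$(ii) I will use finite-dimensional convex duality. Let $\phi(z)=(\Re z,\Im z,\Re z^2,\Im z^2)\in\mathbb R^4$ and let $C\subset\mathbb R^4\times\mathbb R$ be the convex hull of the points $(\phi(z),W(z))$, $z\in\C$. Points of $C$ are exactly $(\int\phi\,d\mu,\int W\,d\mu)$ for finitely supported probability measures $\mu$. Define
\[
g(x)=\sup\{t:(x,t)\in C\},\qquad x\in\mathbb R^4 .
\]
The function $g$ is concave with values in $(-\infty,+\infty]$, and hypothesis (i) says exactly that $g(0)\le A$. The key geometric input is that the projection $K=\operatorname{conv}\phi(\C)$ is all of $\mathbb R^4$. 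If it were not, some nonzero linear functional $\ell$ would be bounded above on $K$. Writing $\ell(\phi(z))=\Re(\alpha z+\beta z^2)$ with $(\alpha,\beta)\neq(0,0)$, the harmonic polynomial $\Re(\alpha z+\beta z^2)$ would then be bounded above on $\C$. This is impossible for a nonconstant harmonic polynomial, for example by taking $z$ along a ray where its top-degree part is positive. Hence $g>-\infty$ everywhere and $0\in\operatorname{int}K$.

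Since $g$ is concave and $g(0)<\infty$ at an interior point, $g$ is finite on all of $\mathbb R^4$. Indeed, if $g(x_0)=+\infty$, then writing $0$ as a convex combination of $x_0$ and a point on the opposite side of $0$ would force $g(0)=+\infty$. A finite concave function on $\mathbb R^4$ has a supergradient $p$ at $0$, so $g(x)\le g(0)+\langle p,x\rangle\le A+\langle p,x\rangle$. Applying this at $x=\phi(z)$ gives $W(z)-\langle p,\phi(z)\rangle\le A$. Writing $-\langle p,\phi(z)\rangle=\Re(az+bz^2)$ for suitable $a,b\in\C$ gives \eqref{dual-eq:pointwise-dual}. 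The main point needing care is the justification that $0$ is interior, which is where the unboundedness of nonzero harmonic quadratics is used; the rest is standard convex analysis.

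For the five-point reduction, fix $\mu=\sum_{\ell=1}^N t_\ell\delta_{z_\ell}\in\calB$ and keep the support points fixed. The admissible weight vectors $t\in\mathbb R^N$ form a polytope, cut out by $t\ge0$ together with five real linear equations: total mass one and the four real moment constraints. The linear functional $t\mapsto\sum_\ell t_\ell W(z_\ell)$ attains its maximum over this polytope at a vertex. A vertex has at most five nonzero coordinates, since the equality constraints have rank at most five. Hence every value $\int W\,d\mu$ is dominated by one coming from a balanced measure with at most five atoms, and the supremum defining $\calT(W)$ is unchanged when $\calB$ is restricted to such measures.
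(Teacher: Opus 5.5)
Your proof is correct, but for (i)$\Rightarrow$(ii) it takes a different route from the paper's.

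The paper works with finite subsystems:
\begin{enumerate}
\item It adjoins the five guard points $\xi_j=e^{2\pi ij/5}$ and uses discrete Fourier inversion ($\sum_j h(\xi_j)=0$, together with the explicit formulas for $a$ and $b$) to show that the guard inequalities confine $(a,b)$ to a compact set.
\item It proves that each finite system $h(z_i)\le A-W(z_i)$ is feasible by separating $(A-W(z_i))_i$ from a closed cone. This is a Farkas-type argument: the separating vector, after normalization, is a balanced measure contradicting (i).
\item It passes to all of $\C$ by the finite intersection property.
\end{enumerate}
You instead form the value function $g$ on the moment space $\mathbb R^4$ and read off $(a,b)$ from a supergradient at $0$.

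Both arguments rest on the same fact: $0$ is an interior point of $\operatorname{conv}\phi(\C)$. The paper exhibits this concretely through the balanced regular pentagon, since the inversion formulas show that no nonzero $h$ is $\le0$ at all five $\xi_j$. That construction also gives explicit bounds on the multipliers in terms of $A-W(\xi_j)$. You derive the interiority from the unboundedness above of every nonzero $\Re(\alpha z+\beta z^2)$. You then let the existence of supergradients for finite concave functions on $\mathbb R^4$ absorb the compactness step. Your version is shorter and avoids the limiting argument over infinitely many constraints. The paper's version is more elementary and quantitative. Incidentally, neither argument uses the continuity of $W$.

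For the five-point reduction, your vertex (basic feasible solution) argument is the same Carath\'eodory-type elimination the paper performs. The paper moves the weights along a linear dependence of the vectors $(1,\Re z_i,\Im z_i,\Re z_i^2,\Im z_i^2)$ until one weight vanishes.
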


\begin{proof}
Integrating \eqref{dual-eq:pointwise-dual} proves (i). For the converse, use
the five points $\xi_j=e^{2\pi ij/5}$, $0\le j<5$. For
$h(z)=\Re(az+bz^2)$, discrete Fourier inversion gives
\begin{equation}\label{dual-eq:guards}
 \sum_{j=0}^4h(\xi_j)=0,\qquad
 a=\frac25\sum_{j=0}^4h(\xi_j)\overline{\xi_j},\qquad
 b=\frac25\sum_{j=0}^4h(\xi_j)\overline{\xi_j}^{\,2}.
\end{equation}
Consequently, upper bounds on the five values $h(\xi_j)$ give lower
bounds as well and bound both coefficients.

Take any finite collection of points and adjoin these five points.
Denote the resulting list by $z_1,\ldots,z_N$. We claim that the finite
system
\[
 h(z_i)\le A-W(z_i),\qquad 1\le i\le N,
\]
is feasible. For completeness, consider the convex cone in $\mathbb R^N$
consisting of vectors $(h(z_i)+s_i)_i$, with $s_i\ge0$. This cone is
closed. Indeed, for a convergent sequence of such vectors, the five
guard coordinates bound the coefficients of $h$ by \eqref{dual-eq:guards};
a subsequence of the coefficients converges, and so do the nonnegative
slacks. If the vector $(A-W(z_i))_i$ were outside the cone, finite
dimensional separation would give a nonzero vector $(p_i)_i$ with
\[
 p_i\ge0,\qquad
 \sum_i p_i z_i=\sum_i p_i z_i^2=0,\qquad
 \sum_i p_i(A-W(z_i))<0.
\]
The first conditions follow respectively from the nonnegative slacks
and the free four real coefficients of $h$. After normalization by
$\sum_i p_i$, this contradicts (i). The finite system is therefore
feasible.

The coefficients satisfying the five guard inequalities form a compact
set, by \eqref{dual-eq:guards}. Every finite family of the remaining closed
half-space constraints has a solution in that set. The finite
intersection property now proves \eqref{dual-eq:pointwise-dual} on all of
$\C$.

It remains to reduce the size of a test. Suppose a balanced measure has
more than five positive weights. The corresponding vectors
\[
 (1,\Re z_i,\Im z_i,\Re z_i^2,\Im z_i^2)\in\mathbb R^5
\]
are linearly dependent. Choose a nonzero real dependence $(c_i)_i$.
Replacing the weights by $t_i+sc_i$ preserves all constraints. Choose
the sign so that the objective $\sum_i(t_i+sc_i)W(z_i)$ does not
decrease, and continue until one weight becomes zero. Such an endpoint
exists because $\sum_i c_i=0$ and the dependence has both signs.
Repeating reduces the support to at most five points without decreasing
the objective. This proves the assertion about the supremum.
\end{proof}

\subsection{A geometric criterion for stable zero sets}

For a multiset satisfying \eqref{eq:quadratic-count}, define
\begin{equation*}\begin{split}
 W_{\Lambda,q}(z)&=\mathcal U^{\mathrm{can}}_\Lambda(z)-\frac{|z|^2}{2}
                              +q\log(1+|z|),\\
 \calT_q(\Lambda)&=\sup_{\mu\in\calB}\int W_{\Lambda,q}\,d\mu,
 \qquad q=0,1,2,\ldots.
 \end{split}
\end{equation*}

\begin{theorem}\label{thm:finite-modification-stability}
A locally finite multiset $\Lambda$ is stable under finite modifications
if and only if \eqref{eq:quadratic-count} holds and
\begin{equation}\label{dual-eq:stable-tests}
 \calT_q(\Lambda)<\infty\qquad\text{for every integer }q\ge0.
\end{equation}
Each supremum in this criterion can be taken over balanced measures
with at most five support points.
\end{theorem}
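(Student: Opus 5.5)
The plan is to combine Proposition~\ref{prop:finite-modification-moments}, which reduces stability to the existence for each $q\ge0$ of $f_q$ with $\mathcal Z(f_q)=\Lambda$ and $z^qf_q\in\F$, with Proposition~\ref{prop:balanced-five-points}, which converts $\calT_q(\Lambda)<\infty$ into a pointwise bound by a harmonic quadratic correction. Passing between pointwise and integral bounds will use the circular smoothing of Lemma~\ref{lem:circular-fock-smoothing} together with subharmonicity. The five-point reduction is then immediate from the last assertion of Proposition~\ref{prop:balanced-five-points} applied to $W=W_{\Lambda,q}$, which is continuous because $\mathcal U^{\mathrm{can}}_\Lambda$ is continuous.

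\emph{Necessity.} Suppose $\Lambda$ is stable. Then $\Lambda$ is itself a zero set, so \eqref{eq:quadratic-count} holds by the preliminary estimates. Fix $q$ and take $f_q$ as in Proposition~\ref{prop:finite-modification-moments}. By \eqref{eq:allforms}, $f_q=Ce^{az+bz^2}P_\Lambda$. Set $g=z^{q}f_q\in\F$; after modifying near the origin we may use $(1+z)^qf_q$, which also lies in $\F$. The pointwise estimate \eqref{eq:fock-point-evaluation} applies to $g$, but we need it for the smoothed potential. Since $S_1\log|g|\le \log\sup_{D(z,1)}|g|\le \log\|g\|_2+(|z|+1)^2/2$, the $O(|z|)$ error is too large to discard directly. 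I would instead apply the point estimate to $g(z)e^{-\bar w z}$-type translates. Alternatively, and more simply, I would use the sub-mean-value property on $D(z,1)$ for $|g|^2e^{-|\cdot|^2}$ combined with the Fock translation (Weyl operator) invariance. This shows $|g(w)|^2e^{-|w|^2}\le C\|g\|_2^2$ for all $w$ in $D(z,1)$, uniformly in $z$. Averaging the logarithm over the circle of radius $1$ about $z$ then gives
\[
 S_1\log|g|(z)-S_1(|\cdot|^2/2)(z)\le C,
\]
and $S_1|\cdot|^2=|z|^2+1$. Using $S_1\log|e^{az+bz^2}|=\Re(az+bz^2)$ (harmonic) and $S_1\log|(1+\cdot)^q|\ge q\log(1+|z|)-C$ away from a compact set, we obtain $W_{\Lambda,q}+\Re(az+bz^2)\le A$ on $\C$. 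Proposition~\ref{prop:balanced-five-points} then gives $\calT_q<\infty$.

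\emph{Sufficiency.} Assume \eqref{eq:quadratic-count} and $\calT_q<\infty$ for all $q$. Fix $q$ and apply Proposition~\ref{prop:balanced-five-points} with $W=W_{\Lambda,q+3}$. This yields $a,b$ with
\[
 \mathcal U^{\mathrm{can}}_\Lambda+\Re(az+bz^2)\le A+|z|^2/2-(q+3)\log(1+|z|).
\]
Put $f_q=P_\Lambda e^{az+bz^2}$, so that $\mathcal Z(f_q)=\Lambda$. By \eqref{eq:canonical-smoothed-potential}, $\log|f_q|\le \mathcal U^{\mathrm{can}}_\Lambda+\Re(az+bz^2)$. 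Hence
\[
 |z^qf_q|^2e^{-|z|^2}\le C(1+|z|)^{-6},
\]
which is integrable, so both $f_q\in\F$ and $z^qf_q\in\F$. Proposition~\ref{prop:finite-modification-moments} then gives stability. Note that the shift from $q$ to $q+3$ is harmless since all $q$ are assumed.

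The main obstacle is the necessity direction, namely converting $L^2$ membership into the uniform pointwise bound on the smoothed potential with only an $O(1)$ loss. I would use the translation-invariant estimate $|g(w)|e^{-|w|^2/2}\le\|g\|_2$ from \eqref{eq:fock-point-evaluation}, which already holds for every $w$. Taking logarithms and applying $S_1$ directly gives
\[
 S_1\log|g|\le\log\|g\|_2+(|z|^2+1)/2,
\]
which is exactly what is needed. The only care required is replacing $z^q$ by a factor comparable to $(1+|z|)^q$ near the origin, where finite modifications are harmless.
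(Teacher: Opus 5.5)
Your proposal is correct and follows essentially the paper's proof: necessity uses the factorization $f_q=Ce^{az+bz^2}P_\Lambda$, the point estimate \eqref{eq:fock-point-evaluation}, and circular averaging, and sufficiency applies Proposition~\ref{prop:balanced-five-points} to $W_{\Lambda,q+3}$ instead of $W_{\Lambda,q+2}$, which makes no difference. The minor variations are harmless: you average $\log|z^qf_q|$ using the exact mean $S_1\log|z|=\log\max(1,|z|)$, whereas the paper first combines the bounds for $f_q$ and $z^qf_q$ into $(1+|w|)^{-q}e^{|w|^2/2}$; and the ``obstacle'' you flag is not real, since, as your last paragraph shows, applying $S_1$ to $\log|g(w)|\le\log\|g\|_2+|w|^2/2$ already gives the $O(1)$ bound, so the detour through translates and Weyl operators can be dropped.
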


\begin{proof}
Suppose first that $\Lambda$ is stable, and choose $f_q$ from
Proposition~\ref{prop:finite-modification-moments}. Applying \eqref{eq:fock-point-evaluation} to
$f_q$ and $z^qf_q$ gives
\[
 |f_q(w)|\le C_q(1+|w|)^{-q}e^{|w|^2/2},\qquad w\in\C.
\]
Factor $f_q=c_qP_\Lambda e^{a_qz+b_qz^2}$ using
\eqref{eq:allforms}. Since
$1+|z+e^{it}|\ge(1+|z|)/2$, taking the circle mean of the logarithm
gives
\[
 \mathcal U^{\mathrm{can}}_\Lambda(z)+\Re(a_qz+b_qz^2)
 \le\frac{|z|^2}{2}-q\log(1+|z|)+C'_q.
\]
The constants absorb $\log|c_q|$, $q\log2$, and $1/2$.
Integrating against a balanced measure proves \eqref{dual-eq:stable-tests}.

Conversely, fix $q\ge0$ and apply Proposition~\ref{prop:balanced-five-points} to
$W_{\Lambda,q+2}$. It gives $a,b$ and $C$ such that
\[
 \mathcal U^{\mathrm{can}}_\Lambda(z)+\Re(az+bz^2)
 \le\frac{|z|^2}{2}-(q+2)\log(1+|z|)+C.
\]
By \eqref{eq:canonical-smoothed-potential}, $f=P_\Lambda e^{az+bz^2}$ satisfies
\[
 |f(z)|\le e^C e^{|z|^2/2}(1+|z|)^{-q-2}.
\]
Both $f$ and $z^qf$ belong to $\F$, because
$\int_0^\infty r(1+r)^{-4}\,dr<\infty$. The zero multiset of $f$ is
$\Lambda$, while that of $z^qf$ is $\Lambda+q\delta_0$. Proposition~\ref{prop:finite-modification-moments} proves stability. The support
bound follows from Proposition~\ref{prop:balanced-five-points}.
\end{proof}

\begin{remark}\label{dual-rem:two-tests}
The test measures in Theorems~\ref{thm:complete-zero-set-characterization} and \ref{thm:finite-modification-stability}
serve different purposes. In the entropy criterion they are absolutely
continuous with respect to area measure. An atomic measure has infinite
relative entropy and cannot be substituted there. In the stability
criterion they test a pointwise majorant, and five atoms suffice.
The constants and the realizing functions in the stability theorem may
depend on $q$. A single function satisfying all polynomial moment
conditions is not asserted by that theorem.
\end{remark}

\subsection{Distance kernels and truncation error}

The atomic tests can be written directly in terms of the positions and
multiplicities, without the normalizing harmonic terms in
\eqref{eq:canonical-smoothed-series}. For $\mu=\sum_{\ell=1}^N t_\ell\delta_{z_\ell}\in\calB$,
put
\begin{equation*}K_\mu(\lambda)=\sum_{\ell=1}^N t_\ell \ell(z_\ell-\lambda)-\log|\lambda|,
 \qquad \lambda\ne0.
\end{equation*}

\begin{proposition}\label{prop:balanced-distance-tail}
Suppose $n_\Lambda(r)\le Cr^2$ for $r\ge1$. Then
\begin{equation}\label{dual-eq:kernel-sum}
 \int \mathcal U^{\mathrm{can}}_\Lambda\,d\mu
 =m_0\sum_\ell t_\ell \ell(z_\ell)
   +\sum_{\lambda\ne0}m_\lambda K_\mu(\lambda),
\end{equation}
where the series is absolutely convergent. Write
$B=\max_\ell|z_\ell|$ and $M_3=\sum_\ell t_\ell|z_\ell|^3$.
For $R>2B+2$,
\begin{equation}\label{dual-eq:tail-error}
 \left|\sum_{|\lambda|>R}m_\lambda K_\mu(\lambda)\right|
 \le\frac{2CM_3}{R}.
\end{equation}
\end{proposition}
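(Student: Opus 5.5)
The plan is to integrate the series \eqref{eq:canonical-smoothed-series} term by term against $\mu=\sum_\ell t_\ell\delta_{z_\ell}$. For $\lambda\ne0$, the grouped summand evaluated at $z_\ell$ and averaged with weights $t_\ell$ equals
\[
 \sum_\ell t_\ell\,\ell(z_\ell-\lambda)-\log|\lambda|
 +\Re\Bigl(\frac{1}{\lambda}\sum_\ell t_\ell z_\ell
 +\frac{1}{2\lambda^2}\sum_\ell t_\ell z_\ell^2\Bigr).
\]
The two balance conditions defining $\calB$ kill the harmonic part, leaving exactly $K_\mu(\lambda)$. The origin term gives $m_0\sum_\ell t_\ell\ell(z_\ell)$. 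Since $\mu$ is a finite combination of point evaluations and the series defining $\mathcal U^{\mathrm{can}}_\Lambda$ converges at each point (indeed locally uniformly, as recorded after \eqref{eq:canonical-tail-count}), the interchange of the finite sum and the series is automatic. This proves \eqref{dual-eq:kernel-sum} as an identity of convergent series. Absolute convergence will follow from the tail bound, because only finitely many $\lambda$ lie in $|\lambda|\le R$.

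For the tail estimate, take $|\lambda|>R>2B+2$. Then $|z_\ell-\lambda|\ge|\lambda|-B>1$, so $\ell(z_\ell-\lambda)=\log|z_\ell-\lambda|$. Using the balance conditions again, rewrite
\[
 K_\mu(\lambda)=\sum_\ell t_\ell\Bigl\{\log|1-z_\ell/\lambda|
 +\Re\bigl(z_\ell/\lambda+z_\ell^2/(2\lambda^2)\bigr)\Bigr\}
 =\sum_\ell t_\ell\log|E_2(z_\ell/\lambda)|.
\]
Here $|w_\ell|=|z_\ell/\lambda|<1/2$. The series $\log E_2(w)=-\sum_{j\ge3}w^j/j$ gives $|\log E_2(w)|\le \tfrac13|w|^3\sum_{k\ge0}|w|^k\le\tfrac23|w|^3$. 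Hence
\[
 |K_\mu(\lambda)|\le\tfrac23 M_3|\lambda|^{-3}.
\]
Summing over $|\lambda|>R$ and applying \eqref{eq:canonical-tail-count}, which gives $\sum_{|\lambda|>R}m_\lambda|\lambda|^{-3}\le 3C/R$, yields the bound $2CM_3/R$, matching \eqref{dual-eq:tail-error} exactly.

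The main point needing care is the constant. It comes from $\frac13\cdot\frac{1}{1-1/2}=\frac23$, combined with the factor $3C/R$. This bound uses the hypothesis $n_\Lambda(r)\le Cr^2$ only for $r\ge1$, which is satisfied since $R>2$. A minor check is that $R>2B+2$ guarantees both $|z_\ell/\lambda|<1/2$ and that the truncation $\ell$ coincides with $\log$ for $|\lambda|>R$. Everything else is routine.
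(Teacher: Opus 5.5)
Your proposal is correct and follows the paper's proof essentially step for step. You integrate termwise and let the two balance conditions cancel the harmonic terms. For $|\lambda|>2B+2$ the caps are inactive, and $K_\mu(\lambda)=\sum_\ell t_\ell\log|E_2(z_\ell/\lambda)|$, which is the paper's $-\Re\sum_{j\ge3}j^{-1}\lambda^{-j}\sum_\ell t_\ell z_\ell^j$. This is bounded by $\tfrac23M_3|\lambda|^{-3}$, and combined with \eqref{eq:canonical-tail-count} it gives both absolute convergence and the bound $2CM_3/R$.
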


\begin{proof}
Integrating each summand in \eqref{eq:canonical-smoothed-series}, the moment conditions
cancel $\Re(z/\lambda+z^2/(2\lambda^2))$. This gives
\eqref{dual-eq:kernel-sum} once convergence is checked. For
$|\lambda|>2B+2$, all logarithmic caps are inactive and
\[
 K_\mu(\lambda)
 =-\Re\sum_{j=3}^\infty\frac{1}{j\lambda^j}
                         \sum_\ell t_\ell z_\ell^j.
\]
Since $|z_\ell/\lambda|<1/2$,
\[
 |K_\mu(\lambda)|
 \le\sum_\ell t_\ell\sum_{j=3}^\infty
                       \frac{|z_\ell/\lambda|^j}{j}
 \le\frac{2M_3}{3|\lambda|^3}.
\]
Now \eqref{eq:canonical-tail-count} proves absolute convergence and
\eqref{dual-eq:tail-error}.
\end{proof}

Thus each test in Theorem~\ref{thm:finite-modification-stability} has the explicit form
\begin{equation*}\begin{split}
 &m_0\sum_\ell t_\ell \ell(z_\ell)
 +\sum_{\lambda\ne0}m_\lambda K_\mu(\lambda)\\
 &\hspace{1cm}-\frac12\sum_\ell t_\ell|z_\ell|^2
            +q\sum_\ell t_\ell\log(1+|z_\ell|).
 \end{split}
\end{equation*}
For each $q$, these quantities must have a common finite upper bound
as the balanced configurations of at most five points vary. If a
truncated test exceeds a proposed bound by more than
$2CM_3/R$, it disproves that bound using finitely many zeros.
The number five refers to the support of the \emph{test measure}.
Each test still reads the whole zero multiset, and the theorem does
not reduce the characterization to finitely many inequalities or to
finitely many moments of $\Lambda$.

\section{Infinite-dimensional spaces and finite insertion indices}
\label{sec:infinite-dimensional-insertion}

The finite-modification lemma shows that, once $\Lambda$ is an exact zero
multiset, the possibility of adding finitely many prescribed zeros depends
only on their total multiplicity.  This allows a zero-data invariant which is
independent of the locations of the added points.

\begin{definition}[Exact insertion index]
Let $\Lambda$ be an exact zero multiset.  Its exact insertion index is
\[
 \iota(\Lambda):=\sup\bigl\{q\in\Z_{\ge0}:\Lambda+\nu
 \text{ is an exact zero multiset for every finite effective }\nu
 \text{ with }|\nu|=q\bigr\}.
\]
By Lemma~\ref{lem:finite-modifications}, ``every'' may equivalently be
replaced by ``some''. The admissible integers form an initial segment:
dividing out any subset of finitely many added zeros preserves membership
in $\F$. The value $+\infty$ is allowed. We say that $\Lambda$ is
\emph{maximal under finite additions} if $\iota(\Lambda)=0$. This does not
assert maximality under inclusion among all exact zero multisets.
\end{definition}

We now construct infinite-dimensional zero-based subspaces with arbitrary
prescribed finite insertion index.  Put $s:=\sqrt\pi$. Recall that
\[
 \mathcal L=s(\Z+i\Z),\qquad \mathcal V:=is\Z,
\]
and let $\sigma$ be the Weierstrass sigma function associated with $\mathcal L$.
We use the standard critical-lattice estimate
\begin{equation}\label{eq:sigma-critical-line}
 c_0\,\dist(z,\mathcal L)
 \le |\sigma(z)|e^{-|z|^2/2}
 \le C_0\,\dist(z,\mathcal L),\qquad z\in\C,
\end{equation}
which follows from square-lattice quasi-periodicity; see
\cite[Chapter~5]{Zhu2012}.

\begin{lemma}[Line deletion estimate]\label{lem:line-deletion-estimate}
There exists $C>0$ such that
\[
 \frac{|\sigma(z)|e^{-|z|^2/2}}{|\sinh(sz)|}
 \le C e^{-s|\Re z|},\qquad z\in\C\setminus\mathcal V,
\]
and the quotient extends continuously across $\mathcal V$.
\end{lemma}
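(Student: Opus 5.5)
The plan is to combine the sigma-function estimate \eqref{eq:sigma-critical-line} with an elementary lower bound for $|\sinh(sz)|$ away from its zeros. The zeros of $\sinh(sz)$ are exactly $z=ik\pi/s=iks$ (since $s^2=\pi$), i.e. exactly the points of $\mathcal V$. They are simple, and $\sigma$ has simple zeros at those same lattice points. Hence $\sigma(z)/\sinh(sz)$ is entire, and the quotient in the statement is continuous on $\C$ after removing the singularities. It therefore suffices to prove the bound on $\C\setminus\mathcal V$; continuity then gives it on all of $\C$.

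I will write $z=x+iy$ and split into two regions, fixing a small constant $\delta\in(0,s/2)$.

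In the region $|x|\ge1$, I will use $|\sinh(sz)|\ge|\sinh(sx)|\ge\tfrac14 e^{s|x|}$; the first inequality follows from $|\sinh(sz)|^2=\sinh^2(sx)+\sin^2(sy)$. Together with the upper half of \eqref{eq:sigma-critical-line}, the quotient is then at most $4C_0\dist(z,\mathcal L)e^{-s|x|}$, and $\dist(z,\mathcal L)\le s/\sqrt2$ is bounded.

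In the strip $|x|\le1$, the target $Ce^{-s|x|}$ is just a constant, so only boundedness is needed. The function $\sinh(sz)$ is $is$-periodic in $y$. Near each zero $iks$ it behaves like $\pm s(z-iks)$, so there is $c>0$ with $|\sinh(sz)|\ge c\,|z-iks|$ for $|z-iks|\le\delta$. Since $iks$ is also the nearest lattice point in that disk, $\dist(z,\mathcal L)=|z-iks|$ there, and the quotient is at most $C_0/c$. On the rest of the strip, periodicity and compactness of a fundamental strip piece with the $\delta$-disks removed give $|\sinh(sz)|\ge c'>0$. The quotient is then at most $C_0 s/(\sqrt2 c')$.

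Combining the two regions gives the constant $C$. The only point needing care is making the local linear lower bound for $\sinh$ uniform in $k$, which follows directly from periodicity: $|\sinh(s(z-iks))|=|\sinh(sz)|$ up to sign. I expect no real obstacle beyond this.
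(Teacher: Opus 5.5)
Your proposal is correct and follows essentially the paper's route: you bound $|\sinh(sz)|$ from below using $|\sinh(sz)|^2=\sinh^2(sx)+\sin^2(sy)$, split at $|x|=1$, use periodicity and compactness in the strip, and combine with \eqref{eq:sigma-critical-line} and the common simple zeros on $\mathcal V$. The paper packages the same estimates as the single inequality $|\sinh(sz)|\ge c\,e^{s|x|}\min\{1,\dist(z,\mathcal V)\}$ together with $\dist(z,\mathcal L)\le\dist(z,\mathcal V)$.
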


\begin{proof}
Write $z=x+iy$.  Since
\[
 |\sinh(sz)|^2=\sinh^2(sx)+\sin^2(sy),
\]
there is a constant $c>0$ such that
\[
 |\sinh(sz)|\ge c e^{s|x|}\min\{1,\dist(z,\mathcal V)\}.
\]
Indeed, this is immediate for $|x|\ge1$, while for $|x|\le1$ it follows,
after reducing $y$ to one period, from the simple zeros of $\sinh(sz)$ on
$\mathcal V$ and compactness away from those zeros.  Because $\mathcal V\subset\mathcal L$,
$\dist(z,\mathcal L)\le\dist(z,\mathcal V)$; combining this
with \eqref{eq:sigma-critical-line} gives the asserted estimate.  Every point
of $\mathcal V$ is a simple zero of both $\sigma$ and $\sinh(sz)$,
so $\sigma/\sinh(sz)$ extends holomorphically across $\mathcal V$.
Its weighted modulus gives the required continuous extension.
\end{proof}

Fix $N\ge1$, choose distinct
$a_1,\ldots,a_N\in\mathcal L\setminus\mathcal V$, and put
\[
 P_N(z):=\prod_{j=1}^N(z-a_j),\qquad
 g_N(z):=\frac{\sigma(z)}{\sinh(sz)P_N(z)}.
\]
All apparent poles are removable and
\[
 \mathcal Z(g_N)=\Sigma_N:=\mathcal L\setminus
 \bigl(\mathcal V\cup\{a_1,\ldots,a_N\}\bigr).
\]

\begin{proposition}\label{prop:insertion-lower}
The function $g_N$ satisfies
\[
 \int_\C (1+|z|^2)^{N-1}|g_N(z)|^2e^{-|z|^2}\,dA(z)<\infty.
\]
Consequently $\iota(\Sigma_N)\ge N-1$.
\end{proposition}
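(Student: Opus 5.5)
The plan is to estimate $|g_N(z)|^2e^{-|z|^2}$ pointwise using Lemma~\ref{lem:line-deletion-estimate}, and then integrate. Away from the zeros of $P_N$, the lemma gives
\[
 |g_N(z)|e^{-|z|^2/2}\le \frac{C e^{-s|\Re z|}}{|P_N(z)|}.
\]
The proof then has three steps.

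\emph{Step 1: the compact region.} Since $g_N$ is entire (all poles are removable), it is bounded on any fixed disk $|z|\le R_0$. Choose $R_0>2\max_j|a_j|+1$. The integrand is then bounded on $D_{R_0}$ and contributes a finite amount.

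\emph{Step 2: outside $D_{R_0}$.} Here $|z-a_j|\ge |z|/2$ for every $j$, so $|P_N(z)|\ge 2^{-N}|z|^N$. Combined with the displayed bound, this gives
\[
 (1+|z|^2)^{N-1}|g_N(z)|^2e^{-|z|^2}\le C' (1+|z|^2)^{N-1}|z|^{-2N}e^{-2s|x|}\le C''(1+|z|^2)^{-1}e^{-2s|x|},\qquad z=x+iy.
\]

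\emph{Step 3: integrate in Cartesian coordinates.} Write the majorant as $(1+x^2+y^2)^{-1}e^{-2s|x|}$. The bound $(1+x^2+y^2)^{-1}\le(1+y^2)^{-1}$ gives $\int_{\mathbb R}e^{-2s|x|}\,dx\cdot\int_{\mathbb R}(1+y^2)^{-1}\,dy<\infty$. This is the key point: the exponential decay in the single direction $\Re z$, which comes from deleting the line $\mathcal V$, converts the borderline quadratic decay into an integrable one. The mere $|z|^{-2}$ decay would diverge logarithmically in polar coordinates. The points of $\mathcal V$ need no separate treatment, because the lemma's bound holds there by continuous extension.

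\emph{Conclusion.} Apply Lemma~\ref{lem:finite-modifications} with $\Lambda=\Sigma_N$, $f=g_N$, $\nu_-=0$, and $\nu_+$ any effective multiset of size $q\le N-1$. The weight $(1+|z|^2)^q\le(1+|z|^2)^{N-1}$ makes the integral finite, so $\Sigma_N+\nu_+$ is an exact zero multiset. The case $q=0$ also shows $g_N\in\F$, so $\Sigma_N$ is itself exact. Hence $\iota(\Sigma_N)\ge N-1$.

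The only delicate point is Step 3, namely noticing that the integral must be split in Cartesian rather than polar form. Everything else is direct.
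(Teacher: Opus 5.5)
Your proof is correct and follows the paper's argument. The paper likewise combines the line-deletion estimate with $|P_N(z)|\ge c_N|z|^N$ outside a disk to get the majorant $e^{-2s|x|}/(1+x^2+y^2)$, handles the compact disk by holomorphy, and applies Lemma~\ref{lem:finite-modifications}. The only difference is cosmetic: the paper integrates in $y$ exactly, $\int_{\mathbb R}(1+x^2+y^2)^{-1}\,dy=\pi/\sqrt{1+x^2}$, whereas you bound by $(1+y^2)^{-1}$ and factor the integral. A polar computation would also work, so Cartesian coordinates are a convenience rather than a necessity.
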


\begin{proof}
For $|z|$ outside a fixed disk containing all $a_j$,
$|P_N(z)|\ge c_N|z|^N$.  Lemma~\ref{lem:line-deletion-estimate} therefore
gives
\[
 (1+|z|^2)^{N-1}|g_N(z)|^2e^{-|z|^2}
 \le C_N\frac{e^{-2s|x|}}{1+x^2+y^2},\qquad z=x+iy.
\]
The right-hand side is integrable because
\[
 \int_\mathbb R\frac{dy}{1+x^2+y^2}=\frac{\pi}{\sqrt{1+x^2}}.
\]
The integral on the complementary compact disk is finite by holomorphy.
Hence $z^{N-1}g_N\in\F$, and Lemma~\ref{lem:finite-modifications} gives the
last assertion.
\end{proof}

We next rule out one further insertion.  For $p,q\in\Z$ define the
shifted cell centres
\[
 z_{p,q}:=s\left(p+\frac12\right)
       +is\left(q+\frac12\right).
\]
Choose $0<\rho<s/5$ and put $Q_{p,q}=D(z_{p,q},\rho)$.  The disks are
uniformly separated from $\mathcal L$.  Hence \eqref{eq:sigma-critical-line}
provides a constant $c>0$ such that
\[
 |\sigma(z)|^2e^{-|z|^2}\ge c,
 \qquad z\in Q_{p,q},
\]
while $|\sinh(sz)|\le C e^{s|\Re z|}$ and
$|P_N(z)|\le C_N(1+|z|)^N$.  Therefore, on every sufficiently remote
$Q_{p,q}$,
\begin{equation}\label{eq:critical-cell-lower-new}
 (1+|z|^2)^N|g_N(z)|^2e^{-|z|^2}
 \ge c_N e^{-2s|\Re z|}.
\end{equation}

\begin{lemma}[Hadamard rigidity]\label{lem:hadamard-rigidity-line}
If $F\in\F$ and $\mathcal Z(F)=\Sigma_N$, then
\[
 F(z)=C e^{Az^2+Bz}g_N(z)
\]
for some $A,B,C\in\C$ with $C\ne0$.
\end{lemma}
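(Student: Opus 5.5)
The plan is to show that $F/g_N$ is an entire zero-free function of order at most two, and then apply Hadamard's theorem.

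First, $g_N$ is entire with $\mathcal Z(g_N)=\Sigma_N$, exactly with multiplicity one. Since $\mathcal Z(F)=\Sigma_N$ with the same multiplicities, the quotient $H=F/g_N$ is entire and has no zeros. We may therefore write $H=e^{\phi}$ with $\phi$ entire. It remains to show that $\phi$ is a polynomial of degree at most two. Equivalently, $H$ should have finite order at most two; Hadamard's factorization theorem for zero-free functions then gives $\phi(z)=Az^2+Bz+\log C$.

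To bound the growth of $H$ we use two estimates. The pointwise Fock estimate \eqref{eq:fock-point-evaluation} gives $\log|F(z)|\le |z|^2/2+O(1)$. For a lower bound on $|g_N|$ we cannot work near the zeros, so we argue on circles and use the standard fact that the order of a quotient of entire functions is at most the maximum of their orders. Here $F$ has order at most two, and $g_N$ also has order at most two, since $\sigma$ has order two and we divide by $\sinh(sz)P_N$ with an entire result. The general statement is: if $F=gH$ with $H$ zero-free and $F,g$ of order at most $2$, then $H$ has order at most $2$. This follows from the Nevanlinna characteristic, $T(r,H)\le T(r,F)+T(r,1/g)=T(r,F)+T(r,g)+O(1)$, together with the fact that for a zero-free entire function, order computed from $T$ equals order computed from $\log M$. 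Alternatively, we can avoid Nevanlinna theory and use the Borel--Carathéodory inequality on $\phi$, applied to upper bounds for $\Re\phi=\log|F|-\log|g_N|$.

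A more elementary route works directly on the circles $|z|=r_k$ passing through the cell-centre disks. On the remote disks $Q_{p,q}$, the estimate \eqref{eq:critical-cell-lower-new} gives $|g_N(z)|\ge c\,e^{|z|^2/2-s|\Re z|}(1+|z|)^{-N}$, which is the lower bound we want, but only on those disks. To pass from the disks to whole circles, we choose radii avoiding $\Sigma_N$ by a fixed distance. On such circles, \eqref{eq:sigma-critical-line}, the bound $|\sinh(sz)|\le Ce^{s|\Re z|}$, and the polynomial bound give $\log|g_N(z)|\ge |z|^2/2-s|z|-N\log(1+|z|)-C$, except near points of $\mathcal V$. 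Near $\mathcal V$ the quotient $\sigma/\sinh$ is continuous and nonvanishing by Lemma~\ref{lem:line-deletion-estimate}, and the lower bound there follows similarly from $|\sinh(sz)|\le C e^{s|x|}$ together with the lower bound in \eqref{eq:sigma-critical-line} away from $\mathcal L\setminus\mathcal V$.

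Combining the two estimates gives $\Re\phi\le s r+N\log r+C$ on the circles $|z|=r_k$ with $r_k\to\infty$. By the maximum principle applied to the harmonic function $\Re\phi$, the same bound holds on $D_{r_k}$. Borel--Carathéodory then shows that $\phi$ has linear growth, so $\phi$ is affine. This is even stronger than the quadratic conclusion, and it is certainly consistent with the statement, which allows $A$ to be anything including $0$.

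The main obstacle is the uniform lower bound for $|g_N|$ on the full circles, in particular choosing radii that stay a fixed distance from $\mathcal L\setminus\mathcal V$ while handling the crossings with $\mathcal V$. The plan is to take $r_k$ to be half-integer multiples of $s$ slightly perturbed, and to use the compactness argument from the proof of Lemma~\ref{lem:line-deletion-estimate}. If this becomes awkward, the fallback is the order-of-quotient argument together with Hadamard's theorem, which gives the stated form $Ce^{Az^2+Bz}g_N$ directly.
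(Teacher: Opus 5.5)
Your fallback argument is correct and is essentially the paper's proof. Both $F$ and $g_N$ have order at most two; for $g_N$ you can simply note that $g_N\in\F$ by Proposition~\ref{prop:insertion-lower}. Hadamard's theorem then forces the zero-free quotient to be the exponential of a polynomial of degree at most two. The paper gets there slightly more directly: it factors $F$ and $g_N$ separately through the same genus-two canonical product of $\Sigma_N$, so the quotient is $e^{Q_1-Q_2}$ with $\deg Q_j\le 2$, and no growth estimate for $F/g_N$ (Nevanlinna or otherwise) is needed.

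The ``more elementary route'' contains a step that cannot be carried out as stated. There are no radii $r\to\infty$ whose circles stay a fixed distance $\epsilon>0$ from $\mathcal L\setminus\mathcal V$. The annulus $\{r-\epsilon<|z|<r+\epsilon\}$ has area $4\pi\epsilon r$ and, for large $r$, contains of order $\epsilon r$ lattice points. Perturbing half-integer multiples of $s$ therefore cannot work.

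The route is repairable. Only $O(k)$ lattice points have modulus in $[k-1,k+2]$, so some $r_k\in[k,k+1]$ lies at distance at least $c/k$ from all of them. Near the imaginary axis, use the strip $|\Re z|\le 1$. There, $|\sigma(z)|e^{-|z|^2/2}/|\sinh(sz)|$ is $is$-periodic, continuous and positive, so by compactness it is bounded below. Combining these gives $\Re\phi\le sr+(N+1)\log r+C$ on $|z|=r_k$. The extra $\log r$ is harmless, and Borel--Carath\'eodory still makes $\phi$ affine.

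That stronger conclusion $A=0$ is true; it also follows from the Fock pointwise bound on the cell disks, by the sector argument in the proof of Proposition~\ref{prop:insertion-upper}. It is not needed for the lemma, though.
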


\begin{proof}
Both $F$ and $g_N$ are Fock functions and therefore have order at most two.
They have the same zero divisor.  Applying Hadamard factorization to both
functions with the same genus-two canonical product for $\Sigma_N$ shows
that their quotient is the exponential of a polynomial of degree at most two.
\end{proof}

\begin{proposition}\label{prop:insertion-upper}
There is no exact realizer $F$ of $\Sigma_N$ for which
\[
 \int_\C(1+|z|^2)^N|F(z)|^2e^{-|z|^2}\,dA(z)<\infty.
\]
Hence $\iota(\Sigma_N)=N-1$.
\end{proposition}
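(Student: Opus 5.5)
By Lemma~\ref{lem:hadamard-rigidity-line}, every exact realizer of $\Sigma_N$ has the form $F=Ce^{Az^2+Bz}g_N$ with $C\ne0$. It therefore suffices to show that
\[
I(A,B):=\int_\C(1+|z|^2)^N|g_N(z)|^2\,|e^{Az^2+Bz}|^2e^{-|z|^2}\,dA(z)=\infty
\]
for every pair $A,B\in\C$. The tool is the cell lower bound \eqref{eq:critical-cell-lower-new}. On each remote disk $Q_{p,q}$ it gives
\[
(1+|z|^2)^N|g_N(z)|^2e^{-|z|^2}|e^{Az^2+Bz}|^2\ge c_N\exp\bigl(-2s|\Re z|+2\Re(Az^2+Bz)\bigr).
\]
The plan is to find a half-line of cells along which the exponent $\phi(z)=-2s|x|+2\Re(Az^2+Bz)$, with $z=x+iy$, stays bounded below. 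Infinitely many disjoint disks of fixed area $\pi\rho^2$ then each contribute a uniform positive amount, so the integral diverges.

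The cells to use are those in a fixed column near the imaginary axis: $p=0$, so $\Re z$ stays in $[s/2-\rho,\,s/2+\rho]$, and $-2s|x|$ is bounded. Along this column $z=x+iy$ with $x$ bounded and $|y|\to\infty$, and we have
\[
\Re(Az^2+Bz)=\Re A\,(x^2-y^2)-2\Im A\,xy+\Re B\,x-\Im B\,y.
\]
The argument splits into cases according to $A$.

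\emph{Case $\Re A<0$.} The term $-\Re A\,y^2$ tends to $+\infty$ and dominates the linear terms in $y$, so the integrand blows up and divergence is immediate.

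\emph{Case $\Re A>0$.} The column direction decays, so instead use the row $q=0$ with $|x|\to\infty$. There $\Re A\,x^2$ dominates $-2s|x|$ and the linear terms, so the integrand again tends to infinity.

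\emph{Case $\Re A=0$.} In the column the exponent is $-2\Im A\,xy-\Im B\,y+O(1)$, which is linear in $y$ with a coefficient depending on $x$. Choose the sign of $y$, going up or down the column, so that this linear term is nonnegative. If the coefficient vanishes, the exponent is bounded, which is still enough. Since $x$ varies over an interval of length $2\rho$ inside a cell, the term $\Im A\,xy$ is not constant. To handle this, shrink each disk to a subdisk, or equivalently fix $x$ in a small interval around $s/2$; on that interval the coefficient $-(2\Im A\,x+\Im B)$ has a fixed sign, or is nearly zero. If the coefficient is nearly zero and of indefinite sign, the error $|\Im A|\,|x-s/2|\,|y|$ is not bounded. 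Handle this by noting that the set of $x$ on which the coefficient has a given sign has positive measure on one side of its root, so one can integrate over a thin vertical strip $\{x_0<x<x_0+\epsilon\}$, $y\to\pm\infty$, contained in the annulus region where the lower bound from \eqref{eq:sigma-critical-line} holds.

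The main obstacle is exactly this last issue: the lower bound \eqref{eq:critical-cell-lower-new} is available only on the cells $Q_{p,q}$, while the sign analysis wants vertical strips. The first approach to try is to pick $x_0\in(s/2-\rho,\,s/2+\rho)$ with $2\Im A\,x_0+\Im B\ne0$, restrict each cell to a subdisk centred at $x_0+is(q+\tfrac12)$ of radius small enough to lie in $Q_{0,q}$ and on one side of the root, and then go in the direction of $y$ that makes the linear term positive. Each subdisk then contributes at least a fixed positive constant, and summing over $q$ gives divergence. With this, no $F$ satisfies the weighted integrability. By Lemma~\ref{lem:finite-modifications}, $\Sigma_N+\nu$ with $|\nu|=N$ is therefore not a zero set, so $\iota(\Sigma_N)\le N-1$, and Proposition~\ref{prop:insertion-lower} gives equality.
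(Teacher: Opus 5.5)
Your proof is correct, but it organizes the case analysis around the quadratic coefficient differently from the paper.

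\emph{The paper's route.} It splits into $A\neq0$ and $A=0$. For every $A\ne0$ it picks an open sector on which $\Re(Az^2)\ge c_A|z|^2$. A smaller sector contains infinitely many remote cells $Q_{p,q}$, and on these the integrand is bounded below by $\exp(2c_A|z|^2-C_B|z|)$, which swamps the factor $e^{-2s|\Re z|}$. Only after $A=0$ is established does it use the column $w_q=s/2+is(q+\tfrac12)$. There the exponent $2\Re(Bz)$ has a $y$-coefficient $-2\Im B$ that does not depend on $x$, so whole cells can be used with no further localization.

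\emph{Your route.} You split by the sign of $\Re A$ and stay on axis-parallel families: the column $p=0$ when $\Re A<0$, and the row $q=0$ when $\Re A>0$. This is elementary. The cost is that purely imaginary $A=i\alpha$ with $\alpha\neq0$ lands in the column, where $\Re(Az^2)=-2\alpha xy$ has an $x$-dependent coefficient; this is what forces your subdisk refinement. That refinement is valid:
\begin{itemize}
\item Choose $x_0$ away from the root $-\Im B/(2\alpha)$, and a radius small enough that the subdisks stay inside $Q_{0,q}$ and on one side of the root. Then $|2\alpha x+\Im B|$ is bounded below on the subdisks.
\item Going up or down the column as appropriate sends the exponent to $+\infty$.
\item The degenerate case $\alpha=\Im B=0$ gives a bounded exponent on full cells.
\end{itemize}

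\emph{Suggestions for the write-up.} Present only this subdisk version and drop the ``thin vertical strip'' alternative, which it supersedes. Even simpler: use a fixed column $p$ whose $x$-range $[s(p+\tfrac12)-\rho,\,s(p+\tfrac12)+\rho]$ avoids the root. These ranges are disjoint for different $p$, so at most one is bad, and whole cells then work.

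\emph{Comparison.} The paper's sector argument treats all $A\neq0$ uniformly and leaves a linear case with constant coefficient. Yours avoids sectors at the price of one extra localization step for imaginary $A$.
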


\begin{proof}
Assume such an $F$ exists.  By Lemma~\ref{lem:hadamard-rigidity-line},
$F=Ce^{Az^2+Bz}g_N$.

Suppose first that $A\ne0$.  There is an open sector $\mathcal S$ and a
constant $c_A>0$ for which $\Re(Az^2)\ge c_A|z|^2$ on $\mathcal S$.
A smaller sector contains infinitely many shifted lattice centres
$z_{p,q}$, and for all sufficiently remote such centres the entire disk
$Q_{p,q}$ lies in $\mathcal S$.  On these pairwise disjoint disks,
\eqref{eq:critical-cell-lower-new} yields
\[
 (1+|z|^2)^N|F(z)|^2e^{-|z|^2}
 \ge c\exp\bigl(2c_A|z|^2-C_B|z|\bigr),
\]
so the integral diverges.  Thus $A=0$.

Write $B=u+iv$.  Consider the vertical family
\[
 w_q=\frac s2+is\left(q+\frac12\right),\qquad Q_q=D(w_q,\rho).
\]
On these disks \eqref{eq:critical-cell-lower-new} has a positive lower bound
independent of $q$.  If $v\ne0$, then $\Re(Bw_q)$ tends to $+\infty$ in one
of the directions $q\to\pm\infty$, and the corresponding disk contributions
diverge exponentially.  If $v=0$, then $e^{2\Re(Bz)}$ is uniformly bounded
below on all $Q_q$, so infinitely many disjoint disks each contribute a fixed
positive amount.  This again gives divergence.  No $B$ is possible.

Therefore the order-$N$ weighted exact realization does not exist.  By
Lemma~\ref{lem:finite-modifications}, $N$ additional zeros cannot be inserted,
while Proposition~\ref{prop:insertion-lower} allows $N-1$; hence
$\iota(\Sigma_N)=N-1$.
\end{proof}

\begin{proposition}\label{prop:infinite-dim-line}
For every $N\ge1$, $\dim\mathcal I(\Sigma_N)=\infty$.
\end{proposition}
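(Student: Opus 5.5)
We need to show that $\mathcal I(\Sigma_N)$ contains infinitely many linearly independent functions. The plan is to exploit the fact that the deleted line $\mathcal V$ gives exponential decay $e^{-s|\Re z|}$, which is far more than is needed. We will put back zeros from $\mathcal V$, and by Lemma~\ref{lem:line-deletion-estimate} this costs only polynomial growth in $|z|$ along the vertical strip.

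\textbf{Step 1: the family.} For each integer $k\ge0$ consider
\[
 h_k(z):=z^k g_N(z)=\frac{z^k\sigma(z)}{\sinh(sz)P_N(z)}.
\]
Each $h_k$ is entire and vanishes on $\Sigma_N$, with the prescribed multiplicities, since $g_N$ does. The functions $h_0,h_1,\dots$ are linearly independent: a vanishing finite combination $\sum c_kz^kg_N$ forces the polynomial $\sum c_kz^k$ to vanish off the discrete zero set of $g_N$, hence identically.

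\textbf{Step 2: membership in $\F$.} Lemma~\ref{lem:line-deletion-estimate} together with $|P_N(z)|\ge c_N|z|^N$ outside a fixed disk gives
\[
 |h_k(z)|^2e^{-|z|^2}\le C\,|z|^{2k-2N}e^{-2s|x|},\qquad z=x+iy .
\]
Integrating in $y$ over $\mathbb R$ needs $2k-2N<-1$. That restricts $k\le N-1$, so this family is finite and Step 2 fails as stated.

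\textbf{Repair.} Polynomial factors are the wrong insertion; instead we multiply by bounded-type functions that vanish on $\mathcal V$ only partially. Take
\[
 h_k(z):=g_N(z)\,\frac{\sinh(sz)}{\prod_{j=1}^{k}(z-isj)},
\]
that is, we restore all of $\mathcal V$ except the $k$ points $is,\dots,isk$. Then
\[
 h_k=\frac{\sigma}{P_N\prod_{j\le k}(z-isj)}
\]
vanishes on $\Sigma_N$. By \eqref{eq:sigma-critical-line}, $|h_k|e^{-|z|^2/2}\lesssim |z|^{-N-k}\operatorname{dist}(z,\mathcal L)$, which is bounded by $|z|^{-N-k}$. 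Since $N+k\ge2$ when $N\ge1$ and $k\ge1$, the function $|z|^{-2(N+k)}$ is integrable at infinity, and so $h_k\in\F$.

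Linear independence of $\{h_k\}_{k\ge1}$ follows from their zero sets: $h_k$ vanishes at $is(k+1)$ while $h_1,\dots,h_k$ do not. If $\sum_{k\le K}c_kh_k=0$ with $c_K\ne0$, evaluate at $isK$, which is a zero of every $h_k$ with $k<K$ but not of $h_K$. This gives $c_K=0$, a contradiction. More simply, the $h_k$ have distinct zero divisors nested in a triangular fashion, so the evaluation matrix at $is,\dots,isK$ is triangular with nonzero diagonal.

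\textbf{Main obstacle.} The only delicate point is the uniform lower bound in the norm estimate, specifically that division by the polynomial is harmless near its zeros. Those zeros lie on $\mathcal L$, where $\sigma$ vanishes simply, so the quotient is entire and locally bounded. A fixed disk handles the finitely many exceptional points, and outside it $|P_N\prod(z-isj)|\ge c|z|^{N+k}$. All the needed estimates are already contained in \eqref{eq:sigma-critical-line}.
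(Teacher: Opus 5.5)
Your repaired argument is correct, but it takes a different route from the paper. The paper keeps the exact divisor. It uses the functions $e^{tz}g_N$ with real $|t|<s$, which lie in $\F$ because Lemma~\ref{lem:line-deletion-estimate} gives a decay $e^{-s|\Re z|}$ that absorbs $e^{t\Re z}$. They are linearly independent because distinct exponentials are.

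You instead give up exactness. Your functions $\sigma/(P_N\prod_{j\le k}(z-isj))$ put back all but $k$ points of $\mathcal V$. They lie in $\F$ by \eqref{eq:sigma-critical-line} alone, since they realize the lattice with $N+k\ge2$ points removed. They vanish on $\Sigma_N$ but have extra zeros on $\mathcal V$. Their independence comes from the triangular evaluation matrix at $is,\dots,isK$.

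Your route is more elementary and more general. It never uses the line-deletion lemma, and the same argument gives $\dim\mathcal I(\Lambda)=\infty$ for every $\Lambda\subset\mathcal L$ that misses infinitely many lattice points. The paper's route proves something stronger about $\Sigma_N$ itself. Its exact realizers are not unique up to constants: a whole continuum of them lies in $\F$ and spans an infinite-dimensional space. This fact is reused in Remark~\ref{rem:finite-versus-inclusion-maximality} through $g_1,e^{sz/2}g_1\in\F$.

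Two presentation points. First, drop the abandoned Steps~1--2 from the final write-up. Second, the sentence claiming that $h_1,\dots,h_k$ do not vanish at $is(k+1)$ is false as written, since they all vanish there. The correct statement, which your next sentence actually uses, is that $h_1,\dots,h_{K-1}$ vanish at $isK$ while $h_K$ does not.
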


\begin{proof}
For every real $t$ with $|t|<s$, the function $e^{tz}g_N$ has the same exact
zero divisor $\Sigma_N$.  Outside a fixed disk,
\[
 |e^{tz}g_N(z)|^2e^{-|z|^2}
 \le C_N\frac{e^{-2(s-|t|)|x|}}{(1+x^2+y^2)^N},
\]
which is integrable.  Thus $e^{tz}g_N\in\F$.  Distinct real parameters give
linearly independent exponentials; hence the corresponding Fock functions
are linearly independent.
\end{proof}

\begin{theorem}[Infinite-dimensional zero spaces with prescribed finite insertion index]
\label{thm:infinite-dimensional-prescribed-index}
For every $m\in\Z_{\ge0}$ there exists a simple exact zero multiset
$\Lambda_m$ such that
\[
 \dim\mathcal I(\Lambda_m)=\infty,
 \qquad
 \iota(\Lambda_m)=m.
\]
Explicitly, with $N=m+1$, one may take
\[
 \Lambda_m=\mathcal L\setminus
 \Bigl(\mathcal V\cup\{a_1,\ldots,a_N\}\Bigr),
\]
where the $a_j$ are arbitrary distinct points of $\mathcal L\setminus\mathcal V$. In particular, $m=0$ yields a zero multiset maximal under
finite additions, with an infinite-dimensional vanishing space.
\end{theorem}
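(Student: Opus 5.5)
The theorem is an assembly of Propositions~\ref{prop:insertion-lower}, \ref{prop:insertion-upper} and \ref{prop:infinite-dim-line}, applied with $N=m+1$. Given $m\ge0$, set $N=m+1\ge1$ and choose distinct $a_1,\ldots,a_N\in\mathcal L\setminus\mathcal V$. This is possible because $\mathcal L\setminus\mathcal V$ is infinite. Then put $\Lambda_m=\Sigma_N$.

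The first step is to check that $\Sigma_N$ is a simple exact zero multiset. The lattice $\mathcal L$ consists of simple zeros of $\sigma$. Removing $\mathcal V$ (the simple zeros of $\sinh(sz)$, all lying in $\mathcal L$) and the further distinct points $a_j$ leaves a set of distinct points. Each of these points is a simple zero of $g_N$, and $g_N$ has no other zeros, as recorded before Proposition~\ref{prop:insertion-lower}. It remains to see that $g_N\in\F$. This is the case $t=0$ of Proposition~\ref{prop:infinite-dim-line}. Alternatively it follows directly from Proposition~\ref{prop:insertion-lower}: the weight $(1+|z|^2)^{N-1}\ge1$, so the finiteness of that integral gives $\|g_N\|_2<\infty$. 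Hence $\Sigma_N$ is an exact zero multiset, and $\iota(\Sigma_N)$ is defined.

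Next come the two invariants. By Proposition~\ref{prop:infinite-dim-line}, $\dim\mathcal I(\Sigma_N)=\infty$, since each $e^{tz}g_N$ with $|t|<s$ lies in $\mathcal I(\Sigma_N)$ and these functions are linearly independent. By Proposition~\ref{prop:insertion-upper}, which already uses the lower bound from Proposition~\ref{prop:insertion-lower}, $\iota(\Sigma_N)=N-1=m$.

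Finally, in the case $m=0$ we have $\iota(\Lambda_0)=0$, which is by definition maximal under finite additions, while the vanishing space is still infinite-dimensional. No step is a real obstacle, since the substantive analysis, namely the Hadamard rigidity argument that excludes a quadratic or linear exponential correction, was carried out in Proposition~\ref{prop:insertion-upper}. The only point needing care is the bookkeeping: the index shift $N=m+1$, and the observation that simplicity of $\Sigma_N$ uses $\mathcal V\subset\mathcal L$ together with the simplicity of the zeros of both $\sigma$ and $\sinh(sz)$.
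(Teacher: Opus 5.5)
Your proposal is correct and is essentially the paper's proof, which takes $N=m+1$ and $\Lambda_m=\Sigma_N$, then combines Propositions~\ref{prop:insertion-lower}, \ref{prop:insertion-upper} and \ref{prop:infinite-dim-line}. Your explicit check that $g_N\in\F$, so that $\Sigma_N$ is a simple exact zero multiset and $\iota(\Sigma_N)$ is defined, is bookkeeping the paper leaves implicit rather than a different route.
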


\begin{proof}
Take $N=m+1$ and $\Lambda_m=\Sigma_{m+1}$. Propositions~\ref{prop:insertion-lower} and
\ref{prop:insertion-upper} give $\iota(\Sigma_N)=N-1=m$, while
Proposition~\ref{prop:infinite-dim-line} gives
$\dim\mathcal I(\Sigma_N)=\infty$.
\end{proof}

These examples also have critical Beurling density. The square lattice
satisfies $\mathcal L(D(w,r))=r^2+O(r+1)$ uniformly in $w\in\C$.
Every disk $D(w,r)$ contains at most $2r/s+1$ points of $\mathcal V$.
Deleting this line and $m+1$ further points therefore gives
\[
 \Lambda_m(D(w,r))=r^2+O_m(r+1)\qquad(w\in\C,\ r>0),
\]
with an error uniform in the center. Consequently,
$D^-(\Lambda_m)=D^+(\Lambda_m)=1/\pi$.

\begin{remark}
When $\dim\mathcal I(\Lambda)=d<\infty$, Zhu's finite-dimensional structure
theorem \cite[Corollary~9 and Theorem~10]{Zhu2011} implies that the exact
insertion index is $d-1$. Theorem~\ref{thm:infinite-dimensional-prescribed-index}
shows that the dimension of the vanishing space alone does not determine
the exact insertion index in the infinite-dimensional case: $\iota$ can
be any prescribed finite integer while $\dim\mathcal I(\Lambda)=\infty$.
\end{remark}

\begin{corollary}[Finite additions beyond the insertion index]
\label{cor:intermediate-zero-status}
Let $\Lambda_m$ be the multiset in
Theorem~\ref{thm:infinite-dimensional-prescribed-index}. For every finite
effective multiset $\nu$ with $|\nu|>m$, the multiset $\Lambda_m+\nu$ is
not an exact Fock zero multiset, but
\[
 \dim\mathcal I(\Lambda_m+\nu)=\infty.
\]
In particular, it is not a uniqueness multiset.
\end{corollary}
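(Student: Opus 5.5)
The corollary has two parts, and I will prove them separately. The first says $\Lambda_m+\nu$ is not a zero set. The second says its vanishing space is infinite-dimensional.

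\emph{Not a zero set.} The insertion index $\iota(\Lambda_m)=m$ comes from Theorem~\ref{thm:infinite-dimensional-prescribed-index}. The admissible integers in the definition of $\iota$ form an initial segment, and by Lemma~\ref{lem:finite-modifications} ``every'' may be replaced by ``some''. So if $\Lambda_m+\nu$ were a zero set for one $\nu$ with $|\nu|=q>m$, then $q$ would be admissible. By the initial-segment property, so would $m+1$, contradicting $\iota(\Lambda_m)=m$. A more direct route is also available. Lemma~\ref{lem:finite-modifications}, with $\Xi=\Lambda_m+\nu$ of net multiplicity $q\ge N=m+1$, reduces realizability to finiteness of $\int(1+|z|^2)^q|F|^2e^{-|z|^2}$ for a realizer $F$ of $\Sigma_N$. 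This integral dominates the weight-$N$ integral, which Proposition~\ref{prop:insertion-upper} excludes.

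\emph{Infinite-dimensional vanishing space.} Write $\nu$ with support points $w_1,\dots,w_k$ and $q=|\nu|$. I will produce infinitely many linearly independent Fock functions vanishing on $\Lambda_m+\nu$, using the functions $e^{tz}g_N$ from Proposition~\ref{prop:infinite-dim-line} multiplied by $q_\nu$. We must check that $q_\nu e^{tz}g_N\in\F$ for $|t|<s$. Outside a fixed disk,
\[
|q_\nu(z)e^{tz}g_N(z)|^2e^{-|z|^2}\le C\,(1+|z|^2)^{q-N}e^{-2(s-|t|)|x|},
\]
by Lemma~\ref{lem:line-deletion-estimate} and $|P_N(z)|\ge c|z|^N$. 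The factor $(1+|z|^2)^{q-N}$ may now grow polynomially, since $q>m$ only gives $q\ge N$. The $x$-direction is still controlled by the exponential, but the $y$-integral of $(1+x^2+y^2)^{q-N}$ diverges. This polynomial growth along the line $\mathcal V$ is the main obstacle.

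\emph{Overcoming it.} I would exploit the freedom to use extra zeros, since vanishing does not require exact divisors. First, multiply instead by a function that kills the growth in $y$. For instance, replace $g_N$ by $\sigma(z)/(\sinh(sz)P_{N'}(z))$ with $N'\ge q+1$, deleting additional lattice points $a_{N+1},\dots,a_{N'}\in\mathcal L\setminus\mathcal V$. This is the wrong direction, however: deleting points from the divisor loses vanishing at those points. The correct route is the reverse. Take $h_t=e^{tz}\sigma(z)q_\nu(z)/(\sinh(sz)\,Q(z))$, where $Q$ is a monic polynomial of degree $q+N$ whose zeros are $N+q$ points of $\mathcal V$, which is removed anyway. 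Then $h_t$ still vanishes on all of $\Sigma_N$: the zeros of $\sigma$ off $\mathcal V$ are untouched, and the $a_j$ remain zeros of $\sigma$. It also vanishes on $\nu$. The apparent poles at the zeros of $Q$ are cancelled, because $\sigma/\sinh(sz)$ has zeros only off $\mathcal V$, so I must check the cancellation. At points of $\mathcal V$, $\sigma/\sinh$ is nonzero, so dividing by $Q$ vanishing there creates poles. The fix is to divide $\sigma$ by $\sinh(sz)$ only partially, that is, to use $\sigma(z)/\bigl(\sinh(sz)\bigr)$ times a polynomial vanishing on chosen points of $\mathcal V$. That polynomial reintroduces growth, so this also fails.

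\emph{Simplest robust approach.} Use instead $e^{tz}$ combined with a Gaussian-strip damping factor. Since $\sigma$ itself vanishes on all of $\mathcal L\supset\Lambda_m$, consider $\sigma(z)q_\nu(z)e^{-\varepsilon z^2+tz}/(z-a_1)^{\cdots}$. The estimate \eqref{eq:sigma-critical-line} with the factor $e^{-\varepsilon\Re z^2}$ fails in the direction $\Re z^2<0$. The better choice is $\sigma(z)q_\nu(z)e^{tz}/\sinh(sz)^2$, which vanishes on $\mathcal L\setminus\mathcal V\supset\Lambda_m$, but it has poles on $\mathcal V$. So I would settle on $G_t=e^{tz}q_\nu(z)\,g_N(z)\,\phi(z)$ with $\phi$ entire, vanishing nowhere essential, and decaying polynomially along vertical lines while growing at most like $e^{c|x|}$. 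A candidate is $\phi(z)=\bigl(\sin(\pi z/s)/(z\cdots)\bigr)$, which is small near the real axis, not near the imaginary one. The correct candidate is $\phi(z)=\bigl(\sinh(\varepsilon z)/z\bigr)^{k}$ with $k=q$. On vertical lines, $|\sinh(\varepsilon(x+iy))|\le e^{\varepsilon|x|}$, so $|\phi|\le e^{k\varepsilon|x|}|z|^{-k}$. This gives the needed decay $(1+|z|^2)^{-q/2}$ per factor and costs only $e^{2k\varepsilon|x|}$, which is absorbed by $e^{-2(s-|t|)|x|}$ once $k\varepsilon+|t|<s$. The function $\phi$ is entire, since its zero at $0$ is removable. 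Hence $G_t\in\F$ and $G_t\in\mathcal I(\Lambda_m+\nu)$ for $|t|<s-k\varepsilon$. Linear independence over distinct real $t$ follows exactly as in Proposition~\ref{prop:infinite-dim-line}. This gives $\dim\mathcal I(\Lambda_m+\nu)=\infty$, and in particular $\Lambda_m+\nu$ is not a uniqueness multiset.
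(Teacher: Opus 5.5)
Your proof is correct. For non-realizability it matches the paper: Lemma~\ref{lem:finite-modifications} together with $\iota(\Lambda_m)=m$, and your direct comparison with the weight in Proposition~\ref{prop:insertion-upper} is an equivalent variant.

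For $\dim\mathcal I(\Lambda_m+\nu)=\infty$ you take a different route. The paper constructs nothing. Inside the infinite-dimensional space $\mathcal I(\Lambda_m)$ of Proposition~\ref{prop:infinite-dim-line}, the extra conditions $f^{(j)}(w)=0$ for $\Lambda_m(\{w\})\le j<\Lambda_m(\{w\})+\nu(\{w\})$ are the kernel of a linear map into $\C^{|\nu|}$. So $\mathcal I(\Lambda_m+\nu)$ has finite codimension there and is infinite dimensional.

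Your final family $G_t=e^{tz}q_\nu g_N\,(\sinh(\varepsilon z)/z)^{q}$ also works. Since $|\sinh(\varepsilon z)|\le\cosh(\varepsilon\Re z)\le e^{\varepsilon|\Re z|}$, the damping factor cancels the $|z|^{2q}$ growth of $|q_\nu|^2$. The exponential cost $e^{2q\varepsilon|\Re z|}$ is absorbed by Lemma~\ref{lem:line-deletion-estimate}, leaving the majorant $C(1+|z|^2)^{-N}e^{-2(s-|t|-q\varepsilon)|\Re z|}$. This is integrable because $N\ge1$. Independence over distinct real $t$ follows from independence of exponentials, since a product of nonzero entire functions is nonzero.

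What each route buys: yours gives explicit elements of $\mathcal I(\Lambda_m+\nu)$, necessarily carrying extra zeros (here those of $\sinh(\varepsilon z)$). The paper's is shorter and fully general: it applies to any multiset with infinite-dimensional vanishing space and uses nothing about the decay of $g_N$.

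In a write-up, delete the abandoned attempts ($h_t$, the Gaussian damping, $\sigma q_\nu e^{tz}/\sinh^2(sz)$, and the $\sin(\pi z/s)$ candidate). As written they obscure the argument that actually works.
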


\begin{proof}
Lemma~\ref{lem:finite-modifications} and $\iota(\Lambda_m)=m$ exclude an
exact realization. Within $\mathcal I(\Lambda_m)$, the extra vanishing
conditions defining $\mathcal I(\Lambda_m+\nu)$ are the kernel of a linear
map to $\C^{|\nu|}$. At a point already in $\Lambda_m$, these conditions
start at its existing prescribed order. This kernel has finite codimension
in an infinite-dimensional space and is therefore infinite dimensional.
\end{proof}

\begin{remark}[Finite maximality and infinite enlargement]
\label{rem:finite-versus-inclusion-maximality}
For $m=0$, the function $g_1$ above has zero multiset $\Lambda_0$.
Proposition~\ref{prop:infinite-dim-line} gives
$g_1,e^{sz/2}g_1\in\F$. Consequently
\[
 (e^{sz/2}-1)g_1\in\F,\qquad
 \mathcal Z\bigl((e^{sz/2}-1)g_1\bigr)=\Lambda_0+4is\Z.
\]
The added simple zeros lie on the deleted line and are disjoint from
$\Lambda_0$. Thus this multiset, although maximal under finite additions,
has a strict infinite enlargement that is an exact zero multiset.
\end{remark}

\section{Critical lattice perturbations}
\label{sec:critical-lattice}
\label{sec:geometric-characterizations}

A bounded motion of each critical lattice point can accumulate into an
unbounded change of logarithmic potential. We compare with the
square-lattice sigma function to remove the Gaussian background, and
retain the first and second displacement terms; the remaining tail is
summable. Example~\ref{ex:bounded-oscillatory-endpoint} shows why the second
term is needed. Lemma~\ref{lem:circular-fock-smoothing} converts the resulting
potential estimate into a norm criterion. The relative potential
$\mathcal U^{\mathrm{rel}}_{\Gamma/\mathcal L}$ below has already had the Gaussian background removed,
unlike the canonical potential $\mathcal U^{\mathrm{can}}_\Lambda$ of
Section~\ref{sec:analytic-characterization}.

\subsection{Bounded displacements}
\label{sec:bounded-displacement-geometry}

A bounded matching to the lattice has the following equivalent area
formulation.

Recall $\mathcal L=\sqrt\pi(\Z+i\Z)$. A labelled multiset
$\Gamma$ admits a bounded allocation if the plane can be partitioned,
up to null sets, into measurable cells $A_\gamma$ of area $\pi$ such that
$\gamma\in A_\gamma$ and $\sup_\gamma\operatorname{diam}A_\gamma<\infty$.
Attaching a representative to its cell does not change its area.
Null intersections are allowed, so distinct labels may represent the
same point.

\begin{lemma}
\label{lem:bounded-area-allocation}
A labelled multiset admits a bounded allocation if and only if
there is a bijective matching to $\mathcal L$ with uniformly bounded
displacement.
\end{lemma}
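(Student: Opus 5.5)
The easy direction is the construction of cells from a matching. Given a bijection $\phi:\Gamma\to\mathcal L$ with $|\gamma-\phi(\gamma)|\le K$, we start from the lattice squares $S_\ell=\ell+\sqrt\pi\,[-\tfrac12,\tfrac12)^2$. These have area $\pi$, diameter $\sqrt{2\pi}$, and partition the plane. Set $A_\gamma=S_{\phi(\gamma)}\cup\{\gamma\}$; attaching a single point is a null modification. Each $A_\gamma$ has area $\pi$, contains $\gamma$, and has diameter at most $\sqrt{2\pi}+K+\sqrt{\pi}$. Different labels may sit at the same point, but their cells still overlap only in a null set, so this is a bounded allocation.

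For the converse we use Hall's marriage theorem in its locally finite infinite form. Suppose the cells $A_\gamma$ have area $\pi$, contain $\gamma$, and have diameter at most $d$. Connect $\gamma$ to $\ell\in\mathcal L$ whenever $A_\gamma\cap S_\ell$ has positive area. Each cell meets only boundedly many squares, so every $\gamma$ has finitely many neighbours. Conversely, the cells meeting a fixed square $S_\ell$ all contain points within $d+\sqrt{2\pi}$ of $\ell$. Their areas sum to at most the area of a fixed disk, and each area is $\pi$, so there are boundedly many. This also shows that $\Gamma$ is locally finite with counts controlled uniformly. The graph is therefore locally finite on both sides.

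The key step is verifying Hall's condition on both sides using areas. For a finite set $F\subset\Gamma$, the union $\bigcup_{\gamma\in F}A_\gamma$ has area $\pi|F|$. Up to null sets it is covered by the squares $S_\ell$ with $\ell\in N(F)$, which have total area $\pi|N(F)|$, so $|N(F)|\ge|F|$. Symmetrically, for finite $G\subset\mathcal L$, the union of the $S_\ell$ has area $\pi|G|$ and is covered a.e.\ by the cells of $N(G)$, giving $|N(G)|\ge|G|$. By Hall's theorem for locally finite bipartite graphs (via compactness, or the König/Rado extension), each side has an injective matching into the other along edges. The Cantor–Schröder–Bernstein argument for graphs (Kőnig's theorem on infinite matchings, using the union of the two matchings decomposed into paths and cycles) then produces a bijective matching along edges. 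Every edge joins points at distance at most $d+\sqrt{2\pi}$, so the displacement is uniformly bounded.

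The main obstacle is this infinite combinatorics: passing from two one-sided injections to a single bijection. In the union of the two matchings every component is a finite even cycle, a two-way infinite path, or a one-way infinite path. We match alternately along each component, choosing the edges from the injection that covers the starting vertex of a one-way infinite path. This gives a perfect matching using only edges of the graph. The measure-theoretic points (null overlaps, repeated labels at the same location) are harmless, since only areas of unions enter.
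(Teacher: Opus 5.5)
Your proposal is correct and follows essentially the paper's proof. Both use the same bipartite graph on positive-area overlaps of cells and lattice squares, and obtain Hall's condition from areas; the paper phrases this with the doubly stochastic weights $\pi^{-1}|Q_\lambda\cap A_\gamma|$ and derives the infinite Hall theorem from the finite one by a diagonal argument using local finiteness. Both then decompose the union of the two one-sided matchings in the same K\H{o}nig manner. One small detail your component list should include: an edge lying in both matchings forms an isolated two-vertex component (a degenerate $2$-cycle if the union is taken as a multigraph), which the paper lists explicitly and which is matched trivially.
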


\begin{proof}
Let $Q_\lambda$ be the square lattice cell of area $\pi$ centered at
$\lambda$. Suppose $\operatorname{diam}A_\gamma\le D$. Join
$\lambda$ to $\gamma$ if $|Q_\lambda\cap A_\gamma|>0$, and give the
edge weight $\pi^{-1}|Q_\lambda\cap A_\gamma|$. Row and column
sums are one. Every edge satisfies
$|\lambda-\gamma|\le D+\sqrt{2\pi}$. Cells with representatives
in a disk of radius $R$ lie in the concentric disk of radius $R+D$;
their equal areas bound their number. Thus the graph is locally finite.

For a finite collection $E$ of lattice vertices, let $N(E)$ be its
neighbors. Writing $w_{\lambda\gamma}$ for the edge weights gives
\[
 |E|=\sum_{\lambda\in E}\sum_{\gamma\in N(E)}w_{\lambda\gamma}
 \le\sum_{\gamma\in N(E)}\sum_{\lambda\in\mathcal L}
                   w_{\lambda\gamma}=|N(E)|.
\]
The same inequality holds for finite sets of representatives. Both
vertex sets are countable. By Hall's finite marriage theorem, there
is a matching covering the first $n$ lattice vertices for every $n$.
Local finiteness permits a diagonal subsequence in which each partner
stabilizes. Its limit covers the lattice. Reversing the two sides gives
a matching covering the representatives.

The union of these two matchings has degree at most two. An edge that
belongs to both matchings is an isolated two-vertex component. In every
other component the edges alternate between the two matchings. A finite
path cannot have an odd number of vertices: its endpoints would lie on
the same side and would both have to be covered by the matching that
covers that side, whereas alternating edges at those endpoints belong
to different matchings. Thus each component is an even cycle, a finite
path with an even number of vertices, or a one-sided or two-sided
infinite path. Alternating edges, starting at the endpoint when there
is one, give a perfect matching in every component. All chosen edges
satisfy the same displacement bound.

Conversely, assign $Q_\lambda$ to the point matched to $\lambda$
and attach that point to the cell. The area is unchanged, the diameters
remain uniformly bounded, and the cells partition the plane up to
null sets.
\end{proof}

For example, consider regular $M_k$-gons of radii $R_k$, where
$M_k\ge ck$, $\sum_{j\le k}(M_j-2j)=O(k)$, and $R_k=k+O(1)$.
Put $s_k=(\sum_{j\le k}M_j)^{1/2}$. Then
$s_k^2=k(k+1)+O(k)$ and $s_k=k+O(1)$. Subtracting consecutive
partial sums gives $M_k=O(k)$, whence
\[
 s_k-s_{k-1}=\frac{M_k}{s_k+s_{k-1}}=O(1),
 \qquad \frac{2\pi s_k}{M_k}=O(1).
\]
Divide each annulus $s_{k-1}\le|z|<s_k$ into $M_k$ equal-angle
sectors. A sector has area
$\pi(s_k^2-s_{k-1}^2)/M_k=\pi$; its radial width and outer arc
length are bounded. Attaching the polygon points increases the
diameters by a bounded amount, since $R_k=k+O(1)$. The resulting
cells give a bounded allocation, which remains bounded after any
uniformly bounded tangential perturbation.

Let $\Gamma=(\mu_\lambda)_{\lambda\in\mathcal L}$, where
$|\mu_\lambda-\lambda|\le M$, and put
$\delta_\lambda=\mu_\lambda-\lambda$. Choose a finite set $A_0$
containing every lattice point with $|\lambda|\le2M+2$, so that
$\lambda\mu_\lambda\ne0$ outside $A_0$. For $T>4M+4$, define
\[
 K_j(w)=\begin{cases}w^{-j},&|w|>T,\\0,&|w|\le T.\end{cases}
\]
Define the displacement field by
\begin{equation}
 \begin{split}
 V_\Gamma(z)=-\Re\sum_{\lambda\notin A_0}\bigg\{
 &\delta_\lambda
       \left[K_1(z-\lambda)+\frac1\lambda+\frac z{\lambda^2}\right]\\
 &+\frac{\delta_\lambda^2}{2}
       \left[K_2(z-\lambda)-\frac1{\lambda^2}\right]\bigg\}.
 \end{split}
 \label{eq:bounded-displacement-field}
\end{equation}
The terms in each brace are grouped. For $|z|\le R$ and
$|\lambda|>2(R+T+1)$, the cutoffs are inactive and
\[
 \begin{split}
 \frac1{z-\lambda}+\frac1\lambda+\frac z{\lambda^2}
   &=-\frac{z^2}{\lambda^2(\lambda-z)},\\
 \frac1{(z-\lambda)^2}-\frac1{\lambda^2}
   &=\frac{2\lambda z-z^2}{\lambda^2(\lambda-z)^2}.
 \end{split}
\]
Each brace is $O_R(|\lambda|^{-3})$. The lattice sum of these
bounds is finite, so the grouped series converges absolutely, with
locally uniform tails. The remaining cutoff terms are locally bounded
and measurable. Hence so is $V_\Gamma$.

\begin{theorem}[Bounded displacement criterion]
\label{thm:bounded-displacement-field}
Suppose $\Gamma$ and $V_\Gamma$ are defined as above. Let
$\Xi=\Gamma-\nu_-+\nu_+$ be a finite modification, where
$0\le\nu_-\le\Gamma$ and $q=|\nu_+|-|\nu_-|$. Then
\begin{equation}
 \begin{gathered}
 \Xi\text{ is a zero set for }\F
 \quad\Longleftrightarrow\\
 \exists a\in\C:\quad
 \int_\C(1+|z|^2)^q
       e^{2V_\Gamma(z)+2\Re(az)}\dd(z)<\infty.
 \end{gathered}
 \label{eq:bounded-displacement-zero-test}
\end{equation}
The points need not be separated, and no convergence of angular
moments is required. The criterion is independent of the cutoff,
the exceptional set $A_0$, and the bounded matching.
\end{theorem}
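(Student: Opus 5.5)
We compare the canonical data of $\Gamma$ with those of the lattice. We take $\sigma$ as the reference function and reduce everything to Lemma~\ref{lem:finite-modifications} with $f$ a realizer of $\Gamma$. The target identity is
\[
 \log\Bigl|\frac{F(z)}{\sigma(z)}\Bigr|=V_\Gamma(z)+\Re(\alpha z+\beta z^2)+\text{(local log terms)}+O(1),
\]
where $F=P_{\Gamma,A_0}$ is a suitable genus-two canonical-type product over $\Gamma$. To build $F$, we write each factor as $(1-z/\mu_\lambda)e^{z/\mu_\lambda+z^2/2\mu_\lambda^2}$ and divide by the corresponding lattice factor. Since $\log(z-\mu)-\log(z-\lambda)=-\sum_{j\ge1}\delta^j/(j(z-\lambda)^j)$ when $|z-\lambda|>T>4M$, we expand the ratio in $\delta_\lambda$. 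The first-order term gives $-\delta_\lambda/(z-\lambda)$, and the normalizing factors give $-\delta_\lambda/\lambda-\delta_\lambda z/\lambda^2+O(z^2|\delta|/|\lambda|^3)$. The second-order terms give $-\delta^2/(2(z-\lambda)^2)+\delta^2/(2\lambda^2)$. Up to conventions, these are exactly the braces in \eqref{eq:bounded-displacement-field}.

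Several leftover pieces must be controlled. The $z^2$-coefficient discrepancy $\sum\delta_\lambda z^2(\mu_\lambda^{-2}-\lambda^{-2})/2$ is a convergent multiple of $z^2$, since it is $O(|\lambda|^{-3})$; it is absorbed in $\beta$. The cubic remainders $\sum_{|z-\lambda|>T}O(|\delta|^3|z-\lambda|^{-3})$ are bounded uniformly in $z$ by lattice summation. The near terms $|z-\lambda|\le T$ contribute $\log|z-\mu_\lambda|-\log|z-\lambda|$ for boundedly many $\lambda$. After removing $A_0$, this yields
\[
 \log|f(z)|-\tfrac{|z|^2}{2}=V_\Gamma(z)+\Re(\alpha z+\beta z^2)+\sum_{|z-\lambda|\le T}\log\frac{|z-\mu_\lambda|}{\max(|z-\lambda|,1)}+\log\operatorname{dist}(z,\mathcal L)\text{-type term}+O(1),
\]
using \eqref{eq:sigma-critical-line} to remove the Gaussian background.

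Next we handle the local singular terms with Lemma~\ref{lem:circular-fock-smoothing}. We replace $\log|f|$ by $S_h\log|f|$ with $h=1$. This replaces each $\log|z-\mu|$ by $\log\max(1,|z-\mu|)$ and $\log\operatorname{dist}(z,\mathcal L)$ by a bounded quantity. The resulting local terms are bounded above and below, with uniformly boundedly many of them, since $T$ and $M$ are fixed. Averaging the harmonic parts $V_\Gamma$ (off the cutoff) and $\Re(\alpha z+\beta z^2)$ changes them by $O(1)$. The cutoff region of $V_\Gamma$, where $K_j$ jumps, involves only boundedly many terms, each bounded because $|\delta|\le M$ and $|w|\ge T$ off the cutoff. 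Hence $e^{2S_1\log|f e^{az+bz^2}|-|z|^2}\asymp e^{2V_\Gamma+2\Re((\alpha+a)z+(\beta+b)z^2)}$.

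Combining this with Lemma~\ref{lem:finite-modifications} and \eqref{eq:allforms}: $\Xi$ is a zero set iff some $(a',b')$ makes $\int(1+|z|^2)^q e^{2V_\Gamma+2\Re(a'z+b'z^2)}dA<\infty$. The weight $(1+|z|^2)^q$ commutes with smoothing up to constants.

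The main obstacle is eliminating the quadratic parameter $b'$. For this we use that $V_\Gamma$ is bounded above by $o(|z|^2)$, and in fact $O(|z|)$ or $O(\log)$ on average, because the $\delta$-braces are $O(|\lambda|^{-2})$-type kernels with bounded coefficients, so $|V_\Gamma(z)|=O(|z|\log|z|)$. We also need a matching lower bound on a positive-density set of unit disks; we would try to get it from the bound on the Laplacian and the mean-value property of $V_\Gamma$ over annuli. With these bounds, if $b'\ne0$ there is a sector where $\Re(b'z^2)\ge c|z|^2$, and the integral diverges as in Proposition~\ref{prop:insertion-upper}. Thus $b'=0$. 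Finally, independence of the cutoff $T$, of $A_0$, and of the matching follows because each change alters $V_\Gamma$ by a bounded function plus $\Re(cz)$ plus $q'\log|z|$-type terms. A different matching, for example, changes labels along bounded chains, which yields a finite-modification-type change absorbed by $a$ and by Lemma~\ref{lem:finite-modifications}. Verifying this last point carefully is delicate, and we would treat it after the main equivalence.
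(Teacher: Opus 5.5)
Your main line is sound. It follows the paper's strategy in slightly different packaging: compare with $\sigma$, smooth on circles, apply Lemma~\ref{lem:circular-fock-smoothing}, and remove the quadratic freedom with an $O(|z|\log|z|)$ growth bound.

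The paper works differently in two places. It uses the genus-one relative product built from $E_1(z/\mu_\lambda)/E_1(z/\lambda)$, proves $S_h\log|\sigma Q|-|z|^2/2=\mathcal U^{\mathrm{rel}}_{\Gamma/\mathcal L}+O(1)$ for a capped potential, and only then reduces this to $V_\Gamma+\Re(cz)+O(1)$. It also removes $p_2$ from a realizer $e^PH_\Xi\in\mathcal F^2$ via the averaged Fock bound. You instead expand genus-two ratios pointwise against $V_\Gamma$; the extra convergent $\beta z^2$ is harmless because $b'$ stays free. You then remove $b'$ inside the integral.

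For that last step you already have what you need, so drop the proposed Laplacian detour. Your termwise estimate is an absolute-value bound, so it gives $V_\Gamma(z)\ge -C(1+|z|)\log(2+|z|)$ pointwise; this is \eqref{eq:bounded-field-growth} combined with \eqref{eq:bounded-second-order-reduction}. The smoothed integrand has no zeros, so this lower bound alone makes the integral diverge on any sector where $\Re(b'z^2)\ge c|z|^2$. No lattice cells as in Proposition~\ref{prop:insertion-upper} are needed. A Laplacian argument would be awkward anyway, because $V_\Gamma$ jumps across the circles $|z-\lambda|=T$.

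The one genuine gap is the final independence assertion, which you defer as delicate. It needs no separate argument. Your equivalence holds for every admissible cutoff, exceptional set and bounded matching, while the zero-set property on the left depends only on the multiset $\Xi$. Hence all these criteria coincide, and this is exactly how the paper concludes.

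The mechanism you suggest for the matching is also not correct as stated. Two bounded matchings of the same multiset need not differ by anything reducible to a finite modification: for $\Gamma=\mathcal L$, the identity and $\mu_\lambda=\lambda+\sqrt\pi$ differ at every label. Their fields do differ only by $\Re(cz)+O(1)$. That follows from Hadamard factorization of two order-two products with the same zeros together with the growth bound, not from Lemma~\ref{lem:finite-modifications}.
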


We next estimate the displacement potential.

For the displacement field, put $h=2M+2$ and $c_h(w)=\log\max(h,|w|)$. Consider
\begin{equation}
 \begin{split}
 \mathcal U^{\mathrm{rel}}_{\Gamma/\mathcal L}(z)=\sum_{\lambda\notin A_0}\bigg\{
 &c_h(z-\mu_\lambda)-c_h(z-\lambda)
                 -\log|\mu_\lambda/\lambda|\\
 &+\Re\left[z\left(\frac1{\mu_\lambda}-\frac1\lambda\right)\right]
 \bigg\}.
 \end{split}
 \label{eq:bounded-capped-field}
\end{equation}

\begin{lemma}[Displacement potential]
\label{lem:bounded-displacement-potential}
There is an entire function $H$ of order at most two with
$\mathcal Z(H)=\Gamma$ such that
\begin{equation}
 S_h\log|H|(z)-|z|^2/2=\mathcal U^{\mathrm{rel}}_{\Gamma/\mathcal L}(z)+O(1).
 \label{eq:bounded-generator-mean}
\end{equation}
The capped potential satisfies
\begin{equation}
 |\mathcal U^{\mathrm{rel}}_{\Gamma/\mathcal L}(z)|\le C(1+|z|)\log(2+|z|).
 \label{eq:bounded-field-growth}
\end{equation}
and, for a constant $c\in\C$,
\begin{equation}
 \mathcal U^{\mathrm{rel}}_{\Gamma/\mathcal L}(z)=V_\Gamma(z)+\Re(cz)+O(1).
 \label{eq:bounded-second-order-reduction}
\end{equation}
All error bounds are uniform in $z\in\C$.
\end{lemma}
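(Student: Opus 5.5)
Build $H$ as a perturbation of the square-lattice sigma function, then read off the three estimates termwise from the series \eqref{eq:bounded-capped-field}.

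\emph{Construction of $H$.} Set
\[
 H(z)=\sigma(z)\,\frac{\prod_{\lambda\in A_0}(z-\mu_\lambda)}{\prod_{\lambda\in A_0}(z-\lambda)}\,\Pi(z),
 \qquad
 \Pi(z)=\prod_{\lambda\notin A_0}\frac{1-z/\mu_\lambda}{1-z/\lambda}\,
 \exp\!\Big(z\Big(\frac1{\mu_\lambda}-\frac1\lambda\Big)\Big).
\]
The logarithm of each factor of $\Pi$ is
\[
 \log(1-z/\mu_\lambda)-\log(1-z/\lambda)+z(1/\mu_\lambda-1/\lambda),
\]
which is $O(|z|^2|\delta_\lambda|/|\lambda|^3)$ for $|\lambda|\gg|z|$. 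Since $\sum|\lambda|^{-3}<\infty$ over the lattice, $\Pi$ converges locally uniformly. Hence $H$ is entire with $\mathcal Z(H)=\Gamma$. For the order, $\sigma$ has order two, and $\Pi$ is a quotient of genus-one canonical products over point sets of density $O(r^2)$, so its order is at most two.

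\emph{The mean-value formula \eqref{eq:bounded-generator-mean}.} By Jensen's formula,
\[
 S_h\log|z-w|=c_h(z-w),\qquad S_h\Re(cz)=\Re(cz).
\]
Applying $S_h$ termwise (the tail is justified as in Section~\ref{sec:analytic-characterization}) turns $S_h\log|\Pi|$ exactly into the sum \eqref{eq:bounded-capped-field}. For the lattice part, \eqref{eq:sigma-critical-line} gives $\log|\sigma(z)|-|z|^2/2=\log\dist(z,\mathcal L)+O(1)$. Its circle mean over radius $h\ge2$ is $O(1)$, because the logarithmic singularity is integrable and the Gaussian term averages to $|z|^2/2+h^2/2$. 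The finitely many $A_0$ factors contribute
\[
 S_h\log\frac{|z-\mu|}{|z-\lambda|}=c_h(z-\mu)-c_h(z-\lambda)=O(1),
\]
since $|\mu-\lambda|\le M<h$. This proves \eqref{eq:bounded-generator-mean}.

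\emph{The growth bound \eqref{eq:bounded-field-growth}.} Split the sum at $|\lambda|\approx 2|z|+T$.
\begin{itemize}
\item \emph{Near terms} ($|\lambda|\le2|z|+T$). Because $h>2M$, $|c_h(z-\mu_\lambda)-c_h(z-\lambda)|\le |\delta_\lambda|/h\le 1$. Also $|\log|\mu_\lambda/\lambda||=O(1/|\lambda|)$ and $|z||1/\mu_\lambda-1/\lambda|=O(|z|/|\lambda|^2)$. Summing $1/|\lambda|$ and $|z|/|\lambda|^2$ over $|\lambda|\lesssim|z|$ gives $O(|z|)+O(|z|\log|z|)$.
\item \emph{First-order cancellation.} The terms $c_h(z-\mu_\lambda)-c_h(z-\lambda)\approx\Re(-\delta_\lambda/(z-\lambda))$ do not all cancel, and their number is $O(|z|^2)$. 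The crude bound $1$ per term is therefore too weak. The right estimate is $O(1/|z-\lambda|)$ for $|z-\lambda|>h$, and the lattice sum of $1/|z-\lambda|$ over $|\lambda|\lesssim|z|$ is $O(|z|)$.
\item \emph{Far terms} ($|\lambda|>2|z|+T$). Expand each brace in powers of $1/\lambda$. The first-order terms cancel by construction, leaving $O(|z|^2/|\lambda|^3)$, whose sum is $O(|z|)$.
\end{itemize}

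\emph{The reduction \eqref{eq:bounded-second-order-reduction}.} This is the main obstacle. Expand, for $|z-\lambda|>T$,
\[
 c_h(z-\mu)-c_h(z-\lambda)=-\Re\Big[\frac{\delta}{z-\lambda}+\frac{\delta^2}{2(z-\lambda)^2}\Big]+O\big(|\delta|^3|z-\lambda|^{-3}\big),
\]
and $-\log|\mu/\lambda|=-\Re[\delta/\lambda-\delta^2/(2\lambda^2)]+O(|\lambda|^{-3})$. With $K_1(w)=1/w$ and $K_2(w)=1/w^2$ for $|w|>T$, this matches the brace in \eqref{eq:bounded-displacement-field}, since $\delta_\lambda K_1(z-\lambda)=-\delta_\lambda/(\lambda-z)$.

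The cubic remainders sum to $O(1)$ uniformly in $z$, because $\sum_{|z-\lambda|>T}|z-\lambda|^{-3}$ is bounded independently of $z$. The $O(|\lambda|^{-3})$ errors are summable. The finitely many $\lambda$ with $|z-\lambda|\le T$ contribute $O(1)$ on both sides, since the capped differences and the cutoff terms are bounded there.

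The remaining discrepancy lies in the linear terms in $z$. The capped field carries $\Re[z(1/\mu_\lambda-1/\lambda)]=\Re[-z\delta_\lambda/\lambda^2+z\delta_\lambda^2/\lambda^3+\cdots]$, while $V_\Gamma$ carries $-\Re[\delta_\lambda z/\lambda^2]$. The difference is $\Re\big[z\sum_\lambda O(|\lambda|^{-3})\big]$, an absolutely convergent series, so it equals $\Re(cz)$ for a fixed constant $c$.

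The delicate point is uniformity for $|\lambda|$ comparable to $|z|$, where neither expansion in $1/\lambda$ nor in $1/(z-\lambda)$ alone controls the error. Only the combined second-order Taylor expansion in $\delta$ gives an error bounded by $|z-\lambda|^{-3}+|\lambda|^{-3}$, and that bound is summable uniformly in $z$.
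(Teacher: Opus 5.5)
Your proposal follows the paper's proof essentially step by step. It uses the same $H=\sigma Q$ with paired factors $E_1(z/\mu_\lambda)/E_1(z/\lambda)$ and the termwise identity $S_h\log|z-\alpha|=c_h(z-\alpha)$. It uses the same near/far split with the bound $C(1+|z-\lambda|)^{-1}$ for \eqref{eq:bounded-field-growth}. Finally, it uses the same second-order expansion in $\delta_\lambda$, which leaves the linear correction $c=\sum_{\lambda\notin A_0}\delta_\lambda^2/(\lambda^2\mu_\lambda)$.

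One justification does not work as written. You describe $\Pi$ as a quotient of genus-one canonical products, but $\sum_{\lambda\in\mathcal L\setminus\{0\}}|\lambda|^{-2}=\infty$. So neither $\prod E_1(z/\lambda)$ nor $\prod E_1(z/\mu_\lambda)$ converges; only the paired quotient does, and your order argument therefore rests on a false premise.

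The fix uses estimates you have already proved, and it is what the paper does. Subharmonicity together with \eqref{eq:bounded-generator-mean} and \eqref{eq:bounded-field-growth} gives
\[
 \log|H(z)|\le S_h\log|H|(z)\le\frac{|z|^2}{2}+C(1+|z|)\log(2+|z|),
\]
so $H$ has order at most two. Alternatively, since $\sum|\mu_\lambda^{-2}-\lambda^{-2}|<\infty$, you may rewrite $\Pi$ as a quotient of convergent genus-two products times $e^{c'z^2}$.

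A smaller point concerns the lattice term. Integrability of the logarithmic singularity bounds the circle mean of $\log\dist(\cdot,\mathcal L)$ for each fixed $z$. Uniformity in $z$ needs one more remark: either that this circle mean is a continuous $\mathcal L$-periodic function of $z$, or the paper's layer-cake count.
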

\begin{proof}
The terms in each brace of \eqref{eq:bounded-capped-field} are grouped.
Put $E_1(w)=(1-w)e^w$.
For $|z|\le R$ and $|\lambda|>2(R+M+h)$, the logarithm normalized
to vanish at zero satisfies
\[
 \log\frac{E_1(z/\mu_\lambda)}{E_1(z/\lambda)}
 =-\sum_{m\ge2}\frac{z^m}{m}
             (\mu_\lambda^{-m}-\lambda^{-m}).
\]
Integration along the segment from $\lambda$ to $\mu_\lambda$ gives
\[
 |\mu_\lambda^{-m}-\lambda^{-m}|
 \le\frac{mM}{(|\lambda|-M)^{m+1}},
 \qquad
 \left|\log\frac{E_1(z/\mu_\lambda)}{E_1(z/\lambda)}\right|
 \le\frac{MR^2}{(|\lambda|-M)^3}
       \frac1{1-R/(|\lambda|-M)}.
\]
The last bound is $O_R(|\lambda|^{-3})$. Thus the logarithmic tails
of the product
\[
 Q(z)=\prod_{\lambda\in A_0}\frac{z-\mu_\lambda}{z-\lambda}
       \prod_{\lambda\notin A_0}
             \frac{E_1(z/\mu_\lambda)}{E_1(z/\lambda)}
\]
converge normally. Since a compact set meets only finitely many labels,
$Q$ is meromorphic with zero and pole orders given by $\Gamma-\mathcal L$, including at
coincident points. For the distant factors, the caps are inactive and
the brace in \eqref{eq:bounded-capped-field} is the real part of the
same logarithm. The grouped series for $\mathcal U^{\mathrm{rel}}_{\Gamma/\mathcal L}$ therefore also
converges absolutely and locally uniformly.

Let $\sigma$ be the square-lattice sigma function with
$\sigma'(0)=1$. We recall that
\begin{equation}
 |\sigma(z)|e^{-|z|^2/2}\asymp\dist(z,\mathcal L);
 \label{eq:lattice-classical-sigma}
\end{equation}
see \cite[Chapter~5]{Zhu2012}. The normalization follows from
square-lattice quasi-periodicity:
\[
 \sigma(z+\omega)=-e^{\overline\omega z+|\omega|^2/2}\sigma(z),
 \qquad \omega\in\{\sqrt\pi,i\sqrt\pi\}.
\]
Indeed, $|z+\omega|^2=|z|^2+2\Re(\overline\omega z)+|\omega|^2$
makes $|\sigma(z)|e^{-|z|^2/2}$ periodic. Dividing by
$\dist(z,\mathcal L)$ gives a positive continuous function across
each simple zero. Its maximum and minimum on a fundamental square
prove \eqref{eq:lattice-classical-sigma}. Thus $H=\sigma Q$ is
entire with zero multiset $\Gamma$.

The identity $S_h\log|z-\alpha|=c_h(z-\alpha)$, applied term by
term to the normally convergent tail, gives
\[
 S_h\log|Q|(z)=\mathcal U^{\mathrm{rel}}_{\Gamma/\mathcal L}(z)+O(1).
\]
The finite logarithmic singularities are integrable. For each exceptional
factor, the mean is $c_h(z-\mu_\lambda)-c_h(z-\lambda)$.
Since $c_h$ is $h^{-1}$-Lipschitz, their sum has modulus at most
$h^{-1}\sum_{\lambda\in A_0}|\delta_\lambda|$.

By \eqref{eq:lattice-classical-sigma},
$S_h\log|\sigma|(z)-|z|^2/2=O(1)$. To see the uniformity,
the quadratic term has mean $|z|^2/2+h^2/2$, and the logarithmic
distance is bounded above by a constant. For
$0<u<\min(1,h/2)$, only the uniformly finitely many lattice points
in $D(z,h+1)$ can be within $u$ of the averaging circle. Each
subtends an arc of angular length at most $C_hu$, so
\[
 \frac1{2\pi}\bigl|\{t:\dist(z+he^{it},\mathcal L)<u\}\bigr|
 \le C_hu.
\]
The layer-cake identity
\[
 \frac1{2\pi}\int_0^{2\pi}
    \bigl[-\log\min(1,\dist(z+he^{it},\mathcal L))\bigr]\,dt
 =\int_0^\infty\frac1{2\pi}
    \bigl|\{t:\dist(z+he^{it},\mathcal L)<e^{-v}\}\bigr|\,dv
\]
now gives a uniform lower bound for the circular mean of the logarithmic
distance. This proves \eqref{eq:bounded-generator-mean}.

We next prove \eqref{eq:bounded-field-growth}.
We use the elementary lattice estimates
\[
 \begin{split}
 \sum_{|\lambda-w|\le R}\frac1{1+|\lambda-w|}&\le C(1+R),\\
 \sum_{0<|\lambda|\le R}\frac1{|\lambda|}&\le C(1+R),
 \qquad
 \sum_{0<|\lambda|\le R}\frac1{|\lambda|^2}
       \le C\log(2+R),\\
 \sum_{|\lambda-w|>R}\frac1{|\lambda-w|^3}&\le \frac C R
       \quad(R\ge1).
 \end{split}
\]
To obtain these bounds uniformly in $w$, place the disjoint lattice
squares in each unit-width annulus inside an annulus enlarged by their
diameter. There are $O(j+1)$ such squares at radius $j$; summation
over $j$ gives the estimates.

Write $r=|z|\ge1$. When $|\lambda|>2r+2h+2M$, the product estimate
above, with $R=r$, bounds each grouped term in
\eqref{eq:bounded-capped-field} by $Cr^2|\lambda|^{-3}$. Their sum
is at most $Cr^2/(2r+2h+2M)=O(r)$. For the other terms we use
\[
 \begin{split}
 |c_h(z-\mu_\lambda)-c_h(z-\lambda)|
      &\le C(1+|z-\lambda|)^{-1},\\
 |\log|\mu_\lambda/\lambda||&\le C|\lambda|^{-1},\qquad
 |\mu_\lambda^{-1}-\lambda^{-1}|\le C|\lambda|^{-2}.
 \end{split}
\]
The first bound follows from the Lipschitz bound for $c_h$ when
$|z-\lambda|\le2(M+h)$ and from the mean-value estimate for the
logarithm otherwise. For the last two, use
$|\delta_\lambda/\lambda|<1/2$ and
$|\mu_\lambda|\ge|\lambda|/2$ outside $A_0$. In the present
range $|\lambda|\le2r+2h+2M$, also $|z-\lambda|\le3r+2h+2M$.
The lattice estimates therefore show that the first two bounds sum
to $O(r)$, while the third, multiplied by $r$, sums to
$O(r\log(2+r))$. This proves \eqref{eq:bounded-field-growth}.
Subharmonicity and \eqref{eq:bounded-generator-mean} give
\[
 \log|H(z)|\le\frac{|z|^2}{2}
                 +C(1+|z|)\log(2+|z|),
\]
so $H$ has order at most two.

We finally compare $\mathcal U^{\mathrm{rel}}_{\Gamma/\mathcal L}$ with $V_\Gamma$. Since $T-M>h$ and $M/T<1/4$,
both caps are inactive when $|z-\lambda|>T$. Expanding
$\log(1-\delta_\lambda/(z-\lambda))$ gives
\[
 c_h(z-\mu_\lambda)-c_h(z-\lambda)
 =-\Re\left(\frac{\delta_\lambda}{z-\lambda}
       +\frac{\delta_\lambda^2}{2(z-\lambda)^2}\right)
       +O(|z-\lambda|^{-3}).
\]
The remainders sum to at most $CM^3/T$, uniformly in $z$.
In $|z-\lambda|\le T$, the kernels vanish, there are at most
$C(1+T)^2$ labels, and each capped difference is at most $M/h$.
These terms also give a bounded error. At the origin,
\[
 -\log|\mu_\lambda/\lambda|
 =-\Re\frac{\delta_\lambda}\lambda
       +\frac12\Re\frac{\delta_\lambda^2}{\lambda^2}
       +O(|\lambda|^{-3}).
\]
Here the error is bounded by $CM^3|\lambda|^{-3}$ and is summable,
since $|\delta_\lambda/\lambda|<1/2$ outside $A_0$. The linear
compensation terms differ by $\Re(cz)$, where
\[
 c=\sum_{\lambda\notin A_0}
   \left(\frac1{\mu_\lambda}-\frac1\lambda
                         +\frac{\delta_\lambda}{\lambda^2}\right)
  =\sum_{\lambda\notin A_0}
                \frac{\delta_\lambda^2}{\lambda^2\mu_\lambda}
\]
is absolutely convergent: $|\mu_\lambda|\ge|\lambda|/2$ makes
its summands at most $2M^2|\lambda|^{-3}$. The identity for $c$
follows from
\[
 \frac1{\lambda+\delta_\lambda}-\frac1\lambda
          +\frac{\delta_\lambda}{\lambda^2}
 =\frac{\delta_\lambda^2}{\lambda^2(\lambda+\delta_\lambda)}.
\]
Combining these estimates proves \eqref{eq:bounded-second-order-reduction}.
\end{proof}

\begin{proof}[Proof of Theorem~\ref{thm:bounded-displacement-field}]
Take $H$ and $\mathcal U^{\mathrm{rel}}_{\Gamma/\mathcal L}$ from Lemma~\ref{lem:bounded-displacement-potential}.
Choose polynomials $P_\pm$ with zero multisets $\nu_\pm$ and put
$H_\Xi=HP_+/P_-$. The function is entire and its zero multiset is
$\Xi$. Since $c_h(z-\alpha)=\tfrac12\log(1+|z|^2)+O(1)$ for
fixed $\alpha$, we obtain
\begin{equation}
 S_h\log|H_\Xi|(z)-|z|^2/2
  =\mathcal U^{\mathrm{rel}}_{\Gamma/\mathcal L}(z)+\tfrac q2\log(1+|z|^2)+O(1).
 \label{eq:bounded-modified-generator-mean}
\end{equation}
It follows that $H_\Xi$ has order at most two as well.

Suppose $f\in\F\setminus\{0\}$ has zero multiset $\Xi$.
Hadamard factorization for $f$ and $H_\Xi$, both of order at most
two, gives $f=e^P H_\Xi$ with $\deg P\le2$.
Averaging \eqref{eq:fock-point-evaluation} gives
\[
 S_h\log|f|(z)\le\log\|f\|_2+\tfrac12(|z|^2+h^2).
\]
Since $S_h\Re P=\Re P$, subtract
\eqref{eq:bounded-modified-generator-mean} and apply
\eqref{eq:bounded-field-growth}. This gives
$\Re P(z)\le C(1+|z|)\log(2+|z|)$; the finite-modification term
is $O(\log(2+|z|))$. If $P(z)=p_2z^2+p_1z+p_0$ and $p_2\ne0$,
choose $\theta$ with $p_2e^{2i\theta}=|p_2|$. Then
\[
 \Re P(re^{i\theta})\ge |p_2|r^2-|p_1|r-|p_0|,
\]
contrary to the preceding bound as $r\to\infty$. Thus $p_2=0$,
and $f$ is a constant multiple of $e^{az}H_\Xi$ for some $a\in\C$.

Harmonicity of $\Re(az)$ and
\eqref{eq:bounded-modified-generator-mean} give
\[
 e^{2S_h\log|e^{az}H_\Xi|(z)-|z|^2}
 \asymp(1+|z|^2)^q e^{2\mathcal U^{\mathrm{rel}}_{\Gamma/\mathcal L}(z)+2\Re(az)}.
\]
By Lemma~\ref{lem:circular-fock-smoothing}, this integral is finite exactly
when $e^{az}H_\Xi\in\F$. Since every Fock function with zero
multiset $\Xi$ has this form up to a constant, we have proved the
criterion with $\mathcal U^{\mathrm{rel}}_{\Gamma/\mathcal L}$ in place of $V_\Gamma$.

Substitute \eqref{eq:bounded-second-order-reduction} and absorb $c$
into $a$. This proves \eqref{eq:bounded-displacement-zero-test}.
Equivalence with the zero-set property also proves independence of
the cutoff, exceptional set, and matching.
\end{proof}

\subsection{Radial displacements}
\label{sec:smooth-radial-displacements}

For a smooth radial displacement, the field is radial up to a bounded
error. This reduces the zero-set criterion to a one-dimensional integral.

\begin{corollary}
\label{cor:smooth-radial-displacements}
Suppose $b:[R_0,\infty)\to\mathbb R$ is $C^2$ and satisfies
\begin{equation*}
 |b(r)|\le C,\qquad |b'(r)|\le C/r,
       \qquad |b''(r)|\le C/r^2.
 \end{equation*}
Set $\mu_\lambda=(1+b(|\lambda|)/|\lambda|)\lambda$ outside a
finite set of lattice labels, and replace the remaining points
arbitrarily. Increase $R_0$, if necessary, so that
$R_0>2\sup|b|+2$ and all exceptional labels lie in
$|\lambda|\le R_0$. Define
\[
 B(r)=\sum_{R_0<|\lambda|\le r}
                   \log(1+b(|\lambda|)/|\lambda|).
\]
For a finite modification $\Xi$ of this array with net multiplicity
$q$, we have
\begin{equation}
 \Xi\text{ is a zero set for }\F
 \quad\Longleftrightarrow\quad
 \int_1^\infty r^{2q+1}e^{-2B(r)}\,dr<\infty.
 \label{eq:smooth-radial-zero-test}
\end{equation}
This is equivalent to
\begin{equation}
 \int_{R_0}^\infty r^{2q+1}
  \exp\left\{-4\int_{R_0}^r b(t)\,dt
                   +2\int_{R_0}^r\frac{b(t)^2}{t}\,dt\right\}\,dr
       <\infty.
 \label{eq:smooth-radial-drift}
\end{equation}
If the integral diverges, the multiset is a uniqueness multiset.
\end{corollary}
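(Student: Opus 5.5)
The plan is to apply Theorem~\ref{thm:bounded-displacement-field} with $\delta_\lambda=b(|\lambda|)\lambda/|\lambda|$, which is bounded by $C$, and then show that the displacement field is radial up to a bounded error. This reduces the planar integral to a one-variable one.

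\textbf{Step 1: the field is radial up to $O(1)$.} Rather than working with $V_\Gamma$, I would compare $\mathcal U^{\mathrm{rel}}_{\Gamma/\mathcal L}$ from Lemma~\ref{lem:bounded-displacement-potential}, or equivalently $S_h\log|H|-|z|^2/2$, with the radial function $-B(|z|)$. The natural device is the circular mean over $|z|=r$. For each lattice term, Jensen's formula gives the mean of $c_h(z-\mu_\lambda)-c_h(z-\lambda)-\log|\mu_\lambda/\lambda|$, up to $O(1)$ contributions from labels near the circle. The result is $-\log|\mu_\lambda/\lambda|$ for $|\lambda|>r$ and $0$ for $|\lambda|<r$. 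The linear terms have mean zero. This shows that the circular mean equals $\sum_{|\lambda|\le r}\bigl(-\log(1+b/|\lambda|)\bigr)+O(1)=-B(r)+O(1)$.

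The harder part is to show that the angular oscillation $\mathcal U^{\mathrm{rel}}(z)-\text{mean}$ is $O(1)$ uniformly. For this I would use the expansion of $V_\Gamma$. The first-order kernel $\sum\delta_\lambda K_1(z-\lambda)$ with $\delta_\lambda=b(|\lambda|)\lambda/|\lambda|$ is a discretization of a smooth radial vector field. I would compare the lattice sum with the corresponding area integral $\pi^{-1}\int b(|w|)\frac{w}{|w|}\frac{dA(w)}{z-w}$ by cell-by-cell Taylor estimates. The hypotheses $|b'|\le C/r$ and $|b''|\le C/r^2$ make the second-order Taylor error summable, since the cell discrepancy for a smooth integrand is controlled by second derivatives, which are $O(|\lambda-z|^{-3})$ away from $z$. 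The continuous integral is exactly radial plus harmonic. Radial symmetry of the density makes its logarithmic potential radial, and the harmonic parts $\Re(cz)$ are absorbed into $a$. The $\delta_\lambda^2$ term is handled the same way.

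I expect \textbf{Step 1 to be the main obstacle}: obtaining a uniform $O(1)$ bound for the lattice-versus-integral discrepancy, especially near $|\lambda|\approx|z|$ and near $R_0$. Near the diagonal I would fall back on the crude bound already used in Lemma~\ref{lem:bounded-displacement-potential}, which says that at most $C(1+T)^2$ labels each contribute $O(1)$. Far away I would use the summable cubic decay.

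\textbf{Step 2: the criterion.} With $\mathcal U^{\mathrm{rel}}=-B(|z|)+\Re(cz)+O(1)$, condition \eqref{eq:bounded-displacement-zero-test} becomes the existence of $a\in\C$ such that $\int(1+|z|^2)^q e^{-2B(|z|)+2\Re(az)}dA<\infty$. Averaging over circles gives $I_0(2|a|r)\ge1$, so the choice $a=0$ is optimal. Polar integration then yields \eqref{eq:smooth-radial-zero-test}. The uniqueness statement follows from \eqref{eq:jensen-radial-norm-lower}, since $D_\Xi(r)=-B(r)+q\log r+O(1)$ by the mean computation above and \eqref{eq:finite-change-jensen}. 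Divergence therefore forces every $f\in\mathcal I(\Xi)$ to vanish.

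\textbf{Step 3: the drift form.} To pass from \eqref{eq:smooth-radial-zero-test} to \eqref{eq:smooth-radial-drift}, expand $\log(1+b/t)=b/t-b^2/(2t^2)+O(t^{-3})$. Then compare the lattice sum over $R_0<|\lambda|\le r$ with $\int_{R_0}^r(\cdot)\,2t\,dt$. This lattice-to-integral comparison uses the smoothness of $b$ and a lattice-point error of size $O(1)$. It converts the sum into $2\int b-\int b^2/t+O(1)$, and multiplying by $-2$ gives the exponent in \eqref{eq:smooth-radial-drift}.

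The annular counting error here is of size $O(r)$ points, each weighted by $O(1/r)$, which only gives $O(1)$ when combined with the mean-zero oscillation of the lattice count $n_{\mathcal L}(t)-t^2$ integrated against the slowly varying weight $b(t)/t$. Partial summation with $|b'|\le C/r$ is what I would use to control this.
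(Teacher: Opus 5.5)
Your Steps 1 and 2 are, in substance, the paper's argument. Both compare the lattice sum with an area integral cell by cell under Hessian control, use rotation invariance of the continuous object, identify the radial profile through Jensen and circular means, use Jensen's inequality in the angle to reduce to $a=0$, and use \eqref{eq:jensen-radial-norm-lower} for uniqueness. The paper uses the exact even kernel $F_z^{\mathrm{ev}}$, built from a smoothed logarithm, and the symmetry $\mu_{-\lambda}=-\mu_\lambda$ to remove linear terms. You linearize via $V_\Gamma$ and absorb $\Re(cz)$ into $a$, which is also fine.

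Two details need repair in Step 1. First, the integral $\int b(|w|)\frac{w}{|w|}\frac{dA(w)}{z-w}$ diverges at infinity, so the compensators $1/w+z/w^2$ must be kept; the $z/w^2$ part is what produces a harmonic error $\Re(c'z)$. Second, the Hessian of $w\mapsto\delta(w)K_1(z-w)$ is not $O(|z-w|^{-3})$. Derivatives falling on $\delta$ give terms $(1+|w|)^{-1}|z-w|^{-2}$ and $(1+|w|)^{-2}|z-w|^{-1}$. Their lattice sums are bounded uniformly in $z$ only after an estimate such as the paper's H\"older bound, using $(1+|w|)^{-1}\in L^3(\C)$ and $(1+|w|)^{-2}\in L^{3/2}(\C)$. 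A small slip: in your circular mean, the term $-\log|\mu_\lambda/\lambda|$ comes from $|\lambda|<r$, not $|\lambda|>r$, although your final formula is correct.

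\textbf{The genuine gap is Step 3.} With $n_{\mathcal L}(t)=t^2+E(t)$ and $f(t)=\log(1+b(t)/t)$, partial summation gives
\[
 \sum_{R_0<|\lambda|\le r}f(|\lambda|)=2\int_{R_0}^rf(t)\,t\,dt
 +\bigl[fE\bigr]_{R_0}^{r}-\int_{R_0}^rf'(t)E(t)\,dt.
\]
The information you bring is $|f'(t)|\le Ct^{-2}$ (from $|b'|\le C/r$) and $E(t)=O(t)$. That bounds the last integral only by $O(\log r)$, not $O(1)$. Already for constant $b$ you need $\int_{R_0}^rE(t)t^{-2}\,dt=O(1)$. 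This is a cancellation property of the circle-problem remainder, which you name (``mean-zero oscillation'') but do not establish. A single partial summation cannot produce it. A second one would require $|b''|\le C/r^2$ together with a power-saving bound on $\int_0^TE$.

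This is not cosmetic. An $O(\log r)$ error multiplies $e^{-2B(r)}$ by a power of $r$. That destroys the equivalence of \eqref{eq:smooth-radial-zero-test} and \eqref{eq:smooth-radial-drift} exactly at the critical exponent, which is where the corollary has content.

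There are two ways to close the gap. The paper avoids lattice-point counting by working in two dimensions. With $u=b/r$, the function $\psi(w)=\log(1+b(|w|)/|w|)$ has radial Hessian eigenvalues $(\log(1+u))''$ and $(\log(1+u))'/r$, both $O(r^{-3})$; this is where $b''$ is used. Midpoint quadrature on the lattice squares, with the centroid identity removing the linear Taylor term, therefore has total error at most $C\sum_\lambda(1+|\lambda|)^{-3}$. The union of squares with centres in $|w|\le r$ differs from the disk only in an annulus of area $O(r)$, on which $\psi=O(1/r)$.

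Alternatively, your own Step 1 already contains Step 3. Averaging over circles $|w|=s$, the compensated first- and second-order area integrals equal $2\int_{R_0}^{|z|}b(t)\,dt$ and $-\int_{R_0}^{|z|}b(t)^2t^{-1}\,dt$, up to $O(1)$ from the cutoff region. Hence $V_\Gamma(z)=-2\int_{R_0}^{|z|}b+\int_{R_0}^{|z|}b^2/t+\Re(c'z)+O(1)$. Comparing circular means with $-B(r)+O(1)$ then gives $B(r)=2\int_{R_0}^rb-\int_{R_0}^rb^2/t+O(1)$, with no lattice-point count at all.
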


\begin{proof}
We prove $\mathcal U^{\mathrm{rel}}_{\Gamma/\mathcal L}(z)=-B(|z|)+O(1)$ uniformly in $z$.
Choose $A_0$ to contain every lattice point in a sufficiently large
disk and all exceptional labels. This changes the tail of $B$ by
a constant. Outside $A_0$, the identity
$\mu_{-\lambda}=-\mu_\lambda$ cancels the linear compensation
in opposite pairs. No symmetry is required of the exceptional points.

Extend $v(w)=b(|w|)w/|w|$ to a bounded radial $C^2$ field that
vanishes near zero and satisfies $T(w):=w+v(w)\ne0$ for $w\ne0$.
Such an extension follows by interpolating the positive radial factor
$1+b(r)/r$ to one. For $r=|w|>R_0$ and $e=w/r$,
\[
 Dv=b'(r)e\otimes e+\frac{b(r)}r(I-e\otimes e),\qquad
 |D^2v|\le C\left(|b''(r)|+\frac{|b'(r)|}r
                                      +\frac{|b(r)|}{r^2}\right).
\]
Thus the extension satisfies
\begin{equation}
 |Dv(w)|\le C(1+|w|)^{-1},\qquad
 |D^2v(w)|\le C(1+|w|)^{-2}.
 \label{eq:radial-field-derivatives}
\end{equation}
It changes only finitely many lattice points.

Let $\eta$ be a smooth radial probability density supported in
$|w|\le L$, and put $\phi=\eta*\log|\cdot|$. If
$\eta(w)=\eta_0(|w|)$, the circular mean identity gives
\[
 \phi(w)=2\pi\int_0^L\eta_0(t)t\log\max(|w|,t)\,dt.
\]
Thus $\phi$ is smooth, agrees with $\log|w|$ for $|w|>L$, and
satisfies $|D^j\phi(w)|\le C_j(1+|w|)^{-j}$ for $j=1,2,3$.
The bounded, compactly supported difference $\phi-c_h$ changes
\eqref{eq:bounded-capped-field} by $O(1)$, since the array has
uniformly bounded local counts.

Set
\[
 \begin{split}
 F_z(w)&=\phi(z-T(w))-\phi(z-w)-\phi(T(w))+\phi(w),\\
 F_z^{\mathrm{ev}}(w)&=\tfrac12(F_z(w)+F_z(-w)).
 \end{split}
\]
For $|w|$ sufficiently large compared with $|z|$, all four kernels
are logarithms. Since $T(-w)=-T(w)$,
\[
 F_z^{\mathrm{ev}}(w)
 =-\Re\sum_{m\ge1}\frac{z^{2m}}{2m}
                    \big(T(w)^{-2m}-w^{-2m}\big)
 =O_z((1+|w|)^{-3}).
\]
If $M_v=\sup|v|$, real integration along the radial segment gives
\[
 |T(w)^{-2m}-w^{-2m}|
       \le 2mM_v(|w|-M_v)^{-2m-1}.
\]
For $|w|>2|z|+2M_v+L$, the resulting series is bounded by
$C|z|^2(|w|-M_v)^{-3}$. Thus the lattice sum and area integral of
$F_z^{\mathrm{ev}}$ converge absolutely. The linear compensation
in \eqref{eq:bounded-capped-field} cancels on $\lambda,-\lambda$.
Also, $\phi(T(\lambda))-\phi(\lambda)
=\log|\mu_\lambda/\lambda|$ outside a fixed disk. The remaining
finitely many terms are bounded uniformly in $z$, since $\phi$ has
bounded gradient. Hence
\begin{equation}
 \mathcal U^{\mathrm{rel}}_{\Gamma/\mathcal L}(z)=\sum_{\lambda\in\mathcal L}
                            F_z^{\mathrm{ev}}(\lambda)+O(1).
 \label{eq:radial-symmetrized-field}
\end{equation}

To compare the sum with area measure, write
$G_z(w)=\phi(z-T(w))-\phi(z-w)$, so that $F_z=G_z-G_0$.
With $A=I+Dv(w)$, $x=z-w$, and $y=z-T(w)$, the chain rule gives
\[
 D_w^2G_z
 =A^{\mathsf T}D^2\phi(y)A-D^2\phi(x)
                 -\sum_{j=1}^2\partial_j\phi(y)D^2v_j(w).
\]
The difference $D^2\phi(y)-D^2\phi(x)$ is bounded by the
third-derivative estimate along the segment joining $x$ to $y$.
All its points are a bounded distance from $x$. Expanding the two
factors $A$ and using \eqref{eq:radial-field-derivatives} gives
\begin{equation*}
 \begin{split}
 |D_w^2[\phi(z-T(w))-\phi(z-w)]|
 \le C\big[&(1+|z-w|)^{-3}\\
 &+(1+|w|)^{-1}(1+|z-w|)^{-2}\\
 &+(1+|w|)^{-2}(1+|z-w|)^{-1}\big].
 \end{split}
 \end{equation*}
Applying the same estimate with $z=0$ shows that the constant
terms in $F_z$ contribute at most $C(1+|w|)^{-3}$.

Put $p_j(w)=(1+|w|)^{-j}$. Since
$p_3\in L^1(\C)$, $p_1\in L^3(\C)$, and
$p_2\in L^{3/2}(\C)$, H\"older's inequality gives
\[
 \int_\C p_1(w)p_2(z-w)\dd(w)
 \le\|p_1\|_3\|p_2\|_{3/2},\qquad
 \int_\C p_2(w)p_1(z-w)\dd(w)
 \le\|p_2\|_{3/2}\|p_1\|_3.
\]
These bounds and translation invariance of the $p_3$ integral are
uniform in $z$. Each weight is comparable at any two points of a
lattice square, so they also give
\[
 \sup_z\sum_{\lambda\in\mathcal L}
                \sup_{Q_\lambda}|D^2F_z^{\mathrm{ev}}|<\infty.
\]
Taylor's formula with integral remainder and the centroid identity
$\int_{Q_\lambda}(w-\lambda)\dd(w)=0$ now imply
\[
 \left|F_z^{\mathrm{ev}}(\lambda)
      -\pi^{-1}\int_{Q_\lambda}F_z^{\mathrm{ev}}(w)\dd(w)\right|
   \le C\sup_{Q_\lambda}|D^2F_z^{\mathrm{ev}}|.
\]
Summation and \eqref{eq:radial-symmetrized-field} give
\begin{equation}
 \mathcal U^{\mathrm{rel}}_{\Gamma/\mathcal L}(z)=\pi^{-1}\int_\C F_z^{\mathrm{ev}}(w)\dd(w)+O(1),
 \label{eq:radial-area-comparison}
\end{equation}
uniformly in $z$. Write
$A(z)=\pi^{-1}\int_\C F_z^{\mathrm{ev}}(w)\dd(w)$. For every rotation
$R$, $T(Rw)=RT(w)$ and $\phi(Rw)=\phi(w)$, so changing variables
shows that $A(Rz)=A(z)$.

To identify $A$, first consider any bounded lattice displacement and
set
\[
 B(r)=\sum_{\substack{\lambda\notin A_0\\|\lambda|\le r}}
                         \log|\mu_\lambda/\lambda|.
\]
For the nonexceptional labels, write
$j_r(a)=\log^+(r/|a|)$. If both $|\lambda|$ and $|\mu_\lambda|$
are at most $r$, then
$j_r(\mu_\lambda)-j_r(\lambda)=-\log|\mu_\lambda/\lambda|$;
if both exceed $r$, the difference is zero. A discrepancy with
$-B(r)$ can therefore occur only when
$\big||\lambda|-r\big|\le M$. This annulus contains $O(r)$
lattice points, and each discrepancy is $O(1/r)$, because
$\big||\mu_\lambda|-|\lambda|\big|\le M$. The exceptional
labels give a constant once $r$ exceeds all their old and new radii:
each replacement preserves the number of labels. Hence
\[
 J_\Gamma(r)-J_{\mathcal L}(r)=-B(r)+O(1).
\]
The lattice term $D_{\mathcal L}(r)$ is bounded. Indeed, Jensen's
formula for $\sigma$, with $\sigma'(0)=1$, and
\eqref{eq:lattice-classical-sigma} identify it, up to $O(1)$, with
the angular mean of $\log\dist(re^{i\theta},\mathcal L)$.
The upper bound follows from the bounded covering radius of the
lattice. For $r\ge2$ and $0<t<1$, only $O(r)$ lattice points can
lie within distance $t$ of the circle, and each subtends an angular
interval of length at most $Ct/r$. Thus
\[
 \frac1{2\pi}\left|\{\theta:
       \dist(re^{i\theta},\mathcal L)<t\}\right|\le Ct.
\]
The layer-cake formula bounds the mean of
$-\log\min(1,\dist(re^{i\theta},\mathcal L))$ by
$C\int_0^1dt<\infty$. On $1\le r\le2$, the same bound follows
from the finitely many relevant lattice points. We have proved
\begin{equation}
 D_\Gamma(r)=-B(r)+O(1).
 \label{eq:bounded-radial-jensen-drift}
\end{equation}
The annular count also proves, for every fixed $L$,
\begin{equation}
 \sup_{|r-r'|\le L}|B(r)-B(r')|<\infty,
 \label{eq:bounded-radial-local-drift}
\end{equation}
since $O(r)$ summands change and each is $O(1/r)$.

Let
$S_\eta u(z)=\int_\C\eta(w)u(z-w)\dd(w)$ and let
$\mathcal M u(r)$ denote the angular mean at radius $r$.
The argument for \eqref{eq:bounded-generator-mean}, with $\phi$
in place of $c_h$, gives
\[
 S_\eta\log|H|(z)-|z|^2/2=\mathcal U^{\mathrm{rel}}_{\Gamma/\mathcal L}(z)+O(1).
\]
The lattice term is uniformly bounded because the locally integrable
periodic function $\log\dist(\cdot,\mathcal L)$ has bounded
convolution with the smooth compactly supported kernel $\eta$.
Rotation invariance of $\eta$ gives
$\mathcal M S_\eta u=S_\eta\mathcal M u$, where the angular
mean is regarded as a radial function on the plane. Jensen's
formula says $\mathcal M\log|H|(r)=J_\Gamma(r)+c_H$.
Moreover,
\[
 S_\eta(|\cdot|^2/2)(z)
       =|z|^2/2+\tfrac12\int_\C |w|^2\eta(w)\dd(w),
\]
since the first moment of $\eta$ is zero. Angularly averaging
\eqref{eq:radial-area-comparison} therefore yields
\[
 A(r)=S_\eta D_\Gamma(r)+O(1)=-B(r)+O(1).
\]
Indeed, $||z-w|-|z||\le L$ on the support of $\eta$, so
\eqref{eq:bounded-radial-jensen-drift} and
\eqref{eq:bounded-radial-local-drift} give the last equality for
large $r$. The fields are bounded on the remaining compact disk, so
\begin{equation}
 \mathcal U^{\mathrm{rel}}_{\Gamma/\mathcal L}(z)=-B(|z|)+O(1).
 \label{eq:smooth-radial-potential}
\end{equation}

Apply the proof of Theorem~\ref{thm:bounded-displacement-field}
with the capped field. In polar coordinates, its integral is,
up to fixed positive factors,
\[
 \int_1^\infty r^{2q+1}e^{-2B(r)}
       \left(\int_0^{2\pi}e^{2\Re(are^{i\theta})}\,d\theta\right)dr.
\]
Jensen's inequality bounds the inner integral below by $2\pi$,
and it equals $2\pi$ when $a=0$. Finiteness for some $a$ is thus
equivalent to \eqref{eq:smooth-radial-zero-test}.

For the uniqueness assertion, combine
\eqref{eq:bounded-radial-jensen-drift} with
\eqref{eq:finite-change-jensen} to obtain
$D_\Xi(r)=-B(r)+q\log r+O(1)$ for large $r$.
The lower bound \eqref{eq:jensen-radial-norm-lower} now shows that
divergence excludes every nonzero Fock function vanishing on $\Xi$,
including functions with additional zeros.

Finally, put $u(r)=b(r)/r$ and $\psi(w)=\log(1+u(|w|))$
outside a fixed disk, extending $\psi$ smoothly inside. The
hypotheses imply
\[
 u'=\frac{b'}r-\frac b{r^2}=O(r^{-2}),\qquad
 u''=\frac{b''}r-\frac{2b'}{r^2}+\frac{2b}{r^3}=O(r^{-3}).
\]
Since $|u|<1/2$ there,
$(\log(1+u))'=O(r^{-2})$ and
$(\log(1+u))''=O(r^{-3})$. The radial Hessian has eigenvalues
$(\log(1+u))''$ and $(\log(1+u))'/r$, so
$|D^2\psi(w)|\le C(1+|w|)^{-3}$. Midpoint quadrature therefore
has total error bounded by
$C\sum_{\lambda\in\mathcal L}(1+|\lambda|)^{-3}<\infty$.
The union of cells with centers in $|w|\le r$ differs from that
disk only in a fixed-width boundary annulus. There $\psi=O(1/r)$,
and the annulus has area $O(r)$, so the boundary error is $O(1)$.
Consequently,
\[
 B(r)=2\int_{R_0}^r t\log(1+b(t)/t)\,dt+O(1)
     =2\int_{R_0}^r b(t)\,dt
             -\int_{R_0}^r\frac{b(t)^2}{t}\,dt+O(1).
\]
Indeed, $t[\log(1+u)-u+u^2/2]=O(t|u|^3)=O(t^{-2})$,
whose integral is bounded independently of the upper limit.
Exponentiating the bounded error proves
\eqref{eq:smooth-radial-drift}.
\end{proof}

\section{Circular arrays and angular criticality}
\label{sec:circular-arrays}
\label{sec:circular-geometry}

For circular arrays, the radial placement and the angular rearrangement
produce competing lower-order terms after the quadratic growth is fixed.
At the endpoint, integrability depends on the measure of directions close
to the angular maximum. The distribution function of these near-maximizing
directions treats both finite-order and infinitely flat maxima. We first
establish radial and sampling estimates, then a quotient rigidity result
that will turn failure of the norm test into uniqueness. The proofs of
the auxiliary sampling, growth, and kernel estimates are collected in
Appendix~\ref{app:circular-estimates}; the geometric construction and the
proof of the angular criterion remain here.

\subsection{Circular arrays}
\label{sec:circular-radial-data}

The next two lemmas give the radial growth and circle sampling formulas
used in Theorem~\ref{thm:smooth-angular-profile} and
Proposition~\ref{prop:pair-moment-sharpness}.

\begin{lemma}[Radial data for circular arrays]
\label{lem:joint-radial-data}
Let $R_k>0$ satisfy $R_k=k+O(1)$. Let $\Lambda$ consist of
$2k$ points, counted with multiplicity, on each circle $|z|=R_k$,
with arbitrary angles. Set
\[
 T_n=\sum_{k\le n}2k\log(R_k/k).
\]
Then $n_\Lambda(r)=r^2+O(r)$ and
\begin{equation}
 D_\Lambda(r)=-\frac16\log r-T_{\lfloor r\rfloor}+O(1),
 \qquad T_{n+1}-T_n=O(1).
 \label{eq:variable-population-deficit}
\end{equation}
The number of labelled radii in an interval of length one is uniformly
bounded. Moreover, for every $c>0$,
\begin{equation}
 \sup_{r>0}\sum_{k\ge1}e^{-ck|\log(r/R_k)|}<\infty.
 \label{eq:variable-population-shell-overlap}
\end{equation}
If the points form regular $2k$-gons, they admit equal-area cells of
uniformly bounded diameter, as does any bounded displacement of this
configuration. In either case,
\begin{equation}
 |\Lambda(D(w,t))-t^2|\le C(t+1)\qquad(w\in\C,\ t>0).
 \label{eq:variable-population-disk-discrepancy}
\end{equation}
\end{lemma}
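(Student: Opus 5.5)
The plan is to compute everything directly from the definitions. Most of the statement is bookkeeping in the index $k$. The one genuinely delicate point is the constant $-\tfrac16$ in \eqref{eq:variable-population-deficit}, which comes from an Euler--Maclaurin comparison of $\sum_{k\le n}2k\log(n/k)$ with $n^2/2$.

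\textbf{Counting.} Since $R_k=k+O(1)$, there is a fixed $K$ with $|R_k-k|\le K$. A circle $|z|=R_k$ lies in $D_r$ whenever $k\le r-K$, and lies outside whenever $k>r+K$. Hence
\[
 n_\Lambda(r)=\sum_{k\le r}2k+O(r)=r^2+O(r).
\]
The same bound $|R_k-k|\le K$ shows that an interval of length one contains at most $2K+1$ radii $R_k$. For \eqref{eq:variable-population-shell-overlap} I use the elementary estimate $|\log(r/R_k)|\ge c'\min\{1,|r-R_k|/\max(r,R_k)\}$. Only $O(1)$ indices have $|R_k-r|\le1$. For the remaining indices, $k|\log(r/R_k)|\gtrsim|k-r|-O(1)$ when $k\lesssim 2r$, and $k|\log(r/R_k)|\gtrsim k$ when $k\gtrsim 2r$. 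Either way the sum is dominated by a geometric series that does not depend on $r$.

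\textbf{Jensen deficit.} I write
\[
 J_\Lambda(r)=\sum_{R_k\le r}2k\log(r/R_k)
 =\sum_{R_k\le r}2k\log(r/k)-\sum_{R_k\le r}2k\log(R_k/k).
\]
In the second sum I replace the index condition $R_k\le r$ by $k\le\lfloor r\rfloor$. This changes only $O(1)$ terms, each of size $2k\cdot O(1/k)=O(1)$, so the second sum equals $T_{\lfloor r\rfloor}+O(1)$. The same estimate gives $T_{n+1}-T_n=O(1)$. In the first sum, terms with $|k-r|\le K$ have $\log(r/k)=O(1/r)$ and so contribute $O(1)$ in total. This leaves $2\sum_{k\le n}k\log(n/k)$ with $n=\lfloor r\rfloor$, up to an adjustment between $n$ and $r$ that I must track to $O(1)$. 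Using
\[
 \sum_{k\le n}k\log k=\frac{n^2}{2}\log n-\frac{n^2}{4}+\frac n2\log n+\frac1{12}\log n+O(1)
\]
(the log-hyperfactorial asymptotics), I get
\[
 2\sum_{k\le n}k\log(n/k)=\frac{n^2}{2}+\frac n2\cdot0-\frac16\log n+O(1).
\]
More precisely, $n(n+1)\log n-2\sum_{k\le n}k\log k=\tfrac{n^2}{2}-\tfrac16\log n+O(1)$, because the $n\log n$ terms cancel. Passing from $n$ to $r$, the function $r\mapsto \sum_{k\le n}2k\log(r/k)-r^2/2$ has derivative $n(n+1)/r-r$, which is $O(1)$ on $[n,n+1)$. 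Together with $n(n+1)/2\cdot\log(r/n)-(r^2-n^2)/2=O(1)$, this gives $D_\Lambda(r)=-\tfrac16\log r-T_{\lfloor r\rfloor}+O(1)$. This is the step I expect to be the main obstacle: the cancellation has to be checked exactly, because the $-\tfrac16$ and the $O(1)$ remainder are the whole content.

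\textbf{Cells and discrepancy.} For regular $2k$-gons I take the sector construction given after Lemma~\ref{lem:bounded-area-allocation}, with $M_k=2k$. In that case $\sum_{j\le k}(M_j-2j)=0$ and $s_k^2=k(k+1)$, so each annular sector has area $\pi$ and bounded diameter, and attaching the polygon points keeps the diameters bounded. The sector boundaries do not depend on the angles, so any uniformly bounded displacement of the points changes the diameters only by a bounded amount. Moreover, for arbitrary angles on the correct circles, attaching each point to a sector of its own annulus still works.

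Given cells $A_\lambda$ of area $\pi$ (here, since the density convention is $n(r)=r^2$, each cell has unit density in the normalization where disks satisfy $|D(w,t)|/\pi=t^2$) with diameter at most $d$, I count $\Lambda(D(w,t))$ from above by the cells meeting $D(w,t)$, which lie in $D(w,t+d)$. I count it from below by the cells contained in $D(w,t-d)$. This gives
\[
 (t-d)_+^2\le\Lambda(D(w,t))\le(t+d)^2,
\]
which is \eqref{eq:variable-population-disk-discrepancy}.
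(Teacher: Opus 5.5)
Your proposal is correct and follows the paper's proof almost step for step. The shared steps are:
\begin{itemize}
\item the sandwich for $n_\Lambda(r)$;
\item the reduction of $J_\Lambda$ to the model $J_0(r)=\sum_{k\le r}2k\log(r/k)$, by discarding the boundedly many indices within $O(1)$ of $r$, each at cost $O(1)$;
\item the bound $(n(n+1)-r^2)/r=O(1)$ for passing from integer to real radii;
\item the sectors of the annuli $\sqrt{k(k-1)}\le|z|<\sqrt{k(k+1)}$, followed by the area sandwich.
\end{itemize}
The only different ingredient is where the constant $-\tfrac16$ comes from. You quote the Glaisher--Kinkelin (hyperfactorial) expansion of $\sum_{k\le n}k\log k$ and check that the $n\log n$ terms cancel. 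The paper instead sets $D_0=J_0-r^2/2$, computes $D_0(n+1)-D_0(n)=n(n+1)\log(1+1/n)-n-\tfrac12=-\tfrac1{6n}+O(n^{-2})$, and sums. The paper's route is self-contained, needing only a third-order Taylor expansion and the harmonic series. Yours is shorter once the Euler--Maclaurin asymptotic is granted. Both give exactly the $O(1)$ accuracy required.

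Several local statements need correcting, although none affects the conclusion.
\begin{itemize}
\item \emph{Shell overlap.} The bound $k|\log(r/R_k)|\gtrsim|k-r|-O(1)$ fails for $k\ll r$: for $k=1$ the left side is only about $\log r$. Split instead according to $R_k<r/2$, $r/2\le R_k\le 2r$, and $R_k>2r$. Your own elementary estimate then gives lower bounds $ck$, $c|k-r|-C$, and $ck$ respectively, as in the paper.
\item \emph{Passing from $n$ to $r$.} The identity involving $\tfrac{n(n+1)}{2}\log(r/n)$ has a spurious factor $\tfrac12$: the increment of $J_0$ from $n$ to $r$ is $n(n+1)\log(r/n)$. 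Your derivative argument already suffices, so you can delete that sentence.
\item \emph{Arbitrary angles.} Your remark that arbitrary angles on the correct circles still admit bounded cells is false. If the $2k$ points of each shell coincide at $R_k$, then $\Lambda(D(R_k,\tfrac12))\ge 2k$, contradicting \eqref{eq:variable-population-disk-discrepancy}. This is why the lemma claims cells only for regular polygons and their bounded displacements.
\item \emph{Lower bound in the discrepancy.} Count the cells meeting $D(w,t-d)$: they cover that disk and have their representatives in $D(w,t)$, which gives $(t-d)_+^2$. Counting only cells contained in $D(w,t-d)$ yields $(t-2d)_+^2$, which still suffices.
\end{itemize}
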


\begin{proof}
Choose $A$ such that $|R_k-k|\le A$ for every $k$. Then
\[
 \sum_{k\le(r-A)_+}2k\le n_\Lambda(r)
       \le\sum_{k\le r+A}2k.
\]
Since $\sum_{k\le n}2k=n(n+1)$, we have
$n_\Lambda(r)=r^2+O(r)$ without ordering the radii.
First take $R_k=k$ and write
$D_0(r)=\sum_{k\le r}2k\log(r/k)-r^2/2$. At integer radii,
\[
 D_0(n+1)-D_0(n)
 =n(n+1)\log(1+1/n)-n-\tfrac12
 =-\frac1{6n}+O(n^{-2}).
\]
Summation gives $D_0(n)=-\tfrac16\log n+O(1)$, since
$\sum n^{-2}<\infty$ and $\sum_{k<n}k^{-1}=\log n+O(1)$.
For $n<r<n+1$,
\[
 D'_0(r)=\frac{n(n+1)-r^2}{r}=O(1),
\]
so the same formula holds for real $r$. To compare the two
configurations, use $\log^+x=\max(\log x,0)$ and write
\[
 J_\Lambda(r)-J_0(r)+T_{\lfloor r\rfloor}
 =\sum_{k\ge1}2k\left(
    \log^+\frac r{R_k}-\log^+\frac r{k}
       +\boldsymbol1_{\{k\le r\}}\log\frac{R_k}{k}\right).
\]
Only indices with $r$ between $k$ and $R_k$ contribute. They
satisfy $|k-r|\le A$, so there are boundedly many. For large $r$,
each contribution is at most
$2k|\log(r/R_k)|+2k|\log(r/k)|=O(1)$; bounded radii contribute
a fixed error. This proves \eqref{eq:variable-population-deficit},
including the increment bound $2k\log(R_k/k)=O(1)$.

The bound on the number of radii in unit intervals also follows from
bounded radial displacement. When $R_k\in[r/2,2r]$,
$k|\log(r/R_k)|\ge c_1|k-r|-C_1$. For the remaining indices,
the summands are bounded by $e^{-c_2k}$. Summation gives
\eqref{eq:variable-population-shell-overlap}.

To construct the cells, set $\rho_k=\sqrt{k(k+1)}$, $\rho_0=0$,
and divide $\rho_{k-1}\le|z|<\rho_k$ into $2k$ equal sectors
whose central rays pass through the polygon points. Each sector has
area $\pi$. Its radial width, angular arc length, and distance from
its assigned point are uniformly bounded. The point can be attached
to the sector without changing its area. These properties persist
under bounded displacement. If $A$ bounds the distance of every
point of a cell from its assigned point, comparison of areas gives
$(t-A)_+^2\le\Lambda(D(w,t))\le(t+A)^2$. This proves the disk
estimate.
\end{proof}

For the regular-polygon arrays in Lemma~\ref{lem:joint-radial-data} and
their bounded displacements, Lemma~\ref{lem:bounded-area-allocation}
gives bounded matchings to $\mathcal L$. The angular motions below have
size at most $O((\log k)^2/k)$ and preserve this property.

\begin{lemma}[Sampling circular means]
\label{lem:gaussian-circle-sampling}
Let $r_n\in[n+1/4,n+3/4]$. For every entire function $f$,
allowing either side to be infinite, we have
\begin{equation}
 \int_{\C}|f(z)|^2 e^{-|z|^2}\,dA(z)
 \asymp\sum_{n\ge1}r_n e^{-r_n^2}
         \frac1{2\pi}\int_0^{2\pi}|f(r_ne^{i\theta})|^2d\theta.
 \label{eq:gaussian-circle-sampling}
\end{equation}
The comparison constants are independent of the choice of $r_n$.
\end{lemma}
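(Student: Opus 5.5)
The plan is to compare the area integral with the sum of circle means through polar coordinates and a local comparison on annuli. Write $\|f\|^2_{\mathrm{area}}=\int_0^\infty r e^{-r^2}\mathcal M(r)\,dr$, where $\mathcal M(r)=\frac1{2\pi}\int_0^{2\pi}|f(re^{i\theta})|^2d\theta$ up to the factor $2\pi$. Since $\log|f|$ is subharmonic, $|f|^2$ is subharmonic, and $\mathcal M(r)$ is nondecreasing in $r$ (and log-convex in $\log r$). Both directions of \eqref{eq:gaussian-circle-sampling} therefore come from monotonicity, together with the observation that the Gaussian weight $re^{-r^2}$ changes by bounded factors only on intervals of length $O(1/r)$. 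This last point is exactly where a naive comparison fails, so the annuli have to be handled with care.

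\emph{Upper bound on the sum.} Set $\phi(r)=r e^{-r^2}\mathcal M(r)$. For $r\in[r_n,r_n+1/(4r_n)]$, monotonicity gives $\mathcal M(r)\ge\mathcal M(r_n)$, and the weight satisfies $re^{-r^2}\ge c\,r_ne^{-r_n^2}$, because $r^2-r_n^2\le 1/2+1/(16r_n^2)$. Integrating over this interval shows that $\int_{r_n}^{r_n+1/(4r_n)}\phi\,dr\ge c\,r_n^{-1}\cdot r_ne^{-r_n^2}\mathcal M(r_n)$. This loses a factor $r_n$, so it is not enough on its own. The fix is the standard Fock trick of replacing circle means by weighted averages. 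Apply subharmonicity of $|f(z)|^2e^{-|z|^2+2\Re(\bar w z)}$ near each point $w=r_ne^{i\theta}$, i.e.\ $|f(w)|^2e^{-|w|^2}\le C\int_{D(w,1)}|f|^2e^{-|z|^2}dA$. Averaging over $\theta$ bounds $r_ne^{-r_n^2}\mathcal M(r_n)$ by $C\int_{\{||z|-r_n|<1\}}|f|^2e^{-|z|^2}dA$, since the factor $r_n$ is absorbed by the Jacobian $|z|\,d|z|\,d\theta$ with $d\theta$-measure $\asymp 1/r_n$ per unit disk. The annuli $||z|-r_n|<1$ overlap boundedly because $r_n\in[n+1/4,n+3/4]$, so summing gives the sum $\lesssim$ the integral.

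\emph{Upper bound on the integral.} Split $\C$ into annuli $A_n=\{r_n\le|z|<r_{n+1}\}$ of width at most $3/2$, plus a bounded disk. For $|z|=r\in A_n$ we need $e^{-r^2}\mathcal M(r)$ controlled by terms at indices $n$ and $n+1$. Monotonicity gives $\mathcal M(r)\le\mathcal M(r_{n+1})$, but then $e^{-r^2}\gg e^{-r_{n+1}^2}$, which is the main obstacle. To get around it, use the reproducing estimate pointwise: $|f(z)|^2e^{-|z|^2}$ is controlled by the average of $|f|^2e^{-|\cdot|^2}$ over the unit disk around $z$, which is the wrong direction. I would instead use log-convexity of $\mathcal M$ combined with the Gaussian: the function $u(t)=\log\mathcal M(e^{t})$ is convex, and I would interpolate $\log\phi$ between $r_n$ and $r_{n+1}$. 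Concretely, $\psi(r)=\log\mathcal M(r)-r^2$ satisfies $\psi''\ge -2-\frac1r\psi'$-type bounds, so $\psi$ cannot exceed the linear interpolation of its endpoint values by more than $O(1)$ on an interval of length $\le 3/2$. Hence $\phi(r)\le C\max(\phi(r_n),\phi(r_{n+1}))$ on $A_n$. Integrating over $A_n$ then gives the integral $\lesssim$ the sum. The first, bounded disk is handled by the maximum principle against $\mathcal M(r_1)$.

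The delicate step is this semiconvexity estimate. I would verify it by writing $\mathcal M(r)=\sum|c_j|^2r^{2j}$, so that $\log\mathcal M(e^{t})$ is convex in $t$, and checking that subtracting $e^{2t}$ changes convexity only by a bounded amount on unit $r$-intervals. All constants depend only on the window $[1/4,3/4]$, giving independence from the choice of $r_n$.
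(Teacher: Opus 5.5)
Your proposal is correct and follows essentially the paper's route: the integral is bounded by the sum using log-convexity of the circle means in $t=\log r$, where the Gaussian exponent $r^2=e^{2t}$ has only $O(1)$ interpolation error over $[\log r_n,\log r_{n+1}]$. The sum is bounded by the integral using the submean inequality, averaged over each circle with Tonelli; the paper takes disks of radius $\epsilon<1/8$ so that the annuli are disjoint, whereas yours merely overlap boundedly.

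Two small repairs are needed. First, the subharmonic function to use is $|f(z)e^{-\overline{w}z}|^2$; your $|f(z)|^2e^{-|z|^2+2\Re(\overline{w}z)}$ is not subharmonic. Second, the interpolation must be linear in $\log r$, as in your last paragraph, and not in $r$. For $\mathcal M(r)=r^{2j}$ with $j\gg r^2$, the excess of $\psi$ over its linear-in-$r$ interpolant is unbounded, even though your consequence $\phi(r)\le C\max(\phi(r_n),\phi(r_{n+1}))$ still holds.
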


The proof is given in Appendix~\ref{app:circle-sampling}.

\subsection{A critical direction forces polynomial quotients}

We begin with a discrete growth lemma. Its role is to treat functions
which may initially have infinitely many additional zeros.

\begin{lemma}[Polynomial bounds on a sampled ray]\label{lem:sampled-ray-polynomial}
Suppose that $Q$ is entire and
\begin{equation}
 \log\max_{|z|\le r}|Q(z)|\le C\log^2 r\qquad(r\ge2).
 \label{eq:quotient-log-squared-growth}
\end{equation}
If, for some $B\ge0$ and all sufficiently large integers $n$,
\begin{equation}
 |Q(n+1/2)|\le C_1(1+n)^B,
 \label{eq:quotient-ray-data}
\end{equation}
then $Q$ is a polynomial. The same assertion holds for samples
$(n+1/2)e^{i\theta_0}$ on any fixed ray.
\end{lemma}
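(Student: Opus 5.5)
The plan is to show that $\log M(r)=\log\max_{|z|\le r}|Q(z)|$ is $O(\log r)$ along a sequence of radii $r_j$ with $r_j\in[x,2x]$ for every large $x$. Monotonicity of $M$ then gives $M(r)\le Cr^K$ for all $r\ge2$, and Cauchy's estimates make $Q$ a polynomial. After replacing $Q(z)$ by $Q(e^{i\theta_0}z)$, which preserves \eqref{eq:quotient-log-squared-growth}, it suffices to treat the positive real ray.

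\emph{Step 1: zero counting and factorization.} By \eqref{eq:quotient-log-squared-growth} and Jensen's formula, the zero-counting function satisfies $N_Q(r)=\int_0^r n_Q(t)\,dt/t\le C\log^2r+O(1)$. Since $N_Q(r^2)-N_Q(r)\ge n_Q(r)\log r$, we get $n_Q(r)=O(\log r)$. In particular $Q$ has order zero, so Hadamard's theorem with genus zero gives
\[
 Q(z)=cz^m\prod_j(1-z/a_j).
\]
It follows that
\[
 \log M(x)\le\log|c|+m\log x+\sum_j\log(1+x/|a_j|),
\]
while
\[
 \log|Q(x)|=\log|c|+m\log x+\sum_j\log|1-x/a_j|.
\]
Subtracting, the defect $\log M(x)-\log|Q(x)|$ is at most $\sum_j\log\bigl((|a_j|+x)/|x-a_j|\bigr)$.

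\emph{Step 2: splitting the defect sum.} I split the zeros by modulus.
\begin{itemize}
 \item Zeros with $|a_j|\le x/4$ contribute at most a bounded amount each. By Step 1 there are $O(\log x)$ of them, so their total is $O(\log x)$.
 \item Zeros with $|a_j|\ge 4x$ contribute $O(x/|a_j|)$ each. Partial summation with $n_Q(t)=O(\log t)$ bounds the total by $x\int_{4x}^\infty \log t\,dt/t^2=O(\log x)$.
 \item The remaining zeros lie in the annulus $x/4<|a_j|<4x$; there are $O(\log x)$ of them. Each contributes $\log(5x/|x-a_j|)$, which is unbounded as $x$ approaches $a_j$.
\end{itemize}

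\emph{Step 3: averaging over samples (the main obstacle).} The annulus term is where the real difficulty lies. The hypothesis \eqref{eq:quotient-ray-data} holds at every half-integer, so we are free to choose the sample. I will average over the samples $t=n+\tfrac12$ with $n\in[x,2x]$. Using the cap $|t-a_j|\ge|t-\Re a_j|$ and the local integrability of $\log(1/|s|)$, each annulus zero has average contribution $O(1)$ over these samples. One point needs care: the discrete average of $\log(1/|n+\tfrac12-\Re a_j|)$ is still $O(1)$, because at most two half-integers lie within distance $\tfrac12$ of $\Re a_j$, and each of these has distance bounded below only if we also control the imaginary part. To handle this robustly, I would first discard the at most $O(\log x)$ samples within distance $\tfrac14$ of some annulus zero, which leaves most samples available. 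On the remaining samples each term is at most $\log(20x)$ and has average $O(1)$. Hence some sample $t_x=n+\tfrac12$ in $[x,2x]$ has total annulus contribution $O(\log x)$, with the constant independent of $x$ by Step 1.

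\emph{Step 4: conclusion.} At the sample $t_x$ from Step 3, combining Steps 2 and 3 with \eqref{eq:quotient-ray-data} gives $\log M(t_x)\le B\log t_x+O(\log t_x)$. Since every interval $[x,2x]$ contains such a $t_x$ and $M$ is increasing, $M(r)\le M(t_{r})\le C r^K$ for all large $r$. Cauchy's estimates then show that the Taylor coefficients of $Q$ of degree larger than $K$ vanish, so $Q$ is a polynomial.
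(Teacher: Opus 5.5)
Your argument is correct, but it takes a genuinely different route from the paper's.

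\textbf{The paper's route.} The paper never looks at the zeros of $Q$. For each large $x$ it interpolates $Q$ at the $m=\lceil D\log x\rceil$ consecutive nodes $n+1/2$ following $x$. The Lagrange sum is bounded by $x^BC_0^m$ and the Cauchy remainder by $\exp(C\log^2(2x))(3m/x)^m$, which gives a polynomial bound at every point of the positive axis. A Phragm\'en--Lindel\"of argument in the slit exterior domain, with the damping factor $\exp\{-\epsilon(-z)^\sigma\}$, then spreads this bound to the whole plane, and Cauchy's estimates finish.

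\textbf{Your route.} You convert the growth hypothesis into information about the zeros: Jensen gives $n_Q(r)=O(\log r)$, hence a genus-zero product. You then show, by a Cartan-type averaging over samples, that $\log M(t)-\log|Q(t)|=O(\log x)$ at a well-chosen half-integer $t\in[x,2x]$. This bounds $M(r)$ directly and avoids both interpolation and Phragm\'en--Lindel\"of.

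\textbf{What each buys.} Your argument needs the sampled bound only on a positive proportion of the half-integers in each block $[x,2x]$, since one good sample per block suffices. The paper's argument, as written, uses the sampled bound at runs of consecutive nodes near every point of the axis, but it needs no factorization or zero counting.

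\textbf{Bookkeeping to make explicit.} First, the evaluation point $t$ ranges over $[x,2x]$, so fix the three classes of zeros relative to $x$, for example with middle class $x/4<|a_j|<8x$. The constants in your Step 2 then adjust harmlessly. Second, after discarding the $O(\log x)$ samples within distance $1/4$ of a middle-class zero, use
\[
|t-a_j|\ge\max\{1/4,\,|t-\Re a_j|\}.
\]
Summing $\log(10x/\max\{1/4,|t-\Re a_j|\})$ over all half-integers $t\in[x,2x]$ gives $O(x)$. This makes the average per zero $O(1)$ and supersedes your hesitant remark about half-integers near $\Re a_j$. Third, the trivial case $Q\equiv0$ and a possible zero at the origin in Jensen's formula should be mentioned.
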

The proof is given in Appendix~\ref{app:sampled-ray}.

The growth restriction in this lemma is essential to the argument.
For example, $\sin\sqrt z/\sqrt z$ is an entire function of order
$1/2$ bounded on the positive axis and is not a polynomial.
Discrete boundedness for broader classes of entire functions belongs
to the classical theory of P\'olya sequences; see \cite{MitkovskiPoltoratski2009}.
Lemma~\ref{lem:sampled-ray-polynomial} uses the stronger bound \eqref{eq:quotient-log-squared-growth}
and its proof in Appendix~\ref{app:sampled-ray} does not invoke that theory.

\begin{proposition}[One critical direction]\label{prop:critical-direction}
Let $G\not\equiv0$ be entire and $r_n=n+1/2$. Suppose that, for
all sufficiently large $n$,
\begin{align}
 \log|G(r_ne^{i\theta})|
   &\ge r_n^2/2-C\log^2 r_n\quad\text{for every }\theta,
       \label{eq:critical-direction-global}\\
 \log|G(r_ne^{i\theta_0})|
   &\ge r_n^2/2-M\log r_n\quad\text{for one fixed }\theta_0.
       \label{eq:critical-direction-ray}
\end{align}
Every $f\in\F$ vanishing on $\mathcal Z(G)$ has the form $f=PG$
with $P$ a polynomial. Consequently,
\[
 \mathcal Z(G)\text{ is a Fock zero set}\quad\Longleftrightarrow\quad
 G\in\F.
\]
If $G\notin\F$, its zero multiset is a uniqueness multiset.
\end{proposition}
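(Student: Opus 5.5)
Let $f\in\F$ vanish on $\mathcal Z(G)$; the case $f=0$ is trivial, so assume $f\ne0$. The plan is to form the quotient $Q=f/G$, show it is entire with growth $\log\max_{|z|\le r}|Q|\le C\log^2r$, and show it is polynomially bounded on the sampled ray $r_ne^{i\theta_0}$. Lemma~\ref{lem:sampled-ray-polynomial} then makes $Q$ a polynomial. The quotient is entire because $f$ vanishes on $\mathcal Z(G)$ with at least the prescribed multiplicities.

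For the ray bound, combine \eqref{eq:fock-point-evaluation} with \eqref{eq:critical-direction-ray}:
\[
 \log|Q(r_ne^{i\theta_0})|\le \log\|f\|_2+\tfrac{r_n^2}{2}-\tfrac{r_n^2}{2}+M\log r_n,
\]
so $|Q(r_ne^{i\theta_0})|\le C(1+n)^M$. This gives \eqref{eq:quotient-ray-data} with $B=M$, after rotating the ray, which the lemma explicitly allows.

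For the global growth bound, the same computation with \eqref{eq:critical-direction-global} gives
\[
 \log|Q(r_ne^{i\theta})|\le \log\|f\|_2+C\log^2r_n
\]
for every $\theta$ and all large $n$. By the maximum principle on the disk $|z|\le r_n$, this bound holds on the whole disk. For $2\le r$, choose the smallest $r_n\ge r$, so that $r_n\le r+1$. Then
\[
 \log\max_{|z|\le r}|Q(z)|\le C'\log^2 r,
\]
with finitely many small radii absorbed into the constant. This is \eqref{eq:quotient-log-squared-growth}. I expect this step to be the only delicate point. It works because the lower bound on $|G|$ is assumed on full circles, so no minimum-modulus argument is needed and possible extra zeros of $f$ cause no trouble: they are simply zeros of $Q$. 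Hence $f=PG$ with $P=Q$ a polynomial.

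For the equivalence, suppose first that $G\in\F$. Then $G$ itself realizes $\mathcal Z(G)$ exactly. Conversely, suppose some $f\in\F$ has $\mathcal Z(f)=\mathcal Z(G)$. Then $f=PG$ with $P$ a polynomial. Since $f$ has no zeros beyond $\mathcal Z(G)$, $P$ has no zeros, so $P$ is a nonzero constant and $G=f/P\in\F$.

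Finally, suppose $G\notin\F$ and some nonzero $f\in\F$ vanishes on $\mathcal Z(G)$. Write $f=PG$ with $P\not\equiv0$ a polynomial. Outside a large disk containing the zeros of $P$ we have $|P(z)|\ge c>0$, so $|G|\le|f|/c$ there. On the disk, $G$ is bounded by continuity. Hence $G\in\F$, a contradiction. Therefore $\mathcal I(\mathcal Z(G))=\{0\}$, and $\mathcal Z(G)$ is a uniqueness multiset.
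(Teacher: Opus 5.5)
Your proof is correct and follows the paper's argument essentially step for step: you form the quotient $Q=f/G$, get the $\log^2$ growth from the circle lower bound together with the maximum principle, get the polynomial bound on the sampled ray, and apply Lemma~\ref{lem:sampled-ray-polynomial}; you then observe that a polynomial factor can neither rescue a non-Fock $G$ nor appear in an exact realizer. The only cosmetic point is that the lemma requires $B\ge0$, so take $B=\max\{M,0\}$, as the paper does.
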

\begin{proof}
For $0\ne f\in\F$ vanishing on $\mathcal Z(G)$, the quotient
$Q=f/G$ is entire, with all additional zeros retained.
The pointwise estimate \eqref{eq:fock-point-evaluation} and
\eqref{eq:critical-direction-global} give
$\log|Q(r_ne^{i\theta})|\le C'\log^2 r_n$.
The maximum principle, using a circle $r_n\in[r,r+1]$, implies
\eqref{eq:quotient-log-squared-growth} after increasing its constant.
Estimate \eqref{eq:critical-direction-ray} gives
$|Q(r_ne^{i\theta_0})|\le C''r_n^M$. Replace $M$ by a larger
nonnegative number if necessary. Lemma~\ref{lem:sampled-ray-polynomial} makes $Q$
a polynomial.

For any nonzero polynomial $P$, $|P(z)|\asymp |z|^{\deg P}$
outside a fixed disk, uniformly in the angle. Thus $PG\in\F$
implies $G\in\F$. If the zeros of $f$ are precisely those of $G$,
then $P$ has no zeros and is constant. These observations prove
both assertions, including uniqueness when $G\notin\F$.
\end{proof}

The proposition is independent of the particular circular construction.
The second estimate requires only one direction attaining the Gaussian
growth bound up to a polynomial factor. It replaces an estimate on
the radial mean when that mean has a deficit of order $\log^2 r$.

\subsection{Angular profiles and their endpoints}
\label{subsec:smooth-angular-profiles}

We now give a criterion in terms of the angular distribution of the
points. Two scales occur: a logarithmic angular potential and a squared
logarithmic one. The same realization estimate treats both, but their
critical endpoints are different.

Let $\tau\in\{1,2\}$, and let $h\in C^2(\mathbb R/2\pi\Z)$
be real valued with mean zero. Fix $a,b,c\in\mathbb R$, with $a=0$
when $\tau=1$. For all sufficiently large $k$, put
\begin{align}
 R_k&=k+\frac{a\log k}{k}+\frac b{k}+\frac c{k\log k},
 \qquad \theta_{k,\ell}=\phi_k+\frac{\pi\ell}{k},\notag\\
 \lambda_{k,\ell}
 &=R_k\exp\left(i\theta_{k,\ell}
       -i\frac{(\log k)^\tau}{2k^2}h'(\theta_{k,\ell})\right),
       \qquad 0\le\ell<2k.
 \label{eq:smooth-profile-configuration}
\end{align}
The phases $\phi_k$ are arbitrary. Choose the starting index
$K>e^2+1$ so that $|R_k-k|<1/4$ and the angular maps have derivative at least
$1/2$ for $k\ge K$. On the initial shells $k<K$, use regular
$2k$-gons of radius $k$. Denote the resulting set by $\Lambda$.
Changes in finitely many initial shells will be covered by finite
modifications.

Set
\[
 \gamma=\frac16+2b,\qquad \delta=2c,\qquad
 H=\max h,\qquad u_h(\theta)=H-h(\theta),
\]
and define
\begin{equation*}
 \nu_h(u)=\frac1{2\pi}
   \bigl|\{\theta\in[0,2\pi):u_h(\theta)\le u\}\bigr|,
 \qquad
 B_h(t)=\frac1{2\pi}\int_0^{2\pi}e^{-2t u_h(\theta)}\,d\theta.
 \end{equation*}
Thus $\nu_h(u)$ is the proportion of directions within $u$ of the
maximum. We say that $\mathcal E_\tau(h,\delta)$ holds if
\begin{equation}
 \begin{cases}
  \text{no additional condition},&\delta>\frac12,\\[2pt]
  \displaystyle\int_0^1\frac{\nu_h(u)}u\,du<\infty,
       &\delta=\frac12,\\[8pt]
  \displaystyle\int_0^1\nu_h(u)u^{(2\delta-1)/\tau-1}\,du<\infty,
       &\delta<\frac12.
 \end{cases}
 \label{eq:smooth-profile-geometric-endpoint}
\end{equation}
These nonnegative integrals include the case in which the maximum
set has positive measure.

\begin{theorem}[Geometric angular criterion]
\label{thm:smooth-angular-profile}
Let $\Lambda$ be given by \eqref{eq:smooth-profile-configuration},
and let $\Xi$ be a finite modification of net multiplicity $q$.
Then $\Xi$ is a zero set for $\F$ if and only if
\begin{equation}
 \int_3^\infty r^{2q+1-2\gamma}(\log r)^{-2\delta}
   e^{-2a(\log r)^2+2H(\log r)^\tau}
   B_h((\log r)^\tau)\,dr<\infty.
 \label{eq:smooth-profile-integral}
\end{equation}
Equivalently, the parameters satisfy the following conditions.
\begin{enumerate}[label=\textup{(\roman*)},leftmargin=2.3em]
\item If $\tau=1$, then $\gamma-H>q+1$, or
      $\gamma-H=q+1$ and $\mathcal E_1(h,\delta)$ holds.
\item If $\tau=2$, then $a>H$, or $a=H$ and either
      $\gamma>q+1$ or
      $\gamma=q+1$ with $\mathcal E_2(h,\delta)$.
\end{enumerate}
In every other case $\Xi$ is a uniqueness multiset. The conclusions
hold for every choice of the polygon phases and every finite
modification of the stated net multiplicity.
\end{theorem}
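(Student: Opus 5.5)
The plan is to produce an explicit generating function $G$ with zero multiset $\Lambda$ (and then $G_\Xi=GP_+/P_-$ for the finite modification), compute the smoothed potential $S_h\log|G|-|z|^2/2$ up to $O(1)$, and then combine Lemma~\ref{lem:circular-fock-smoothing} with Lemma~\ref{lem:gaussian-circle-sampling} for sufficiency and Proposition~\ref{prop:critical-direction} for necessity and uniqueness.

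\textbf{Realizer and asymptotics.} I would build $G$ shell by shell, taking the product over $k$ of $\prod_\ell(1-z/\lambda_{k,\ell})$ normalized against the regular polygon factor $1-(z/R_k)^{2k}e^{-2ik\phi_k}$. The regular factor is handled by Lemma~\ref{lem:joint-radial-data}, which gives the radial part $-\tfrac16\log r-T_{\lfloor r\rfloor}$. With $R_k$ as in \eqref{eq:smooth-profile-configuration} one has $T_n=a(\log n)^2+2b\log n+2c\log\log n+O(1)$ (for $\tau=1$, $a=0$), which produces the factor $r^{-2\gamma}(\log r)^{-2\delta}e^{-2a(\log r)^2}$. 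The angular perturbation of size $(\log k)^\tau h'/(2k^2)$ on the shell of radius $k$ should, to first order, change $\log|z^{2k}-\cdots|$ by $-\tfrac{(\log k)^\tau}{2k^2}\sum_\ell h'(\theta_{k,\ell})\,\partial_\theta\log|z-\lambda|$. Summing by a Riemann sum and integrating by parts should give a contribution $\approx\tfrac{(\log k)^\tau}{k}\,(\text{Poisson-type kernel})*h(\arg z)$ near shells $k\approx|z|$. Summed over $k$ (kernel estimates of the appendix type, using \eqref{eq:variable-population-shell-overlap}), this yields the main term $(\log r)^\tau h(\theta)+O(1)$ uniformly after smoothing. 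The target is therefore
\[
 S_1\log|G|(re^{i\theta})-\tfrac{r^2}{2}
 =-\gamma\log r-\delta\log\log r-a(\log r)^2+(\log r)^\tau h(\theta)+O(1).
\]
I expect this step to be the main obstacle. It requires showing that the smoothed off-shell errors are uniformly bounded, that second-order terms in $h'$ (of order $(\log k)^{2\tau}/k^2$) sum to $O(1)$, and that the contribution of the nearby shell is captured by $S_1$ averaging rather than by pointwise estimates.

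\textbf{Norm test.} Given the potential formula, Lemma~\ref{lem:circular-fock-smoothing} and polar coordinates show that $e^{\alpha z+\beta z^2}G_\Xi\in\F$ for $\alpha=\beta=0$ exactly when \eqref{eq:smooth-profile-integral} holds, since the angular integral of $e^{2(\log r)^\tau h}$ equals $e^{2H(\log r)^\tau}B_h((\log r)^\tau)$. The equivalent parameter conditions (i)--(ii) are a routine Laplace-type analysis. Write $B_h(t)=\int_0^\infty 2t e^{-2tu}\nu_h(u)\,du$, so that $B_h(t)\asymp\int_0^{1}te^{-2tu}\nu_h(u)\,du+O(e^{-ct})$. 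Substituting $t=(\log r)^\tau$, $ds=dr/r$ and exchanging integrals, the endpoint case $\gamma-H=q+1$ (or $a=H$, $\gamma=q+1$) reduces to $\int_0^1\nu_h(u)\int^\infty t^{-2\delta/\tau}e^{-2tu}\,dt^{1/\tau}\,du$. This produces $\mathcal E_\tau(h,\delta)$: the inner integral converges for $\delta>1/2$ regardless of $\nu_h$, gives $\log(1/u)$-free $1/u$ behaviour at $\delta=1/2$, and gives $u^{(2\delta-1)/\tau-1}$ for $\delta<1/2$.

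\textbf{Necessity and uniqueness.} Choose $\theta_0$ with $h(\theta_0)=H$. The potential estimate evaluated on the circles $r_n=n+1/2$ (which lie away from the shells, so $\log|G|$ itself, not only $S_1$, obeys the formula up to $O(1)$) gives \eqref{eq:critical-direction-global}, with $C\log^2r$ absorbing $a(\log r)^2$ and $|h|(\log r)^\tau$. In the critical regime the bound \eqref{eq:critical-direction-ray} also holds along $\theta_0$, up to $M\log r$. Proposition~\ref{prop:critical-direction} then shows that every $f\in\F$ vanishing on $\Xi$ equals $PG_\Xi$ with $P$ a polynomial. Hence $\Xi$ is a zero set iff $G_\Xi\in\F$, and otherwise it is a uniqueness multiset.

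The noncritical cases are handled directly. If the integrand's exponent is positive, so that $a<H$ or the strict inequality fails, a Jensen-type bound combined with Hadamard factorization excludes realizers. If it is strictly negative, $G_\Xi\in\F$ is immediate. Phase independence follows because all estimates are uniform in the phases $\phi_k$.
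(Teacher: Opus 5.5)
Your realization step, norm test and endpoint analysis follow the paper's route. The gap is in the divergent cases away from the critical equalities, namely $\tau=1$ with $\gamma-H<q+1$ and $\tau=2$ with $a<H$. There the theorem asserts that $\Xi$ is a \emph{uniqueness} multiset, and you propose to handle these cases with ``a Jensen-type bound combined with Hadamard factorization''. Neither tool does this.

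The radial bound \eqref{eq:jensen-radial-norm-lower} sees only $D_\Xi(r)=-a(\log r)^2-(\gamma-q)\log r-\delta\log\log r+O(1)$. It is blind to the factor $e^{2H(\log r)^\tau}$ contributed by the angular maximum. For $\tau=2$ with $0<a<H$, or $\tau=1$ with $q+1<\gamma<q+1+H$, the integral $\int^\infty re^{2D_\Xi(r)}\,dr$ converges, so it excludes nothing; compare Example~\ref{ex:smooth-flat-maxima}. Hadamard factorization describes only exact realizers $Ce^{\alpha z+\beta z^2}G_\Xi$. It says nothing about Fock functions that vanish on $\Xi$ and have additional zeros, which is what uniqueness is about. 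Even for exact realizers, the circular Jensen argument that removes $\Re(\alpha z+\beta z^2)$ for radial configurations fails here, because the angular weight $e^{2(\log r)^\tau h(\theta)}$ is not uniform.

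The missing observation is that the hypotheses of Proposition~\ref{prop:critical-direction} hold well beyond the critical equalities. By \eqref{eq:angular-full-expansion}, at a maximizer $\theta_0$ of $h$,
$\log|G_\Xi(r_ne^{i\theta_0})|-r_n^2/2=H(\log r_n)^\tau-a(\log r_n)^2-(\gamma-q)\log r_n-\delta\log\log r_n+O(1)$.
This is at least $-M\log r_n$ for every $\tau=1$ configuration (where $a=0$) and for every $\tau=2$ configuration with $a\le H$. The global bound \eqref{eq:critical-direction-global} holds in all cases.

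These regimes contain every divergent case. The proposition therefore shows that every $f\in\F$ vanishing on $\Xi$ is $PG_\Xi$ with $P$ a polynomial. Since $PG_\Xi\in\F$ forces $G_\Xi\in\F$, divergence of \eqref{eq:smooth-profile-integral} gives uniqueness, including for functions with additional zeros. The only case outside the proposition, $\tau=2$ with $a>H$, is convergent. This is exactly how the paper organizes its Step~2.

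Two minor points:
\begin{itemize}
\item Your displayed inner integral in the endpoint analysis drops the factor $2t$ from $B_h(t)=\int_0^\infty 2te^{-2tu}\nu_h(u)\,du$. With that factor restored one gets the exponent $u^{(2\delta-1)/\tau-1}$ that you state, so your conclusion $\mathcal E_\tau(h,\delta)$ is correct.
\item The smoothed formula off the circles $|z|=r_n$ is unnecessary. The pointwise estimate on those circles together with Lemma~\ref{lem:gaussian-circle-sampling} already gives the norm test, and this avoids the near-shell difficulty you flag.
\end{itemize}
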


For $\tau=2$, the two critical equalities are $a=H$ and
$\gamma=q+1$. At their intersection, the $\log\log r$ correction
and the width of the angular maxima decide the zero-set property.
For $\tau=1$, the single critical equality is $\gamma-H=q+1$.
No isolation or finite order of the maxima is assumed. For example,
if $\nu_h(0)>0$, the endpoint condition is simply $\delta>1/2$
at either scale.

At the critical equalities just described, the following special cases
summarize the endpoint condition. For the first two rows, assume finitely
many maxima, all of the indicated order; all inequalities are strict.
\begin{center}
\begin{tabular}{@{}lll@{}}
\toprule
Shape of the maximum & $\tau=1$ & $\tau=2$\\
\midrule
Quadratic & $\delta>1/4$ & $\delta>0$\\
Quartic & $\delta>3/8$ & $\delta>1/4$\\
Maximum set of positive measure & $\delta>1/2$ & $\delta>1/2$\\
\bottomrule
\end{tabular}
\end{center}
See Example~\ref{ex:finite-order-angular-maxima} for finite-order maxima.
Example~\ref{ex:smooth-flat-maxima} shows why the full sublevel distribution
is needed for infinitely flat maxima.

These configurations are uniformly separated bounded displacements
of the critical lattice. We record the geometry together with the
potential estimate, so that the criterion is derived directly from
the points.

For the angular estimates, put $\varepsilon_k=(\log k)^\tau/(2k^2)$
for $k\ge K$ and $\varepsilon_k=0$ for $k<K$, and set $r_n=n+1/2$.
All constants may depend on $h,a,b,c,K,\tau$, but not on $n$, the
observation angle, or the polygon phases. In the radial kernel estimate
they are also independent of the positive integer frequency.

\begin{lemma}[Geometry, radial deficit, and the shell product]
\label{lem:profile-geometry-product}
The configuration in \eqref{eq:smooth-profile-configuration} is uniformly
separated, admits a bounded matching to $\mathcal L$, and satisfies
\eqref{eq:variable-population-disk-discrepancy}. Its shell product
\[
 F(z)=\prod_{k\ge1}\prod_{\ell=0}^{2k-1}(1-z/\lambda_{k,\ell})
\]
converges locally uniformly, has zero multiset exactly $\Lambda$, and satisfies
\begin{equation}
 D_\Lambda(r)=-a(\log r)^2-\gamma\log r-\delta\log\log r+O(1).
 \label{eq:smooth-profile-radial-deficit}
\end{equation}
\end{lemma}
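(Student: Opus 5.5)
Our approach splits the lemma into four claims, checked in order: geometry, convergence of the shell product, the radial deficit, and the zero divisor.

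\emph{Geometry.} First we quantify the perturbation. The angular perturbation is $\varepsilon_k h'(\theta_{k,\ell})=O((\log k)^2/k^2)$ in angle, which is $O((\log k)^2/k)$ in Euclidean distance. Since $|R_k-k|<1/4$, each point lies within $O(1)$ (in fact within $1/4+o(1)$) of the vertex $k e^{i\theta_{k,\ell}}$ of a regular $2k$-gon of radius $k$. Lemma~\ref{lem:joint-radial-data} then gives the equal-area cells for bounded displacements of regular polygons, hence \eqref{eq:variable-population-disk-discrepancy}. Lemma~\ref{lem:bounded-area-allocation} converts these cells into a bounded matching to $\mathcal L$.

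For uniform separation there are two cases. Within a shell, the angular map $\theta\mapsto\theta-\varepsilon_k h'(\theta)$ has derivative at least $1/2$ for $k\ge K$, so consecutive points are at angular distance at least $\pi/(2k)$, hence at Euclidean distance $\gtrsim 1$. Across shells, radii differ by at least $1-1/2$. The initial shells are finitely many regular polygons.

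\emph{Shell product.} We expand $\log\prod_\ell(1-z/\lambda_{k,\ell})=-\sum_{j\ge1}z^j j^{-1}\sum_\ell\lambda_{k,\ell}^{-j}$ for $|z|<R_k/2$. For an unperturbed regular $2k$-gon, the power sums $\sum_\ell\lambda^{-j}$ vanish for $1\le j<2k$. The perturbation changes $\sum_\ell e^{-ij\theta}$ by a Riemann-sum error. For $j=1,2$ we estimate $\sum_\ell\bigl(e^{-ij(\theta_{k,\ell}-\varepsilon_k h'(\theta_{k,\ell}))}-e^{-ij\theta_{k,\ell}}\bigr)=ij\varepsilon_k\sum_\ell h'(\theta_{k,\ell})e^{-ij\theta_{k,\ell}}+O(k\varepsilon_k^2)$. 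Since $h\in C^2$ and the sample has $2k$ points, the Riemann sum of $h' e^{-ij\theta}$ is $2k\,\widehat{h'}(j)\cdot(\text{const})+O(1)$. The contribution to the $j$-th coefficient is therefore $O(k\varepsilon_k/R_k^j)=O((\log k)^2/k^{1+j})$, which is summable for $j=1,2$. For $j\ge3$ we use the crude bound $2k/R_k^3$, which is also summable. Hence the product converges locally uniformly, without genus factors, and its zero multiset is exactly $\Lambda$.

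\emph{Radial deficit.} The angles do not affect $J_\Lambda$, so $D_\Lambda$ depends only on the radii. Lemma~\ref{lem:joint-radial-data} gives $D_\Lambda(r)=-\tfrac16\log r-T_{\lfloor r\rfloor}+O(1)$ with $T_n=\sum_{k\le n}2k\log(R_k/k)$. We expand
\[
 2k\log(R_k/k)=\frac{2a\log k}{k}+\frac{2b}{k}+\frac{2c}{k\log k}+O\!\left(\frac{(\log k)^2}{k^2}\right),
\]
and sum: $T_n=a(\log n)^2+2b\log n+2c\log\log n+O(1)$. Here $\sum_{k\le n}\log k/k=\tfrac12(\log n)^2+O(1)$, $\sum_{k\le n} 1/k=\log n+O(1)$, and $\sum_{k\le n}1/(k\log k)=\log\log n+O(1)$. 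Combining gives \eqref{eq:smooth-profile-radial-deficit} with $\gamma=\tfrac16+2b$ and $\delta=2c$. The finitely many initial shells of radius $k$ contribute only a constant.

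\emph{Main obstacle.} We expect the only delicate point to be uniformity in the polygon phases $\phi_k$ and the sizes of the low-order power sums. The Riemann-sum estimate above is uniform because $h\in C^2$ gives a Fourier-coefficient error that is $O(1)$ independently of the phase. If needed, we would instead bound the perturbed power sums directly by $2k\varepsilon_k\|h'\|_\infty j$, which still makes the $j=1,2$ terms summable since $\sum k\varepsilon_k/k^j<\infty$.
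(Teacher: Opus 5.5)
Your proposal is correct and follows essentially the same route as the paper: the geometry comes from Lemmas~\ref{lem:joint-radial-data} and \ref{lem:bounded-area-allocation}, convergence of the product from a shellwise logarithmic expansion in which the first two power sums are $O((\log k)^\tau/k)$ and the degree-$\ge3$ tail is $O(k/R_k^3)$, and the deficit from summing the expansion of $2k\log(R_k/k)$ in $T_n$. The only differences are minor. For $j=1,2$ the paper uses the direct bound $2kj\varepsilon_k\|h'\|_\infty$, which is your fallback, rather than your Taylor--Riemann-sum estimate. Its expansion error is $O((\log k)^2/k^3)$, sharper than your $O((\log k)^2/k^2)$, but both are summable.
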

\begin{proof}

The angular maps and all their intermediate deformations have derivative
at least $1/2$.
Since $R_k\asymp k$, their angular gaps give a uniform positive
lower bound for chord lengths within a shell. The bound
$|R_k-k|<1/4$ gives a gap greater than $1/2$ between consecutive
tail radii. The first tail radius is separated from the initial
shells by at least $3/4$.
This proves uniform separation. The tangential motions have size
$O((\log k)^\tau/k)$ and hence are bounded. The allocation,
matching, and disk discrepancy follow from
Lemmas~\ref{lem:joint-radial-data} and \ref{lem:bounded-area-allocation}.

For the radial term, expand
\[
 2k\log(R_k/k)=\frac{2a\log k}{k}+\frac{2b}{k}
       +\frac{2c}{k\log k}
       +O\!\left(\frac{(\log k)^2}{k^3}\right).
\]
The error is summable. Sum-integral comparison gives
\[
 \sum_{k=K}^n2k\log(R_k/k)
       =a(\log n)^2+2b\log n+2c\log\log n+O(1).
\]
Indeed, the derivatives of $(\log x)/x$, $1/x$, and $1/(x\log x)$
are absolutely integrable on $[K,\infty)$, so each comparison
has a bounded error. Lemma~\ref{lem:joint-radial-data} now proves
\eqref{eq:smooth-profile-radial-deficit}.

Write $\psi_{k,\ell}=\theta_{k,\ell}
                  -\varepsilon_kh'(\theta_{k,\ell})$.
For $j=1,2$, the reciprocal angular sums on a large shell obey
\[
 \left|\sum_{\ell=0}^{2k-1}e^{-ij\psi_{k,\ell}}\right|
 \le2kj\varepsilon_k\|h'\|_\infty=O((\log k)^\tau/k),
\]
since the corresponding regular polygon sum is zero.
For $|z|\le T$ and $R_k>2T$, expand the logarithms with value zero
at $z=0$. The terms of degrees one and two are summable over $k$,
because they are bounded by $C_T(\log k)^\tau/k^2$ and
$C_T(\log k)^\tau/k^3$. The remaining terms have sum at most
\[
 2k\sum_{j\ge3}\frac{(T/R_k)^j}{j}\le C_T k/R_k^3,
\]
also summable. This proves local uniform convergence of the logarithmic
tails, whose exponentials do not vanish. The finite remaining factors
give exactly the stated zeros.

\end{proof}

\begin{lemma}[Angular variation and sampling error]
\label{lem:profile-angular-sampling}
Let $F$ be the product from Lemma~\ref{lem:profile-geometry-product}, let
$q_k=\min(r_n/R_k,R_k/r_n)$, and put $L_q(x)=\log|1-qe^{ix}|$.
Then, uniformly in $n$, $\theta$, and the phases,
\[
 \log|F(r_ne^{i\theta})|-J_\Lambda(r_n)
 =-\sum_{k\ge K}\frac{2k\varepsilon_k}{2\pi}
      \int_0^{2\pi}h'(x)L'_{q_k}(x-\theta)\,dx+O(1).
\]
The Taylor and discrete sampling errors are bounded shellwise by
\eqref{eq:profile-taylor-error} and \eqref{eq:profile-sampling-error};
their sums are uniformly bounded.
\end{lemma}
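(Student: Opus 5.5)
We work shell by shell, since $\log|F(r_ne^{i\theta})|=\sum_k\sum_\ell\log|1-r_ne^{i\theta}/\lambda_{k,\ell}|$. For each shell $k$ we compare the sum with the regular $2k$-gon at radius $R_k$ and phase $\phi_k$. Jensen's formula on the circle gives that the angular mean of $\log|1-z/\lambda|$ at radius $r_n$ is $\log^+(r_n/R_k)$. Summing these means over all shells reproduces $J_\Lambda(r_n)$, because the product has no linear or quadratic convergence factors and the regular-polygon part is what Jensen sees. So the task is to control, for each shell, the deviation of the shell sum from its angular mean.

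\textbf{Regular polygon.} First we treat the undisplaced polygon. Using the identity $\prod_\ell(1-w e^{-i\pi\ell/k})=1-w^{2k}$ (with $w=z e^{-i\phi_k}/R_k$), the shell contributes $\log|1-(r_n/R_k)^{2k}e^{2ik(\theta-\phi_k)}|$. For $|w|<1$ this equals $0=\log^+$, up to an error $O(q_k^{2k})$. For $|w|>1$ we factor out $w^{2k}$, which gives $\log^+(r_n/R_k)+O(q_k^{2k})$. Since $r_n=n+1/2$ and $|R_k-k|<1/4$, we have $|\log(r_n/R_k)|\gtrsim|n-k|/k$ up to bounded shifts. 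Hence $q_k^{2k}\le e^{-c|k-r_n|}$, and the sum over $k$ of these errors is bounded, in the same way as \eqref{eq:variable-population-shell-overlap}. The point of choosing $r_n$ at half-integers is that it keeps the nearest shell at distance at least $1/4$, so no shell term blows up.

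\textbf{Angular displacement.} Next we handle the displacement $-\varepsilon_kh'(\theta_{k,\ell})$ in the angle. Write the shell sum as $\sum_\ell L_{q_k}(\psi_{k,\ell}-\theta)$, plus $\log(r_n/R_k)$ terms when $r_n>R_k$. A first-order Taylor expansion in $\varepsilon_k$ gives $-\varepsilon_k\sum_\ell h'(\theta_{k,\ell})L'_{q_k}(\theta_{k,\ell}-\theta)$. The Taylor remainder is bounded by $\varepsilon_k^2\|h'\|^2\sum_\ell|L''_{q_k}|$, which is the estimate \eqref{eq:profile-taylor-error}.

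\textbf{Sampling error.} We then replace the Riemann sum $\sum_\ell$ over the $2k$ equispaced nodes by $\frac{2k}{2\pi}\int\,dx$. Expand $L_q'$ in Fourier series, $L_q(x)=-\Re\sum_{m\ge1}q^me^{imx}/m$, and $h'$ likewise. The discrete sum on $2k$ nodes aliases frequencies modulo $2k$, so the sampling error involves products of Fourier coefficients whose frequencies sum to a nonzero multiple of $2k$. Since $h\in C^2$, the coefficients of $h'$ are $O(1/m)$ and square-summable. The aliasing terms are then bounded by $\varepsilon_k\,k\,(q_k^{2k}+\sum_{|j|\ge 2k-m}|\hat h'(j)|q_k^m)$, and this is the estimate \eqref{eq:profile-sampling-error}.

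\textbf{Main obstacle.} The hardest step is the shells with $k$ near $r_n$, where $q_k\to1$ and $L_q$ develops a logarithmic singularity, so that $L'_q$ and $L''_q$ grow like $(1-q)^{-1}$ and $(1-q)^{-2}\sim k^2$. There we use the uniform separation of the points from the circle $|z|=r_n$ (distance $\ge1/4$) to bound $\sum_\ell|L''_{q_k}(\cdot)|\lesssim k^2$. Combined with $\varepsilon_k^2\sim(\log k)^{2\tau}/k^4$, this gives a per-shell error of size $(\log k)^{2\tau}/k^2$. There are only boundedly many such shells, so their total is $o(1)$ and in particular bounded. For the sampling error on these shells we avoid the Fourier expansion and instead use a direct midpoint-rule estimate, with the derivative of the integrand bounded by $\varepsilon_k k^2$ times $O(1/k)$ mesh. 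This is again a bounded quantity per shell.

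For distant shells we have $|\log q_k|\gtrsim|k-n|/k$, so every error carries a factor $e^{-c|k-n|}$ or $q_k^{\min(2k,\cdot)}$. Summing gives a uniform bound independent of $n$, $\theta$ and the phases, because the phases enter only through the Riemann nodes and all the estimates above are uniform in them.
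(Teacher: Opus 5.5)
Your decomposition is the paper's: the polygon identity $\prod_\ell(1-we^{-i\pi\ell/k})=1-w^{2k}$ with the overlap bound, a first-order expansion in $\varepsilon_k$ with a second-derivative remainder, and aliasing of the sampled first variation. The gap is in the bookkeeping that must make the total error $O(1)$ uniformly in $n$.

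\emph{Distant shells.} There the Taylor remainder and the high-frequency aliasing do \emph{not} carry a factor $e^{-c|k-n|}$. For $k\ge 2n$ one has $q_k\approx n/k$ and $L''_{q_k}(x)=q_k\cos x+O(q_k^2)$, so $\sum_\ell|L''_{q_k}|\asymp kq_k$. Likewise, the modes $j=\pm(2k-1)$ of $h'$ alias against the modes $\pm1$ of $L'_{q_k}$, whose coefficients have size $q_k$. These terms are summable only through decay in $k$. Fortunately, your near-shell estimate holds on \emph{every} shell. The half-integer radii give $k(1-q_k)\ge c'>0$ for all $k$, and the intermediate angles stay $\gtrsim 1/k$ apart since $\varepsilon_k\|h'\|_\infty=o(1/k)$. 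Hence $\sum_\ell|L''_{q_k}|\lesssim q_k\bigl(k/(1-q_k)+(1-q_k)^{-2}\bigr)\lesssim k^2$, and the remainder is $C(\log k)^{2\tau}/k^2$ on all shells, which is summable. The high-frequency aliasing is $\lesssim \varepsilon_k k\sum_{|j|\ge k}|jc_j|$. For this you need $\sum_{|j|\ge k}|jc_j|\le Ck^{-1/2}\|h''\|_{L^2}$, which follows from Cauchy--Schwarz with $\sum j^4|c_j|^2<\infty$. Coefficients of $h'$ that are merely $O(1/j)$ and square-summable do not guarantee that this tail is even finite.

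\emph{Near shells.} The midpoint-rule sampling estimate fails as written. With the global derivative bound $\varepsilon_k k^2$ and mesh $\pi/k$, each of the $2k$ nodes contributes up to $\varepsilon_k k$. The shell error is then only bounded by $\varepsilon_k k^2\asymp(\log k)^\tau$, which is $(\log n)^\tau$ for $k\approx n$. That is the same order as the profile term $h(\theta)(\log r_n)^\tau$ the lemma is meant to isolate.

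You need local information instead. The unnormalized Riemann-sum error is at most the total variation of $f=\varepsilon_k h'L'_{q_k}(\cdot-\theta)$, and $\operatorname{Var}(f)\lesssim\varepsilon_k\bigl(\|L'_{q_k}\|_{L^1}+\|L''_{q_k}\|_{L^1}\bigr)\lesssim\varepsilon_k\bigl(\log\tfrac{2}{1-q_k}+\tfrac{1}{1-q_k}\bigr)\lesssim\varepsilon_k k$. Using $1/(1-q_k)\lesssim 1+n/(1+|k-n|)$, this bound is even summable over all shells.

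Alternatively, keep the Fourier argument on every shell, as the paper does. The aliasing factor $\sum_{p\ne0}q_k^{|2kp-j|}\le 2/(1-q_k^{2k})$ stays bounded because $q_k^{2k}\le e^{-c_0}$ even for the nearest shells. This gives the uniform bound $C\varepsilon_k k\,(q_k^{k}+k^{-1/2})$, i.e.\ \eqref{eq:profile-sampling-error}. Note that low frequencies give $q_k^{k}$, not $q_k^{2k}$, which is harmless. There is no reason to abandon the Fourier expansion near $|z|=r_n$.
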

The proof is given in Appendix~\ref{app:angular-sampling}.

\begin{lemma}[Uniform radial kernel summation]
\label{lem:profile-radial-kernel}
For $R_k$ and $K$ as above,
\begin{equation}
 T_m(r):=\sum_{k\ge K}\frac{(\log k)^\tau}{k}
                 e^{-m|\log(r/R_k)|}
 =\frac{2(\log r)^\tau}{m}+O(1),
 \label{eq:smooth-profile-radial-convolution}
\end{equation}
uniformly for integers $m\ge1$ and large $r$. For each fixed integer
$N\ge1$, one also has
\begin{equation}
 N\sum_{\substack{k\ge K\\N\mid k}}
       \frac{(\log k)^\tau}{k}e^{-m|\log(r/R_k)|}
       =\frac{2(\log r)^\tau}{m}+O_N(1),
 \label{eq:profile-selected-shells}
\end{equation}
with error independent of $m$ and $r$.
\end{lemma}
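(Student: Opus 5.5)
The plan is to compare the shell sum with an integral in the variable $k$ and evaluate that integral exactly in logarithmic coordinates. By the hypotheses on $R_k$ and $K$, the map $k\mapsto R_k$ is increasing with $|R_k-k|<1/4$. Moreover $R_k=k+O(\log k/k)$, so $\log R_k=\log k+O(\log k/k^2)$. Put $x=\log k$ and $s=\log r$. Then $(\log k)^\tau/k\,dk$ corresponds to $x^\tau\,dx$, and $e^{-m|\log(r/R_k)|}=e^{-m|s-x|}\,e^{O(m\log k/k^2)}$.

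The first step is to dispose of the dependence on $m$ near the peak. The factor $e^{-m|s-x|}$ concentrates in a window of width $1/m$ around $x=s$, which is where $k\approx r$. For large $m$ the shells with $|k-r|\le1$ already contribute $O((\log r)^\tau/r)=O(1)$, so I do not need an error uniform in $m$ through the Riemann-sum comparison. Instead I would split into two ranges.

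\textbf{Range $m\le r$.} I compare the sum with $\int_K^\infty(\log t)^\tau t^{-1}e^{-m|\log(r/t)|}\,dt$ shell by shell. The summand $g(k)$ has total variation over $[K,\infty)$ bounded by $\sup g$ plus the variation of the monotone pieces. It is unimodal up to the perturbation $R_k$ versus $k$, increasing for $k<r$ and decreasing afterwards. So the sum minus the integral is bounded by $O(\max g)=O((\log r)^\tau/r)=O(1)$, plus the error from replacing $R_k$ by $k$. That replacement error is $\sum_k g(k)\,m\,O(\log k/k^2)$. The factor $m\log k/k^2$ is only relevant where $g$ is not exponentially small, namely $k\asymp r$; there it is $O(m\log r/r^2)$, times the mass $\sum g\approx 2(\log r)^\tau/m$, giving $O((\log r)^{\tau+1}/r^2)=O(1)$. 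Away from $k\asymp r$ the decay $e^{-m|\log(r/k)|}$ with $m\ge1$ handles the tails.

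\textbf{Range $m>r$.} Here only $O(1)$ shells near $r$ matter, each contributing $O((\log r)^\tau/r)$. The geometric decay in $m|\log(r/R_k)|\ge m\,c/r\cdot|k-r|$ sums to $O(1)$, and $2(\log r)^\tau/m=O(1)$ as well. Both sides are therefore $O(1)$.

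The exact integral is
\[
\int_{\log K}^\infty x^\tau e^{-m|s-x|}\,dx .
\]
Expanding $x^\tau$ around $s$ gives $2s^\tau/m+O(s^{\tau-1}/m^2+1/m^3)$, and the lower tail contributes $O(e^{-m(s-\log K)}s^\tau)$. Since $m\ge1$, all of these errors are $O(1)$ for $\tau\le2$; the only point needing care is that $\tau=2$ yields a $2/m^3$ term, which is still bounded. This proves \eqref{eq:smooth-profile-radial-convolution}.

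For \eqref{eq:profile-selected-shells} I repeat the argument with $k=Nj$. The sum becomes $N\sum_j(\log Nj)^\tau/(Nj)\,e^{-m|\cdots|}$, which is a Riemann sum with step $N$ for the same integrand. The same monotonicity bound gives an error of $N\max g=O_N(1)$.

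The main obstacle is keeping the comparison error uniform in $m$. I handle it through the total-variation and unimodality bound rather than a derivative bound, since the derivative grows with $m$. The only remaining point to check is that the perturbation $R_k\neq k$ does not break unimodality by more than boundedly many shells near $r$.
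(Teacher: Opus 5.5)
Your architecture matches the paper's. Unimodality of $g(x)=w(x)e^{-m|\log(r/x)|}$, with $w(x)=(\log x)^\tau/x$, gives a sum--integral comparison whose error is bounded by the variation, uniformly in $m$. The integral is then evaluated in logarithmic coordinates, and the selected shells are compared on intervals of length $N$.

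\textbf{The gap is in the final evaluation.} You write
\[
 \int_{\log K}^\infty x^\tau e^{-m|s-x|}\,dx=\frac{2s^\tau}{m}+O\Bigl(\frac{s^{\tau-1}}{m^2}+\frac1{m^3}\Bigr)
\]
and conclude that every error is $O(1)$ because $m\ge1$. For $\tau=2$ the middle term is $\log r/m^2$, which equals $\log r$ at $m=1$. Your bound therefore only gives $O(\log r)$.

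This is exactly where the lemma's $O(1)$ precision at the squared-logarithmic scale matters. In Lemma~\ref{lem:angular-profile-realization}, an error $O(\log r/m^2)$ in $T_m$ becomes $O(\log r/|m|)$ in the coefficient $|m|T_{|m|}/2$. Summing against $|c_m|$ then leaves an $O(\log r)$ term, which is the order of the $\gamma\log r$ term that Theorem~\ref{thm:smooth-angular-profile} must resolve when $a=H$.

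The missing idea is that the first-order term cancels exactly. Over the whole line, with $x=s+v$ (the paper's substitution $re^v$ in the original variable),
\[
 \int_{-\infty}^{\infty}(s+v)^\tau e^{-m|v|}\,dv=\frac{2s^\tau}{m}+\begin{cases}0,&\tau=1,\\ 4/m^3,&\tau=2,\end{cases}
\]
because $\tau s^{\tau-1}v$ is odd and the kernel is even. The asymmetry of $[\log K,\infty)$ enters only through the lower tail $\int_{-\infty}^{\log K}|x|^\tau e^{-m(s-x)}\,dx$. This tail is at most $r^{-m}\Gamma(\tau+1)m^{-\tau-1}+(K/r)^m(\log K)^\tau/m$, uniformly in $m$. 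The quadratic term is $4/m^3$ rather than $2/m^3$, which is harmless.

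Your treatment of $R_k\ne k$ also needs tightening, though it is repairable. The claim that the decay ``handles the tails'' is not accurate in two situations:
\begin{itemize}
\item For $m=1$, the shells $k\le r/2$ carry a fixed fraction of the main term, so they are not negligible.
\item For small $k$ with $m$ comparable to $r$, the factor $e^{O(m\log k/k^2)}$ cannot be linearized.
\end{itemize}
A mean-value bound closes both cases. For $k\le r/2$,
\[
 |(R_k/r)^m-(k/r)^m|\le m|R_k-k|\,\bigl((k+1/4)/r\bigr)^{m-1}/r,
\]
and $m(0.6)^{m-1}$ is bounded; the analogous bound handles $k\ge2r$.

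The paper avoids both this estimate and your case split on $m$. The segments from $k$ to $R_k$ lie in the disjoint intervals $[k-1/4,k+1/4]$ and $g$ is unimodal, so $\sum_k|g(R_k)-g(k)|\le\operatorname{Var}g$ uniformly in $m$. Then $\sum_k|w(R_k)-w(k)|<\infty$ converts $g(R_k)$ into the actual summand $w(k)e^{-m|\log(r/R_k)|}$.
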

The proof is given in Appendix~\ref{app:radial-kernel}.

\begin{lemma}[Realization of angular profiles]
\label{lem:angular-profile-realization}
Let $F$ be the shell product from Lemma~\ref{lem:profile-geometry-product}.
For all sufficiently large $n$, with $r_n=n+1/2$,
\begin{equation}
 \log|F(r_ne^{i\theta})|
       =J_\Lambda(r_n)+h(\theta)(\log r_n)^\tau+O(1).
 \label{eq:smooth-profile-potential}
\end{equation}
The error is uniform in $n$, $\theta$, and the phases $\phi_k$.
\end{lemma}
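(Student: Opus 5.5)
The proof will start from Lemma~\ref{lem:profile-angular-sampling}, which writes $\log|F(r_ne^{i\theta})|-J_\Lambda(r_n)$ as
\[
 -\sum_{k\ge K}\frac{2k\varepsilon_k}{2\pi}\int_0^{2\pi}h'(x)L'_{q_k}(x-\theta)\,dx
\]
plus a uniformly bounded error. The remaining task is to evaluate this shell sum. The plan is to expand both $h$ and the kernel in Fourier series. Since $h$ is $C^2$ with mean zero, write $h(\theta)=\sum_{m\ne0}\hat h(m)e^{im\theta}$; then $|\hat h(m)|\le C m^{-2}$, and $\sum|m\hat h(m)|\cdot m^{-1}<\infty$ will be what we need. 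For $0<q<1$, $L_q(x)=\log|1-qe^{ix}|=-\Re\sum_{m\ge1}q^m e^{imx}/m$, hence $L_q'(x)=\Re\sum_{m\ge1} (-i)q^me^{imx}$. Integrating by parts (or convolving Fourier coefficients), the inner integral $\frac1{2\pi}\int h'(x)L'_q(x-\theta)\,dx$ equals $-\frac1{2\pi}\int h''(x)L_q(x-\theta)\,dx$. This is a convolution whose $m$-th coefficient is $-\tfrac12 q^{|m|}|m|\,\hat h(m)e^{im\theta}$, up to sign conventions. The overall sign is checked by consistency with the target identity, for example on $h=\cos\theta$.

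Substituting $2k\varepsilon_k=(\log k)^\tau/k$ and $q_k^{|m|}=e^{-|m||\log(r_n/R_k)|}$, the shell sum becomes
\[
 \sum_{m\ne0}\tfrac12|m|\hat h(m)e^{im\theta}\,T_{|m|}(r_n),
\]
with $T_m$ as in Lemma~\ref{lem:profile-radial-kernel}. Lemma~\ref{lem:profile-radial-kernel} gives $T_m(r)=2(\log r)^\tau/m+O(1)$ uniformly in $m$. The main term is therefore $\sum_{m\ne0}\hat h(m)e^{im\theta}(\log r_n)^\tau=h(\theta)(\log r_n)^\tau$. The error is bounded by $\tfrac12\sum_{m\ne0}|m||\hat h(m)|\cdot O(1)$, which is uniform in $\theta$, $n$ and the phases.

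The main obstacle is this last error sum: it requires $\sum|m\hat h(m)|<\infty$, which $C^2$ regularity gives only barely. Parseval with $h''\in L^2$ and Cauchy--Schwarz yield $\sum|m\hat h(m)|\le(\sum m^{-2})^{1/2}\|h''\|_2<\infty$, so this works. I would also verify that the interchange of the $k$- and $m$-sums is legitimate. Absolute convergence for fixed $n$ follows from $\sum_k (\log k)^\tau k^{-1}q_k^{|m|}\le T_1(r_n)<\infty$ together with $\sum|m\hat h(m)|<\infty$, and Fubini then justifies the interchange.

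Finally, shells with $R_k=r_n$ are impossible, since $|R_k-k|<1/4$ and $r_n=n+1/2$, so $q_k<1$ always holds. This completes \eqref{eq:smooth-profile-potential}.
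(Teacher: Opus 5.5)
Your proposal is correct and follows essentially the same route as the paper. You reduce via Lemma~\ref{lem:profile-angular-sampling} to the continuous first variations and expand $h$ in Fourier series, so that by Lemma~\ref{lem:profile-radial-kernel} the $m$th coefficient is $\tfrac12|m|c_mT_{|m|}(r_n)=c_m(\log r_n)^\tau+O(|mc_m|)$. You then control the error by $\sum|mc_m|<\infty$ from Parseval and Cauchy--Schwarz, as in \eqref{eq:smooth-profile-fourier-tail}; your interchange argument via $T_{|m|}(r_n)\le T_1(r_n)<\infty$ is only a harmless variant of the paper's bound.
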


\begin{proof}
By Lemma~\ref{lem:profile-angular-sampling}, we need only sum the continuous
first variations. Write
$h(\theta)=\sum_{m\ne0}c_me^{im\theta}$ and use
Lemma~\ref{lem:profile-radial-kernel}, with $r=r_n$.
The coefficient of $c_m e^{im\theta}$ in the summed continuous
variations is $|m|T_{|m|}(r)/2=(\log r)^\tau+O(|m|)$.
The sums can be interchanged since
\[
 \sum_{m\ne0}|mc_m|T_{|m|}(r)
 \le2(\log r)^\tau\sum_{m\ne0}|c_m|+C\sum_{m\ne0}|mc_m|<\infty.
\]
By \eqref{eq:smooth-profile-fourier-tail}, the coefficient errors have
a bounded sum. Thus the variations equal $h(\theta)(\log r)^\tau+O(1)$.
Combining all three bounded errors with the regular-polygon potential
proves \eqref{eq:smooth-profile-potential}.
\end{proof}

\begin{proof}[Proof of Theorem~\ref{thm:smooth-angular-profile}]
Let $F$ be the function in Lemma~\ref{lem:angular-profile-realization}
and put $G=Fq_{\nu_+}/q_{\nu_-}$, with its poles canceled. Thus
$\mathcal Z(G)=\Xi$. The finite-change factor has logarithmic modulus
$q\log r+O(1)$ outside a fixed disk, and hence
\begin{equation}
 \begin{split}
 \log|G(r_ne^{i\theta})|={}&\frac{r_n^2}{2}
       -a(\log r_n)^2+h(\theta)(\log r_n)^\tau\\
       &-(\gamma-q)\log r_n-\delta\log\log r_n+O(1).
 \end{split}
 \label{eq:angular-full-expansion}
\end{equation}
All errors in this proof are uniform in the angle. The proof has two
separate tasks: compute the norm of this prescribed generator, and then
show that a divergent norm cannot be repaired by an entire multiplier
introducing additional zeros.

\emph{Step 1: integrability of the prescribed generator.} We determine when $G$ belongs to $\F$.
Lemma~\ref{lem:gaussian-circle-sampling} makes this equivalent to
summability of the integrand in \eqref{eq:smooth-profile-integral}
at the radii $r_n$. All its nonangular factors have bounded ratios
on $[n,n+1]$, since their logarithms change by
$O((1+\log n)/n)$. Also, boundedness of $u_h$ gives
\[
 e^{-2\|u_h\|_\infty|s-t|}
 \le\frac{B_h(s)}{B_h(t)}
 \le e^{2\|u_h\|_\infty|s-t|}.
\]
The change of $(\log r)^\tau$ on a unit interval is bounded.
This proves the equivalence between $G\in\F$ and
\eqref{eq:smooth-profile-integral}.

At a maximum $\theta_0$, Taylor's theorem gives
$0\le u_h(\theta_0+x)\le Cx^2$ for small $x$.
Integration over $|x|\le c t^{-\tau/2}$ therefore yields
\begin{equation}
 c_h t^{-\tau/2}\le B_h(t^\tau)\le1\qquad(t\ge1).
 \label{eq:angular-concentration-bounds}
\end{equation}
For a constant profile, the same inequality follows from $B_h=1$.
Under $t=\log r$, the integral in
\eqref{eq:smooth-profile-integral} becomes
\[
 \int_{\log3}^\infty
 e^{-2at^2+2Ht^\tau-2(\gamma-q-1)t}
                  t^{-2\delta}B_h(t^\tau)\,dt.
\]
For $\tau=1$, recall that $a=0$. The bounds in
\eqref{eq:angular-concentration-bounds} give convergence if
$\gamma-H>q+1$ and divergence if $\gamma-H<q+1$.
For $\tau=2$, they give convergence if $a>H$ and divergence
if $a<H$. When $a=H$, the sign of $\gamma-q-1$ decides in
the same way. At the respective critical equalities, the remaining
condition is
\begin{equation}
 \int_{\log3}^\infty t^{-2\delta}B_h(t^\tau)\,dt<\infty.
 \label{eq:angular-endpoint-integral}
\end{equation}

By Tonelli's theorem, this integral is the angular mean of
$K_{\tau,\delta}(u_h(\theta))$, where
\[
 K_{\tau,\delta}(u)=\int_{\log3}^\infty
                         t^{-2\delta}e^{-2ut^\tau}\,dt.
\]
If $\delta>1/2$, this kernel is bounded even at $u=0$.
If $\delta=1/2$, split the integral at $u^{-1/\tau}$.
The exponential is between $e^{-2}$ and $1$ below that point,
and substitution bounds the tail independently of small $u>0$.
Consequently $K_{\tau,\delta}(u)\asymp1+\log(1/u)$ as
$u\downarrow0$. If $\delta<1/2$, put $p=(1-2\delta)/\tau>0$.
The substitution $v=2ut^\tau$ gives
\[
 K_{\tau,\delta}(u)=\frac{(2u)^{-p}}\tau
       \int_{2u(\log3)^\tau}^\infty v^{p-1}e^{-v}\,dv
       \sim\frac{\Gamma(p)}\tau(2u)^{-p}.
\]
For $\delta\le1/2$, the value at zero is infinite; away from zero
the kernel is bounded. The layer-cake identities are
\begin{align*}
 \frac1{2\pi}\int_0^{2\pi}
            \log^+\frac1{u_h(\theta)}\,d\theta
     &=\int_0^1\frac{\nu_h(u)}u\,du,\\
 \frac1{2\pi}\int_{\{u_h<1\}}
            (u_h(\theta)^{-p}-1)\,d\theta
     &=p\int_0^1\nu_h(u)u^{-p-1}\,du.
\end{align*}
They follow by writing each nonnegative integrand as its integral
over the sublevel parameter and applying Tonelli. They remain valid
with infinite values at $u_h=0$. Strict and non-strict sublevel sets
give the same integrals in $u$. This proves that
\eqref{eq:angular-endpoint-integral} is exactly
$\mathcal E_\tau(h,\delta)$.

\emph{Step 2: exclusion of all other vanishing functions.} Failure of these tests gives uniqueness,
including for functions with additional zeros. When $\tau=1$,
\eqref{eq:angular-full-expansion} is $r_n^2/2+O(\log r_n)$
in every direction. When $\tau=2$ and $a\le H$, it is bounded
below by $r_n^2/2-C(\log r_n)^2$ in every direction and by
$r_n^2/2-M\log r_n$ at a fixed maximum of $h$. Indeed, at that
maximum the quadratic logarithmic term is $(H-a)(\log r_n)^2\ge0$,
and the remaining $\log r_n$ and $\log\log r_n$ terms are bounded
below by $-M\log r_n$ after increasing $M$.
Proposition~\ref{prop:critical-direction} applies in both cases:
every vanishing Fock function is a polynomial multiple of $G$,
and one exists exactly when $G\in\F$.
For $\tau=2$ and $a>H$, the norm test already gives $G\in\F$.
The stated alternatives now follow.
\end{proof}

\subsection{A stable zero set at critical density}

The preceding criteria give a simple example of the class in
Theorem~\ref{thm:finite-modification-stability}.

\begin{example}\label{ex:stable-critical-array}
Let $R_k=k+(\log k)/k$ and put
\[
 \Lambda=\bigcup_{k\ge1}\{R_ke^{\pi i\ell/k}:0\le\ell<2k\}.
\]
This set is uniformly separated, admits a bounded matching to
$\mathcal L$, and satisfies $n_\Lambda(r)=r^2+O(r)$. Every finite
modification is a Fock zero set. The product
\[
 F(z)=\prod_{k\ge1}\left(1-(z/R_k)^{2k}\right)
\]
has zero set $\Lambda$ and $z^qF\in\F$ for every integer $q\ge0$.
\end{example}
\begin{proof}
The inequalities $0\le R_k-k\le1/e<3/8$ separate consecutive radii
by at least $5/8$. Chords on the $k$th circle have minimum length
$2R_k\sin(\pi/(2k))\ge2$. The count and bounded allocation follow
from Lemma~\ref{lem:joint-radial-data}; the matching follows from
Lemma~\ref{lem:bounded-area-allocation}. The product converges normally
on compact sets because $\sum_k(T/R_k)^{2k}<\infty$ for fixed $T$.

Apply Lemma~\ref{lem:angular-profile-realization} with $h=0$, $\tau=2$,
$a=1$, and $b=c=0$. Changing its finitely many initial shells to the
radii $R_k$ is a finite modification of net multiplicity zero. The ratio
of the two generators is a rational function tending to a nonzero
constant at infinity. Hence, uniformly in $\theta$ and for
$r_n=n+1/2$ sufficiently large,
\begin{equation*}
 \log|F(r_ne^{i\theta})|
 =\frac{r_n^2}{2}-(\log r_n)^2-\frac16\log r_n+O(1).
 \end{equation*}
Lemma~\ref{lem:gaussian-circle-sampling} therefore gives
\[
 z^qF\in\F\quad\Longleftrightarrow\quad
 \sum_{n\ge3}n^{2q+2/3}e^{-2(\log n)^2}<\infty.
\]
The series converges for every $q$, since its terms eventually lie
below $n^{-2}$. Proposition~\ref{prop:finite-modification-moments}
proves stability. This also shows that a strict deficit in the leading
radial density is not necessary for finite-modification stability.
\end{proof}

\section{Transfer principles and local moment stability}
\label{sec:transfer}
\label{sec:further-perturbations}
\label{sec:irregular-perturbations}

We give two ways to transfer zero-set criteria. Small motions produce
a radial drift; bounded local replacements with two matching moments
preserve the zero-set property. These results extend the preceding
criteria to configurations that need not retain a radial or circular
form. The local moments here concern replacement clusters, whereas
Proposition~\ref{prop:balanced-five-points} balances the test measure.

\subsection{Inverse-radius displacements}

\begin{theorem}[Inverse-radius displacements]
\label{thm:inverse-radius-transport}
Let $\Lambda=(\lambda_a)_{a\in A}$ be a discrete multiset, and let
$\Gamma=(\mu_a)_{a\in A}$ be a labelled replacement. Suppose there
is a finite set of indices $A_0$ such that the positive measure
\begin{equation}
 \nu=\sum_{a\notin A_0}(1+|\lambda_a|)|\mu_a-\lambda_a|
                                      \delta_{\lambda_a}
 \quad\hbox{satisfies}\quad
 \sup_{w\in\C}\nu(D(w,1))<\infty.
 \label{eq:irt-displacement-mass}
\end{equation}
Enlarge $A_0$ so that $\lambda_a\mu_a\ne0$ and
$|\mu_a-\lambda_a|\le |\lambda_a|/2$ for $a\notin A_0$.
Define
\begin{equation*}
 B(r)=\sum_{\substack{a\notin A_0\\ |\lambda_a|\le r}}
                    \log\left|\frac{\mu_a}{\lambda_a}\right|.
 \end{equation*}
Then $\Gamma$ is discrete and
\[
 \begin{gathered}
 \Gamma\text{ is a zero set for }\F
 \quad\Longleftrightarrow\\
 \exists f\in \mathcal O(\C)\setminus\{0\}:\quad\mathcal Z(f)=\Lambda,
 \quad\int_\C |f(z)|^2
                    e^{-|z|^2-2B(|z|)}\,dA(z)<\infty.
 \end{gathered}
\]
\end{theorem}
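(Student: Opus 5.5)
Since the displacement measure $\nu$ controls only $|\mu_a-\lambda_a|$ weighted by $1+|\lambda_a|$, each point moves by $O(1/|\lambda_a|)$ in the mean over unit disks. The plan is to construct an explicit meromorphic quotient $Q$ with $\mathcal Z(Q)-\mathcal P(Q)=\Gamma-\Lambda$ and $\log|Q(z)|=-B(|z|)+O(1)$ away from the points, in circularly smoothed form, and then transfer Fock membership through Lemma~\ref{lem:circular-fock-smoothing}, as in the proof of Theorem~\ref{thm:bounded-displacement-field}.

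First, discreteness of $\Gamma$ follows from \eqref{eq:irt-displacement-mass}: on $D(w,1)$ with $|w|$ large, each displacement is at most $C/(1+|w|)$. Hence only points of $\Lambda$ in a slightly larger disk can move into $D(w,1)$. We need local finiteness of $\Lambda$ (it is discrete) and a bound on the multiplicity mass of moved points near $w$. That bound follows because $\Gamma$ and $\Lambda$ share labels and $\Lambda$ is locally finite.

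Next, set
\[
Q(z)=\prod_{a\in A_0}\frac{z-\mu_a}{z-\lambda_a}\prod_{a\notin A_0}\frac{1-z/\mu_a}{1-z/\lambda_a}.
\]
Note that no convergence factors are needed: $\log|(1-z/\mu_a)/(1-z/\lambda_a)|$ is a difference of two logarithms of nearby points. For $|\lambda_a|>2|z|$, it equals $\log|\lambda_a/\mu_a|+O(|z||\mu_a-\lambda_a|/|\lambda_a|^2)$. The difficulty is that $\sum|\mu_a-\lambda_a|/|\lambda_a|$ need not converge, since only unit-disk mass of $\nu$ is bounded, so the total mass in $D_R$ is $O(R^2)$. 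I will therefore use the smoothed form. For each label, the capped term $c_h(z-\mu_a)-c_h(z-\lambda_a)$ with $h=1$ is bounded by $C|\mu_a-\lambda_a|/(1+|z-\lambda_a|)$. The term $-\log|\mu_a/\lambda_a|$ is paired only with labels with $|\lambda_a|\le|z|$, and the remainder is absorbed. Concretely, I aim to show that
\[
S_1\log|Q|(z)=-B(|z|)+E(z),
\]
where $E$ is the sum of the capped differences. We have $c_h(z-\mu)-c_h(z-\lambda)-\bigl(\log^+|z|/|\mu|-\log^+|z|/|\lambda|\bigr)$-type corrections.

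The main obstacle is a uniform bound on $E$. The crude estimate $\sum_a|\mu_a-\lambda_a|/(1+|z-\lambda_a|)\le\sum_a\nu_a/((1+|\lambda_a|)(1+|z-\lambda_a|))$ uses bounded unit-disk mass and gives $\int_{D_{2|z|}}dA(w)/((1+|w|)(1+|z-w|))=O(|z|)$. That is not bounded. So a cancellation is needed. For $|\lambda_a|\gg|z|$ and for $|\lambda_a|\ll|z|$, the exact kernel difference is $\Re[\delta_a\cdot(\text{gradient})]$ with gradient of size $|z|/|\lambda_a|^2$ or $1/|z|$. These must be compared with the subtracted radial term $\log|\mu_a/\lambda_a|$, which is present only for $|\lambda_a|\le|z|$. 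For $|\lambda_a|<|z|/2$, the combined difference is $\Re[\delta_a(1/(z-\lambda_a)+\ldots)]$. That is dangerous, so I expect to need an additional linear harmonic correction $\Re(cz)$ together with the observation that $\log|1-z/\lambda|$ minus $\log|z/\lambda|$ has gradient in $\lambda$ of order $1/|z|$. This gives total $\sum_{|\lambda_a|<|z|/2}\nu_a/(|\lambda_a||z|)=O(1)$. Similarly, the far range gives $|z|\sum_{|\lambda_a|>2|z|}\nu_a/|\lambda_a|^3=O(1)$. The annulus $|\lambda_a|\asymp|z|$ gives $\sum\nu_a/(|z|(1+|z-\lambda_a|))=O(1)$. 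Thus the careful split into these three ranges should yield $E=O(1)$.

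Finally, given $f$ with $\mathcal Z(f)=\Lambda$, we have $g=fQ$ entire with $\mathcal Z(g)=\Gamma$. The map $f\mapsto fQ$ is a bijection between entire functions with these zero sets. Lemma~\ref{lem:circular-fock-smoothing}, applied to $g$ and to $f$ with weight $e^{-2B}$, shows that $g\in\F$ iff the weighted integral of $f$ is finite. Here we use the local constancy $\sup_{|r-r'|\le1}|B(r)-B(r')|<\infty$, which again follows from the unit-disk bound on $\nu$, so that smoothing commutes with the weight up to constants.
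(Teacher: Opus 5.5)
Your proposal is sound and follows the paper's proof. It uses the same quotient $Q$ without convergence factors, the same smoothed estimate $S_h\log|Q|(z)=-B(|z|)+O(1)$ via the split $|\lambda_a|<|z|/2$, $|z|/2\le|\lambda_a|\le2|z|$, $|\lambda_a|>2|z|$, and the same transfer through circular averaging, Tonelli, and local constancy of $B$.

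Your diagnosis of the obstacle is off, however. The crude bound is already $O(1)$ on $|\lambda_a|\le2|z|$, because by scaling $\int_{D_{2r}}\frac{dA(w)}{(1+|w|)(1+|z-w|)}$ is bounded for $|z|=r$, so no linear correction $\Re(cz)$ is needed. The only cancellation occurs for $|\lambda_a|>2|z|$, where $\log|(1-z/\mu_a)/(1-z/\lambda_a)|=O(|z||\mu_a-\lambda_a|/|\lambda_a|^2)$, not $\log|\lambda_a/\mu_a|+O(\cdot)$ as you wrote. The leftover terms $\log|\mu_a/\lambda_a|$ with $|z|<|\lambda_a|\le2|z|$ sum to at most $C\nu(D_{2|z|})/|z|^2=O(1)$.
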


\begin{proof}
Consider the meromorphic product
\begin{equation*}
 Q(z)=\prod_{a\in A_0}\frac{z-\mu_a}{z-\lambda_a}
       \prod_{a\notin A_0}
       \frac{1-z/\mu_a}{1-z/\lambda_a}.
 \end{equation*}
We prove that $Q$ converges independently of order and that, for every
entire $f$ vanishing on $\Lambda$,
\begin{equation}
 \|Qf\|_2^2\asymp
 \frac1\pi\int_\C |f(z)|^2
                 e^{-|z|^2-2B(|z|)}\dd(z).
 \label{eq:irt-norm}
\end{equation}
The constants are independent of $f$, and infinite values are allowed.
Multiplication by $Q$ will identify the two vanishing classes through
\begin{equation}
 \mathcal Z(Qf)=\mathcal Z(f)-\Lambda+\Gamma\qquad(f\ne0).
 \label{eq:irt-zero-multiset}
\end{equation}

Put $d_a=(1+|\lambda_a|)|\mu_a-\lambda_a|$ for $a\notin A_0$.
Condition~\eqref{eq:irt-displacement-mass} gives $d_a\le M$, so
$|\mu_a-\lambda_a|\le M/(1+|\lambda_a|)$ and
$L=\sup_a|\mu_a-\lambda_a|<\infty$.
Thus $\Gamma$ is discrete. A covering by unit disks gives
\begin{equation}
 \nu(D(w,t))\le C_0(1+t)^2
 \qquad(w\in\C,\ t\ge0).
 \label{eq:irt-quadratic-mass}
\end{equation}
For $a\notin A_0$, put $b_a=\log|\mu_a/\lambda_a|$. If
$|\lambda_a|$ is sufficiently large, then
\begin{equation}
 |b_a|\le C_1d_a|\lambda_a|^{-2}.
 \label{eq:irt-b-bound}
\end{equation}
For $|z|\le R$ and $|\lambda_a|$ sufficiently large, the segment
from $\lambda_a$ to $\mu_a$ stays outside $D(0,2R)$. On this segment,
\[
 \frac{d}{dw}\log(1-z/w)=\frac{z}{w(w-z)},\qquad
 \left|\log\frac{1-z/\mu_a}{1-z/\lambda_a}\right|
 \le C_R d_a|\lambda_a|^{-3},
\]
where the logarithm vanishes at $z=0$. For $R_1\ge1$,
\eqref{eq:irt-quadratic-mass} gives
\[
 \int_{|w|>R_1}|w|^{-3}\,d\nu(w)
 \le\sum_{j\ge0}(2^jR_1)^{-3}\nu(D(0,2^{j+1}R_1))
 \le\frac C{R_1}\sum_{j\ge0}2^{-j}.
\]
The logarithmic tails converge absolutely and normally and have
nonvanishing exponentials. Hence $Q$ has zero and pole orders given by $\Gamma-\Lambda$,
with cancellation at coincident points. This proves
\eqref{eq:irt-zero-multiset}.

It also follows from \eqref{eq:irt-b-bound} that
$B(r)=O(\log(2+r))$ and that, for each fixed $h>0$,
\begin{equation}
 \sup_{\substack{r,r'\ge0\\ |r-r'|\le h}}
                   |B(r)-B(r')|<\infty.
 \label{eq:irt-local-drift}
\end{equation}
Indeed, outside a fixed disk the contribution of each annulus
$2^jR_0<|w|\le2^{j+1}R_0$ is bounded by
\[
 C(2^jR_0)^{-2}\nu(D(0,2^{j+1}R_0))\le C'.
\]
There are $O(\log(2+r))$ such annuli up to radius $r$.
If $|r-r'|\le h$ and $r>2h+2R_0$, the difference is supported
on $r-h<|w|\le r+h$. This annulus is covered by $O_h(r)$ unit
disks, whence
\[
 |B(r)-B(r')|
 \le Cr^{-2}\nu\{w:r-h<|w|\le r+h\}\le C_h/r.
\]
Bounded radii involve only finitely many terms, proving both claims.

Set $T=2(L+1)$ and $h=4(L+1)$, so that $h>T+L$.
For a nonzero meromorphic function $F$, write
\[
 S_h\log|F|(z)=\frac1{2\pi}\int_0^{2\pi}
                          \log|F(z+he^{it})|\,dt.
\]
The required potential estimate is
\begin{equation}
 \big|S_h\log|Q|(z)+B(|z|)\big|\le C_2.
 \label{eq:irt-mean-potential}
\end{equation}
For $r=|z|$, put
\[
 \ell_a^h(z)=\log\max(h,|z-\mu_a|)
             -\log\max(h,|z-\lambda_a|).
\]
The circular-mean formula for the logarithm gives
\begin{align*}
 S_h\log|Q|(z)+B(r)
 &=\sum_{a\in A_0}\ell_a^h(z)
   +\sum_{\substack{a\notin A_0\\ |\lambda_a|\le r}}\ell_a^h(z)
   \\
 &\quad+\sum_{\substack{a\notin A_0\\ |\lambda_a|>r}}
                              (\ell_a^h(z)-b_a).
 \end{align*}
Termwise integration is valid by uniform convergence of the tail;
the remaining finite logarithmic singularities are integrable.

On compact sets, the finite terms and the locally uniformly convergent
tail are bounded. Suppose $r$ is large. When $|z-\lambda_a|\le T$,
both distances are below $h$, so $\ell_a^h(z)=0$. The residual
terms $-b_a$ are bounded by \eqref{eq:irt-b-bound} and the local
mass bound. The finite set $A_0$ also contributes a bounded amount.

For every other index, replacing each logarithm by its maximum with
$\log h$ does not increase the absolute difference. Since
$|z-\lambda_a|>T>2L$, we obtain
\[
 |\ell_a^h(z)|\le
 \frac{2|\mu_a-\lambda_a|}{|z-\lambda_a|}.
\]
Divide these indices into three regions. The contribution from
$|\lambda_a|<r/2$ is at most
\[
 \frac C r\int_{R_0\le|w|<r/2}\frac{d\nu(w)}{|w|}+O(1)
 \le C',
\]
where $R_0>0$ is fixed. For $r/2\le|\lambda_a|\le2r$, the bound is
\[
 \frac C r\int_{T<|z-w|\le3r}\frac{d\nu(w)}{|z-w|}
 +\frac C{r^2}\nu(D(0,2r))\le C'.
\]
Here the second term bounds the terms $b_a$ with $|\lambda_a|>r$.
For $|\lambda_a|>2r$ and sufficiently large $r$, neither distance
is capped. On the segment from $\lambda_a$ to $\mu_a$,
$|w|\ge3|\lambda_a|/4$ and $|w-z|\ge|\lambda_a|/4$.
Differentiation of $\log(1-z/w)$ along this segment bounds the
contribution from the outer region by
\[
 Cr\int_{|w|>2r}|w|^{-3}\,d\nu(w)\le C'.
\]
The estimates used here follow from the following dyadic sums,
uniformly in $v$ and for $R\ge a\ge1$:
\[
 \begin{split}
 \int_{a<|w-v|\le R}|w-v|^{-1}\,d\nu(w)
 &\le \sum_{\substack{j\ge0\\2^ja<R}}
        (2^ja)^{-1}\nu(D(v,2^{j+1}a))\le CR,\\
 \int_{|w-v|>R}|w-v|^{-3}\,d\nu(w)
 &\le \sum_{j\ge0}(2^jR)^{-3}\nu(D(v,2^{j+1}R))\le C/R.
 \end{split}
\]
In the first sum the last annulus may be truncated at $R$;
the estimate follows from $\sum_{2^ja<R}2^ja\le2R$.
The outer-region bound is the second estimate multiplied by $r$.
These estimates prove \eqref{eq:irt-mean-potential}.

Now let $f$ be entire and vanish on $\Lambda$, and put $g=Qf$.
The product is entire. If $f\ne0$, subharmonicity and
\eqref{eq:irt-mean-potential} give
\begin{align}
 \log|g(z)|&\le S_h\log|f|(z)-B(|z|)+C_2,
       \label{eq:irt-capped}\\
 \log|f(z)|&\le S_h\log|g|(z)+B(|z|)+C_2.
       \label{eq:irt-reverse-capped}
\end{align}
The mean of $|z+he^{it}|^2$ is $|z|^2+h^2$. Including the
Gaussian in \eqref{eq:irt-capped} and applying Jensen gives
\begin{equation}
 |g(z)|^2e^{-|z|^2}
 \le C_3e^{-2B(|z|)}\frac1{2\pi}\int_0^{2\pi}
             |f(z+he^{it})|^2e^{-|z+he^{it}|^2}\,dt.
 \label{eq:irt-gaussian}
\end{equation}
Put $W(z)=e^{-2B(|z|)}$ and $F(z)=|f(z)|^2e^{-|z|^2}$.
The constant in \eqref{eq:irt-gaussian} includes $e^{h^2}$.
By \eqref{eq:irt-local-drift}, $W(z)\le C_hW(z+he^{it})$.
Translation invariance and Tonelli's theorem therefore give
\[
 \begin{split}
 \int_\C |g(z)|^2e^{-|z|^2}\,dA(z)
 &\le \frac{C_3}{2\pi}\int_0^{2\pi}
       \int_\C W(z)F(z+he^{it})\,dA(z)\,dt\\
 &\le C_3C_h\int_\C W(w)F(w)\,dA(w).
 \end{split}
\]
For the reverse bound, apply the same Gaussian calculation to
\eqref{eq:irt-reverse-capped} before multiplying by $W(z)$.
The two drift factors cancel at the center, giving
\[
 |f(z)|^2e^{-|z|^2}W(z)
 \le e^{h^2+2C_2}\frac1{2\pi}\int_0^{2\pi}
       |g(z+he^{it})|^2e^{-|z+he^{it}|^2}\,dt.
\]
A further integration proves the other half of \eqref{eq:irt-norm}.
All integrands are nonnegative, so both applications of Tonelli
remain valid for infinite norms.

Conversely, if $g$ is entire and vanishes on $\Gamma$, the zero and pole orders
of $Q$ make $g/Q$ entire and vanishing on $\Lambda$. Multiplicities
are counted, so collisions between labels cause no change. The reverse
estimate uses the same $Q$ and drift and requires no mass condition
on the target. Thus multiplication by $Q$ is a bijection, and the
zero-set equivalence follows.

\end{proof}

\begin{corollary}[Radial and angular displacements]
\label{cor:irt-bounded-drift}
\label{cor:radial-weight-shift}
\label{cor:irt-angular}
Under the hypotheses of Theorem~\ref{thm:inverse-radius-transport},
if $B(r)=\gamma\log(1+r)+O(1)$, then
\[
 \begin{gathered}
 \Gamma\text{ is a Fock zero set}
 \quad\Longleftrightarrow\\
 \exists f\in \mathcal O(\C)\setminus\{0\}:\quad\mathcal Z(f)=\Lambda,
 \quad\int_\C|f(z)|^2(1+|z|^2)^{-\gamma}e^{-|z|^2}\,dA(z)<\infty.
 \end{gathered}
\]
In particular, bounded drift preserves Fock zero sets. The hypotheses
hold in each of the following cases.
\begin{enumerate}[label=\textup{(\roman*)}]
\item If $\Lambda$ has uniformly bounded counting in unit disks,
$|\mu_a-\lambda_a|\le C/(1+|\lambda_a|)$ outside a finite set,
and $|\mu_a|=|\lambda_a|$ there, then $\gamma=0$.
Here $C$ need not be small, and no rotational symmetry or moment
identity is assumed.
\item If $\Lambda$ has uniformly bounded counting in unit disks,
$n_\Lambda(r)=\rho r^2+O(r)$ with $\rho\ge0$, and, outside a
sufficiently large disk,
\[
 \mu=\lambda\left(1+\frac c{|\lambda|^2}\right),\qquad c\in\mathbb R,
\]
then $\gamma=2c\rho$.
\end{enumerate}
\end{corollary}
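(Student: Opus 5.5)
The first assertion follows directly from Theorem~\ref{thm:inverse-radius-transport}. If $B(r)=\gamma\log(1+r)+O(1)$, then $e^{-2B(|z|)}\asymp(1+|z|)^{-2\gamma}\asymp(1+|z|^2)^{-\gamma}$ uniformly in $z$. The two weighted integrals are therefore finite together for every entire $f$ with $\mathcal Z(f)=\Lambda$. For bounded drift ($\gamma=0$), the weighted condition reduces to the existence of a Fock function with zero multiset $\Lambda$, so $\Gamma$ is a zero set exactly when $\Lambda$ is. The work lies in checking the hypotheses in the two cases. In each case I would verify the mass bound \eqref{eq:irt-displacement-mass} and then compute $B$.

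\emph{Case (i).} Outside the finite set we have $(1+|\lambda_a|)|\mu_a-\lambda_a|\le C$. Hence $\nu(D(w,1))\le C\,\Lambda(D(w,1))$, which is uniformly bounded by the counting hypothesis. Enlarging $A_0$ by finitely many indices gives $|\mu_a-\lambda_a|\le|\lambda_a|/2$ and $\lambda_a\mu_a\ne0$. Since $|\mu_a|=|\lambda_a|$, every summand of $B$ vanishes. Thus $B\equiv0$ beyond the finite adjustment, and $\gamma=0$. The value of $C$ plays no role, because only the mass bound enters.

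\emph{Case (ii).} Here $|\mu-\lambda|=|c|/|\lambda|$, so $d_a\le 2|c|$ for large $|\lambda|$. The mass bound then follows again from bounded unit-disk counting. Moreover $\log|\mu_a/\lambda_a|=\log(1+c|\lambda_a|^{-2})=c|\lambda_a|^{-2}+O(|\lambda_a|^{-4})$, and the error is summable because $\sum|\lambda|^{-4}<\infty$ under quadratic counting. This leaves
\[
 B(r)=c\int_{R_0}^r t^{-2}\,dn_\Lambda(t)+O(1).
\]
By partial summation this equals $c\,n_\Lambda(r)r^{-2}+2c\int_{R_0}^r n_\Lambda(t)t^{-3}\,dt+O(1)$. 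Substituting $n_\Lambda(t)=\rho t^2+O(t)$, the boundary term is $O(1)$, and the integral is $2c\rho\log r+O(\int^r t^{-2}\,dt)=2c\rho\log r+O(1)$. Hence $\gamma=2c\rho$.

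The main point requiring care is the partial summation in (ii). The $O(r)$ remainder in $n_\Lambda$ has to give a bounded contribution, which works only because it is integrated against $t^{-3}$. If the remainder were only $o(r^2)$, the result would be $\gamma\log r+o(\log r)$, which is not enough. I would also note that the finitely many excluded indices change $B$ by a constant, so the conclusion does not depend on the choice of $A_0$.
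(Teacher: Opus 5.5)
Your proposal is correct and follows essentially the same route as the paper. Both arguments use the comparability $e^{-2B(r)}\asymp(1+r^2)^{-\gamma}$ together with the inverse-radius theorem, and both obtain the mass bound from bounded unit-disk counting. Both note that the drift vanishes identically in case (i). In case (ii), both use the expansion $\log(1+c|\lambda|^{-2})=c|\lambda|^{-2}+O(|\lambda|^{-4})$ and then Stieltjes integration by parts, where the $O(t)$ remainder integrated against $t^{-3}$ contributes only $O(1)$.
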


\begin{proof}
The equivalence follows from $e^{-2B(r)}\asymp(1+r^2)^{-\gamma}$
and Theorem~\ref{thm:inverse-radius-transport}.
Bounded local counting gives \eqref{eq:irt-displacement-mass} in
both cases. In \textup{(i)}, the drift is zero. In \textup{(ii)},
choose $R$ with $1+c/|\lambda|^2>0$ for $|\lambda|>R$ and use
\[
 \log(1+c/|\lambda|^2)=c|\lambda|^{-2}+O(|\lambda|^{-4}).
\]
Since $n_\Lambda(t)=O(t^2)$, the sum of the error terms is finite.
Stieltjes integration by parts gives
\[
 \begin{split}
 \sum_{R<|\lambda|\le r}|\lambda|^{-2}
 &=r^{-2}n_\Lambda(r)-R^{-2}n_\Lambda(R)
      +2\int_R^r\frac{n_\Lambda(t)}{t^3}\,dt\\
 &=2\rho\log r+O(1),
 \end{split}
\]
because the integral of the $O(t)$ remainder divided by $t^3$
converges. Thus $B(r)=2c\rho\log r+O(1)$, as asserted.
\end{proof}

\begin{remark}
Changing $A_0$ multiplies $Q$ by a nonzero constant and changes $B$
by a bounded step function, leaving the conclusions unchanged.
Condition~\eqref{eq:irt-displacement-mass} allows unbounded local
multiplicities and arbitrarily many stationary points.
\end{remark}

\subsection{Perturbations of the critical lattice}
\label{sec:lattice-criteria}

\begin{corollary}[Inverse-radius perturbations of the lattice]
\label{thm:lattice-drift-criterion}
Suppose $\Gamma=(\mu_\lambda)_{\lambda\in\mathcal L}$ satisfies
$|\mu_\lambda-\lambda|\le C/(1+|\lambda|)$ outside a finite
set $A_0$. Enlarge $A_0$, if necessary, so that the following
quotients are nonzero, and put
\[
 B(r)=\sum_{\substack{\lambda\notin A_0\\|\lambda|\le r}}
                       \log|\mu_\lambda/\lambda|.
\]
For any finite modification $\Xi$ of net multiplicity $q$,
\begin{equation*}
 \Xi\text{ is a zero set for }\F
 \quad\Longleftrightarrow\quad
 \int_1^\infty r^{2q+1}e^{-2B(r)}\,dr<\infty.
 \end{equation*}
Equivalently, the condition is $\int_1^\infty re^{2D_\Xi(r)}dr<\infty$.
If the integral diverges, $\Xi$ is a uniqueness multiset.
\end{corollary}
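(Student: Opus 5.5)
The plan is to apply Theorem~\ref{thm:inverse-radius-transport} with $\Lambda=\mathcal L$ and the sigma function as base realizer. First we reduce to the unmodified array. By Lemma~\ref{lem:finite-modifications}, $\Xi$ is a zero set exactly when some entire $g$ with $\mathcal Z(g)=\Gamma$ satisfies $\int|g|^2(1+|z|^2)^qe^{-|z|^2}dA<\infty$. The proof of Theorem~\ref{thm:inverse-radius-transport} builds a quotient $Q$ for which multiplication is a bijection between entire functions vanishing on $\mathcal L$ and those vanishing on $\Gamma$. Its norm comparison \eqref{eq:irt-norm} holds verbatim with the extra weight $(1+|z|^2)^q$, because that weight is comparable at $z$ and $z+he^{it}$. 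The hypothesis \eqref{eq:irt-displacement-mass} holds since $(1+|\lambda|)|\mu_\lambda-\lambda|\le C$ and the lattice has bounded counts in unit disks. Hence $\Xi$ is a zero set if and only if there is an entire $f$ with $\mathcal Z(f)=\mathcal L$ and
\[
 \int_\C |f(z)|^2(1+|z|^2)^q e^{-|z|^2-2B(|z|)}\,dA(z)<\infty .
\]

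Next we identify all such $f$. They have the form $f=e^{P}\sigma$. From the proof, $B(r)=O(\log(2+r))$, so Hadamard and the sigma estimate \eqref{eq:lattice-classical-sigma} force $\deg P\le 2$. If the quadratic coefficient is nonzero, the integrand grows like $e^{c|z|^2}$ on a sector. There it is bounded below on disjoint disks of fixed radius around the shifted cell centres, as in Proposition~\ref{prop:insertion-upper}, so the integral diverges. Thus $f=Ce^{az}\sigma$. Using $|\sigma|^2e^{-|z|^2}\asymp\dist(z,\mathcal L)^2$, which is periodic with positive mean, and \eqref{eq:irt-local-drift} for local constancy of $B$, the integral is comparable to
\[
 \int_0^\infty r^{2q+1}e^{-2B(r)}\Bigl(\int_0^{2\pi}e^{2\Re(are^{i\theta})}d\theta\Bigr)dr .
\]
By Jensen the inner mean is at least $2\pi$, with equality at $a=0$. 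So some $a$ works exactly when $a=0$ works, which gives the stated integral test; the range near $r=0$ is harmless.

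For the reformulation, $D_{\mathcal L}(r)=O(1)$ as shown in the proof of Corollary~\ref{cor:smooth-radial-displacements}. The annulus argument there gives $J_\Gamma(r)-J_{\mathcal L}(r)=-B(r)+O(1)$; it applies a fortiori here since the displacements are bounded. Combined with \eqref{eq:finite-change-jensen} this yields $D_\Xi(r)=-B(r)+q\log r+O(1)$, so $re^{2D_\Xi}\asymp r^{2q+1}e^{-2B}$. Uniqueness under divergence then follows directly from \eqref{eq:jensen-radial-norm-lower} applied to any nonzero Fock function vanishing on $\Xi$, including those with extra zeros.

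The main obstacle is the periodic-averaging step. It needs a uniform two-sided comparison between the area integral and the radial integral. The lower bound comes from the cell disks, where $\dist(\cdot,\mathcal L)$ is bounded below; the upper bound comes from boundedness of $\dist$. Both must be uniform in $a$ and sit alongside $e^{2\Re(az)}$. We would handle this by bounding the angular integrand above and below by constants times $e^{2\Re(az)}$ on annuli of bounded width, using that $\Re(az)$ varies by $O(|a|)$ on unit cells.
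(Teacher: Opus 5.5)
Your proof is correct, but for necessity it follows a different and longer route than the paper.

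\textbf{The paper's route.} It never classifies realizers.
\begin{itemize}
\item For sufficiency it takes the single generator $G=Q\sigma P_+/P_-$. From \eqref{eq:irt-mean-potential} and \eqref{eq:lattice-classical-sigma} it reads off $e^{2S_h\log|G|-|z|^2}\asymp(1+|z|^2)^q e^{-2B(|z|)}$, and then applies Lemma~\ref{lem:circular-fock-smoothing}.
\item For necessity it uses only $D_\Xi(r)=-B(r)+q\log r+O(1)$ and the Jensen bound \eqref{eq:jensen-radial-norm-lower}. That bound excludes every nonzero Fock function vanishing on $\Xi$, so it gives necessity and the uniqueness assertion at once.
\end{itemize}

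\textbf{Your route.} You transfer the weighted problem to $\mathcal L$ through \eqref{eq:irt-norm}. Your observation that $(1+|z|^2)^q$ is comparable at $z$ and $z+he^{it}$ is exactly what makes this transfer work. You then classify lattice realizers as $e^{P}\sigma$, exclude the quadratic term on cell disks, and remove the linear term by Jensen on the angular mean.

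In the Hadamard step you should say why $f$ has order at most two. Finiteness of the weighted integral together with $B(r)=O(\log(2+r))$ gives, by the submean inequality, $|f(z)|\le C(1+|z|)^{N}e^{|z|^2/2}$.

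\textbf{What each approach buys.} Your route yields a weighted norm isomorphism and a description of all realizers, in the spirit of the proof of Theorem~\ref{thm:bounded-displacement-field}. In fact $\frac1{2\pi}\int_0^{2\pi}e^{2\Re(are^{i\theta})}\,d\theta$ grows exponentially in $r$ when $a\ne0$, while $e^{-2B}$ is only polynomial. So only $a=0$ survives, and every realizer of $\Xi$ is a constant multiple of $Q\sigma P_+/P_-$.

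However, you invoke \eqref{eq:jensen-radial-norm-lower} anyway for the uniqueness claim. That makes your necessity argument redundant: the bound alone already shows that divergence rules out every realizer, including those with extra zeros.

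\textbf{The step you flagged.} The periodic-averaging step is not delicate. For fixed $a$, the cellwise comparison may use constants depending on $a$, since only finiteness for that particular $a$ matters; no uniformity in $a$ is needed. The paper's circular smoothing avoids $\dist(z,\mathcal L)$ altogether, because $S_h\log|\sigma|(z)-|z|^2/2=O(1)$.
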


\begin{proof}
The lattice satisfies \eqref{eq:irt-displacement-mass}. Let $Q$ be
the product of Theorem~\ref{thm:inverse-radius-transport}, and set
$G_0=Q\sigma$. Equations \eqref{eq:irt-mean-potential} and
\eqref{eq:lattice-classical-sigma} give
\[
 S_h\log|G_0|(z)-|z|^2/2=-B(|z|)+O(1).
\]
The uniform lattice mean bound was proved in
Lemma~\ref{lem:bounded-displacement-potential}. For the finite change,
put $G=G_0P_+/P_-$, where $P_\pm$ represent the added and deleted
points. Then $G$ is entire with zero multiset $\Xi$.
Since $\log\max(h,|z-a|)=\tfrac12\log(1+|z|^2)+O(1)$ for
fixed $a$, we obtain
\[
 e^{2S_h\log|G|(z)-|z|^2}
 \asymp (1+|z|^2)^q e^{-2B(|z|)}.
\]
The right side is bounded above and below on compact sets.
Polar coordinates and \eqref{eq:smoothed-fock-norm} therefore give
$G\in\F$ exactly when the stated radial integral converges.

Since $\Gamma$ is a bounded displacement of the lattice,
\eqref{eq:bounded-radial-jensen-drift} applies. Together with
\eqref{eq:finite-change-jensen}, it gives
\[
 D_\Gamma(r)=-B(r)+O(1),\qquad
 D_\Xi(r)=-B(r)+q\log r+O(1).
\]
This proves the equivalent criterion in terms of $D_\Xi$.
By \eqref{eq:jensen-radial-norm-lower}, divergence excludes every
nonzero Fock function vanishing on $\Xi$. This proves necessity and
the uniqueness assertion.
\end{proof}

For $B(r)=\gamma\log r+\delta\log\log r+O(1)$, the criterion
reduces to $\gamma>q+1$, or $\gamma=q+1$ and $\delta>1/2$.
The integral criterion requires no asymptotic expansion of the drift.

\Needspace{6\baselineskip}
\subsection{Local moment conditions}
\label{sec:cluster-deformations}

Two matching moments make the logarithmic ratio of a replacement
begin with a cubic term. Summability of this remainder gives the
following preservation result.

\begin{theorem}[Two local moment identities]
\label{thm:general-cluster-isomorphism}
Let $\Lambda$ be a discrete multiset partitioned, with multiplicity,
into finite nonempty clusters $\Gamma_\alpha$. Let $c_\alpha$
be centers, let $0\le r_\alpha\le H<\infty$, and let
$\Gamma'_\alpha$ be finite multisets such that
\begin{gather*}
 \Gamma_\alpha\cup\Gamma'_\alpha
 \subset\{z:|z-c_\alpha|\le r_\alpha\},\qquad
 m_\alpha:=|\Gamma_\alpha|=|\Gamma'_\alpha|,\\
 \sum_{\lambda\in\Gamma_\alpha}\lambda^p
 =\sum_{\mu\in\Gamma'_\alpha}\mu^p\qquad(p=1,2).
\end{gather*}
Assume the local cubic-mass bound
\begin{equation}
 B_3:=\sup_w\sum_{|c_\alpha-w|<1}m_\alpha r_\alpha^3<\infty.
 \label{eq:local-cubic-mass}
\end{equation}
Set $\Lambda'=\sum_\alpha\Gamma'_\alpha$.  Then
\[
 \Lambda\text{ is a zero set for }\F
 \quad\Longleftrightarrow\quad
 \Lambda'\text{ is a zero set for }\F.
\]
Uniqueness is preserved as well.
\end{theorem}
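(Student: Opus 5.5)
Following the pattern of Theorem~\ref{thm:inverse-radius-transport}, I will build a meromorphic multiplier
\[
 Q(z)=\prod_\alpha Q_\alpha(z),\qquad
 Q_\alpha(z)=\frac{\prod_{\mu\in\Gamma'_\alpha}(z-\mu)}{\prod_{\lambda\in\Gamma_\alpha}(z-\lambda)},
\]
and show that multiplication by $Q$ maps the functions vanishing on $\Lambda$ bijectively onto those vanishing on $\Lambda'$, with comparable Fock norms. Because $|\Gamma_\alpha|=|\Gamma'_\alpha|$, each factor tends to $1$ at infinity. For $|z-c_\alpha|>2r_\alpha$ I will write $\log Q_\alpha(z)=\sum_{\mu}\log(1-(\mu-c_\alpha)/(z-c_\alpha))-\sum_\lambda\log(1-(\lambda-c_\alpha)/(z-c_\alpha))$ and expand in powers of $(z-c_\alpha)^{-1}$. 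The coefficients are differences of power sums of the recentred points. Equality of the moments of orders $0,1,2$ about $0$ implies equality about $c_\alpha$, so the expansion starts at order $3$:
\[
 |\log Q_\alpha(z)|\le C\,m_\alpha r_\alpha^3|z-c_\alpha|^{-3}.
\]
The local cubic-mass bound \eqref{eq:local-cubic-mass} makes $\sum_\alpha m_\alpha r_\alpha^3|z-c_\alpha|^{-3}$ converge, uniformly away from the nearby clusters. The dyadic annulus estimate of Theorem~\ref{thm:inverse-radius-transport}, with $\nu=\sum m_\alpha r_\alpha^3\delta_{c_\alpha}$, gives $\int_{|w-z|>R}|w-z|^{-3}d\nu\le C/R$. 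Hence $Q$ converges normally, and no convergence factors are needed. Its zero and pole orders are exactly $\Lambda'-\Lambda$, so $\mathcal Z(Qf)=\mathcal Z(f)-\Lambda+\Lambda'$.

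The key estimate is a uniform bound $|S_h\log|Q|(z)|\le C$ for a fixed $h>2H+1$. I split the clusters into far ones, $|c_\alpha-z|>2h$, and near ones. For the far clusters the circle $z+he^{it}$ stays at distance at least $|c_\alpha-z|/2$ from $c_\alpha$, so the cubic bound applies on the whole circle. Their total contribution is then at most $C\int_{|w-z|>2h}|w-z|^{-3}d\nu(w)\le C'$.

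For the near clusters, $|c_\alpha-z|\le 2h$, Jensen's formula gives $S_h\log|z-\lambda|=\log\max(h,|z-\lambda|)$. Every point involved lies within $3h+H$ of $z$, so each such log-difference is bounded by a constant times $r_\alpha/h$. The delicate point is that the number of near points is not obviously bounded: \eqref{eq:local-cubic-mass} controls $m_\alpha r_\alpha^3$, not $m_\alpha$. Clusters with $r_\alpha=0$ are trivial, since then $\Gamma_\alpha=\Gamma'_\alpha$ and $Q_\alpha\equiv1$. In general I will bound the contribution of cluster $\alpha$ via the Lipschitz property of $\log\max(h,\cdot)$. Each point of $\Gamma_\alpha$ or $\Gamma'_\alpha$ lies within $r_\alpha$ of $c_\alpha$, so the cluster contributes at most $2m_\alpha r_\alpha/h$.

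I expect this near-cluster bound to be the main obstacle, because $\sum_{\text{near}}m_\alpha r_\alpha$ is not controlled by $B_3$ when there are many small clusters. The remedy I would try first is to use the moment cancellation in the near range as well. For $|z-c_\alpha|\ge 2r_\alpha+h$, the averaged function $\log\max(h,|z+\cdot|)$ is harmonic near the cluster, so the same cubic expansion applies with $|z-c_\alpha|\ge h$. The contribution is then $O(m_\alpha r_\alpha^3)$, which sums by $B_3$. Only clusters with $|z-c_\alpha|<2r_\alpha+h$ remain, and those with $r_\alpha\ge 1$ number at most $O(B_3)$. For the remaining small ones I would replace $\log\max(h,\cdot)$ by a smooth radial mollification $\phi$, as in Corollary~\ref{cor:smooth-radial-displacements}. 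The Taylor expansion of $\phi(z-\cdot)$ about $c_\alpha$ then has cancelling linear and quadratic terms, and the cubic remainder is bounded by $\|D^3\phi\|_\infty m_\alpha r_\alpha^3$. This yields $|S_\eta\log|Q||\le C$ with $S_\eta$ in place of $S_h$, which serves equally well in Lemma~\ref{lem:circular-fock-smoothing}-type arguments, since only an averaging operator with fixed compact radial kernel is needed.

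With the potential bound in hand, I copy the Gaussian/Tonelli argument of Theorem~\ref{thm:inverse-radius-transport}, now with $B\equiv0$. Subharmonicity gives $\log|Qf|\le S\log|f|+C$ and $\log|f|\le S\log|Qf|+C$. Jensen's inequality and translation invariance then yield $\|Qf\|_2\asymp\|f\|_2$. Applying this to exact realizers gives the zero-set equivalence, and applying it to all nonzero vanishing functions gives preservation of uniqueness.
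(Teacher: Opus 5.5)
\textbf{Genuine gap: your key estimate is false.} You single out a uniform two-sided bound $|S_h\log|Q|(z)|\le C$ (or its mollified version with $S_\eta$) as the key estimate. Under the stated hypotheses this bound fails, so neither of your near-cluster arguments can succeed. The cubic-mass bound permits unbounded local multiplicity. Put $N$ copies of the cluster $\Gamma_\alpha=\{c+u\omega^j\}_{j=0,1,2}$, with $\omega=e^{2\pi i/3}$, at one center $c$. Replace each copy by $\Gamma'_\alpha=\{c+2u\omega^j\}$, and take $u=N^{-1/3}$ and $r_\alpha=2u$. Both complex moments agree and the cubic mass is $24$. At $z=c-h$ each copy contributes exactly $\log\frac{h+2u}{h+u}\approx u/h$ to $S_h\log|Q|(z)$, for a total of order $N^{2/3}/h$. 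Repeating this at far-apart centers with $N\to\infty$ gives $\sup_z S_h\log|Q|=\infty$.

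\textbf{Why mollification does not help.} The identities $\sum\lambda^2=\sum\mu^2$ cancel only the trace-free part of a real quadratic form. The second-order Taylor term of $\phi(z-\cdot)$ about $c_\alpha$ therefore leaves $\tfrac14\Delta\phi(z-c_\alpha)\bigl(\sum_{\mu}|\mu-c_\alpha|^2-\sum_{\lambda}|\lambda-c_\alpha|^2\bigr)$, where $\Delta\phi=2\pi\eta$. This term has size $m_\alpha r_\alpha^2$, not $m_\alpha r_\alpha^3$. In the example it gives about $\tfrac{9\pi}{2}\eta(0)N^{1/3}$ at $z=c$. Only a harmonic kernel gets the full quadratic cancellation, and an averaging kernel is never harmonic on its support. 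Your argument does work when $\Lambda$ has uniformly bounded local counting, but the cubic-mass hypothesis deliberately allows unbounded multiplicities.

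\textbf{The missing idea.} Do not average $Q$ at all. Keep the zeros of $f$ on $\Lambda$ inside Jensen's formula and compare one-sidedly, as the paper does. Take $H\ge1$, $h=4H$, a finite set $E$ of clusters, and $g_E=Q_Ef$. Since each $\lambda\in\Gamma_\alpha$ is a zero of $f$, Jensen's formula turns $-\log|z-\lambda|$ into $-\log\max(h,|z-\lambda|)$. All other zeros of $f$ contribute nonpositive terms. Hence
\[
 \log|g_E(z)|-S_h\log|f|(z)\le\sum_{\alpha\in E}\Bigl[\sum_{\mu\in\Gamma'_\alpha}\log|z-\mu|-\sum_{\lambda\in\Gamma_\alpha}\log\max(h,|z-\lambda|)\Bigr].
\]
\begin{itemize}
\item If $|z-c_\alpha|\le2H$, all points of both clusters lie within $3H<h$ of $z$. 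The bracket is then at most $m_\alpha\log h-m_\alpha\log h=0$, however large $m_\alpha$ is.
\item If $|z-c_\alpha|>2H$, removing the cap raises the bracket to $\log|Q_\alpha(z)|\le\tfrac43m_\alpha r_\alpha^3|z-c_\alpha|^{-3}$, and your dyadic estimate sums these.
\end{itemize}
Exhaustion gives $\log|Qf|\le S_h\log|f|+C_*$, and interchanging the configurations gives the reverse inequality. Your Gaussian--Tonelli step then finishes as planned. This bound holds even though $S_h\log|Q|$ itself is unbounded, because the Jensen defect of $f$ at the clustered zeros absorbs the excess.
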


\begin{proof}
We use the product
\[
 Q(z)=\prod_\alpha
 \frac{\prod_{\mu\in\Gamma'_\alpha}(z-\mu)}
      {\prod_{\lambda\in\Gamma_\alpha}(z-\lambda)}
\]
and prove that it converges meromorphically, independently of order.
For every nonzero entire $f$ vanishing on $\Lambda$, we will obtain
\begin{equation}
 \mathcal Z(Qf)=\mathcal Z(f)-\Lambda+\Lambda',\qquad
 \|Qf\|_2\asymp\|f\|_2,
 \label{eq:cluster-zero-norm-isomorphism}
\end{equation}
with constants independent of $f$ and with infinite norms allowed.
The inverse product will give the reverse correspondence.

Increase $H$ to at least one if necessary. Each nonempty original
cluster contains a point within $H$ of its center. Since the source
multiset is discrete, the centers and the target multiset are locally
finite. For $\sigma=\sum_\alpha m_\alpha r_\alpha^3\delta_{c_\alpha}$,
\[
 \sigma(D(z,t))\le C B_3(t+1)^2\qquad(t\ge1)
\]
by a unit-disk covering. The moment identities are invariant under
translation by $-c_\alpha$: for $p=1,2$, the difference is
\[
 \sum_{j=0}^p\binom pj(-c_\alpha)^{p-j}
 \left(\sum_{\mu\in\Gamma'_\alpha}\mu^j
       -\sum_{\lambda\in\Gamma_\alpha}\lambda^j\right)=0,
\]
where the $j=0$ term vanishes by equal cardinality.
For $|z-c_\alpha|>2H$, the logarithm
of the cluster ratio, with the branch that vanishes at infinity, is
therefore
\[
 -\sum_{p\ge3}
 \frac{\sum_{\mu\in\Gamma'_\alpha}(\mu-c_\alpha)^p
       -\sum_{\lambda\in\Gamma_\alpha}(\lambda-c_\alpha)^p}
      {p(z-c_\alpha)^p}.
\]
Writing $R=|z-c_\alpha|$, its absolute value is at most
\[
 2m_\alpha\sum_{p\ge3}\frac{r_\alpha^p}{pR^p}
 \le\frac{2m_\alpha r_\alpha^3}{3R^3(1-r_\alpha/R)}
 \le\frac{4m_\alpha r_\alpha^3}{3R^3}.
\]
Summing over dyadic annuli gives
\[
 C_*:=\sup_z\sum_{|z-c_\alpha|>2H}
       \frac{4m_\alpha r_\alpha^3}{3|z-c_\alpha|^3}<\infty.
\]
Indeed, $H\ge1$ and the mass estimate give
\[
 \begin{split}
 \sum_{|z-c_\alpha|>2H}
      \frac{m_\alpha r_\alpha^3}{|z-c_\alpha|^3}
 &\le\sum_{j\ge0}(2^{j+1}H)^{-3}
                    \sigma(D(z,2^{j+2}H))\\
 &\le \frac{CB_3}{H}\sum_{j\ge0}2^{-j}<\infty.
 \end{split}
\]
Beyond radius $R$, the same sum is $O(B_3/R)$. On a fixed compact
set, sufficiently distant centers satisfy
$|z-c_\alpha|\ge|c_\alpha|/2$. The logarithmic tails therefore
converge absolutely and normally. Together with the finite remaining
factors they define $Q$ with the required zero and pole orders. Thus $g=Qf$ is
entire and has the zero multiset in
\eqref{eq:cluster-zero-norm-isomorphism}.

To compare the norms, put $h=4H$ and write
\[
 S_h\log|f|(z)=\frac1{2\pi}\int_0^{2\pi}
                          \log|f(z+he^{it})|\,dt.
\]
In Jensen's formula, the contribution of an original root changes
from $\log|z-\lambda|$ to $\log\max(h,|z-\lambda|)$.
When $|z-c_\alpha|\le2H$, the roots of both clusters lie within
distance $3H<h$ of $z$, and hence
\[
 \sum_{\mu\in\Gamma'_\alpha}\log|z-\mu|
 \le m_\alpha\log h
 =\sum_{\lambda\in\Gamma_\alpha}\log\max(h,|z-\lambda|).
\]
For distant clusters, the absolute logarithmic ratio bounds the excess
over the capped original sum. Let $E$ be a finite set of cluster
indices, with product $Q_E$, and put $g_E=Q_Ef$. This is entire.
Jensen's formula, away from the zeros of $f$, gives
\[
 \begin{split}
 \log|g_E(z)|-S_h\log|f|(z)
 &\le\sum_{\alpha\in E}\left[
       \sum_{\mu\in\Gamma'_\alpha}\log|z-\mu|
       -\sum_{\lambda\in\Gamma_\alpha}
                         \log\max(h,|z-\lambda|)\right].
 \end{split}
\]
Uncanceled original zeros and additional zeros of $f$ contribute
nonpositive terms to this upper bound. Near clusters also contribute
nonpositive terms. For a distant cluster, removing the cap from the
denominator increases the bracket, which is then bounded by the
absolute logarithmic ratio. The sum is at most $C_*$, independently
of $E$. Exhausting the clusters gives
\begin{equation}
 \log|g(z)|\le S_h\log|f|(z)+C_*.
 \label{eq:general-cluster-smoothed-comparison}
\end{equation}
The finite-product inequality extends across the original roots by
continuity. Indeed, factor off the finitely many zeros near the
averaging circle and use
$S_h\log|z-a|=\log\max(h,|z-a|)$ to see that the circular mean
is continuous. On a compact neighborhood, include all clusters with
original roots there. Their poles cancel against $f$, and the remaining
holomorphic tail converges normally. Thus $g_E\to g$ locally
uniformly, and the exponentiated inequality passes to the limit,
including at zeros. Interchanging the two configurations preserves
all hypotheses and replaces $Q$ by $Q^{-1}$, giving the same
estimate with $f$ and $g$ reversed.

The mean of $|z+he^{it}|^2$ is $|z|^2+h^2$. Applying Jensen
to \eqref{eq:general-cluster-smoothed-comparison} with the Gaussian
included gives
\begin{equation}
 |g(z)|^2e^{-|z|^2}
 \le e^{h^2+2C_*}\frac1{2\pi}\int_0^{2\pi}
       |f(z+he^{it})|^2e^{-|z+he^{it}|^2}\,dt.
 \label{eq:general-cluster-local-norm}
\end{equation}
Integration of \eqref{eq:general-cluster-local-norm} and Tonelli's
theorem give
\[
 \|g\|_2^2\le e^{h^2+2C_*}\|f\|_2^2.
\]
Interchanging the configurations gives the reverse estimate.
Multiplication by $Q^{-1}$ maps every entire function vanishing on
$\Lambda'$ back to one vanishing on $\Lambda$. This proves the
zero-set equivalence and, for the same reason, preservation of uniqueness.

\end{proof}

\begin{corollary}[Vanishing-space and insertion invariants]
\label{cor:cluster-invariants}
Under the hypotheses of Theorem~\ref{thm:general-cluster-isomorphism},
multiplication by its meromorphic product $Q$, with poles canceled against
the prescribed zeros, defines a bounded linear isomorphism
\[
 M_Q:\mathcal I(\Lambda)\longrightarrow\mathcal I(\Lambda'),
 \qquad f\longmapsto Qf.
\]
In particular, $\dim\mathcal I(\Lambda)=\dim\mathcal I(\Lambda')$.
If these multisets are exact zero multisets, then
$\iota(\Lambda)=\iota(\Lambda')$, including the value $+\infty$.
\end{corollary}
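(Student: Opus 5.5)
The plan is to read off everything from the proof of Theorem~\ref{thm:general-cluster-isomorphism}, which establishes more than the zero-set equivalence. For every nonzero entire $f$ vanishing on $\Lambda$, that proof gives \eqref{eq:cluster-zero-norm-isomorphism}: $g=Qf$ is entire, $\mathcal Z(Qf)=\mathcal Z(f)-\Lambda+\Lambda'$, and $\|Qf\|_2\le e^{h^2/2+C_*}\|f\|_2$. The proof also gives the symmetric estimate for $Q^{-1}$ acting on functions vanishing on $\Lambda'$.

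\emph{Step 1 (the map).} Define $M_Qf=Qf$ on $\mathcal I(\Lambda)$, with $M_Q0=0$. The zero identity shows that $Qf$ vanishes on $\Lambda'$ with at least the prescribed multiplicities. The extra zeros $\mathcal Z(f)-\Lambda\ge0$ are carried over unchanged. The norm bound puts $Qf$ in $\F$, so $M_Q$ maps into $\mathcal I(\Lambda')$. Linearity is immediate, since $Q$ is a fixed meromorphic function and the pole cancellation is pointwise. Boundedness is the constant $e^{h^2/2+C_*}$.

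\emph{Step 2 (inverse).} Interchanging the roles of the configurations preserves every hypothesis: the centers, radii, cardinalities, the two moment identities and $B_3$ are symmetric. So $M_{Q^{-1}}:\mathcal I(\Lambda')\to\mathcal I(\Lambda)$ is bounded. The two compositions are the identity, since $Q\cdot Q^{-1}=1$ as meromorphic functions. Hence $M_Q$ is a bounded linear isomorphism, and dimensions agree, including $\infty$.

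\emph{Step 3 (insertion index).} Fix $q\ge0$ and a finite effective $\nu$ with $|\nu|=q$. By the remark after the definition of $\iota$, one may use \emph{some} $\nu$. Choose $\nu$ supported away from $\Lambda\cup\Lambda'$ and away from the poles and zeros of $Q$; this is possible since these sets are locally finite. Suppose $\Lambda+\nu$ is an exact zero multiset, realized by $f$. Then $f$ vanishes on $\Lambda$ with $\mathcal Z(f)-\Lambda=\nu$. By Step 1, $Qf\in\F$ and $\mathcal Z(Qf)=\Lambda'+\nu$, so $\Lambda'+\nu$ is an exact zero multiset. The symmetric argument with $Q^{-1}$ gives the converse. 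Therefore the admissible sets of $q$ coincide, and $\iota(\Lambda)=\iota(\Lambda')$, including the value $+\infty$.

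The only delicate point is whether the zero identity in Step 1 is exact, in particular when clusters of $\Lambda$ and $\Lambda'$ overlap. Multiplicities are counted throughout the proof of Theorem~\ref{thm:general-cluster-isomorphism}, so the pole orders of $Q$ cancel exactly against $\Lambda$ and its zero orders are exactly those of $\Lambda'$. This is what makes the zero identity exact. Choosing $\nu$ off these sets avoids any further bookkeeping in Step 3.
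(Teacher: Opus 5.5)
Your proof is correct and follows the paper's argument. The isomorphism and its inverse come from the divisor identity and the two-sided norm bounds in \eqref{eq:cluster-zero-norm-isomorphism}, exactly as in the paper. For the insertion index, the paper adjoins $\nu$ to both configurations as unchanged radius-zero clusters and reapplies Theorem~\ref{thm:general-cluster-isomorphism}; since those clusters leave $Q$ unchanged, this is the same as your direct use of the divisor identity on a realizer of $\Lambda+\nu$. That identity holds for any $\nu$, so choosing $\nu$ off the supports is harmless but unnecessary.
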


\begin{proof}
The divisor identity and the two norm bounds in
\eqref{eq:cluster-zero-norm-isomorphism} give the isomorphism, with inverse
$M_{Q^{-1}}$, and hence equality of dimensions. Equality of dimensions
alone does not imply equality of insertion indices, as
Theorem~\ref{thm:infinite-dimensional-prescribed-index} shows.
For the latter assertion, add the same arbitrary finite effective multiset $\nu$ to both configurations, treating each
added point, with its multiplicity, as an unchanged cluster of radius zero.
These clusters are understood as labeled parts of the counting measure,
so an added point may coincide with an existing point. All moment
identities remain valid and the cubic mass is unchanged; each added
cluster has ratio identically one. The theorem gives that $\Lambda+\nu$ is an exact zero multiset if and only
if $\Lambda'+\nu$ is. This proves equality of the insertion indices.
\end{proof}

Thus any replacement satisfying the local moment hypotheses, applied to
the examples of Theorem~\ref{thm:infinite-dimensional-prescribed-index},
preserves their infinite-dimensional vanishing spaces and their prescribed
finite insertion indices. The resulting multisets need not remain simple.

The local mass condition is automatic when the source has uniformly
bounded local counting, since
\[
 \sum_{|c_\alpha-w|<1}m_\alpha r_\alpha^3
 \le H^3\Lambda(D(w,1+H))\le C_H.
\]
In particular, the hypothesis holds for the arrays considered above.
It also permits unbounded local multiplicities when the cluster radii
are sufficiently small.

More generally, retain equal cardinalities, the common cluster disks,
and the radius bound in Theorem~\ref{thm:general-cluster-isomorphism}.
For a fixed $m\ge2$, the same conclusions hold if the first $m$
complex moments agree and \eqref{eq:local-cubic-mass} is replaced by
\[
 B_{m+1}:=\sup_w\sum_{|c_\alpha-w|<1}
                         m_\alpha r_\alpha^{m+1}<\infty.
\]
The binomial identity above shows that the first $m$ translated
moments also agree. For $R=|z-c_\alpha|>2H$, the remaining
series satisfies
\[
 2m_\alpha\sum_{p\ge m+1}\frac{r_\alpha^p}{pR^p}
 \le\frac{4m_\alpha r_\alpha^{m+1}}{(m+1)R^{m+1}}.
\]
The measure $\sigma_{m+1}=\sum_\alpha
m_\alpha r_\alpha^{m+1}\delta_{c_\alpha}$ has quadratic disk
growth. Consequently,
\[
 \sum_{|z-c_\alpha|>2H}
   \frac{m_\alpha r_\alpha^{m+1}}{|z-c_\alpha|^{m+1}}
 \le C_mB_{m+1}H^{1-m}\sum_{j\ge0}2^{-j(m-1)}<\infty.
\]
This gives locally uniform convergence of the product tail.
The near-root Jensen comparison uses only equal cardinalities and
the radius bound, so the same norm comparison follows with $B_{m+1}$
in place of $B_3$. For shrinking clusters, this condition can be weaker.
When the cubic-mass condition already holds, the extra moments are
unnecessary.

The radial and angular arrays above have bounded local counting.
Their criteria therefore extend to bounded local replacements preserving
two moments, even if the new points leave the circles. For example,
let $\omega=e^{2\pi i/3}$ and replace
$\{c+u,c+\omega u,c+\omega^2u\}$ by
$\{c+v,c+\omega v,c+\omega^2v\}$. Both first moments equal $3c$,
and both second moments equal $3c^2$. Any disjoint family of these
replacements with uniformly bounded radii satisfies the theorem when
the source has bounded local counting.

\section{Sharpness and limits of finite geometric data}
\label{sec:counterexamples}

The examples in this section identify information that the complete
criterion must retain. We first compare area integrability with a
pointwise potential bound, then examine the displacement and angular
terms in the geometric criteria.

\subsection{The entropy term and finite changes}

Let $\sigma$ be the critical square-lattice sigma function. The proof
of Lemma~\ref{lem:bounded-displacement-potential} gives
\begin{equation}
 S_1\log|\sigma|(z)-|z|^2/2=O(1),\qquad z\in\C.
 \label{dual-eq:sigma-mean}
\end{equation}
The familiar finite-puncture examples \cite{Zhu2011} also distinguish
the entropy criterion from a pointwise potential bound.

\Needspace{12\baselineskip}
\begin{example}\label{ex:punctured-lattice-tests}
Let $D$ consist of $d$ distinct points of $\mathcal L$, where $d\ge0$,
and put $\Lambda_d=\mathcal L-D$. Then
\begin{enumerate}[label=\textup{(\roman*)},leftmargin=2.2em]
 \item $\calT_q(\Lambda_d)<\infty$ if and only if $q\le d$;
 \item $\Lambda_d$ is a zero set for $\F$ if and only if $d\ge2$;
 \item there is an $f$ with $\mathcal Z(f)=\Lambda_d$ and $f,z^qf\in\F$
       if and only if $d\ge q+2$.
\end{enumerate}
In particular, finiteness of $\calT_0$ alone does not characterize Fock
zero sets, and a Fock zero set need not be stable under finite
modifications.
\end{example}

\begin{proof}
Let $A_D(z)=\prod_{\lambda\in D}(z-\lambda)$ and
$g_d=\sigma/A_D$. By \eqref{dual-eq:sigma-mean} and the circle-mean formula,
\begin{equation}\label{dual-eq:punctured-mean}
 S_1\log|g_d|(z)-|z|^2/2
 =-d\log(1+|z|)+O(1).
\end{equation}
Here we used $\ell(z-\lambda)=\log(1+|z|)+O_\lambda(1)$, uniformly in
$z$. Hadamard factorization shows that $g_d$ and $P_{\Lambda_d}$ differ
by a nonzero constant times the exponential of a polynomial of degree
at most two. Thus their smoothed potentials give atomic tests that
differ only by a fixed constant.

If $q\le d$, \eqref{dual-eq:punctured-mean} makes the atomic objective
bounded above. If $q>d$, use the balanced measure assigning mass $1/5$
to each point $r\xi_j$, where the $\xi_j$ are the fifth roots of unity.
The objective is $(q-d)\log(1+r)+O(1)$ and tends to infinity. This
proves (i).

For (iii), every potential Fock realization has the form
$f=cg_de^{az+bz^2}$. Applying Lemma~\ref{lem:circular-fock-smoothing} to $z^qf$ and
using \eqref{dual-eq:punctured-mean}, its smoothed squared norm is comparable
to
\[
 \int_\C(1+|z|)^{2(q-d)}e^{2\Re(az+bz^2)}\dd(z).
\]
The circular mean of the exponential is at least one, by Jensen's
inequality. Thus finiteness requires
$\int_1^\infty r^{2(q-d)+1}\,dr<\infty$, or $d>q+1$.
If this inequality holds, take $f=g_d$; the same estimate proves both
required norms finite. This gives (iii), and (ii) is its case $q=0$.
\end{proof}

This also illustrates why the convention on multiplicities matters.
The locations $\Lambda_2$ are realized by simple zeros. Prescribing order
two at every one of these locations gives a different multiset, with
count $2r^2+O(r)$ and Jensen mean $r^2+O(r)$. Jensen's formula and
\eqref{eq:fock-point-evaluation} would instead bound that mean by $r^2/2+O(1)$
for a Fock realization. Thus the doubled multiset is not a zero set.

\subsection{The quadratic displacement term}

\begin{example}[Radial oscillation]
\label{ex:bounded-oscillatory-endpoint}
Let $b(r)=c+d\sin(\log r)$ outside a fixed disk. The hypotheses
of Corollary~\ref{cor:smooth-radial-displacements} hold, since,
with $t=\log r$,
\[
 b'(r)=\frac{d\cos t}{r},\qquad
 b''(r)=-\frac{d(\sin t+\cos t)}{r^2}.
\]
Moreover,
\begin{align*}
 \int_{R_0}^r b(s)\,ds
   &=cr+\frac{dr}{2}(\sin\log r-\cos\log r)+O(1),\\
 \int_{R_0}^r\frac{b(s)^2}{s}\,ds
   &=(c^2+d^2/2)\log r-2cd\cos\log r
                         -\frac{d^2}{4}\sin(2\log r)+O(1).
\end{align*}
The drift expansion in the proof of
Corollary~\ref{cor:smooth-radial-displacements} therefore gives
\begin{equation*}
 B(r)=r[2c+d(\sin\log r-\cos\log r)]
       -(c^2+d^2/2)\log r+O(1).
 \end{equation*}
If $2c>\sqrt2|d|$, every finite modification is a zero set. If
$2c<\sqrt2|d|$, every finite modification is a uniqueness multiset.
The minimum of $2c+d(\sin t-\cos t)$ is $2c-\sqrt2|d|$.
If it is positive, exponential decay gives convergence for every $q$.
If it is negative, the expression is at most $-\epsilon$ on an
interval of phases. On the corresponding, geometrically recurring
intervals of radii, exponential growth gives divergence for every $q$.
Suppose $2c=\sqrt2|d|$ and $c>0$. Then
\begin{equation*}
 \Xi\text{ is a Fock zero set}
       \quad\Longleftrightarrow\quad q<-\frac34-2c^2.
 \end{equation*}
In particular, equality gives uniqueness. When $c=d=0$, the
criterion reduces to the lattice condition $q<-1$.

For the endpoint, put $F(t)=2c+d(\sin t-\cos t)$ and
$\kappa=c^2+d^2/2=2c^2$. Then $F\ge0$, and its zeros
$t_n=t_0+2\pi n$ are quadratic. Set $r_n=e^{t_n}$.
In a fixed logarithmic neighborhood of $r_n$,
\[
 rF(\log r)\asymp (r-r_n)^2/r_n.
\]
Indeed, choose $\eta>0$ so that
$C_1x^2\le F(t_n+x)\le C_2x^2$ for $|x|\le\eta$,
uniformly in $n$. On $I_n=[e^{-\eta}r_n,e^\eta r_n]$,
we have $r\asymp r_n$ and
$|\log(r/r_n)|\asymp |r-r_n|/r_n$. Hence, with
$\rho=2q+1+2\kappa$,
\[
 \int_{I_n}r^\rho e^{-2rF(\log r)}\,dr
 \asymp r_n^\rho
       \int_{I_n}e^{-C(r-r_n)^2/r_n}\,dr
 \asymp r_n^{\rho+1/2}.
\]
The two bounds use different positive constants $C$; the last
comparison follows by $y=(r-r_n)/\sqrt{r_n}$. The contribution
of $I_n$ is therefore comparable to
\[
 r_n^{2q+1+2\kappa}\sqrt{r_n}
       =r_n^{2q+3/2+4c^2}.
\]
Off these intervals, $F(\log r)$ is bounded below by a positive
constant, so the remaining integral converges. Since $r_n$ grows
geometrically, the contributions above are summable exactly when
$2q+3/2+4c^2<0$.

Take $A_0$ to be the lattice points in a sufficiently large centered
disk, and use the same $A_0$ and cutoff $T$ for $b$ and $-b$. Write
$V_\Gamma=V_1+V_2$ for the first- and second-order terms in
\eqref{eq:bounded-displacement-field}. In
\eqref{eq:bounded-second-order-reduction}, the linear correction
vanishes on opposite pairs: $\delta_{-\lambda}=-\delta_\lambda$
and $\mu_{-\lambda}=-\mu_\lambda$, so
$\sum\delta_\lambda^2/(\lambda^2\mu_\lambda)=0$.
Apply \eqref{eq:smooth-radial-potential} to $b$ and $-b$.
Under this change $V_1$ changes sign and $V_2$ is unchanged.
Adding and subtracting gives
\[
 V_1(z)=-2\int_{R_0}^{|z|}b(t)\,dt+O(1),\qquad
 V_2(z)=\int_{R_0}^{|z|}\frac{b(t)^2}{t}\,dt+O(1).
\]
Take $c=1/2$ and $d=1/\sqrt2$. The integral with $V_1$ alone
converges for $q<-3/4$, whereas the zero-set condition is
$q<-5/4$. Thus deleting one point from the original array gives a uniqueness
multiset, whereas deleting two gives a zero set. The quadratic term
cannot be omitted.
\end{example}
\subsection{Angular width and lower-order radial terms}

\begin{example}[Finite-order angular maxima]
\label{ex:finite-order-angular-maxima}
Suppose $h$ has finitely many maxima and
$H-h(\theta_j+x)\asymp |x|^{2m_j}$ near each one, with
$m_j\in\N$. Compactness bounds the gap away from zero elsewhere.
Putting $m_* =\max_jm_j$, we obtain
\[
 \nu_h(u)\asymp\sum_j u^{1/(2m_j)}\asymp u^{1/(2m_*)}
                   \qquad(u\downarrow0).
\]
At the critical equalities of
Theorem~\ref{thm:smooth-angular-profile}, the zero-set condition is
\begin{equation}
 \delta>\frac12-\frac{\tau}{4m_*}.
 \label{eq:finite-order-angular-endpoint}
\end{equation}
Indeed, for $\delta<1/2$ the exponent of $u$ in
\eqref{eq:smooth-profile-geometric-endpoint} is
$1/(2m_*)+(2\delta-1)/\tau-1$, which must exceed $-1$.
For $\delta\ge1/2$ the condition holds directly.
Equality in \eqref{eq:finite-order-angular-endpoint} is excluded.
Thus quadratic and quartic maxima give, respectively,
$\delta>1/4$ and $\delta>3/8$ at the logarithmic scale, but
$\delta>0$ and $\delta>1/4$ at the squared logarithmic scale.

For example,
\[
 h_m(\theta)=H_m-(1-\cos\theta)^m,
 \qquad H_m=2^{-m}\binom{2m}{m},
\]
has mean zero and a maximum of order $2m$ at zero. The value of
$H_m$ follows by integrating
$2^m\sin^{2m}(\theta/2)$, or by taking its constant Fourier
coefficient. At $\tau=2$, $a=H_m$, and
$b=(q+1-1/6)/2$, the criterion is $c>1/4-1/(4m)$.
\end{example}

\begin{example}[The leading angular profile is insufficient]
\label{ex:squared-log-lower-order}
In \eqref{eq:smooth-profile-configuration}, take $\tau=2$,
$h(\theta)=\cos\theta$, $a=1$, $b=5/12$, and $q=0$.
The generators satisfy
\[
 \log|F(r_ne^{i\theta})|
 =\frac{r_n^2}{2}+(\cos\theta-1)(\log r_n)^2
           -\log r_n-2c\log\log r_n+O(1).
\]
The squared logarithmic profile and the coefficient of $\log r$
are the same for all $c$. Nevertheless the configuration is a
zero set for $c>0$ and a uniqueness set for $c\le0$, by
\eqref{eq:finite-order-angular-endpoint}. In the first case,
Corollary~\ref{cor:angular-vanishing-dimension} gives coherent-state
codimension one; in the second case the system is complete.
Changing $c$ moves the radii by only $O(1/(k\log k))$.
Its cumulative $\log\log r$ contribution decides the endpoint,
so an unspecified $O(\log r)$ remainder would lose essential data.
\end{example}

\subsection{Flat angular maxima}

\begin{example}[Infinitely flat maxima]
\label{ex:smooth-flat-maxima}
Let $p>0$. Choose a smooth nonnegative $2\pi$-periodic function
$b_p$ whose only zero modulo $2\pi$ is $0$, with
\[
 b_p(\theta)=e^{-|\theta|^{-p}}
       \qquad(0<|\theta|<\epsilon).
\]
Fix $H>0$. If $\overline b_p$ denotes the circular mean of $b_p$, set
\[
 h_p(\theta)=H-\frac{H}{\overline b_p}b_p(\theta).
\]
A smooth cutoff joins the specified expression to a positive constant
away from zero. Every derivative near zero is a finite sum of powers
of $|\theta|$ times $e^{-|\theta|^{-p}}$ and tends to zero.
Thus $h_p$ is smooth, has mean zero and maximum $H$, and satisfies
$h_p^{(j)}(0)=0$ for all $j\ge1$.

Fix one sequence of phases and one sequence of radii for all $p$.
For a fixed sufficiently large $K_0$, take $R_k=k$ for $k<K_0$ and
\[
 R_k=k+\frac{H+5/6}{2k}+\frac1{4k\log k}
                  \qquad(k\ge K_0),
\]
where $K_0$ is chosen so that $|R_k-k|<1/4$. For each $p$, choose
$K_p\ge K_0$ so that the angular map in
\eqref{eq:smooth-profile-configuration}, with $\tau=1$ and $h=h_p$,
has derivative at least $1/2$ for $k\ge K_p$. Apply that map on
these shells and use regular polygons on the common radii $R_k$
for $k<K_p$. Thus every configuration is simple and has exactly
$2k$ points on the same circle $|z|=R_k$. Its difference from the
corresponding construction with starting index $K_p$ is a finite modification of net
multiplicity zero. Theorem~\ref{thm:smooth-angular-profile} therefore
applies, with $a=0$, to the resulting set $\Lambda_p$ and gives
\begin{equation*}
 \Lambda_p\text{ is a zero set for }\F
       \quad\Longleftrightarrow\quad p<1;
 \qquad
 p\ge1\ \Longrightarrow\ \Lambda_p\text{ is a uniqueness set}.
 \end{equation*}
Here $\gamma-H=1$ and $\delta=1/2$. Put $C_p=H/\overline b_p$.
For small $u$, the set $\{H-h_p\le u\}$ lies near zero, where
\[
 C_p e^{-|\theta|^{-p}}\le u
 \quad\Longleftrightarrow\quad
 |\theta|\le\bigl(\log(C_p/u)\bigr)^{-1/p}.
\]
Its normalized length is
$\pi^{-1}(\log(C_p/u))^{-1/p}$. In particular, as $u\downarrow0$,
\[
 \nu_{h_p}(u)\asymp(\log(1/u))^{-1/p}.
\]
Theorem~\ref{thm:smooth-angular-profile} therefore gives the condition
\[
 \int_0^{1/2}\frac{du}{u(\log(1/u))^{1/p}}<\infty.
\]
Substitution gives $\int_{\log2}^\infty v^{-1/p}\,dv$, which
converges exactly when $p<1$. The radial counts and all derivatives
at the common maximum agree, but the zero-set property changes at
$p=1$. Also,
\[
 r e^{2D_{\Lambda_p}(r)}\asymp
                r^{-2H-1}(\log r)^{-1}\qquad(r\to\infty).
\]
Since $H>0$, the radial integral in
\eqref{eq:jensen-radial-norm-lower} converges for every $p$.
It is therefore insufficient even for bounded displacements of the
critical lattice.

Nor can these potentials be approximated, with bounded error, by
finitely many angular harmonics. A nonconstant trigonometric polynomial
has no infinitely flat maximum, so $h_p$ has infinitely many nonzero
Fourier coefficients $c_m$. By
\eqref{eq:smooth-profile-potential}, the $m$th coefficient of the
centered potential is
\[
 c_m\log r_n+O(1).
\]
For any fixed $L$, choose $m>L$ with $c_m\ne0$. Subtracting
modes of orders at most $L$ leaves this coefficient unchanged, even
if their coefficients depend on $r_n$. Since a Fourier coefficient
is bounded by the supremum norm, the error cannot be uniformly bounded.
\end{example}
\subsection{The first moment condition}

\begin{proposition}[Preserving the first local moment]
\label{prop:pair-moment-sharpness}
On each circle $|z|=k$, take a regular $2k$-gon with arbitrary
phase $\theta_k$. For $k\ge2$, set
\[
 a_k=\frac{\pi}{2k},\qquad
 \phi_{kj}=\theta_k+\frac{(2j+1/2)\pi}{k},\quad 0\le j<k.
\]
Write its adjacent pairs as $c_{kj}\pm v_{kj}$, where
\[
 c_{kj}=k\cos a_k\,e^{i\phi_{kj}},\qquad
 v_{kj}=ik\sin a_k\,e^{i\phi_{kj}}.
\]
For fixed $t>0$, replace each pair by $c_{kj}\pm tv_{kj}$ and
leave the first shell unchanged. If $\Lambda_t$ is the resulting
multiset and $\Xi$ is a finite modification of net multiplicity $q$,
then $\Xi$ is a zero set for $\F$ if and only if
\begin{equation*}
 \frac{\pi^2}{2}(t^2-1)>2q+\frac53.
 \end{equation*}
Otherwise $\Xi$ is a uniqueness multiset. The replacements are
bounded, satisfy \eqref{eq:local-cubic-mass}, and preserve the first
moment of every pair. For $t\ne1$, they change the second moment.
\end{proposition}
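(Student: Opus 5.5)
The plan is to recognize $\Lambda_t$ as a circular array of the type in Lemma~\ref{lem:joint-radial-data} whose shell products are explicit, and then to argue exactly as in the proof of Theorem~\ref{thm:smooth-angular-profile} with $h\equiv0$.

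\emph{Geometry.} Write $s_k=\sin a_k$. The two points $c_{kj}\pm tv_{kj}$ both have modulus
\[
R_k'=k\sqrt{\cos^2a_k+t^2s_k^2}=k\sqrt{1+(t^2-1)s_k^2},
\]
which does not depend on $j$. Hence shell $k$ still consists of exactly $2k$ points on the single circle $|z|=R_k'$. Their arguments are $\phi_{kj}\pm\beta_k$, where $\tan\beta_k=t\tan a_k$. Since $R_k'=k+O(1/k)$, Lemma~\ref{lem:joint-radial-data} applies. Each pair moves by $(t-1)|v_{kj}|=O(1)$ and preserves its midpoint $c_{kj}$. Bounded local counting then gives \eqref{eq:local-cubic-mass}, as remarked after Corollary~\ref{cor:cluster-invariants}. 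The second pair moment changes by $2(t^2-1)v_{kj}^2\neq0$ when $t\ne1$. These are the side assertions.

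\emph{Radial deficit.} Expand
\[
2k\log(R_k'/k)=k(t^2-1)s_k^2+O(k^{-3})=\frac{\pi^2(t^2-1)}{4k}+O(k^{-3}).
\]
Summing gives $T_n=\frac{\pi^2}{4}(t^2-1)\log n+O(1)$. Then \eqref{eq:variable-population-deficit} yields
\[
D_{\Lambda_t}(r)=-\Bigl(\frac16+\frac{\pi^2}{4}(t^2-1)\Bigr)\log r+O(1).
\]
By \eqref{eq:finite-change-jensen}, $D_\Xi(r)$ gains $q\log r$. The resulting radial integral $\int^\infty r^{2q+1-1/3-\pi^2(t^2-1)/2}\,dr$ converges exactly when $\frac{\pi^2}{2}(t^2-1)>2q+\frac53$.

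\emph{Angular uniformity.} This is the step needing care. The arguments on shell $k$ are invariant under rotation by $2\pi/k$. Because the $\phi_{kj}$ are equally spaced, the identity $\prod_{j}(1-we^{-i\phi_{kj}})=1-w^ke^{-i\alpha}$ gives the explicit shell factor
\[
\bigl(1-\zeta^ke^{-ik(\theta_k+1/2\cdot\pi/k-\beta_k)}\bigr)\bigl(1-\zeta^ke^{-ik(\theta_k+1/2\cdot\pi/k+\beta_k)}\bigr),\qquad \zeta=z/R_k'.
\]
The first shell is kept as a plain polygon. From this I claim
\[
\log|\text{shell}_k(r e^{i\theta})|=2k\log^+(r/R_k')+O\bigl(e^{-k|\log(r/R_k')|}\bigr),
\]
uniformly in $\theta$, provided $|\zeta^k|$ stays a fixed distance from $1$. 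At $r_n=n+\tfrac12$ this holds: $|\zeta|^k$ is near $e^{\pm1/2}$ for the nearest shells and farther from $1$ for the others. The shell-overlap bound \eqref{eq:variable-population-shell-overlap} then sums the errors. Convergence of the full product is handled as in Lemma~\ref{lem:profile-geometry-product}; here the first and second reciprocal power sums over a shell vanish exactly, by $k$-fold symmetry, once $k\ge3$. This gives a generator $F$ with
\[
\log|F(r_ne^{i\theta})|=J_{\Lambda_t}(r_n)+O(1)
\]
uniformly in $\theta$. Setting $G=Fq_{\nu_+}/q_{\nu_-}$, we get $\log|G(r_ne^{i\theta})|=r_n^2/2+D_\Xi(r_n)+O(1)$.

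\emph{Conclusion.} By Lemma~\ref{lem:gaussian-circle-sampling}, $G\in\F$ if and only if $\sum_n r_ne^{2D_\Xi(r_n)}<\infty$, which is the radial condition above. Since $D_\Xi=O(\log r)$ in every direction, both hypotheses of Proposition~\ref{prop:critical-direction} hold. Hence $\Xi$ is a zero set exactly when $G\in\F$, and it is a uniqueness multiset otherwise. The main obstacle is the uniform angular estimate near the shell radii, that is, keeping $|\zeta^k|$ away from $1$ at the sampling radii. If the explicit factorization proves awkward, I would instead invoke Lemma~\ref{lem:profile-angular-sampling}.
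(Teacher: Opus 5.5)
Your proposal is correct and follows essentially the paper's own argument: each new shell is split into two regular $k$-gons on the common radius $k\sqrt{\cos^2a_k+t^2\sin^2a_k}=k+O_t(1/k)$, each with an explicit factor of the form $1-e^{i\cdot}(z/R_k)^k$. From there the paper uses, as you do, the shell-overlap bound \eqref{eq:variable-population-shell-overlap} for uniformity in $\theta$ at $r_n=n+\tfrac12$, the radial deficit $D_{\Lambda_t}(r)=-(\tfrac16+\tfrac{\pi^2}{4}(t^2-1))\log r+O(1)$ from Lemma~\ref{lem:joint-radial-data}, and Lemma~\ref{lem:gaussian-circle-sampling}.

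The only difference is the uniqueness step. You invoke Proposition~\ref{prop:critical-direction}, whereas the paper uses the more elementary radial lower bound \eqref{eq:jensen-radial-norm-lower}. That bound already suffices because $D_\Xi(r)=-(\tfrac16+\tfrac{\pi^2}{4}(t^2-1)-q)\log r+O(1)$ is purely radial here. Both routes are valid.
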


\begin{proof}
The replaced shell is the union of two regular $k$-gons of radius
\[
 R_k(t)=k\sqrt{\cos^2a_k+t^2\sin^2a_k}
       =k+\frac{\alpha_t}{k}+O_t(k^{-3}),\qquad
 \alpha_t=\frac{\pi^2}{8}(t^2-1).
\]
The expansion follows from $\sin^2a_k=\pi^2/(4k^2)+O(k^{-4})$
and $\sqrt{1+x}=1+x/2+O(x^2)$. With
$\psi_k=\arctan(t\tan a_k)$ and
$\beta_{k,\pm}=\theta_k+\pi/(2k)\pm\psi_k$, the two polygon
factors are
\[
 P_{k,\pm}(z)=1-e^{-ik\beta_{k,\pm}}(z/R_k(t))^k.
\]
Let $P_1$ be the factor for the unchanged first shell and set
$F_t=P_1\prod_{k\ge2}P_{k,+}P_{k,-}$. On $|z|\le M$,
\[
 \sum_{k\ge K}\sup_{|z|\le M}|P_{k,\pm}(z)-1|
 \le\sum_{k\ge K}(2M/k)^k<\infty
\]
for sufficiently large $K$. Normal convergence of the logarithmic tail shows that $F_t$ is
entire with the prescribed zeros, counting collisions between the
two polygons with their multiplicities.

Let $r_n=n+1/2$. For large $n$, these radii stay a fixed positive
distance from the occupied circles. Dividing each polygon factor by
its larger radial monomial leaves a modulus between
$1-e^{-k|\log(r_n/R_k)|}$ and $1+e^{-k|\log(r_n/R_k)|}$.
We claim that
\[
 \log|F_t(r_ne^{i\theta})|=J_{\Lambda_t}(r_n)+O_t(1)
\]
uniformly in $n,\theta$. Put $x_{kn}=e^{-k|\log(r_n/R_k)|}$.
For large $n$, $x_{kn}\le\rho<1$ uniformly in $k$, and
\[
 |\log|1-x_{kn}e^{i\vartheta}||
 \le-\log(1-x_{kn})\le\frac{x_{kn}}{1-\rho}.
\]
The sum in $k$ is bounded by
\eqref{eq:variable-population-shell-overlap}; finitely many initial
factors contribute $O(1)$. Each shell contributes
$2k\log^+(r_n/R_k)$ to the radial term, proving the claim. Also,
\[
 2k\log(R_k(t)/k)=\frac{2\alpha_t}{k}+O_t(k^{-3}),\qquad
 T_n=2\alpha_t\log n+O_t(1).
\]
Lemma~\ref{lem:joint-radial-data} now gives
\[
 D_{\Lambda_t}(r)=-\left(\frac16+2\alpha_t\right)\log r+O_t(1).
\]
Let $G_t$ be the finitely modified generator.
Lemma~\ref{lem:gaussian-circle-sampling} gives
\[
 r_n e^{-r_n^2}\frac1{2\pi}\int_0^{2\pi}
                       |G_t(r_ne^{i\theta})|^2\,d\theta
 \asymp r_n^{2q+1}e^{2D_{\Lambda_t}(r_n)}
 \asymp n^{2q+2/3-4\alpha_t}.
\]
Hence $G_t\in\F$ exactly when these powers are summable.
Conversely, \eqref{eq:finite-change-jensen} gives
$D_\Xi(r)=-(\tfrac16+2\alpha_t-q)\log r+O_t(1)$.
By \eqref{eq:jensen-radial-norm-lower}, every nonzero entire function
vanishing on $\Xi$ has Fock integral at least a positive multiple of
$\int_R^\infty r^{2q+2/3-4\alpha_t}\,dr$. Thus the condition is
\[
 2q+\frac23-4\alpha_t<-1
 \quad\Longleftrightarrow\quad
 \frac{\pi^2}{2}(t^2-1)>2q+\frac53.
\]
This proves both assertions, including uniqueness at equality.

The first power sum remains $2c_{kj}$ and the second changes by
$2(t^2-1)v_{kj}^2$. The displacement tends to $|t-1|\pi/2$.
Each center is a bounded distance from a distinct original vertex,
so Lemma~\ref{lem:joint-radial-data} gives bounded local counting
for the centers. For the cluster centered at $c_{kj}$, take
$r_{kj}=\max(1,t)|v_{kj}|\le\max(1,t)\pi/2=:H_t$.
Consequently
\[
 \sup_w\sum_{|c_{kj}-w|<1}2r_{kj}^3
 \le2H_t^3\sup_w\#\{(k,j):|c_{kj}-w|<1\}<\infty,
\]
which proves the cubic-mass condition.
\end{proof}

\subsection{Finitely many angular moments at critical density}
\label{sec:geometric-obstructions}
\label{sec:angular-necessity}

The obstruction persists among uniformly separated bounded
displacements of the critical lattice, even for arbitrarily small
motions which tend to zero at infinity.

\Needspace{15\baselineskip}
\begin{theorem}[Insufficiency of finite angular data]
\label{thm:finite-angular-data-insufficient}
For every integer $L\ge0$, every $\eta>0$, and every $R_*>0$,
there are simple uniformly separated sets $\Lambda_+$ and $\Lambda_-$
and a bijection $T:\Lambda_+\to\Lambda_-$ with the following properties.
\begin{enumerate}[label=\textup{(\roman*)},leftmargin=2.3em]
\item Each set admits a bounded matching to $\mathcal L$, and
      \[
       \sup_{w\in\C}
       \bigl|\#(\Lambda_\pm\cap D(w,r))-r^2\bigr|=O(r)
       \qquad(r\to\infty).
      \]
\item The matching preserves moduli and is the identity in $D_{R_*}$.
      Moreover,
      \begin{equation}
       \sup_{\lambda\in\Lambda_+}|T\lambda-\lambda|<\eta,
       \qquad
       |T\lambda-\lambda|
       =O_L\!\left(\frac{\log^2(2+|\lambda|)}{1+|\lambda|}\right).
       \label{eq:obstruction-small-motion}
      \end{equation}
\item The radial counting functions agree at every radius. For every
      $\rho>0$ and $1\le j\le L$,
      \begin{equation*}
       \sum_{\substack{\lambda\in\Lambda_+\\|\lambda|=\rho}}
                   e^{ij\arg\lambda}
       =\sum_{\substack{\mu\in\Lambda_-\\|\mu|=\rho}}
                   e^{ij\arg\mu}.
       \end{equation*}
      These moments vanish outside a fixed disk.
\item Every finite modification of $\Lambda_+$ is a Fock zero set.
      Every finite modification of $\Lambda_-$ is a uniqueness multiset.
\end{enumerate}
\end{theorem}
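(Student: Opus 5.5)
We will build both sets from the angular-profile configurations of Theorem~\ref{thm:smooth-angular-profile} with $\tau=2$, sharing the same radii $R_k$ and phases, so that the radial counting functions coincide automatically and the bijection $T$ is the shell-wise map $\lambda_{k,\ell}^+\mapsto\lambda_{k,\ell}^-$ preserving moduli. The two sets differ only in the angular profile: $h_+$ for $\Lambda_+$ and $h_-$ for $\Lambda_-$. Choose $h_\pm=\epsilon g_\pm$, where $g_\pm$ are trigonometric polynomials with only frequencies $>L$ (say frequencies in $\{L+1,\dots,2L+2\}$ or a single mode $\cos(m\theta)$ versus a combination), arranged so that $\max h_+<\max h_-$, while both have mean zero. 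Then take $a$ strictly between: $\max h_+<a<\max h_-$. By Theorem~\ref{thm:smooth-angular-profile}(ii), every finite modification of $\Lambda_+$ is a zero set (since $a>H_+$, independent of $q,b,c$), while every finite modification of $\Lambda_-$ is a uniqueness multiset (since $a<H_-$). For instance $h_+=\epsilon\cos(m\theta)$ and $h_-=\epsilon(\cos m\theta+\cos 2m\theta)\cdot$(normalized) with $m>L$; one just needs distinct maxima, and any nonconstant mean-zero profiles with $\max h_+<\max h_-$ work.

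Moment matching: on the $k$-th shell the points are $R_ke^{i\psi_{k,\ell}}$ with $\psi_{k,\ell}=\theta_{k,\ell}-\varepsilon_kh'(\theta_{k,\ell})$. For $1\le j\le L$, I want $\sum_\ell e^{ij\psi_{k,\ell}}=0$ for both signs. Since $h'$ contains only frequencies $m>L$ and the unperturbed angles form a regular $2k$-gon, the perturbed polygon is invariant under rotation by $2\pi/m_0$ whenever all frequencies are multiples of a common $m_0$ dividing $2k$... cleaner: restrict to frequencies that are multiples of a large integer $N>L$ and use only shells with $N\mid 2k$ for the perturbation, or simply use the rotational symmetry: if $h$ is $2\pi/N$-periodic and $N\mid 2k$ then the configuration is invariant under rotation by $2\pi/N$, forcing the $j$-th moments to vanish for $N\nmid j$, in particular for $1\le j\le L<N$. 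For shells with $N\nmid 2k$, I would leave them unperturbed (regular polygons, moments zero for $j<2k$, and for $2k\le L$ only finitely many shells, which lie inside $D_{R_*}$ anyway). The effect of restricting the perturbation to shells with $N\mid k$ on the potential is handled by \eqref{eq:profile-selected-shells} in Lemma~\ref{lem:profile-radial-kernel}, which gives the same $2(\log r)^\tau/m$ main term after multiplying the amplitude by $N$; I would rerun Lemmas~\ref{lem:profile-angular-sampling} and \ref{lem:angular-profile-realization} with $\varepsilon_k$ replaced by $N\varepsilon_k\mathbf 1_{N\mid k}$, obtaining again \eqref{eq:smooth-profile-potential}, and then the proof of Theorem~\ref{thm:smooth-angular-profile} goes through verbatim. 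This modified realization is the main obstacle, since the theorem is stated only for the unrestricted family.

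Remaining properties: starting the perturbation at a shell index $K$ beyond $R_*$ and large enough that $N\varepsilon_k\|h'_\pm\|_\infty k<\eta$ gives $T=\mathrm{id}$ on $D_{R_*}$ and the displacement bound $O_L((\log k)^2/k)$ of \eqref{eq:obstruction-small-motion}; the initial shells are identical regular polygons, a finite modification of net multiplicity zero. Uniform separation, bounded matching to $\mathcal L$, and the disk discrepancy follow exactly as in Lemma~\ref{lem:profile-geometry-product} via Lemmas~\ref{lem:joint-radial-data} and \ref{lem:bounded-area-allocation}. Vanishing of moments outside a fixed disk holds because every shell with $2k>L$ is either a regular polygon or $N$-fold symmetric.
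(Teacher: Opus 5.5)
Your proposal is correct and is essentially the paper's proof. It uses a $\tau=2$ angular profile $h$ that is $2\pi/N$-periodic with $N>L$, applied with amplitude $N\varepsilon_k$ only on shells with $N\mid k$; $N$-fold rotational symmetry gives the circle moments, the selected-shell estimate \eqref{eq:profile-selected-shells} realizes the profile, and the dichotomy $a>H$ versus $a<H$, with Proposition~\ref{prop:critical-direction} for the uniqueness side, separates the two sets. The only difference is that the paper takes $h_+=0$ (so $\Lambda_+$ is the undeformed polygon array with $R_k=k+\log k/k$ and needs no angular realization), together with $h_-=4\cos(N\theta)$, $N=\max\{L+1,3\}$, and $a=1$.
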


\begin{proof}
Choose $N=\max\{L+1,3\}$, put $h(\theta)=4\cos(N\theta)$, and set
\[
 R_k=k+\frac{\log k}{k},\qquad
 \theta_{k,\ell}=\frac{\pi\ell}{k},\qquad 0\le\ell<2k.
\]
For a sufficiently large $K\ge2N$, define
\[
 t_k=\begin{cases}
       N(\log k)^2/(2k^2),&k\ge K,\ N\mid k,\\
       0,&\text{otherwise},
      \end{cases}
\]
and take
\[
 \lambda_{k,\ell}=R_ke^{i\theta_{k,\ell}},\qquad
 \mu_{k,\ell}=R_k
       e^{i(\theta_{k,\ell}-t_kh'(\theta_{k,\ell}))}.
\]
Let $\Lambda_+$ and $\Lambda_-$ consist of these points, matched
by their labels. Only shells with $N\mid k$ are deformed. The
factor $N$ in $t_k$ compensates for the omitted shells.

The derivative of $x+\log x/x$ is bounded below by $1/2$ for
$x\ge1$, so consecutive radii are separated by at least $1/2$.
Choose $K$ so that $t_k\|h''\|_\infty\le1/2$ for $k\ge K$.
Then every intermediate angular map
$\theta\mapsto\theta-s t_kh'(\theta)$, $0\le s\le1$, has
derivative at least $1/2$. The angular gaps on a deformed shell
are at least $\pi/(2k)$. Since $R_k\asymp k$, this proves
uniform separation and bijectivity. Also,
\[
 |\mu_{k,\ell}-\lambda_{k,\ell}|
 \le R_kt_k\|h'\|_\infty
 \le C_N\frac{(\log k)^2}{k}\longrightarrow0.
\]
Increasing $K$ makes the supremum less than $\min\{\eta,1\}$,
and $K>R_*+1$ makes the configurations identical in $D_{R_*}$.
The regular array has a bounded allocation by
Lemma~\ref{lem:joint-radial-data}, and the motion preserves it.
The same lemma and Lemma~\ref{lem:bounded-area-allocation} give
the uniform disk discrepancy and bounded lattice matchings in
\textup{(i)}.

On an active shell, $N\mid k$, so addition of $2\pi/N$ permutes
the regular $2k$-gon. The function $h'$ has this period, so the
same rotation also permutes the deformed shell. Its $j$th moment
is multiplied by $e^{2\pi ij/N}\ne1$ for $1\le j<N$, and hence
vanishes. On every inactive shell the two sets coincide. A regular
$2k$-gon has vanishing moments of orders $1,\ldots,L$ once
$2k>L$. This proves \textup{(ii)} and \textup{(iii)}.

It remains to prove \textup{(iv)}. Write $J$ for the common radial
potential and put $r_n=n+1/2$. By
Lemma~\ref{lem:joint-radial-data} and the summable expansion
\[
 2k\log(R_k/k)=\frac{2\log k}{k}
                     +O\!\left(\frac{(\log k)^2}{k^3}\right),
\]
we have
\begin{equation*}
 J(r_n)=\frac{r_n^2}{2}-(\log r_n)^2-\frac16\log r_n+O(1).
 \end{equation*}
The grouped products
\[
 F_+(z)=\prod_{k\ge1}\prod_{\ell<2k}(1-z/\lambda_{k,\ell}),
 \qquad
 F_-(z)=\prod_{k\ge1}\prod_{\ell<2k}(1-z/\mu_{k,\ell})
\]
converge locally uniformly. Indeed, the first two reciprocal power
sums vanish on every sufficiently large shell: on inactive shells
by regularity, and on active shells by $N$-fold symmetry and $N\ge3$.
For $|z|\le A$ and $R_k>2A$, the logarithm of either shell product,
normalized to vanish at zero, consequently has modulus at most
\[
 2k\sum_{j\ge3}\frac{(A/R_k)^j}{j}\le C_A k/R_k^3.
\]
This bound is summable. The logarithmic tails have nonvanishing
exponentials, and the remaining factors give precisely the stated zeros.

We claim that, uniformly in $\theta$,
\begin{align}
 \log|F_+(r_ne^{i\theta})|&=J(r_n)+O(1),\notag\\
 \log|F_-(r_ne^{i\theta})|&=J(r_n)
                    +4\cos(N\theta)(\log r_n)^2+O(1).
 \label{eq:obstruction-potentials}
\end{align}
Here we can use the estimates already proved in
Lemmas~\ref{lem:profile-angular-sampling} and
\ref{lem:profile-radial-kernel}. For clarity, we give
the first variation and its summation on the selected shells.
Put $q_k=\min(r_n/R_k,R_k/r_n)$, $d_k=-\log q_k$,
and $L_q(x)=\log|1-qe^{ix}|$.
Although the initial radii here differ from those in
\eqref{eq:smooth-profile-configuration}, only finitely many shells are
affected. Their centered logarithmic contributions are bounded for
large $n$. On the tail, $R_k-k=o(1)$, so
\eqref{eq:smooth-profile-overlap} applies unchanged. The
regular-polygon terms therefore have bounded sum, proving the first
identity.
The first variation on an active shell is
\[
 -t_k\sum_{\ell<2k}h'(\theta_{k,\ell})
                         L'_{q_k}(\theta_{k,\ell}-\theta).
\]
The Taylor and sampling estimates
\eqref{eq:profile-taylor-error} and
\eqref{eq:profile-sampling-error}, with $t_k$ at most $N$ times
the parameter there, bound the errors by
\[
 C_N\frac{(\log k)^4}{k^2},\qquad
 C_N\frac{(\log k)^2}{k}e^{-kd_k}
       +C_N\frac{(\log k)^2}{k^{3/2}},
\]
respectively. Their sums over the selected shells are uniformly
bounded. Fourier multiplication, or sine orthogonality, gives
\[
 -\frac1{2\pi}\int_0^{2\pi}h'(x)L'_q(x-\theta)\,dx
                         =2Nq^N\cos(N\theta).
\]
The continuous first variation is therefore
$4Nkt_kq_k^N\cos(N\theta)$.
The selected-shell estimate \eqref{eq:profile-selected-shells},
with $\tau=2$ and $m=N$, gives
\[
 N\sum_{\substack{k\ge K\\N\mid k}}
       \frac{(\log k)^2}{k}q_k^N
                       =\frac{2(\log r_n)^2}{N}+O_N(1).
\]
Multiplication by $2N\cos(N\theta)$ shows that the variations
sum to $4\cos(N\theta)(\log r_n)^2+O(1)$.
This proves \eqref{eq:obstruction-potentials}.

Let $G_+$ or $G_-$ be a generator after any finite modification
of net multiplicity $q$. Its logarithmic modulus acquires the term
$q\log r_n+O(1)$. For $G_+$, circular-mean sampling gives
\[
 \|G_+\|_2^2<\infty
 \quad\Longleftrightarrow\quad
 \sum_{n\ge3} n^{2q+2/3}e^{-2(\log n)^2}<\infty,
\]
and the series converges for every $q$.
For $G_-$, the coefficient of $(\log r_n)^2$ is
$-1+4\cos(N\theta)$. It is bounded below in every direction
and equals $3$ at $\theta=0$. Thus $G_-$ satisfies the hypotheses
of Proposition~\ref{prop:critical-direction}, while
\[
 \log|G_-(r_n)|-r_n^2/2
       =3(\log r_n)^2+(q-1/6)\log r_n+O(1)\longrightarrow\infty.
\]
The Fock pointwise estimate excludes $G_-\in\F$.
The proposition then excludes every nonzero Fock function vanishing
on its zeros, including functions with additional zeros. This proves
\textup{(iv)}.
\end{proof}

For simple sets, the coherent states indexed by $\Lambda_+$ are
incomplete, whereas those indexed by $\Lambda_-$ are complete.
The same is true after any finite addition or deletion of distinct
states, by \textup{(iv)} and the orthogonal-complement identity in
Section~\ref{sec:coherent-states}.

The construction chooses $N$ and then $K$ after $L$, $\eta$, and
$R_*$ have been fixed. Its constants may depend on $L$; no uniformity
as $L\to\infty$ is claimed. The motions in
\eqref{eq:obstruction-small-motion} are not asserted to satisfy an
inverse-radius bound, and no optimality of their rate is claimed.
Circle moments also differ from the moments of bounded local clusters
in Theorem~\ref{thm:general-cluster-isomorphism}.

Theorem~\ref{thm:complete-zero-set-characterization} distinguishes the
two configurations: $\calH(\Lambda_+)<\infty$ and
$\calH(\Lambda_-)=\infty$. Theorem~\ref{thm:finite-modification-stability}
also gives $\calT_q(\Lambda_+)<\infty$ for every $q$. Thus finite
circle moments cannot replace either family of potential tests.
The full angular measures, their radii, and the multiplicity at zero
determine the multiset and are retained by the complete criterion.

\section{Zero-based subspaces and coherent states}
\label{sec:coherent-states}
\label{rem:coherent-state-interpretation}

The zero-set criteria have consequences for coherent states when a
uniqueness alternative is available. The distinction between realizing
all prescribed zeros and merely vanishing on them is essential here.
For $\lambda\in\supp\Lambda$ and $0\le j<m_\lambda$, put
\[
 K_{\lambda,j}(z)=z^j e^{\overline\lambda z}.
\]
The coefficient formula \eqref{eq:fock-coefficient-norm} gives
\[
 \|K_{\lambda,j}\|_2^2
 =\sum_{n\ge0}\frac{(n+j)!}{(n!)^2}|\lambda|^{2n}<\infty,
 \qquad \langle f,K_{\lambda,j}\rangle=f^{(j)}(\lambda),
\]
with the inner product linear in its first variable. The first series
converges by the ratio test; the second identity follows from the
coefficient inner product and Cauchy--Schwarz. Consequently,
\begin{equation}
 \left(\overline{\operatorname{span}}
 \{K_{\lambda,j}:\lambda\in\supp\Lambda,\ 0\le j<m_\lambda\}\right)^\perp
 =\mathcal I(\Lambda).
 \label{eq:kernel-orthogonal-complement}
\end{equation}
Thus the system is complete exactly when $\Lambda$ is a uniqueness
multiset. For simple points, the normalized kernels
\[
 k_\lambda(z)=e^{\overline\lambda z-|\lambda|^2/2},\qquad
 \|k_\lambda\|_2=1,
\]
are the usual coherent states in the Bargmann representation
\cite{BargmannEtAl1971}. They are nonzero scalar multiples of
$K_{\lambda,0}$. Repeating the same kernel does not express a higher
zero order; the derivative kernels do.

\subsection{Dimension at angular endpoints}

\begin{corollary}[Vanishing functions and coherent-state codimension]
\label{cor:angular-vanishing-dimension}
Use the notation of Theorem~\ref{thm:smooth-angular-profile}, and
let $G=Fq_{\nu_+}/q_{\nu_-}$ be its modified generator.
If $\tau=1$, or if $\tau=2$ and $a=H$, set
\[
 D=\begin{cases}
    \gamma-H-q-1,&\tau=1,\\
    \gamma-q-1,&\tau=2.
   \end{cases}
\]
Let $\mathcal D$ consist of the integers $d\ge0$ such that $d<D$,
including $d=D$ when $D$ is a nonnegative integer and
$\mathcal E_\tau(h,\delta)$ holds. In these cases,
\begin{equation}
 \mathcal I(\Xi)=\operatorname{span}\{z^dG:d\in\mathcal D\},
 \qquad \dim\mathcal I(\Xi)=\#\mathcal D.
 \label{eq:angular-vanishing-dimension}
\end{equation}
The span of the empty family is understood to be $\{0\}$.
In the remaining cases, $\tau=2$ and $a\ne H$, the dimension
$\dim\mathcal I(\Xi)$ is infinite when $a>H$ and zero when $a<H$.
For every multiset $\Xi$ in these cases, with
$m_\lambda=\Xi(\{\lambda\})$,
\begin{equation}
 \operatorname{codim}_{\F}
   \overline{\operatorname{span}}\{K_{\lambda,j}:\lambda\in\supp\Xi,\ 0\le j<m_\lambda\}
          =\dim\mathcal I(\Xi).
 \label{eq:angular-coherent-codimension}
\end{equation}
\end{corollary}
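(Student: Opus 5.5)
The plan is to combine Proposition~\ref{prop:critical-direction} with the weighted norm computation from Step~1 of the proof of Theorem~\ref{thm:smooth-angular-profile}. In the critical regimes ($\tau=1$, or $\tau=2$ with $a\le H$), Step~2 of that proof already checks the two hypotheses \eqref{eq:critical-direction-global} and \eqref{eq:critical-direction-ray} for $G$. The proposition then shows that every $f\in\mathcal I(\Xi)$ has the form $f=PG$ with $P$ a polynomial. Since a polynomial of degree $d$ satisfies $|P(z)|\asymp|z|^d$ outside a disk, uniformly in the angle, we get $PG\in\F$ exactly when $z^{d}G\in\F$ for $d=\deg P$. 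Hence $\mathcal I(\Xi)$ is the span of those monomial multiples $z^dG$ that lie in $\F$. The admissible $d$ form an initial segment, because membership is monotone in $d$. So the dimension count reduces to the question: for which $d\ge0$ is $z^dG\in\F$?

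The key observation is that $z^dG$ is the generator of the finite modification $\Xi+d\delta_0$, which has net multiplicity $q+d$. By Step~1 of the theorem, $z^dG\in\F$ if and only if \eqref{eq:smooth-profile-integral} converges with $q$ replaced by $q+d$. Reading off the criterion:
\begin{itemize}
\item For $\tau=1$, membership holds when $\gamma-H>q+d+1$, i.e.\ $d<D$, or when $d=D$ and $\mathcal E_1(h,\delta)$ holds.
\item For $\tau=2$ with $a=H$, membership holds when $\gamma>q+d+1$, i.e.\ $d<D$, or when $d=D$ and $\mathcal E_2(h,\delta)$ holds.
\end{itemize}
The endpoint case $d=D$ can occur only when $D$ is a nonnegative integer. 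This is exactly the set $\mathcal D$. The functions $z^dG$ for distinct $d$ are linearly independent, so \eqref{eq:angular-vanishing-dimension} follows; if $\mathcal D$ is empty, $\mathcal I(\Xi)=\{0\}$.

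For the remaining cases:
\begin{itemize}
\item If $\tau=2$ and $a<H$, the proof of Theorem~\ref{thm:smooth-angular-profile} gives uniqueness. Indeed, the criterion fails there for every net multiplicity, so the dimension is zero.
\item If $\tau=2$ and $a>H$, the theorem shows that $z^dG\in\F$ for every $d$, since the condition $a>H$ does not depend on $q$. These functions are linearly independent elements of $\mathcal I(\Xi)$, so the dimension is infinite.
\end{itemize}
Finally, \eqref{eq:angular-coherent-codimension} follows directly from the orthogonal-complement identity \eqref{eq:kernel-orthogonal-complement}. The codimension of a closed subspace equals the (Hilbert) dimension of its orthogonal complement, which here is $\mathcal I(\Xi)$. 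In the infinite case, both sides are infinite.

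The main point to check is that the hypotheses of Proposition~\ref{prop:critical-direction} hold uniformly for the critical case $a=H$. Step~2 of the theorem's proof already verifies them, with a direction at a maximum of $h$. A second check is that passing from $G$ to $z^dG$ is legitimate as a finite modification, so that the theorem's integral test applies with $q+d$. This is a finite addition at the origin, which Lemma~\ref{lem:finite-modifications} and the theorem allow.
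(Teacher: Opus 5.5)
Your proposal is correct and matches the paper's proof essentially step for step. Proposition~\ref{prop:critical-direction}, with its hypotheses verified in Step~2 of the theorem's proof, reduces $\mathcal I(\Xi)$ to polynomial multiples $PG$; the degree comparison turns the norm test into \eqref{eq:smooth-profile-integral} with $q+d$; the cases $a>H$ and $a<H$ are read off the theorem; and \eqref{eq:kernel-orthogonal-complement} gives the codimension identity.
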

\begin{proof}
In the stated critical regimes, Proposition~\ref{prop:critical-direction}
shows that every nonzero member of $\mathcal I(\Xi)$ is $PG$
for a polynomial $P$. If $P$ has degree $d$, then
$|P(z)|\asymp |z|^d$ outside a fixed disk, uniformly in angle.
Its norm test is therefore \eqref{eq:smooth-profile-integral}
with $q$ replaced by $q+d$. Theorem~\ref{thm:smooth-angular-profile}
gives exactly $d\in\mathcal D$. This set is empty or an initial
segment of the nonnegative integers, proving
\eqref{eq:angular-vanishing-dimension}.
For $\tau=2$ and $a>H$, all the functions $z^dG$ belong to
$\F$ and are linearly independent; for $a<H$ the theorem gives
uniqueness. Finally, \eqref{eq:kernel-orthogonal-complement} proves
\eqref{eq:angular-coherent-codimension}.
\end{proof}

\subsection{Stable zero sets}

\begin{corollary}\label{cor:stable-coherent-codimension}
If $\Lambda$ satisfies the equivalent conditions of
Theorem~\ref{thm:finite-modification-stability}, then the closed span of
$\{K_{\lambda,j}:\lambda\in\supp\Lambda,\ 0\le j<m_\lambda\}$ has
infinite codimension in $\F$.
\end{corollary}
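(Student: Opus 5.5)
By \eqref{eq:kernel-orthogonal-complement}, the orthogonal complement of the closed span of the kernels $K_{\lambda,j}$ is exactly $\mathcal I(\Lambda)$. So the codimension equals $\dim\mathcal I(\Lambda)$, and it suffices to show that $\mathcal I(\Lambda)$ is infinite dimensional whenever $\Lambda$ is stable under finite modifications.

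To produce infinitely many independent vanishing functions, I will use Proposition~\ref{prop:finite-modification-moments}. Fix an integer $q\ge0$; stability gives an entire $f_q$ with $\mathcal Z(f_q)=\Lambda$, $f_q\in\F$ and $z^qf_q\in\F$. I then consider the functions $z^df_q$ for $0\le d\le q$.

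Each of these lies in $\F$. On the unit disk this is holomorphy. Outside it, $|z|^d\le|z|^q$, so $|z^df_q|\le|z^qf_q|$, and the tail integral is dominated by $\|z^qf_q\|_2^2$ (equivalently, use $|z|^{2d}\le(1+|z|^2)^q$). Each vanishes on $\Lambda$ with at least the prescribed multiplicities. The $q+1$ functions are linearly independent: a relation $\sum_{d\le q}c_dz^df_q=0$ forces the polynomial $\sum_d c_dz^d$ to vanish identically, because $f_q\not\equiv0$ and $\mathcal O(\C)$ is an integral domain.

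Hence $\dim\mathcal I(\Lambda)\ge q+1$ for every $q$, so $\dim\mathcal I(\Lambda)=\infty$, and \eqref{eq:kernel-orthogonal-complement} gives infinite codimension. There is no genuine obstacle. The only point needing care is that $f_q$ is allowed to depend on $q$, as Remark~\ref{dual-rem:two-tests} stresses; the argument avoids needing a single function with all polynomial multiples in $\F$ by bounding the dimension separately for each $q$.
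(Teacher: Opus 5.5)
Your proof is correct and follows essentially the paper's argument: reduce via \eqref{eq:kernel-orthogonal-complement} to $\dim\mathcal I(\Lambda)=\infty$, then use stability to produce arbitrarily many linearly independent vanishing functions. The only difference is cosmetic. The paper fixes $w\notin\supp\Lambda$ and takes, directly from the definition of stability, functions $g_j\in\F$ with $\mathcal Z(g_j)=\Lambda+j\delta_w$, proving independence by the lowest vanishing order at $w$. You instead take the polynomial multiples $z^df_q$, $0\le d\le q$, of a single function from Proposition~\ref{prop:finite-modification-moments}.
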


\begin{proof}
By \eqref{eq:kernel-orthogonal-complement}, it suffices to prove that
$\mathcal I(\Lambda)$ is infinite dimensional. Choose
$w\notin\supp\Lambda$. For each integer $j\ge0$, stability
supplies $g_j\in\F$ with $\mathcal Z(g_j)=\Lambda+j\delta_w$.
For every $N$, the functions $g_0,\ldots,g_N$ are linearly independent:
in a nontrivial relation, the lowest nonzero order at $w$ cannot cancel.
They all belong to $\mathcal I(\Lambda)$, which is therefore infinite
dimensional.
\end{proof}

The positive configuration in
Theorem~\ref{thm:finite-angular-data-insufficient} therefore has an
associated kernel system of infinite codimension, while the negative
configuration gives a complete system. Both conclusions persist under
finite modifications, with multiplicities interpreted by derivative
kernels.

For the radial configurations in
Corollaries~\ref{cor:smooth-radial-displacements} and
\ref{thm:lattice-drift-criterion}, and the angular configurations in
Theorem~\ref{thm:smooth-angular-profile}, the proved uniqueness
alternatives give
\[
 \Lambda\text{ is a Fock zero set}
 \quad\Longleftrightarrow\quad
 \text{its zero-set integral converges}
 \quad\Longleftrightarrow\quad\mathcal I(\Lambda)\ne\{0\}.
\]
Equation~\eqref{eq:kernel-orthogonal-complement} then identifies
convergence with incompleteness. For general multisets, including the
full bounded-displacement class, failure of the zero-set criterion
alone does not imply completeness. The main theorem prescribes all
zeros; the kernel system tests vanishing with at least those orders.

Corollary~\ref{cor:intermediate-zero-status} supplies explicit examples
with no exact realization but infinite-dimensional vanishing space.

\appendix
\numberwithin{equation}{section}
\section{Auxiliary estimates for circular arrays}
\label{app:circular-estimates}

We give the proofs of Lemmas~\ref{lem:gaussian-circle-sampling},
\ref{lem:sampled-ray-polynomial}, \ref{lem:profile-angular-sampling}, and
\ref{lem:profile-radial-kernel}. The angular estimates use the notation of
Section~\ref{subsec:smooth-angular-profiles}; in particular,
$\varepsilon_k=(\log k)^\tau/(2k^2)$ for $k\ge K$ and
$\varepsilon_k=0$ for $k<K$, with $r_n=n+1/2$. Their error constants
may depend on $h,a,b,c,K,\tau$, but are independent of $n$, the
observation angle, and the polygon phases. The kernel estimates are
also uniform in the positive integer frequency.

\subsection{Sampling circular means}
\label{app:circle-sampling}

\begin{proof}[Proof of Lemma~\ref{lem:gaussian-circle-sampling}]
Assume $f\not\equiv0$ and set
$M_f(r)=(2\pi)^{-1}\int|f(re^{i\theta})|^2d\theta$.
Since $M_f(r)=\sum_{m\ge0}|a_m|^2r^{2m}$, H\"older's
inequality implies that $\log M_f(r)$ is convex as a function of
$\log r$. Thus, if $a=r_n\le r\le b=r_{n+1}$ and
$\log r=(1-t)\log a+t\log b$, then
\[
 M_f(r)e^{-r^2}
 \le e^{(1-t)a^2+tb^2-r^2}
       (M_f(a)e^{-a^2})^{1-t}(M_f(b)e^{-b^2})^t.
\]
The interpolation error for $x\mapsto e^{2x}$ gives
\[
 0\le(1-t)a^2+tb^2-r^2
 \le\tfrac12 b^2(\log b-\log a)^2
 \le\tfrac12(b/a)^2(b-a)^2\le C.
\]
Here $b-a\le3/2$ and $a\ge5/4$, so $b/a$ is bounded.
The arithmetic--geometric mean inequality then yields
\[
 \int_a^b M_f(r)e^{-r^2}r\,dr
 \le C\bigl(aM_f(a)e^{-a^2}+bM_f(b)e^{-b^2}\bigr).
\]
Summing over adjacent intervals counts each endpoint at most twice.
Also, $r_1\in[5/4,7/4]$ and $M_f$ is nondecreasing, whence
\[
 \int_0^{r_1}M_f(r)e^{-r^2}r\,dr
 \le\tfrac12r_1^2M_f(r_1)
 \le C r_1M_f(r_1)e^{-r_1^2}.
\]
Polar integration proves the upper estimate in
\eqref{eq:gaussian-circle-sampling}.

For the opposite estimate, fix $0<\epsilon<1/8$ and apply the
submean inequality to $f(w)e^{-\overline z w}$ on
$D(z,\epsilon)$. Since
$|z|^2-2\Re(\overline z w)=|w-z|^2-|w|^2$, this gives
\[
 |f(z)|^2e^{-|z|^2}
 \le\frac1{\pi\epsilon^2}
       \int_{D(z,\epsilon)}|f(w)|^2e^{-|w|^2}
                                  e^{|w-z|^2}\,dA(w)
 \le C_\epsilon
       \int_{D(z,\epsilon)}|f(w)|^2e^{-|w|^2}\,dA(w).
\]
Integrate over $|z|=r_n$ with respect to arclength and apply Tonelli.
For fixed $w$, the arc $|z-w|<\epsilon$ has length at most
$C\epsilon$ and is empty unless $||w|-r_n|<\epsilon$.
Thus
\[
 r_ne^{-r_n^2}M_f(r_n)
 \le C_\epsilon
       \int_{r_n-\epsilon<|w|<r_n+\epsilon}
                       |f(w)|^2e^{-|w|^2}\,dA(w).
\]
Since
$r_{n+1}-r_n\ge1/2$, these annuli are disjoint. Summing over $n$
completes the proof.
\end{proof}

\subsection{Polynomial bounds on a sampled ray}
\label{app:sampled-ray}

\begin{proof}[Proof of Lemma~\ref{lem:sampled-ray-polynomial}]
The assertion is immediate for $Q=0$. Rotation reduces the general
ray to the positive real axis. We first obtain a polynomial bound
on the entire positive axis from \eqref{eq:quotient-ray-data}.

For large $x>0$, choose $x_0=n+1/2$ with $x_0\le x<x_0+1$.
Set $m=\lceil D\log x\rceil$, where $D>4(C+1)$ is fixed, and
use the nodes $x_j=x_0+j$, $0\le j<m$. For large $x$ we have
$m<x/6$. Lagrange interpolation with its Cauchy remainder gives
\begin{equation}
 Q(x)=\sum_{j=0}^{m-1}Q(x_j)\ell_j(x)
 +\frac1{2\pi i}\int_{|\zeta-x|=x/2}
       \frac{Q(\zeta)}{\zeta-x}
       \prod_{j=0}^{m-1}\frac{x-x_j}{\zeta-x_j}\,d\zeta,
 \label{eq:sampled-ray-interpolation}
\end{equation}
where $\ell_j(x)=\prod_{i\ne j}(x-x_i)/(x_j-x_i)$.
The formula follows by residues at $x$ and at the nodes; at a node,
it holds by continuity. The estimates $|x-x_j|\le m$ and
$\prod_{i\ne j}|x_j-x_i|=j!(m-1-j)!$ imply
\[
 \sum_{j=0}^{m-1}|\ell_j(x)|
 \le\frac{2^{m-1}m^{m-1}}{(m-1)!}\le C_0^m
\]
for an absolute $C_0$. Here the elementary bound
$s!\ge(s/e)^s$ suffices. Since $x_j\asymp x$, the interpolation
sum is bounded by $C x^B C_0^m$, hence by a fixed power of $x$.

On the contour in \eqref{eq:sampled-ray-interpolation},
$|\zeta-x_j|\ge x/3$ and $|\zeta|\le2x$. The remainder is at most
\[
 \exp\bigl(C\log^2(2x)\bigr)\left(\frac{3m}{x}\right)^m.
\]
Its logarithm is at most
$C\log^2(2x)+m(\log(3m)-\log x)$, which tends to $-\infty$
by the choice of $D$. Thus $|Q(x)|\le C_2 x^{B_1}$ for $x\ge2$,
with a finite $B_1$.

We give the Phragm\'en--Lindel\"of step explicitly. Choose an integer
$M>B_1$ and $0<\sigma<1/2$. In the slit exterior domain
\[
 \Omega=\{z:|z|>2,\quad0<\arg z<2\pi\},
\]
take the branch $\arg(-z)\in(-\pi,\pi)$. For $\epsilon>0$ the
function
\[
 Q(z)z^{-M}\exp\{-\epsilon(-z)^\sigma\}
\]
is holomorphic on $\Omega$. Since
$\Re\bigl((-z)^\sigma\bigr)\ge |z|^\sigma\cos(\pi\sigma)>0$, its modulus
on a large outer circle tends to zero by \eqref{eq:quotient-log-squared-growth}.
On both banks of the slit and on the inner circle its modulus is
bounded by a constant independent of $\epsilon$. The maximum
principle on truncated domains, followed by exhaustion and then
$\epsilon\downarrow0$, gives $|Q(z)|\le C_3|z|^M$ for $|z|\ge2$.
Cauchy's coefficient estimates show that $Q$ is a polynomial.
\end{proof}

\subsection{Angular variation and sampling errors}
\label{app:angular-sampling}

\begin{proof}[Proof of Lemma~\ref{lem:profile-angular-sampling}]
Fix $r=r_n$ and set
$q_k=\min(r/R_k,R_k/r)$, $d_k=-\log q_k$.
Since $R_k-k=o(1)$ and $r_n$ stays at distance at least $1/4$ from
the occupied radii for large $n$,
\begin{equation}
 2kd_k\ge c_0>0,\qquad
 \sup_n\sum_{k\ge1}e^{-s k d_k}<\infty\quad(s>0).
 \label{eq:smooth-profile-overlap}
\end{equation}
Indeed, for $r/2\le k\le2r$, $kd_k\ge c(1+|k-n|)$;
outside this range, $d_k$ has a fixed positive lower bound after
omitting finitely many initial indices. The resulting geometric and
exponential sums prove \eqref{eq:smooth-profile-overlap}.

Let $L_q(x)=\log|1-qe^{ix}|$. For a point at angle $\psi$,
\[
 \log|1-re^{i\theta}/(R_ke^{i\psi})|
       -\log^+(r/R_k)=L_{q_k}(\psi-\theta).
\]
The centered sum for a regular polygon is
$\log|1-q_k^{2k}e^{2ki(\phi_k-\theta)}|$.
The sum of these terms is uniformly bounded by \eqref{eq:smooth-profile-overlap}.
The first variation caused by the angular motion is
\begin{equation}
 -\varepsilon_k\sum_{\ell=0}^{2k-1}
       h'(\theta_{k,\ell})L'_{q_k}(\theta_{k,\ell}-\theta).
 \label{eq:smooth-profile-first-variation}
\end{equation}

We first control the Taylor remainder. Differentiation gives
$|L_q''(x)|\le q/|1-qe^{ix}|^2$.
For $M$ angles separated by at least $c/M$, comparison with the
integral of $((1-q)^2+x^2)^{-1}$ gives
\[
 \sum_\ell\frac{q}{|1-qe^{ix_\ell}|^2}
 \le Cq\left(\frac{M}{1-q}+\frac1{(1-q)^2}\right).
\]
For $q\ge1/2$, this follows from
$|1-qe^{ix}|^2=(1-q)^2+4q\sin^2(x/2)$ and grouping by angular
distance from zero; for $q<1/2$, bound each summand by $4q$.
Estimate \eqref{eq:smooth-profile-overlap} implies $k(1-q_k)\ge c'>0$.
All intermediate angular maps have derivative at least $1/2$,
so the same separation estimate applies throughout Taylor's integral
remainder. The total shell remainder is at most
\begin{equation}
 C\varepsilon_k^2\frac{kq_k}{1-q_k}
                       \le C\frac{(\log k)^{2\tau}}{k^2}.
 \label{eq:profile-taylor-error}
\end{equation}
Its sum is bounded independently of $n$, the angle, and the phases.

To estimate the sampling error in \eqref{eq:smooth-profile-first-variation}, write
$h(x)=\sum_{j\in\Z}c_je^{ijx}$, with $c_0=0$.
Parseval and Cauchy--Schwarz give
\begin{equation}
 \sum_j|jc_j|<\infty,\qquad
 \sum_{|j|\ge M/2}|jc_j|\le C M^{-1/2}\|h''\|_{L^2}.
 \label{eq:smooth-profile-fourier-tail}
\end{equation}
For $q<1$,
\[
 L_q'(x)=-\frac i2\sum_{m\ne0}\operatorname{sgn}(m)
                                      q^{|m|}e^{imx}.
\]
Both Fourier series converge absolutely. Sampling at
$\phi+2\pi\ell/M$ retains precisely the frequencies divisible by
$M$. Thus the difference between the sampled and continuous means
of $h'(x)L_q'(x-\theta)$ is at most
\[
 \frac12\sum_j|jc_j|\sum_{p\ne0}q^{|pM-j|}.
\]
Terms with zero frequency in $L_q'$ have only increased this bound.
For $|j|<M/2$, the inner sum is at most
$2q^{M/2}/(1-q^M)$; for arbitrary $j$, its extension to all $p$
is at most $2/(1-q^M)$. Use \eqref{eq:smooth-profile-fourier-tail} and
$q^M\le e^{-c_0}$ with $M=2k$. The error in the unnormalized
sum is at most $CM(q^{M/2}+M^{-1/2})$.
Multiplying by $\varepsilon_k$ bounds the shell error by
\begin{equation}
 C\frac{(\log k)^\tau}{k}e^{-kd_k}
           +C\frac{(\log k)^\tau}{k^{3/2}},
 \label{eq:profile-sampling-error}
\end{equation}
The second term is summable over $k$. For the first, use
$\sup_{k\ge K}(\log k)^\tau/k<\infty$ and
\eqref{eq:smooth-profile-overlap}. Thus the total error is bounded
independently of the observation radius.

The continuous first variation is obtained by Fourier multiplication:
\[
 -\frac1{2\pi}\int_0^{2\pi}h'(x)L_q'(x-\theta)\,dx
 =\frac12\sum_{m\ne0}|m|c_mq^{|m|}e^{im\theta}.
\]
For each fixed $r_n$, the continuous-variation series converges
absolutely: for large $k$, $q_k=O(r_n/k)$ and
$\|L'_{q_k}\|_\infty=O(q_k)$, so its terms are
$O_{r_n}((\log k)^\tau/k^2)$. The regular-polygon bound and the two
summable errors prove the assertion.
\end{proof}

\subsection{Uniform radial kernel summation}
\label{app:radial-kernel}

\begin{proof}[Proof of Lemma~\ref{lem:profile-radial-kernel}]
The remainder must be uniform in the frequency because it will be summed
against the Fourier coefficients of $h$. A bound with an uncontrolled
frequency-dependent constant would not suffice. Since $K-1>e^2$, put
\[
 w(x)=\frac{(\log x)^\tau}{x},\qquad
 g(x)=w(x)e^{-m|\log(r/x)|}.
\]
On $[K-1,\infty)$ the function $g$ increases up to
$r$ and decreases afterwards. On either side it is a constant times
$x^{m-1}(\log x)^\tau$ or $x^{-m-1}(\log x)^\tau$, respectively.
Its variation is bounded by $2\sup_{x\ge K-1}w(x)$ uniformly in
$m,r$. Comparison on unit intervals consequently gives
\[
 \sum_{k\ge K}g(k)=\int_K^\infty g(x)\,dx+O(1).
\]
The intervals joining $k$ to $R_k$ lie in disjoint intervals
$[k-1/4,k+1/4]$, so
$\sum_{k\ge K}|g(R_k)-g(k)|\le\operatorname{Var} g=O(1)$.
Also,
\[
 \sum_{k\ge K}|w(R_k)-w(k)|
       \le C\sum_{k\ge K}\frac{(\log k)^{\tau+1}}{k^3}<\infty.
\]
These estimates replace $g(k)$ by
$w(k)e^{-m|\log(r/R_k)|}$ at bounded total cost.
For $r\ge2K$, the absolute value of the missing integral
over $(0,K)$ is at most
\[
 r^{-m}\int_0^K x^{m-1}|\log x|^\tau\,dx
 \le r^{-m}\frac{\Gamma(\tau+1)}{m^{\tau+1}}
       +\left(\frac Kr\right)^m\frac{(\log K)^\tau}{m},
\]
which is uniformly bounded. Finally, $x=re^v$ gives
\[
 \begin{split}
 \int_0^\infty\frac{(\log x)^\tau}{x}e^{-m|\log(r/x)|}\,dx
 &=\int_{-\infty}^{\infty}(\log r+v)^\tau e^{-m|v|}\,dv\\
 &=\frac{2(\log r)^\tau}{m}
       +\begin{cases}0,&\tau=1,\\4/m^3,&\tau=2.\end{cases}
 \end{split}
\]
This proves \eqref{eq:smooth-profile-radial-convolution}.

For the selected-shell estimate \eqref{eq:profile-selected-shells},
let $K_0$ be the first multiple of $N$ at least $K$. Comparison on the intervals $[k,k+N]$ bounds the error
between $N\sum_{N\mid k}g(k)$ and $\int_{K_0}^\infty g$ by
$N\operatorname{Var}(g)$. The displacement estimates above remain
valid on a subset of the indices, and the omitted integral over
$(0,K_0)$ has the same uniform bound. This proves
\eqref{eq:profile-selected-shells}.

\end{proof}

\Needspace{8\baselineskip}
\section*{Acknowledgments}

\noindent\textbf{Funding.}
Pan Ma was supported by the National Natural Science Foundation of China
(Grant Nos.~12171484 and 12571140), the Natural Science Foundation of Hunan
Province (Grant No.~2023JJ20056), the Science and Technology Innovation
Program of Hunan Province (Grant No.~2023RC3028), the Central South
University Innovation-Driven Research Programme (Grant No.~2023CXQD032),
and the Hunan Basic Science Research Center for Mathematical Analysis
(Grant No.~2024JC2002). Shengzhao Hou was supported by the National Natural
Science Foundation of China (Grant No.~12371133).

\medskip
\noindent\textbf{AI Statement.}
The authors used artificial-intelligence tools for language editing, LaTeX
formatting, and limited assistance with local mathematical reasoning. The
proof strategy and mathematical development are the authors' own; all
mathematical content was independently written and checked by the authors,
who take full responsibility for it.

\end{document}